\newtheorem{theorem}{Theorem}[section]
\newtheorem{lem}[theorem]{Lemma}
\newtheorem{prop}[theorem]{Proposition}
\newtheorem{corol}[theorem]{Corollary}
\newtheorem{defin}[theorem]{Definition}
\newtheorem{fait}[theorem]{Fact}
\theoremstyle{remark}
\newtheorem{rem}[theorem]{Remark}
\renewcommand{\rm}[1]{\mathrm{#1}}
\renewcommand{\cal}[1]{\mathcal{#1}}
\newcommand{\bb}[1]{\mathbb{#1}}
\renewcommand{\frak}[1]{\mathfrak{#1}}
\newcommand{\dd}{ \; \mathrm{d}}
\newcommand{\dis}{\mathrm{d}}
\newcommand{\adis}{\mathrm{d}_{\underline{a}}}
\newcommand{\R}{\bb R}
\newcommand{\Z}{\bb Z}
\newcommand{\N}{\bb N}
\renewcommand{\P}{\bb P}
\newcommand{\calM}{\cal M}
\newcommand{\calF}{\cal F}
\newcommand{\calG}{\cal G}
\newcommand{\calO}{\cal O}
\newcommand{\calW}{\cal W}
\newcommand{\calL}{\cal L}
\newcommand{\calP}{\cal P}
\newcommand{\calD}{\cal D}
\newcommand{\frakS}{\frak S}
\newcommand{\frakD}{\frak D}
\newcommand{\sld}{\rm{SL}_d(\Z)}
\newcommand{\slr}{\rm{SL}_d(\R)}
\newcommand{\inj}{{inj} }
\DeclareMathOperator{\vol}{vol}
\DeclareMathOperator{\leb}{Leb_{\mathfrak{a}}}
\renewcommand{\c}{c}
\newcommand{\hypG}{Let $G$ be a connected, real linear, semisimple Lie group of non-compact type. }
\title{Equidistribution and counting of periodic tori in the space of Weyl chambers}
\author{Nguyen-Thi Dang and Jialun Li}
\date{}
\begin{document}

\maketitle
\begin{abstract}
Let $G$ be a semisimple Lie group without compact factor and $\Gamma < G$ a torsion-free, cocompact, irreducible lattice.
According to Selberg, periodic orbits of regular Weyl chamber flows live on tori. 
We prove that these periodic tori equidistribute exponentially fast towards the quotient of the Haar measure. From the equidistribution formula, we deduce a higher rank prime geodesic theorem.

\end{abstract}

\section{Introduction}

Let $G$ be a semisimple, connected, real linear Lie group without compact factor. 
Let $K$ be a maximal compact subgroup, $A$ be a maximal $\mathbb{R}$-split torus, $A^{+}\subset A$ a closed positive chamber such that the Cartan decomposition $G=KA^+K$ holds.
Denote by $M:=Z_K(A)$ the centralizer of $A$ in $K$. 

Let $\Gamma < G$ be a torsion-free, cocompact lattice. 
The double coset space $\Gamma \backslash G/M$ is called the \emph{space of Weyl chambers} of the symmetric space $\Gamma \backslash G/K$.
We study the counting and equidistribution of the compact right $A$-orbits in the space of Weyl chambers.

\subsection{Pioneering works on hyperbolic surfaces}
In this case, $G= \mathrm{PSL}(2,\mathbb{R})$ is the isometry group of the Poincaré half-plane $\mathbb{H}^2$, the space of Weyl chamber is the unit tangent bundle of the hyperbolic surface $\Gamma \backslash \mathbb{H}^2$ and the right action of $A$ on $\Gamma \backslash G/M$ corresponds to the geodesic flow.
Periodic orbits of the geodesic flow project in the surface to primitive closed geodesics.

\paragraph{Prime geodesic theorems} 
In 1959, Huber \cite{huberZurAnalytischenTheorie1959} proved a prime geodesic theorem for compact hyperbolic surfaces.
He obtained an estimate of the number of primitive closed geodesics as their length grows to infinity. 
More precisely, let $N(T)$ be the number of primitive closed geodesics of length less than $T$ on a hyperbolic surface. 
He proved that as $T$ tends to infinity, 
$$N(T) \sim e^T/T.$$
This term is similar to the asymptotic $x/\log x$ given by the prime number 
theorem\footnote{See Pollicott's research statement §1.2 \cite{pollicott}}
for the number of primes less than $x$. 
In 1969, using dynamical methods, Margulis \cite{margulisCertainApplicationsErgodic1969} extended the prime geodesic theorem to negatively curved compact manifolds. 
He proved that the exponential growth rate of $N(T)$ is equal to the topological entropy of the geodesic flow. 
Later on, relying on Selberg's Trace formula, Hejhal 
\cite{hejhal_selberg_1976} and Randol \cite{randol_asymptotic_1977} obtained a precise asymptotic development of the counting function in terms of the spectrum of the Laplace-Beltrami operator. 
In 1980, Sarnak \cite{sarnak1980} extended their precise asymptotic development to finite area surfaces.

Let us state one of the various equivalent formulations of the prime geodesic theorem.
For a closed geodesic $\c$ on $\Gamma\backslash \mathbb{H}^2$, denote by $\ell(\c)$ the length of this geodesic. Let $c_0$ be the primitive closed geodesic underlying $\c$.
Then as $T\rightarrow +\infty$
\begin{equation}\label{equ-prime}
\sum_{\c_0}\left\lfloor \frac{T}{\ell(\c_0)} \right\rfloor\ell(c_0)=\sum_{\c,\ell(c)\leq T}\ell(c_0)\sim e^T,
\end{equation}
where the first sum is over all primitive closed geodesics, the second sum is over all closed geodesics. 
This sum is similar to the second Chebyshev function: the weighted sum of the logarithms of primes less than a given number, where the weight is the highest power of the prime that does not exceed the given number. 
The second Chebyshev function is essentially equivalent to the prime counting function and their asymptotic behaviour is similar. 

\paragraph{Equidistribution of closed geodesics}
Margulis in his 1970 thesis\footnote{See Parry's review \cite{parry}} and Bowen \cite{bowenPeriodicOrbitsHyperbolic1972}, \cite{bowen_equidistribution_1972} independently studied the spatial distribution of the closed orbits of the geodesic flow. They proved that closed orbits uniformly equidistribute towards a measure of maximal entropy as their period tends to infinity. 
In the second 1972 paper, Bowen proved the uniqueness of the measure of maximal entropy for the geodesic flow.
As a consequence, the measure of maximal entropy of the geodesic flow is equal to the quotient of the Haar measure.
Later, Zelditch \cite{zelditchSelbergTraceFormulae1992} generalized Bowen's equidistribution theorem to finite area hyperbolic surfaces.

Let us recall Bowen and Margulis' result for a compact hyperbolic surface. 
For every primitive periodic orbit $F \subset  \Gamma \backslash \mathrm{PSL}(2,\mathbb{R})$, denote by $\cal{P}_{F}$ the unique probability measure invariant under the geodesic flow supported on $c$.
For every $T >0$, we denote by $\calG_p(T)$ the set of primitive periodic orbits of minimal period less that $T$.
Bowen and Margulis proved that for every bounded smooth function $f$,
$$ \frac{T}{e^{ T}} \sum_{F \in \calG_p(T) } \int f \; \dd\cal{P}_{F} \xrightarrow[T \rightarrow \infty]{} \int f \; \dd m_{\Gamma},$$
where $m_{\Gamma}$ is the measure of maximal entropy, which also corresponds in our case to the quotient measure of the Haar measure on $ \Gamma \backslash \mathrm{PSL}(2,\R)$.

The following non exhaustive list \cite{degeorge_length_1977}, \cite{gangolli_zeta_1980}, \cite{parryAnaloguePrimeNumber1983},
\cite{roblin}, \cite{naud2005expanding}, \cite{margulis_closed_2014} provides some of the many subsequent works tackling the counting and equidistribution problem in several different rank one generalisations.

\subsection{Main results}

In this article, we focus on the higher rank case\footnote{more precisely, we do not have restrictions on the rank of $G$} for $G$,
meaning that $\dim _{\mathbb{R}} A \geq 2$.  
Denote by $\frak a:= \mathrm{Lie} \; A$ the Cartan subspace, by $ \frak{a}^+$ the closed positive chamber in the Lie algebra and by $\frak{a}^{++}$ its interior.  

\begin{defin}[Periodic flat tori]\label{defin-flat-per-tori}
For any right $A$-orbit $F$ in $\Gamma\backslash G/M$, we define the set of \emph{periods} of $F$ as
$$ \Lambda (F):= \lbrace Y \in \frak{a} \; \vert \; ze^Y=z ,\;   \forall z \in F \rbrace. $$
A period $Y$ in $\Lambda(F)$ is called \emph{regular} if $Y\in \frak a^{++}$.
When $\Lambda(F)$ is a maximal grid of $\frak{a}$, we say $F$ a \emph{periodic flat torus} or a \emph{compact $A$-orbit}. 
\end{defin}
Denote by $C(A)$ the set of compact $A$-orbits in $\Gamma\backslash G/M$. 
For every $F \in C(A)$, we denote by $L_F$ the quotient measure on $F$ of $\leb$, the Lebesgue measure on $\frak{a}$. 
Note that $L_F$ is not a probability measure. Its total mass, denoted by $\vol_{\frak{a}} (F)$, is the Lebesgue measure of any fundamental domain in $\mathfrak{a}$ of the grid $\Lambda (F)$.

\paragraph{Main counting result}

We use $\vol$ to denote the Haar measure on $G$ whose quotient on the symmetric space $X:=G/K$ equals the measure induced by the Riemannian metric. 
Denote by $\Vert \; \Vert$ the Euclidean norm on $\frak{a}$ coming from the Killing form on $\mathfrak{g}$ and by $B_\frak{a}$ the balls for this norm.
For every $T>0$, set $B_{\frak a}^{++}(0,T):=B_{\frak a}(0,T)\cap\frak a^{++}$ and
$D_T:= K \exp\big( B_{\frak{a}}(0,T) \big) K$, which is the preimage by the quotient map $G\rightarrow X$ of the ball of radius $T$ centered at $eK$ in the symmetric space $X$.

\begin{theorem}\label{corol-counting}
Let $G$ be a semisimple, connected, real linear Lie group without compact factor and $\Gamma < G$ be a torsion-free, cocompact irreducible lattice. 
Then there exist constants $C_G>0$ and $u>0$ such that for $T>0$
\begin{equation}
	\sum_{F\in C(A)}|\Lambda(F)\cap B_{\frak a}^{++}(0,T)| \;
	\vol_{\frak{a}}(F)
	= \vol(D_T)(C_G+O(e^{-u T})).
\end{equation}
There exists $C_G'>0$ such that for any non-degenerate parallelotope domain $\mathcal{P}\subset \mathfrak{a}^{+}$ whose faces are parallel to the walls of the Weyl chamber, there exists an entropy $\delta_{\mathcal{P}}>0$, a gap $u_{\mathcal{P}}>0$ such that for $T>0$
\begin{equation}\label{equ:parallelotope domain}
	\sum_{F\in C(A)}|\Lambda(F)\cap T \mathcal{P} \cap \mathfrak{a}^{++} | \;
	\vol_{\frak{a}}(F)
	= e^{\delta_{\mathcal{P}} T} (C_G'+O(e^{-u_{\mathcal{P}} T})).
\end{equation}
\end{theorem}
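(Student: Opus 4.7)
The plan is to deduce both parts of Theorem~\ref{corol-counting} from a single equidistribution statement for weighted sums of the measures $L_F$, which I expect to be the main theorem proved earlier in the paper. Concretely, I assume an exponentially quantitative formula of the shape
\[
\sum_{F\in C(A)}\sum_{Y\in\Lambda(F)\cap R}\int f\,dL_F
\;=\;\Big(\int_{R}J(H)\,dH\Big)\Big(C_G\int f\,dm_\Gamma+O(\|f\|_{\calC^k}e^{-uT})\Big)
\]
for smooth $f$ on $\Gamma\backslash G/M$ and suitably scaled domains $R\subset\frak a^{++}$, where $J(H)=\prod_{\alpha\in\Sigma^+}\sinh(\alpha(H))^{m_\alpha}$ is the Cartan Jacobian and $m_\Gamma$ is the normalised Haar measure. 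Taking $f\equiv 1$ collapses the left-hand side to $\sum_F|\Lambda(F)\cap R|\vol_{\frak a}(F)$, so both claims reduce to asymptotics of $\int_R J(H)\,dH$ for the two choices of $R$.

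For the first formula I would take $R=B_{\frak a}^{++}(0,T)$. Integration of the Haar measure in Cartan coordinates gives $\vol(D_T)=c_0\int_{B^{++}_{\frak a}(0,T)}J(H)\,dH$ for an explicit constant $c_0$ depending only on $K$, $M$ and the Weyl group, so after absorbing $c_0$ into $C_G$ the equidistribution formula is exactly the desired asymptotic with the same exponential rate $e^{-uT}$.

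For the second formula I would take $R=T\mathcal P\cap\frak a^{++}$. Since $\mathcal P$ is non-degenerate and has faces parallel to the Weyl walls, every positive root satisfies $\alpha(H)\gtrsim T$ on this region, so $J(H)=c_1 e^{2\rho(H)}(1+O(e^{-\kappa T}))$ for constants $c_1,\kappa>0$ independent of $T$. The remaining integral $\int_{T\mathcal P}e^{2\rho(H)}\,dH$ is of Laplace type: the linear form $2\rho$ attains its maximum on $\overline{\mathcal P}$ at a single vertex $H_\ast$ (precisely because the faces are parallel to the walls, so $2\rho$ is positive on the edge directions), and a change of variables near $H_\ast$ yields $\int_{T\mathcal P}e^{2\rho(H)}\,dH=e^{\delta_{\mathcal P}T}(C+O(e^{-u'T}))$ with $\delta_{\mathcal P}=2\rho(H_\ast)$. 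Combining with the equidistribution formula gives (\ref{equ:parallelotope domain}).

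The real work is in the underlying equidistribution estimate, and that is where I expect the main obstacle to lie. The natural approach, generalising Margulis in rank one, is to thicken each periodic torus in directions transverse to $A$ via the stable and unstable horospherical foliations, replace the sum over periodic orbits by an integral of a suitable test function over $\Gamma\backslash G$, and invoke exponential mixing of the regular Weyl chamber flow --- equivalently, quantitative decay of matrix coefficients along directions in $\frak a^{++}$ --- to compare this with a Haar integral. The principal difficulties are to perform the thickening coherently as $Y$ ranges over a multidimensional domain (requiring the full multi-parameter horospherical geometry of $G$ rather than a single-parameter flow box), and to obtain mixing estimates uniform in direction on $\frak a^{++}$ that are strong enough to absorb the derivative loss from the thickening step; both ultimately rest on a quantitative higher-rank mixing theorem of Kleinbock-Margulis type for the full $A$-action on $\Gamma\backslash G$.
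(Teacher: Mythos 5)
Your reduction of the counting statements to the equidistribution theorem is exactly the paper's: Theorem \ref{corol-counting} is obtained from Theorem \ref{thm-introequid} by taking $f\equiv 1$, the identification $\vol(D_T)=\int_{B_{\frak a}^{++}(0,T)}\prod_{\alpha\in\Sigma^+}\sinh(\alpha(H))^{m_\alpha}\,dH$ being the Harish--Chandra formula \eqref{equ:borel-harish-chandra}, and the parallelotope asymptotic $\vol(K\exp(T\calP)K)=C_Ge^{\delta_\calP T}+O(e^{\delta^- T})$ being Lemma \ref{lem-volume-paral}. One small inaccuracy: since $\calP=\prod_\beta[0,a_\beta]$ contains the origin and its faces lie in the Weyl walls, it is false that $\alpha(H)\gtrsim T$ on all of $T\calP\cap\frak a^{++}$, so the uniform approximation $J(H)=c_1e^{2\rho(H)}(1+O(e^{-\kappa T}))$ does not hold near the walls. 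The conclusion survives because the wall neighbourhood contributes $O(e^{\delta^- T})$ (this is essentially Lemma \ref{volume_reste}); the paper avoids the issue by expanding $\prod\sinh^{m_\alpha}$ exactly into a finite sum of exponentials of linear forms $\omega<2\rho$ and integrating each term over the parallelotope in the basis dual to the simple roots.

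Where you genuinely diverge from the paper is in the proposed proof of the underlying equidistribution input. You suggest a Margulis-type argument: thicken each periodic torus along stable/unstable horospherical directions and invoke exponential multi-parameter mixing of the $A$-action (Kleinbock--Margulis decay of matrix coefficients). The paper instead follows Roblin's scheme powered by Gorodnik--Nevo: the effective angular equidistribution of lattice points $(\gamma_x^+,\gamma_x^-)$ towards $\mu_x\otimes\mu_x$ (Theorem \ref{theo-gorodnik-nevo}, resting on the quantitative mean ergodic theorem for $L^2_0(\Gamma\backslash G)$), a geometric "corridor" criterion (Proposition \ref{prop-lemroblin}) forcing such $\gamma$ to be loxodromic with $(\gamma^+,\gamma^-)$ close to $(\gamma_x^+,\gamma_x^-)$ and $\lambda(\gamma)$ close to $\underline a_x(\gamma)$, Selberg's lemma and Proposition \ref{lem-gaglox} to convert loxodromic conjugacy classes into periodic tori, and a partition of unity to descend to the quotient. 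This sidesteps mixing entirely and in particular avoids the two difficulties you correctly identify (coherent multidimensional thickening and direction-uniform mixing rates), which are precisely the obstacles that forced Guedes Bonthonneau--Guillarmou--Weich to develop heavy microlocal machinery for the $A$-action. Your route is plausible in outline but would require substantial new work at exactly the point you flag; as a blind proposal it correctly locates the main term and the structure of the deduction, but it does not supply the mechanism the paper actually uses.
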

We deduce this counting result from the subsequent equidistribution statement.
\paragraph{Main equidistribution result}
Denote by
$\pi : G \rightarrow \Gamma \backslash G/M$ the projection and by $\widetilde m_{\Gamma}$ the quotient measure of the Haar measure $\vol$.
We normalise $\widetilde m_{\Gamma}$ to obtain a probability measure that we denote by $m_{\Gamma}$. 

We obtain a higher rank version of the Bowen-Margulis equidistribution formula with an exponential rate of convergence. 

\begin{theorem}\label{thm-introequid}
Under the same hypothesis and for the same constants $C_G>0$ and $u>0$ as in the previous Theorem \ref{corol-counting}, for all $T>0$ and every Lipschitz function $f$ on $\Gamma\backslash G/M$ we have

\begin{equation}\label{equ:ball domain}
	\frac{1}{ \vol(D_T)}\sum_{F\in C(A)}|\Lambda(F)\cap B_{\frak a}^{++}(0,T)|\int_F f \; \dd L_F= C_G \int_{\Gamma\backslash G/M} f \; \dd m_{\Gamma}+O(e^{-u T}|f|_{Lip}).
\end{equation}
Additionally, for the same $C_G'>0$, parallelotope domain $\mathcal{P}\subset \mathfrak{a}^+$ and constants $\delta_{\mathcal{P}}, u_{\mathcal{P}}>0$, for all $T>0$ and every Lipschitz function $f$ on $\Gamma\backslash G/M$ we have
\begin{equation}
e^{-\delta_{\mathcal{P}}T}\sum_{F\in C(A)}|\Lambda(F)\cap T \mathcal{P}\cap \mathfrak{a}^{++}|\int_F f \; \dd L_F= C_G' \int_{\Gamma\backslash G/M} f \; \dd m_{\Gamma}+O(e^{-u_{\mathcal{P}} T}|f|_{Lip}).
\end{equation}
\end{theorem}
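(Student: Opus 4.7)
The strategy adapts Margulis' thickening argument to the higher-rank Weyl chamber flow: reduce the weighted sum over periodic tori to correlation integrals along the $A$-action, and control those by exponential mixing. The key analytic input is \emph{exponential mixing} of the Weyl chamber flow on $\Gamma\backslash G/M$: there exists $c>0$ such that for every Lipschitz $\phi,\psi$ and every $Y$ in a fixed closed subcone of $\mathfrak{a}^{++}$,
\[
\int \phi(x)\,\psi(xe^Y)\,\dd m_\Gamma = \Big(\int\phi\,\dd m_\Gamma\Big)\Big(\int\psi\,\dd m_\Gamma\Big) + O\!\big(e^{-c\|Y\|}\,|\phi|_{Lip}\,|\psi|_{Lip}\big).
\]
For a cocompact irreducible $\Gamma$ this follows from the strong spectral gap of $L^2(\Gamma\backslash G)$ (property (T) in higher rank, classical spectral gap for the Laplacian in rank one), via a Sobolev / representation-theoretic argument.

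Next I would fix a small $\epsilon>0$ and a smooth transversal $\Sigma_\epsilon$ to the $A$-orbits coming from Bruhat local coordinates $N^- M N^+$, so that $\dd m_\Gamma\approx \dd\sigma_\epsilon\otimes \leb$ on the flow box $B_\epsilon = \Sigma_\epsilon\cdot e^{B_{\mathfrak a}(0,\epsilon)}$. A pair $(F,Y)$ with $F\in C(A)$ and $Y\in\Lambda(F)$ corresponds to a fixed point of $x\mapsto xe^Y$ on $\Gamma\backslash G/M$, modulo the $M$-ambiguity in the centraliser of a regular semisimple $\gamma\in\Gamma$. Lipschitz regularity of $f$ lets me replace the portion of $\int_F f\,\dd L_F$ near a representative $z\in F$ by the thickened correlation
\[
\frac{1}{m_\Gamma(B_\epsilon)} \int f(x)\,\mathbf{1}_{B_\epsilon}(x)\,\mathbf{1}_{B_\epsilon}(xe^Y)\,\dd m_\Gamma(x) \; + \; O(\epsilon\,|f|_{Lip}).
\]
Summing over $Y\in\Lambda(F)\cap B^{++}_{\mathfrak a}(0,T)$ and then over $F$ telescopes into a single sum over $Y\in\mathfrak{a}^{++}\cap B(0,T)$ of such correlations. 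Applying the mixing estimate, then the Cartan integration formula on $KA^+K$ (with Haar density $\prod_{\alpha\in\Sigma^+}\sinh(\alpha(Y))^{m_\alpha}$ on $\mathfrak{a}^+$ times Haar on the two $K$-factors), produces the main term $C_G\,\vol(D_T)\int f\,\dd m_\Gamma$; the accumulated error $\sum_Y e^{-c\|Y\|}\epsilon^{-O(1)}$ is absorbed by choosing $\epsilon=e^{-\beta T}$ with $\beta>0$ small, yielding the claimed $O(e^{-uT})$. The parallelotope statement proceeds identically with $B^{++}_{\mathfrak a}(0,T)$ replaced by $T\mathcal{P}\cap\mathfrak{a}^{++}$; the exponent $\delta_{\mathcal{P}}$ emerges as the exponential growth rate of the Cartan volume of $e^{T\mathcal{P}}$, and the parallel-to-walls hypothesis guarantees that most of $T\mathcal{P}$ lies in a fixed interior subcone where mixing is uniform.

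The main obstacle is the combinatorial bookkeeping in the reduction above. In rank one, Margulis' local product lemma around a closed geodesic identifies $(F,Y)$-pairs with fixed points of $e^Y$ up to the easy $M=\{\pm 1\}$ multiplicity. In higher rank the centraliser of a regular semisimple $\gamma\in\Gamma$ is a full $MA$-torus, so the extra $M$-factor must be integrated out with the correct multiplicity, and the many regular periods $Y\in\Lambda(F)$ of a single torus must not generate phantom contributions. A secondary technical difficulty is that the mixing constant $c$ degenerates as $Y$ approaches the Weyl walls, which is what forces the cleanest statements to live either in the full regular ball (with its built-in interior cone) or in parallelotopes whose faces are parallel to the walls.
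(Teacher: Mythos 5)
Your proposal follows a Margulis-type ``thicken the periodic orbits and invoke exponential mixing'' scheme, whereas the paper follows Roblin's route: effective equidistribution of the angular parts $(\gamma_x^+,\gamma_x^-)$ of lattice points (Gorodnik--Nevo, Theorem \ref{theo-gorodnik-nevo}, powered by the same spectral gap you invoke), a geometric configuration lemma (Proposition \ref{prop-lemroblin}) upgrading Cartan-regular lattice elements in a corridor to loxodromic elements with controlled fixed points, Selberg's lemma and Proposition \ref{lem-gaglox} to identify loxodromic conjugacy classes with pairs $(Y,F)$, and the BMS disintegration $\nu\otimes\leb$ of Haar measure in Hopf coordinates. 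The difference of route is not the problem; the problem is that your central reduction has a genuine gap. The identity you assert --- that the local contribution of $\int_F f\,\dd L_F$ near a point of $F$ equals the normalized correlation $m_\Gamma(B_\epsilon)^{-1}\int f\,\mathbf{1}_{B_\epsilon}\cdot(\mathbf{1}_{B_\epsilon}\circ e^Y)\,\dd m_\Gamma + O(\epsilon|f|_{Lip})$ --- is false as stated and is exactly where the real work lives. The correlation integral, after unfolding to $G$, counts \emph{all} returns, i.e.\ all $\gamma\in\Gamma$ with $\underline a_x(\gamma)$ near $Y$ and $B_\epsilon\gamma\cap B_\epsilon e^{-Y}\neq\emptyset$; passing from such returns to actual compact $A$-orbits (and back, without over- or under-counting) requires an effective closing lemma in higher rank --- precisely the content of Proposition \ref{prop-lemroblin} together with Lemma \ref{lem-cartan-lox} and Selberg's Lemma \ref{lem-selberg} --- plus a count of the returns themselves, which is a lattice-point count in bisectors and is not supplied by mixing alone. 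You would also need to handle the non-loxodromic (singular) lattice elements that produce returns but no periodic torus, which the paper controls via the volume estimates of Lemmas \ref{volume_reste}--\ref{lem-dtx}.

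There is also a quantitative inconsistency in your bookkeeping. After ``telescoping'', you integrate the \emph{normalized} correlations over $Y\in B_{\frak a}^{++}(0,T)$; by your own mixing statement each such correlation tends to $m_\Gamma(B_\epsilon)\big(\int f\big)$ up to exponentially small errors, so the total is $O\big(\leb(B_{\frak a}(0,T))\big)$, polynomial in $T$, whereas the left-hand side of \eqref{equ:ball domain} times $\vol(D_T)$ grows like $T^{(r-1)/2}e^{\delta_0 T}$. The exponential main term must come from the \emph{number} of returns (equivalently, from $|\Gamma\cap D_T|\asymp\vol(D_T)$ and the Harish-Chandra density $\prod_\alpha\sinh(\alpha(Y))^{m_\alpha}$ weighting each direction $Y$), not from the limit value of a single normalized correlation. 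As written, your argument either double-normalizes or conflates the Lebesgue measure on $\frak a^+$ with the Cartan volume; fixing this forces you back to a lattice-point count with angular control, i.e.\ essentially to the Gorodnik--Nevo input the paper uses. Your closing remarks about the $M$-multiplicity and the degeneration of mixing near the walls correctly identify secondary difficulties, but the primary one --- the effective closing lemma and the return count --- is the missing idea.
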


The asymptotic behaviour of the main term for the ball domain is
$ \vol(D_T)\sim C_0 T^{\frac{ \dim  A -1}{2} } e^{\delta_0 T} $, where $\delta_0>0$ is determined by the root system of $\mathfrak{g}$, the Lie algebra of $G$ and $C_0>0$ is given by the Harish-Chandra formula.
Without the error term, we deduce the following convergence where $r:= \dim  A$.
\begin{equation}
 \frac{C_{G,0} e^{-\delta_0 T}}{T^{\frac{r-1}{2}}}\sum_{F\in C(A)}|\Lambda(F)\cap B_{\frak a}^{++}(0,T)|\int_F f \; \dd L_F \xrightarrow[T \rightarrow \infty]{}	\int_{\Gamma\backslash G/M} f \; \dd m_{\Gamma} .
\end{equation}

Note that in the rank one case, any periodic flat torus $F$ corresponds to a primitive closed geodesic.
Furthermore, both $\vol_{\frak a}(F)$ and its smallest regular period correspond to the length of the geodesic. 
Therefore Theorem \ref{corol-counting} is a higher rank version of the prime geodesic theorem \eqref{equ-prime}. 

\begin{figure}[h!]
    \centering
    \includegraphics[width=10cm]{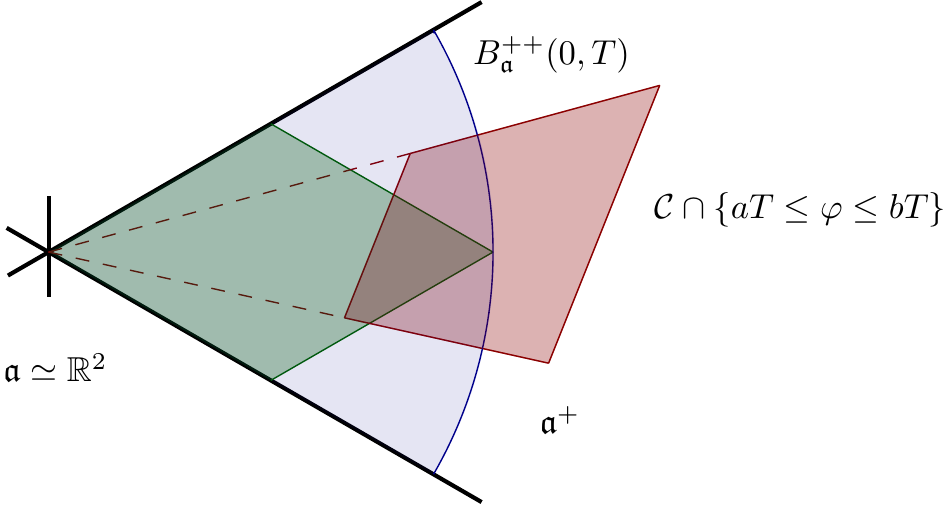}
    \caption{This is a positive Weyl chamber for $\mathrm{SL}(3,\mathbb{R})$ and $T>0$ is large. 
    In blue, our counting region $B_{\frak a}^{++}(0,T)$.
    In green, Deitmar-Gon-Spilioti's \cite{deitmar_prime_2019} counting region.
    In red, Guedes Bonthonneau--Guillarmou--Weich's \cite{bonthonneau_srb_2021} counting region where $\mathcal{C}$ is a convex cone strictly inside $\mathfrak{a}^{++}$ delimited by the red dashed lines, $0<a<b$ are real numbers and $\varphi$ is a linear form strictly positive on $\mathfrak{a}^+$.
    }
    \label{fig:domains}
\end{figure}

In the compact case, Spatzier in his thesis \cite{spatzier83} computed, using the root spaces of the Lie algebra of $G$, the topological entropy of every regular Weyl chamber flows: right action of $\exp(\R Y)$ on $\Gamma \backslash G/M$, where $Y\in \mathfrak{a}^{++}$ is non zero. 
Furthermore, $\delta_0$, the exponential growth rate of $ \vol(D_T)$, is a sharp upper bound of the topological entropy of regular Weyl chamber flow.
He also proved that $\delta_0$ is equal to the exponential growth rate of 
the sum over periodic flat tori of the smallest regular period less than $t$ of $\vol_{\frak a}(F)$, as $t$ goes to infinity. 
Knieper \cite{knieper2005uniqueness} studied the equidistribution of periodic orbits of regular Weyl chamber flows in the same setting. 
He obtained an equidistribution formula towards the measure of maximal entropy of the most chaotic regular Weyl chamber flow, whose topological entropy is $\delta_0$. In the finite volume case, Oh \cite{oh} proved that the number of periodic flat tori of bounded volume is always finite. 

In the compact case, Deitmar \cite[Theorem 3.1]{deitmar} used a Selberg trace formula and methods from analytical number theory to give the main term \eqref{equ:parallelotope domain} in Theorem \ref{corol-counting} (Using \eqref{equ:conjugacy} to connect the counting of conjugacy classes in Theorem 3.1 in \cite{deitmar} with Theorem \ref{corol-counting}). Actually, Deitmar's result is more general that he only needs each edge of the parallelotope goes to infinite. For parallelotope domain, Theorem \ref{corol-counting} and \ref{thm-introequid}  provides an equidistribution result and an exponential speed of convergence, which is new compared to \cite{deitmar}. 

\begin{rem}
Recently and for the compact case, Guedes Bonthonneau--Guillarmou--Weich \cite[Theorem 2, equation (0.3)]{bonthonneau_srb_2021} obtained a weighted equidistribution formula. 
Each period point is weighted by a dynamical determinant and the region where they count the period points is defined using any convex non-degenerate closed cone $\mathcal{C}$ strictly inside $\mathfrak{a}^{++}$, any choice of positive numbers $0<a<b$ and any linear form $\varphi$ that takes positive values in $\mathfrak{a}^+$ as shown in red in Figure \ref{fig:domains}.
They take a different approach, relying on the spectral properties of the $A$-action via their previous study of Ruelle-Taylor resonances with Hilgert \cite{bonthonneau_taylor_2020}.

For the non-compact, finite volume case $\mathrm{SL}(3,\mathbb{Z}) \backslash \mathrm{SL}(3,\R)$, Einsiedler--Lindenstrauss--Michel--Venkatesh in \cite{einsiedler_distribution_2011} use the classification of diagonal invariant measures and subconvexity estimates to deduce an equidistribution result for the following collection of tori. 
They take sets of periodic tori of the same volume and prove that the sum of Lebesgue measures on those tori, normalised by the total mass, equidistributes towards the quotient measure of the Haar measure as the volume goes to infinity.

Deitmar later on generalised his counting result to the non-compact finite volume case $\mathrm{SL}(3,\Z) \backslash \mathrm{SL}(3,\R)$, in joint work with Gon and Spilioti in \cite{deitmar_prime_2019}, with a different summation region in the Weyl chamber, the one in green in Figure \ref{fig:domains}.
\end{rem}

\begin{rem}
1. Our counting region in \eqref{equ:ball domain} is different (shown in blue in Figure \ref{fig:domains}), so our first asymptotic term is new in the higher rank case. 

None of the above works provides estimates on the speed of convergence. 
The decay rate $u$ in Theorem \ref{thm-introequid} only depends on a parameter $n(G,\Gamma)$ from spectral gaps, so it is uniform over all congruence subgroups.

2. In a forthcoming paper, we will prove the same counting and equidistribution results for irreducible non-cocompact higher rank lattices. The extra ingredient for the non-cocompact case results from the non-escape of mass for periodic tori. The case for $\sld$ is written in a previous arXiv version.
\end{rem}

\paragraph{Counting conjugacy classes}
We deduce an asymptotic formula of loxodromic conjugacy classes with a weight given by the volume of the corresponding periodic torus. See Section \ref{sec:equidistribution} for more details.

One application is an upper bound of the growth of conjugacy classes. Set $[\Gamma]$ be the set of conjugacy classes and let 
$$[\Gamma](T):=\{[\gamma]\in[\Gamma] |\,\lambda(\gamma)\in B_{\frak a}(0,T) \}.$$ 
Because $\Gamma$ is torsion free, there is a one-to-one correspondence between conjugacy classes and free homotopic classes of closed geodesics of the locally symmetric space $\Gamma\backslash X$. 
So $|[\Gamma](T)|$ is the number of free homotopic classes of closed geodesics of length less than $t$. The exponential growth rate of $|[\Gamma](T)|$ for higher rank lattice is still unknown \cite{knieper2005uniqueness}. Here we give an upper bound with a non-trivial polynomial term.
\begin{theorem}
\label{corol_conjugacy_count}
	Let $\Gamma$ be a cocompact irreducible lattice without torsion. Then 
	\[|[\Gamma](T)|\ll \frac{e^{\delta_0 T}}{T^{(r+1)/2}} ,\]
	where $r=\dim  A$.
\end{theorem}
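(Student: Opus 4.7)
My plan is to deduce Theorem~\ref{corol_conjugacy_count} from Theorem~\ref{corol-counting} via a lattice-theoretic argument in the positive Weyl chamber.

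First, by the identity \eqref{equ:conjugacy} established in Section~\ref{sec:equidistribution}, loxodromic conjugacy classes $[\gamma]$ with $\lambda(\gamma)\in B_{\mathfrak{a}}(0,T)$ are in bijection, up to a finite Weyl-group factor, with pairs $(F,Y)$ where $F\in C(A)$ is a periodic flat torus and $Y\in\Lambda(F)\cap B_{\mathfrak{a}}^{++}(0,T)$ a regular period.  Hence
\[
|[\Gamma](T)| \ll \sum_{F\in C(A)} |\Lambda(F)\cap B_{\mathfrak{a}}^{++}(0,T)|.
\]
Second, Theorem~\ref{corol-counting} yields the volume-weighted counterpart
\[
\sum_{F\in C(A)} |\Lambda(F)\cap B_{\mathfrak{a}}^{++}(0,T)|\cdot \vol_{\mathfrak{a}}(F) \ll \vol(D_T)\sim C_0\, T^{(r-1)/2}e^{\delta_0 T}.
\]

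The key technical input is a uniform Minkowski-type lower bound: for every torus $F\in C(A)$ with $\Lambda(F)\cap B_{\mathfrak{a}}^{++}(0,T)\neq\emptyset$ and every $T$ sufficiently large,
\[
|\Lambda(F)\cap B_{\mathfrak{a}}^{++}(0,T)|\cdot \vol_{\mathfrak{a}}(F) \gg T^{r}.
\]
The proof exploits that cocompactness and torsion-freeness of $\Gamma$ force a uniform systolic lower bound $\ell_0>0$ on every lattice $\Lambda(F)$: every non-zero period has norm at least $\ell_0$.  This, together with a Gauss-type lattice-point count in the open cone $\mathfrak{a}^{++}$, gives the displayed inequality.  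The matching upper bound $|\Lambda(F)\cap B_{\mathfrak{a}}^{++}(0,T)|\cdot \vol_{\mathfrak{a}}(F) \ll T^{r}$ is a standard lattice geometry statement.

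Combining the lower bound with Theorem~\ref{corol-counting} immediately shows that the number of tori active at scale $T$ is $\ll \vol(D_T)/T^{r}\ll e^{\delta_0 T}/T^{(r+1)/2}$, and the tight two-sided estimate $|\Lambda(F)\cap B_{\mathfrak{a}}^{++}(0,T)|\cdot \vol_{\mathfrak{a}}(F)\asymp T^{r}$ then lets me convert this count of active tori into the bound on $|[\Gamma](T)|$, since each active torus contributes of the order of $T^{r}/\vol_{\mathfrak{a}}(F)$ conjugacy classes and the systolic bound controls the resulting sum.  The main obstacle is the uniform lattice-theoretic lower bound: it must hold irrespective of the shape of $\Lambda(F)$, in particular for elongated lattices with very heterogeneous successive minima.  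While the systole controls $\lambda_1$, extending the argument uniformly to the open positive cone in this regime is the delicate technical heart of the proof.
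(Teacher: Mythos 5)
Your reduction has a genuine gap at the final step, and even the intermediate estimate you flag as ``the delicate technical heart'' would not suffice if proved. Grant for the sake of argument the two-sided bound $|\Lambda(F)\cap B_{\frak a}^{++}(0,T)|\cdot\vol_{\frak{a}}(F)\asymp T^{r}$ for every active torus. Your conversion then reads $|[\Gamma^{lox}](T)|\asymp\sum_{F}|\Lambda(F)\cap B_{\frak a}^{++}(0,T)|\asymp T^{r}\sum_{F}\vol_{\frak{a}}(F)^{-1}$, and the only control you invoke on the last sum is the uniform systole bound, which gives merely $\vol_{\frak{a}}(F)\gg 1$. That yields $\sum_{F}\vol_{\frak a}(F)^{-1}\ll\#\{\text{active }F\}\ll\vol(D_T)/T^{r}$ and hence $|[\Gamma](T)|\ll\vol(D_T)\asymp T^{(r-1)/2}e^{\delta_0T}$ --- the trivial bound, weaker than the theorem by a factor of $T^{r}$. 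To reach $e^{\delta_0T}/T^{(r+1)/2}$ you need $\vol_{\frak a}(F)\gg T^{r}$ for essentially \emph{all} contributing tori, and this fails: tori of bounded volume exist, each contributing $\asymp T^{r}$ regular periods on its own, and nothing in your argument controls how many small-volume tori there are or bounds $\sum_F\vol_{\frak a}(F)^{-1}$ over them.

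This is exactly where the paper's proof does something you have no substitute for. It splits $[\Gamma^{lox}](T)$ into \emph{balanced} classes, whose torus has systole $>T/\kappa$ and therefore $\vol_{\frak a}(F)\gg T^{r}$ (so the volume-weighted count of Theorem \ref{corol-counting} divides through, giving $\ll\vol(D_T)/T^{r}$), and \emph{unbalanced} classes, whose total number is bounded by a completely different mechanism: their periods are counted via lattice points of the centralizer $\Gamma_\beta$ of a short element $\beta$, and $G_\beta$ embeds in a proper Levi subgroup $L_\Theta$ whose ball-volume growth exponent satisfies $\delta_0(H_\Theta)\leq\delta_0-c_G$ \eqref{equ-cG}; combined with $|[\Gamma](T/\kappa)|\ll T^{r}e^{\delta_0T/\kappa}$ and the choice $\kappa=2\delta_0/c_G$ this makes the unbalanced contribution $\ll\vol(D_T)^{1-\epsilon}$. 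You would need to supply an argument of this kind (or some other bound on tori of small systole) to close the gap; in addition, $[\Gamma](T)$ also contains the singular conjugacy classes, which your bijection with periodic tori does not see and which the paper bounds separately by $|\Gamma\cap D_{T+C}^{C}|\ll e^{(\delta_0-\epsilon)T}$ via Corollary \ref{lem-cartan-loxc} and Lemma \ref{count-reste}.
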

The interesting point is that the polynomial term $T^{-(r+1)/2}$ in the upper bound depends on the real rank of the group $G$. We hope this polynomial term is the correct asymptotic for $|[\Gamma](T)|$. This upper bound also hints at Knieper's question in \cite[Remark in page 175]{knieper2005uniqueness}.

\subsection{Overview of the proofs}

The first step of the proof is to rewrite the sum of ``delta masses on the tori" (Cf. §4) using conjugacy class of loxodromic elements in the discrete group and their Jordan projection.
In the $\mathrm{SL}(2,\R)$ case, periodic orbits of the geodesic flow are in one to one correspondence with conjugacy class of hyperbolic elements in the discrete group.
In the higher rank case, every regular period $Y \in \Lambda(F) \cap \frak{a}^{++}$ of any periodic flat torus $F$ corresponds to a regular Weyl chamber flow $z \mapsto z e^{t Y}$ that admits for all $z \in F$ a $1$-periodic orbit.
Now, instead of hyperbolic elements and the translation length, we use loxodromic elements and the Jordan projection $\lambda$ (Cf. Definition \ref{def-Jordan-proj}).
The conjugacy class of any loxodromic element $[\gamma]$ in the discrete subgroup is in a one to one correspondence with $(F_{[\gamma]}, \lambda(\gamma) )$, where $F_{[\gamma]}$ is an $A$-orbit and $\lambda(\gamma)$ one of its regular periods (Cf. Proposition \ref{lem-gaglox}).
Denote by $\Gamma_c^{lox}$ the subset of loxodromic elements whose conjugacy class corresponds to a compact $A$-orbit and rewrite the sum as follows.
\begin{equation}\label{reorganising-sum}
 \sum_{F \in C(A)} \vert \Lambda(F) \cap B_{\frak a}^{++}(0,T) \vert \; L_F = \underset{\Vert \lambda(\gamma) \Vert \leq T}{ \sum_{[\gamma] \in [\Gamma_c^{lox}]} } L_{F_{[\gamma]}} .   
\end{equation}

In the cocompact case, by a Selberg's Lemma in \cite{selberg} (Cf. Lemma \ref{lem-selberg}) then $\Gamma_c^{lox}=\Gamma^{lox}$, i.e. every $1$-periodic orbit of a regular Weyl chamber flow lives in a periodic flat torus.

\subsubsection{Local equidistribution}

In a second step of the proof, we follow Roblin's strategy \cite[Chapter 5]{roblin} to get a local equidistribution in the cover: the space of Weyl chambers $G/M$ of the symmetric space $X=G/K$.

Roblin works in a CAT($-1$) space, let us sketch his method in the particular case of the hyperbolic plane and for a cocompact, torsion free, discrete subgroup $\Gamma < \mathrm{SL}(2,\R)$.

Using Patterson-Sullivan theory, he constructs the Bowen-Margulis-Sullivan (BMS) measure $m_{BMS}$ on $(\partial \mathbb{H}^2 \times \partial \mathbb{H}^2 \setminus \Delta) \times \R$ where $\Delta$ is the diagonal.
By Hopf coordinates, $m_{BMS}$ corresponds to a $\Gamma$-invariant and geodesic flow invariant measure on $T^1 \mathbb{H}^2$. 

Roblin then relies on mixing of the geodesic flow for the BMS measure in \cite[Chapter 4]{roblin} to deduce an equidistribution formula of orbit points $(\gamma x, \gamma^{-1}x)_{\gamma \in \Gamma}$.
In average, these points equidistribute towards a product of conformal Patterson-Sullivan densities of the geometric boundary of $\mathbb{H}^2$.
That horofunction and geometric compactification coincide in this case is one of the key reasons why this convergence holds.

To get from orbit points to periodic orbits, he then relies on a geometric configuration between $x, \gamma x, \gamma^{-1}x$ that implies that $\gamma$ is hyperbolic. 
For any point $x\in \mathbb{H}^2$ and an isometry $\gamma$ such that $x \neq \gamma x$, denote by $\gamma_x^+ \in \partial \mathbb{H}^2$ (resp. $\gamma_x^-$) the endpoint of the geodesic starting at $x$ going through $\gamma x$ (resp. $\gamma^{-1}x$).
Namely, if the geodesic of endpoints $\gamma_x^+$ and $\gamma_x^-$ passes close enough to $x$ and $d_{\mathbb{H}^2}(x,\gamma x)$ is large enough, then $\gamma$ is hyperbolic, its translation axis passes close to $x$ and the attracting (resp. repelling) endpoint $\gamma^+ \in \partial \mathbb{H}^2$ (resp. $\gamma^-$) is close to $\gamma_x^+$ (resp. $\gamma_x^-$).
Under restriction to suitable small sets called "corridors", this geometric configuration allows to remove finitely many terms in the sum of Dirac masses, the rest corresponding to translation axis of hyperbolic elements. 
Using a partition of unity, one then deduces the equidistribution in the quotient $T^1 (\Gamma \backslash \mathbb{H}^2)$.

\paragraph{Higher rank situation}

Horofunction and geometric compactification of higher rank symmetric spaces are no longer the same.  
However, the space of Weyl chamber $G/M$ of the higher rank symmetric space $X$ admits Hopf coordinates $\mathcal{F}^{(2)} \times \mathfrak{a}$ (Cf. \cite[Chapter 8, §8.G.2]{thirion} or §2 below), where $\mathcal{F}$ is the Furstenberg boundary and $\mathcal{F}^{(2)}$ is the subset of transverse pairs in $\calF \times \calF$.

Each delta mass $L_{ F_{[\gamma]} }$ in the sum \eqref{reorganising-sum} is the quotient of the measure of $G/M$ whose disintegration along Hopf coordinates is $D_{\gamma^+} \otimes D_{\gamma^-} \otimes Leb_{\mathfrak{a}}$, where $\gamma^+$ (resp. $\gamma^-$) are the attracting (resp. repelling) fixed points for the left action of $\gamma$ on $\calF$. 

The Haar measure on $G/M$ can be disintegrated along the Hopf coordinates and we write it as a higher rank BMS measure.
Haar densities on $\calF$ satisfy identities reminiscent of Patterson-Sullivan theory (Cf. §2.2 or \cite{helgasonGroupsGeometricAnalysis2000},\cite{albuquerque99}, \cite{quintMesuresPattersonSullivanRang2002}). 

We do not look for an equidistribution formula of orbit points in a suitable compactification of the higher rank symmetric space. 
Instead of looking at geodesic half-lines and their endpoints in the geometric boundary, 
we use the identification of $G/M$ with geometric Weyl chambers i.e. isometric embeddings in $X$ of the closed positive Weyl chamber $\frak a^+$ which in turn can be parameterized by $X \times \cal F$, the data of the base point and the asymptotic Weyl chamber which identifies with the Furstenberg boundary.
For every $\gamma$ and $x \in X$, provided $\gamma x$ is in the interior of a geometric Weyl chamber based at $x$ i.e. $\adis(x,\gamma x) \in \frak a^{++}$, (Cf. \eqref{defin-dist-a} for a formal definition) one can define the asymptotic directions of angular points in the Furstenberg boundary $\gamma_x^+$, $\gamma_x^-$. 
Gorodnik-Nevo \cite{gorodnikCountingLatticePoints2012} prove an equidistribution formula of "angular" points for irreducible lattices towards Haar densities of $\calF$.

For Lipschitz test functions $\psi$ and any $x \in X$, for all $t \gg \dis_X(o,x)$,  
$$ \frac{1}{ \vol(D_t)} \underset{ \adis(x,\gamma x) \in B_{\frak a}^{++}(0,T) }{\sum_{\gamma \in \Gamma}} \psi(\gamma_x^+,\gamma_x^-) = \frac{1}{ \vol(G \backslash \Gamma)} \int \psi \dd \mu_x \otimes \mu_x+ E(t,\psi,x)$$
where $\mu_x$ is the $Stab_G(x)$ invariant Haar density on $\calF$ and the error $E(t,\psi, x)= O(Lip(\psi) C_x  \vol(D_t)^{-\kappa})$ with $\kappa >0$ and $\log C_x \gg \dis_X(o,x)$.
Their formula provides an error term that comes from the spectral properties of averaging operators on the Borel probability spaces $L^p(\Gamma \backslash G)$.

Then we get from asymptotic angular points to attracting or repelling fixed point of loxodromic elements by adapting the geometric argument to the higher rank case.

\subsection*{Organization of the paper}

In Section 2, we gather the basic facts and preliminaries about semisimple real Lie groups, the Furstenberg boundary, Hopf coordinates, higher rank Patterson-Sullivan measure, volume estimates and the angular distribution of lattice points.

In Section 3, we prove a lemma comparing the angular part of an element in $G$ with its contracting and repelling fixed points in the Furstenberg boundary. In Section 4, we relate loxodromic elements and periodic tori.

In Section 5, we prove Theorem \ref{thm-introequid} for cocompact lattices. 

In Section 6, we prove Theorem \ref{corol_conjugacy_count}.

In Appendix, we follow the works of Gorodnik-Nevo \cite{gorodnikCountingLatticePoints2012} \cite{gorodnikLiftingRestrictingSifting2012} and explain why their results work in our setting.

\paragraph{Notation.} In the paper, given two real functions $f$ and $g$, we write $f \ll g$ or $f=O(g)$ if there exists a
constant $C > 0$ only depending on $G, \Gamma$ such that $f \leq Cg$. We write $f \asymp g$ if $f \ll g$ and $g\ll f$.

\section*{Acknowledgement}
We would like to thank Alex Gorodnik for explaining his results with Amos Nevo and Viet Dang for his inspirational habilitation defence sparking the collaboration. The first author would also like to thank Mark Pollicott for telling her to look at the counting problem in Ralf Spatzier's thesis. We would like to thank Jean Lécureux for pointing out the reference and including the proof of Proposition \ref{prop-bruhat-dyn} in Appendix.
Part of this work was done while two authors were at the \textit{Hyperbolic dynamical systems and resonances} conference in Porquerolles and the \textit{Anosov}$^3$ mini-workshop in Oberwolfach; we would like to thank the organizers Colin Guillarmou, Benjamin Delarue (formerly K\"{u}ster), Maria Beatrice Pozzetti, Tobias Weich, and the hospitality of the centres. We would also like to thank the hospitality of Institut für Mathematik Universität Zürich and Fakultät für Mathematik und Informatik Universität Heidelberg for each time the authors visit each other.
The first author acknowledges funding by the Deutsche Forschungsgemeinschaft (DFG, German Research Foundation) – 281869850 (RTG 2229) and by ANR PIA funding: ANR-20-IDEES-0002. 
The second author acknowledges the funding by Alex Gorodnik's SNF grant 200021--182089.

\section{Background}

\begin{center}
\fbox{
\begin{minipage}{.7 \textwidth}
In the whole article, $G$ is a semisimple, connected, real linear Lie group, without compact factor.
\end{minipage}
}
\end{center}

Classical references for this section are \cite{thirion}, \cite{guivarc2012compactifications}, \cite{helgason1978differential}. One also may refer to the exposition in \cite{dg20}.

Let $K$ be a maximal compact subgroup of $G$. Then $X= G/K$ is a globally symmetric space of non-compact type and $G= \mathrm{Isom}_0(X)$. 
We fix a base point $o \in X$ such that $K= \mathrm{Stab}_G(o)$. 
For every $x\in X$, we denote by $K_x:= \mathrm{Stab}_G(x)$. Note that for any $h_x \in G$ such that $h_x o=x$, then $K_x=h_xK h_x^{-1}$, independently of the choice of $h_x$.

\paragraph{Geometric Weyl chambers}
Denote by $\frak{g}$ (resp. $\frak{k}$) the Lie algebra of $G$ (resp. $K$) and consider the Cartan decomposition in the Lie algebra $\frak{g}= \frak{k} \oplus \frak{p}$.
Let $\frak{a} \subset \frak{p}$ be a \emph{Cartan subspace} of $\frak{g}$.
Then $A:= \exp(\frak{a})$ is a maximal $\mathbb{R}$-split torus of $G$.
Denote by $M:= Z_K(A)$ the centralizer of $A$ in $K$.
The \emph{real rank} of $G$, denoted by $r_G$, is equal to $\dim _{\mathbb{R}} \frak{a}$.
We say that $G$ is of \emph{higher rank} when $r_G \geq 2$.

For any linear form $\alpha$ on $\frak{a}$, set $\frak{g}_\alpha := \lbrace v \in \frak{g} \; \vert \; \forall u \in \frak{a}, \; [u,v]=\alpha(u) v \rbrace.$
The \emph{set of restricted roots} is denoted by $\Sigma:= \lbrace \alpha \in \frak{a}^*\setminus \lbrace 0\rbrace \; \vert \; \frak{g}_{\alpha} \neq \lbrace 0\rbrace \rbrace.$
The kernel of each restricted root is a hyperplane of $\frak{a}$. The \emph{Weyl chambers} of $\frak{a}$ are the connected components of $\frak{a} \setminus \cup_{\alpha\in \Sigma} \ker(\alpha)$. 
We choose a \emph{positive Weyl chamber} by fixing such a connected component and denote it (resp. its closure) by $\frak{a}^{++}$ (resp. $\frak{a}^+$).
In the Lie group, we denote by $A^{++}:=\exp(\frak{a}^{++})$ (resp. $A^+:= \exp(\frak{a}^+)$). Denote by $N_K(A)$ the normalizer of $A$ in $K$. The group $N_K(A)/M$ is the \emph{Weyl group}, denoted by $\calW$. The Weyl group also acts on the Lie algebra $\frak a$ by the adjoint action, which acts transitively on the set of connected components of $\frak{a} \setminus \cup_{\alpha\in \Sigma} \ker(\alpha)$.

A \emph{geometric Weyl chamber} is a subset of $X$ of the form $g. (A^+ o)$, where $g \in G$.
The \emph{base point} of the geometric Weyl chamber $g A^+o$ is the point $go \in X$.
In \cite[§2]{dg20}, we obtained the following identifications between the space of Weyl chambers and the set of geometric Weyl chambers of $X$,
\begin{equation}\label{eq-geomWC}
 G/M \simeq G. (A^+ o).
\end{equation}

\paragraph{Cartan projection}
\begin{defin}\label{defin-cartan-proj}
For any $g \in G$, we define, by Cartan decomposition, a unique element $\underline{a}(g) \in \frak{a}^+$ such that $g \in K \exp(\underline{a}(g)) K$.
The map $\underline{a}:G \rightarrow \frak{a}^+$ is called the \emph{Cartan projection}.
\end{defin}
Recall $\Vert . \Vert$ is the associated norm on $\frak{a}$ coming from the Killing form. 
The Cartan projection allows to define an $\frak{a}^+$-valued function on $X \times X$, denoted by $\adis$. 
For every $x,y\in X$, any choice $h_x,h_y \in G$ such that $h_x o =x$ and $h_yo=y$, we set
\begin{equation}\label{defin-dist-a}
    \adis (x,y) := \underline{a}(h_x^{-1} h_y).
\end{equation}
This function does not depend on the choice of $h_x$ and $h_y$ up to right multiplication by $K$. We define the $G$-invariant riemannian distance on $X$
$$ \dis_X(x,y):= \Vert \adis(x,y) \Vert .$$

The following fact is standard for symmetric spaces of non-compact type.
\begin{fait}\label{fait-a-dist}
For every $x,y\in X$, there is a geometric Weyl chamber based on $x$ containing $y$.
If furthermore, $\adis(x,y)\in \frak{a}^{++}$, such a geometric Weyl chamber is defined by a unique element $h_{xy}M \in G/M$ such that $h_{xy}o=x$ and $h_{xy}  e^{ \adis(x,y) } o = y$.
\end{fait}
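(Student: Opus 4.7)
The plan is to reduce the statement to the Cartan decomposition $G=KAK$ together with its refinement in the regular case: if $g=k_1 \exp(Y) k_2 = k_1' \exp(Y) k_2'$ with $Y \in \mathfrak{a}^{++}$, then there exists $m \in M$ with $k_1'=k_1 m^{-1}$ and $k_2'=m k_2$. This is the standard form of uniqueness in the Cartan decomposition for regular elements, and it is the only ``non-formal'' ingredient I will use.

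For existence, I would choose any $h_x, h_y \in G$ with $h_x o = x$ and $h_y o = y$, and apply the Cartan decomposition to $h_x^{-1} h_y$: by the very definition of $\adis(x,y)$ there exist $k_1, k_2 \in K$ with
\[ h_x^{-1} h_y = k_1 \exp\!\bigl(\adis(x,y)\bigr) k_2. \]
Setting $h_{xy} := h_x k_1$ one checks directly that $h_{xy} o = h_x k_1 o = h_x o = x$ and $h_{xy}\exp(\adis(x,y)) o = h_x k_1 \exp(\adis(x,y)) k_2 o = h_x h_x^{-1} h_y o = y$, so the geometric Weyl chamber $h_{xy} A^+ o$ is based at $x$ and contains $y$, as required.

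For uniqueness, assume $\adis(x,y) \in \mathfrak{a}^{++}$ and suppose two elements $h,h' \in G$ both satisfy $ho=h'o=x$ and $h\exp(Y) o = h'\exp(Y) o = y$, with $Y := \adis(x,y)$. The first condition forces $k := h^{-1} h' \in K$, and the second says that $k$ stabilises $\exp(Y) o$, i.e.\ $k\exp(Y) = \exp(Y) k'$ for some $k' \in K$. Reading the identity
\[ \exp(Y) \;=\; 1 \cdot \exp(Y) \cdot 1 \;=\; k^{-1} \cdot \exp(Y) \cdot k' \]
as two $KAK$-decompositions of the same regular element, the refined uniqueness recalled above yields an $m \in M$ with $k^{-1}=m^{-1}$ and $k'=m$, hence $k \in M$. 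Thus $h' \in hM$, and the coset $h_{xy} M \in G/M$ is uniquely determined. The only delicate point is the regular-case uniqueness of the $KAK$-decomposition, which is precisely why the open chamber hypothesis $\adis(x,y)\in\mathfrak{a}^{++}$ cannot be dropped: for singular $Y$ the stabiliser $\mathrm{Stab}_K(\exp(Y)o)$ is strictly larger than $M$ and uniqueness genuinely fails.
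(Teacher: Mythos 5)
Your proof is correct. The paper itself offers no argument for this Fact — it simply declares it ``standard for symmetric spaces of non-compact type'' — so there is nothing to compare against except the standard argument, which is exactly what you give: existence via the Cartan decomposition of $h_x^{-1}h_y$, and uniqueness of the coset $h_{xy}M$ via the regular-case refinement of $KAK$ (two decompositions of a regular element agree up to $m\in M=Z_K(A)$). Your closing remark correctly identifies why the hypothesis $\adis(x,y)\in\frak{a}^{++}$ is needed: for singular $\underline{a}(g)$ the ambiguity in the $K$-factors is governed by $Z_K(\exp(\underline{a}(g)))$, which is strictly larger than $M$, so the coset in $G/M$ is no longer determined.
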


\paragraph{Jordan projection}
Denote by $\Sigma^+$ the subset of roots which take positive values in the positive Weyl chamber. 
It allows to define the following nilpotent subalgebras $\frak{n}:= \oplus_{\alpha \in \Sigma^+} \frak{g}_{\alpha}$ and $\frak{n}^-=\oplus_{\alpha \in \Sigma^+} \frak{g}_{-\alpha}$.
Denote by $N:= \exp (\frak{n})$ and $N^-:=\exp(\frak{n}^-)$ two maximal unipotent subgroups of $G$.

By Jordan decomposition, every element $g \in G$ admits a unique decomposition $g=g_e g_h g_u$ where $g_e, g_h$ and $g_u$ commute and such that $g_e$ (resp. $g_h, g_u$) is conjugated to an element in $K$ (resp. $A^+$, $N$). 
The element $g_e$ (resp. $g_h$, $g_u$) is called the \emph{elliptic part} (resp. \emph{hyperbolic part}, \emph{unipotent part}) of $g$.

\begin{defin}\label{def-Jordan-proj}
For any element $g \in G$, there is a unique element $\lambda(g) \in \frak{a}^+$ such that the hyperbolic part $g_h$ is conjugated to $\exp (\lambda(g)) \in A^+$.
The map $\lambda : G \rightarrow \frak{a}^+$ is called the \emph{Jordan projection}.

\noindent Any element $g \in G$ such that $\lambda(g) \in \frak{a}^{++}$ is called \emph{loxodromic.} Non loxodromic elements are called \emph{singular}.

\noindent Denote by $G^{lox}$ (resp. $G^{sing}$) the set of loxodromic (resp. singular) elements of $G$ and for any subset $S\subset G$, denote by $S^{lox}:= S \cap G^{lox}$ (resp. $S^{sing}:=S\cap G^{sing}$).
\end{defin}
Equivalently (Cf. §4 \cite{dang20}), loxodromic elements are conjugated in $G$ to elements in $A^{++}M$.

\paragraph{Asymptotic Weyl chambers}
Denote by $P:=MAN$ and by $\mathcal{F}:=G/P$ the \emph{Furstenberg boundary}.
We recall the interpretation of $\mathcal{F}$ in terms of asymptotic Weyl chambers.

Following the exposition in \cite{dg20}, we introduce the following equivalence relation between geometric Weyl chambers:
$$ g_1 A^+ o \sim g_2 A^+ o \Longleftrightarrow \sup_{a \in A^{++}} \dis_X( g_1 a o, g_2 a o ) < + \infty.$$
Equivalence classes for this relation are called \emph{asymptotic Weyl chambers}.
Denote by $\eta_0$ (resp. $\zeta_0$) the asymptotic Weyl chamber of $A^+o$ (resp. $(A^+)^{-1} o$).
The set of asymptotic Weyl chambers identifies with the Furstenberg boundary (see for instance \cite[Fact 2.5]{dg20} for a proof),  
\begin{equation} \label{eq-asymptotic-furstenberg}
 \mathcal{F} \simeq \big( G. (A^+o)/\sim  \big) \simeq K/M \simeq K. \eta_0 .
\end{equation}
Since $MAN^-$ is also a minimal parabolic subgroup of $G$, it is conjugated to $P=MAN$.
Choose $k_-\in K$ such that $MAN^-:=k_-(P)k_-^{-1}$ and set $\zeta_0:=k_- \eta_0$.
By definition, $\mathrm{Stab}_G(\eta_0)=P$ and $\mathrm{Stab}_G(\zeta_0) = MAN^-$.

\begin{rem}
Note that one may choose in this particular case for $k_-$ an element in $N_K(A)$ such that $k_- A^+ k_-^{-1} = (A^+)^{-1}$.

\noindent The more general construction is detailed in the following.
Let $\Theta\subset \Pi$ be a subset of simple roots and let $\calW_\Theta$ be the Weyl subgroup generated by reflections $s_\alpha$ for $\alpha\in \Theta^\complement$.
Then the standard parabolic subgroups of $G$ may be parameterized by $P_\Theta:= P \calW_\Theta P$ where $P_\Pi$ is the Borel subgroup.
Here we take the reverse of the Bourbaki convention \cite{bourbaki_lie_2004}.

\noindent Denote by $\tau$ the Cartan involution of $G$ (Cf. \cite{helgason1978differential}): it is an automorphism of $G$ that acts on $\frak{a}$ by $-id$ and on $A$ by $a\mapsto a^{-1}$.
For $\mathrm{SL}(d,\mathbb{R})$, the Cartan involution is the automorphism $g\mapsto {}^t(g^{-1})$.
The involution $\tau$ induces an involution of the set of simple roots $\iota:\Pi\to\Pi$, such that for all subset $\Theta \subset \Pi$, the parabolic subgroup $\tau(P_\Theta)$ is conjugated to $P_{\iota(\Theta)}$. 
Denote by $P_\Theta^-:=\tau(P_\Theta)$.  
In particular, for the Borel subgroup, the parabolic $P^-=MAN^-$ is a conjugate of $P$. 
\end{rem}

In the remainder of the article, we identify $G. (A^+ o ) / \sim$ with $\mathcal{F}$ and $G . (A^+ o)$ with $G/M$.
We recall that a geometric Weyl chamber is uniquely determined by its base point in $X$ and the asymptotic Weyl chamber it represents.
\begin{fait}\label{fait-asymptotic-point}
The following $G$-equivariant map is a diffeomorphism:
\begin{align*}
    G/M & \overset{\sim}{\longrightarrow} X \times \mathcal{F} \\
    gM & \longmapsto (go, g \eta_0).
\end{align*}
\end{fait}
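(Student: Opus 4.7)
The plan is to realize the claimed diffeomorphism as the quotient by $M$ of the orbit map $\phi: G\to X\times \calF$, $g\mapsto (go, g\eta_0)$, after establishing that $G$ acts transitively on $X\times \calF$ and that the stabilizer of the base point $(o,\eta_0)$ is exactly $M$. Since $G$ acts smoothly on both $X$ and $\calF$, the map $\phi$ is smooth and $G$-equivariant; in particular the induced map on $G/M$ is well defined (because $M\subset K=\mathrm{Stab}_G(o)$ and $M\subset P=\mathrm{Stab}_G(\eta_0)$).

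First I would prove transitivity of the $G$-action on $X\times \calF$. Given $(x,\xi)\in X\times\calF$, choose $h_x\in G$ with $h_x o=x$. By \eqref{eq-asymptotic-furstenberg}, $K$ acts transitively on $\calF$, so does the conjugate $K_x=h_xKh_x^{-1}$; hence there is $k\in K$ with $h_xk\eta_0=\xi$, and the element $g:=h_xk$ sends $(o,\eta_0)$ to $(x,\xi)$. Next I would compute $\mathrm{Stab}_G(o,\eta_0)=K\cap P$ and verify $K\cap P=M$. The inclusion $M\subset K\cap P$ is immediate from the definition $M=Z_K(A)\subset K$ and $M\subset MAN=P$. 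For the reverse inclusion I would invoke the Iwasawa decomposition $G=KAN$ (a diffeomorphism of $K\times A\times N$ onto $G$): if $k=man\in K$ with $m\in M,a\in A,n\in N$, then $m^{-1}k=an\in K\cap AN$, and uniqueness of the Iwasawa factorization forces $a=e$ and $n=e$, so $k=m\in M$.

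From these two facts, the orbit map $\phi$ descends to a $G$-equivariant bijection $\overline\phi: G/M\to X\times\calF$. To upgrade bijectivity to smoothness of the inverse, I would note that both $G/M$ and $X\times \calF=(G/K)\times(G/P)$ are smooth homogeneous $G$-manifolds; the bijection $\overline\phi$ is the canonical identification of $G/(K\cap P)$ with the $G$-orbit of $(o,\eta_0)$, which is smooth with smooth inverse because the quotient map $G\to G/(K\cap P)$ is a smooth principal $M$-bundle and $\phi$ is a smooth submersion onto the homogeneous target (equivalently, $\overline\phi$ is a $G$-equivariant smooth bijection between manifolds on which $G$ acts transitively, hence a diffeomorphism). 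As a dimension sanity check, the Iwasawa decomposition gives $\dim G=\dim K+\dim A+\dim N$ and $\dim P=\dim M+\dim A+\dim N$, so $\dim(X\times\calF)=2\dim G-\dim K-\dim P=\dim G-\dim M=\dim(G/M)$.

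The only point requiring care is the identification $K\cap P=M$; everything else is formal from homogeneous space theory. Once this is in hand, the diffeomorphism statement follows immediately, and the geometric interpretation promised in the preceding discussion, that a geometric Weyl chamber is determined by its base point in $X$ together with its asymptotic class in $\calF$, becomes a direct corollary of $\overline\phi$ being a bijection.
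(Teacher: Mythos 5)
Your proof is correct. The paper states this as a recalled background fact and gives no proof of its own (it points to the identification of geometric Weyl chambers with $G/M$ established in the cited references), so there is nothing to compare against; your argument — transitivity of $G$ on $X\times\calF$ via the $K$-action on $\calF$, the stabilizer computation $K\cap P=M$ via uniqueness of the Iwasawa factorization, and the standard fact that a smooth equivariant bijection between homogeneous $G$-spaces is a diffeomorphism — is exactly the standard justification one would supply.
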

\noindent For every $(x,\xi) \in X \times \mathcal{F}$, we denote by $g_{x,\xi} M \in G/M$ the geometric Weyl chamber of base point $x$ asymptotic to $\xi$.

\paragraph{Busemann and Iwasawa cocycle}
For every $\xi \in \mathcal{F}$ and $g \in G$, consider, by Iwasawa decomposition $KAN$, the unique element $\sigma(g,\xi) \in \frak{a}$, called the \emph{Iwasawa cocycle}, such that if $k_\xi \in K$ satisfies $k_\xi \eta_0 = \xi$, then
\begin{equation}\label{eq-iwasawa-cocycle}
    g k_\xi \in K \exp( \sigma (g, \xi) ) N .
\end{equation}
The \emph{cocycle relation} holds (Cf. \cite[Lemma 5.29]{benoist-quint}) i.e. for all $g_1, g_2 \in G$ and $\xi \in \mathcal{F}$, then
\begin{equation}\label{eq-relation-Icocycle}
\sigma(g_1 g_2,\xi) = \sigma(g_1, g_2 \xi) + \sigma(g_2 , \xi).    
\end{equation}
Note that restricted to $K \times \mathcal{F}$, the Iwasawa cocycle is the zero function, i.e. for every $k \in K$ and $\xi \in \mathcal{F}$, then $\sigma(k,\xi)=0$.
This motivates the following definition of the Busemann cocycle for two points of $X$ and an asymptotic Weyl chamber. 
\begin{defin}\label{defin-buseman}
For every $x,y \in X$ and $\xi \in \mathcal{F}$, we define the \emph{Busemann cocycle} by 
$$ \beta_{\xi} (x,y):= \sigma (h_x^{-1} h_y, h_y^{-1} \xi ) $$
independently of the choice of $h_x, h_y \in G$ such that $h_x o = x$ and $h_y o = y$.
\end{defin}
Remark that for every $x,y \in X$ and $\xi \in \mathcal{F}$, for all $g \in G$ and all $z \in X$,
\begin{align}\label{eq-relationBcocycle}
    \beta_{ \xi}(x, y )  &= \beta_{g\xi}(gx,gy) \\
    \beta_\xi (x,y) &= \beta_\xi(x,z) + \beta_\xi(z,y).
\end{align}
The first equation is the $G$-invariance of the formula, whereas the second is due to the cocycle relation of the Iwasawa cocycle.

\paragraph{Transverse points in $\cal{F}$}
The subset of \emph{ordered transverse pairs} of $\cal{F} \times \cal{F}$ is defined by the $G$-orbit
\begin{equation} \label{eq-defin-transverse-pts}
    \cal{F}^{(2)}:= \lbrace (g \eta_0,g\zeta_0) \; \vert \; g \in G \rbrace. 
\end{equation}
We say that $\xi, \eta \in \cal{F}$ are \emph{opposite} or \emph{transverse} if $(\xi,\eta) \in \cal{F}^{(2)}$.

In terms of asymptotic Weyl chambers, $\xi, \eta \in \cal{F}$ are opposite when there exists a geometric Weyl chamber $g. (A^+ o)$ asymptotic to $\xi$ such that $g. ( (A^+)^{-1}o )$ is asymptotic to $\eta$.
Note that (Cf. §3.2 \cite{thirion}) we have the following identifications
$$ \cal{F}^{(2)} \simeq G/AM .$$
\begin{defin}\label{defin-max-flat}
For every $(\xi,\eta)\in \cal{F}^{(2)}$, for any choice $g_{\xi,\eta} \in G$ such that $g_{\xi,\eta} (\eta_0,\zeta_0)= (\xi,\eta)$, we denote by 
$$(\xi \eta)_X:= g_{\xi,\eta} . (Ao)$$
the associated maximal flat in the symmetric space $X$.

For every $(x,\xi) \in X \times \cal{F}$,  we denote by 
$\xi_x^\perp \in \cal{F}$ the unique opposite point to $\xi$ such that $x \in (\xi \xi_x^\perp )_X$.
Equivalently, $\xi_x^\perp:= g_{x,\xi}\zeta_0$, where $g_{x,\xi}M\in G/M$ corresponds (Cf. Fact \ref{fait-asymptotic-point}) to the geometric Weyl chamber of base point $x$ asymptotic to $\xi$.
\end{defin}
\begin{rem}\label{rem-zeta-eta}
Note that $(\zeta_0)_o^\perp = \eta_0$ and vice-versa.
\end{rem}

\paragraph{Hopf coordinates}
Let $\cal{H}$ be the Hopf coordinate map of $G/M$ (Cf. \cite[Chapter 8, §8.G.2]{thirion} or \cite{dg20})
\begin{align*}
\cal{H}: G/M &\rightarrow \calF^{(2)}\times \frak a\\
gM &\mapsto (g\eta_0,g\zeta_0,\sigma(g,\eta_0)).
\end{align*}
Hopf coordinates are left $G$-equivariant and right $A$-equivariant in the following sense: 
\begin{itemize}
    \item[(i)] the left action of $G$ on $G/M$ reads in those coordinates equivariantly on $\cal{F}^{(2)}$ and using the Iwasawa cocycle as follows. For all $h\in G$ and $(\xi,\eta,Y)\in\cal{F}^{(2)}\times \frak a$,
\begin{equation}\label{equ-hxi}
h(\xi,\eta,Y)=(h\xi,h\eta, Y+\sigma(h,\xi)).
\end{equation}
    \item[(ii)] the right action of $A$ on $G/M$ reads for all $(\xi,\eta,Y) \in \cal{F}^{(2)} \times \frak{a}$ and $a \in A$ by keeping the first two coordinates constant and translating the last one by $\log(a)$ 
    $$\cal{H}( \cal{H}^{-1}(\xi,\eta,Y) a  ) = (\xi,\eta, Y + \log(a)) .$$
\end{itemize}

Using the geometric Weyl chamber interpretation and the Busemann cocycle notations,
the Hopf map reads the same as in Roblin's work \cite{roblin}:
\begin{equation}\label{equ_hopfmap}
\begin{split}
    X \times \calF & \longrightarrow \calF^{(2)} \times \frak{a} \\
    (x,\xi) & \longmapsto (\xi, \xi_x^\perp , \beta_{\xi}(o,x) ).
\end{split}
\end{equation}
This translated map is also left $G$-equivariant in the sense that for every $g\in G$ and every $(x,\xi) \in X \times \calF$, using the cocycle relation \eqref{eq-relationBcocycle}, the element $(gx,g\xi)$ has Hopf coordinates
$$(g \xi, g \xi_x^\perp , \beta_{g \xi}(o,go) + \beta_{\xi}(o,x) ).$$
Note that $\beta_{g \xi} (o,go) = \sigma(g,\xi)$, therefore the notations are consistent.

\subsection{Disintegration of the Haar measure}
Patterson--Sullivan measures were generalized to the higher rank setting in \cite{albuquerque99}, \cite{quintMesuresPattersonSullivanRang2002}. 
We follow Thirion's \cite[Chapter 9 §9.e]{thirion} construction of higher rank Patterson--Sullivan measures.
He dealt with the space of Weyl chambers of $\mathrm{SL}(d,\mathbb{R})$, it turns out that his method is more general.

We start by the so-called Patterson densities.
For $x\in X$, let $K_x$ be the stabilizer group of $x$ in $G$. Let $\mu_x$ be the unique $K_x$ invariant probability measure on the Furstenberg boundary $\calF$. Then we have for $g\in G$ and $x\in X$
\begin{equation}\label{equ.gmux}
    g_*\mu_x=\mu_{gx}, 
\end{equation}
where $g_*\mu_x$ is the pushforward of $\mu_x$ under the $g$ action. This relation holds because the stabilizer of $g_*\mu_x$ is given by $gK_xg^{-1}=K_{gx}$.
Let $\rho=\frac{1}{2}\sum_{\alpha\in\Sigma}\alpha$ be the half of the sum of positive roots with multiplicities. By \cite[Lemma 6.3]{quintMesuresPattersonSullivanRang2002} or \cite[I 5.1]{helgasonGroupsGeometricAnalysis2000}, for $g$ in $G$ and every $\xi \in \calF$ we have
\begin{equation}\label{equ.gmu0}
    \frac{ \dd g_*\mu_o}{\dd\mu_o}(\xi)=e^{-\rho \sigma(g^{-1},\xi)}= e^{-\rho  \beta_\xi(go,o)},
\end{equation}
which is a $G$ quasi-invariant measure. 
Note that in the rank one notations, we replaced the critical exponent with the linear form $\rho$ and apply it with the higher rank Busemann function.
Then we will introduce the Gromov product to obtain a $G$-invariant measure on $\calF^{(2)}$.
\begin{defin}\label{defin_gromov}
For a pair $(\xi,\eta)\in\calF^{(2)}$, we associate it with the unique element in the Lie algebra $\frak a$ such that for all weights $\chi^\alpha$
\[\chi^\alpha(\xi|\eta)_o:
=  -\log\frac{|\varphi(v)|}{\|\varphi\|\|v\|}, \]
where $v\in V^\alpha-\{0\}$ is a representative of $x^\alpha(\xi)$ and $\varphi$ is a non zero linear form such that $\ker \varphi = x^{\alpha}(\eta_o^\perp)^\perp$.
\end{defin}

This definition already appears in \cite[Section 8.10]{bochiAnosovRepresentationsDominated2019}, \cite[Section 4]{sambarinoOrbitalCountingProblem2015} for semisimple Lie groups and \cite{thirion} for $\slr$. 
Our definition of $\delta$ seems different from the one in \cite{bochiAnosovRepresentationsDominated2019}, \cite{sambarinoOrbitalCountingProblem2015}. By using the correspondence between linear forms and hyperplanes for Euclidean spaces, we can verify that they are the same.
An important property is that \cite[Lemma 4.12]{sambarinoOrbitalCountingProblem2015}: for all $g\in G$ and $(\xi,\eta)\in\calF^{(2)}$, we have 
\begin{equation}\label{equ.gromov}
    (g\xi|g\eta)_o-(\xi|\eta)_o=\iota\sigma(g,\xi)+\sigma(g,\eta),
\end{equation}
where $\iota$ is the inverse involution on $\frak a$.
We also define the Gromov product at other points $x$ in $X$ by $G$-invariance, by setting
\[(\xi|\eta)_x=(h_x^{-1}\xi|h_x^{-1}\eta)_o, \]
where $h_x$ is some element such that $h_xo=x$.
Since by \eqref{equ.gromov}, the Gromov product at $o$ is left $K$-invariant, this definition is independent of the choice of $h_x$. 
In \cite{bochiAnosovRepresentationsDominated2019}, the authors proved that the Gromov product $(\xi|\eta)_o$ in norm is almost the same as the distance between $o$ and the maximal flat $(\xi\eta)_X \subset X$.
\begin{lem}\label{lem-xietao}
\cite[Proposition 8.12]{bochiAnosovRepresentationsDominated2019}
There exist $C_3>1,C'>0$ such that for any $(\xi,\eta)\in \calF^{(2)}$, we have
\begin{equation*}
    \frac{1}{C_3}\|(\xi|\eta)_o\|\leq \dis_X(o,(\xi\eta)_X)\leq C_3\|(\xi|\eta)_o\|+C'.
\end{equation*}
\end{lem}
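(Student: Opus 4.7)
The plan is to reduce the inequality to a group-theoretic comparison between the Iwasawa and Cartan data of a single element of $G$ sending the base pair $(\eta_0,\zeta_0)$ to $(\xi,\eta)$.

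First, pick any $g \in G$ with $g(\eta_0,\zeta_0) = (\xi,\eta)$; then $(\xi\eta)_X = g(Ao)$ and by $G$-invariance of $\dis_X$,
$$\dis_X(o,(\xi\eta)_X) = \inf_{a \in A}\|\underline{a}(ga)\|.$$
Since $g$ is defined modulo right multiplication by $AM = \mathrm{Stab}_G(\eta_0,\zeta_0)$, one may normalise $g$ so the infimum is attained at $a = e$, giving $\dis_X(o,(\xi\eta)_X) = \|\underline{a}(g)\|$. On the Gromov side, applying the cocycle identity \eqref{equ.gromov} at the pair $(\eta_0,\zeta_0)$, together with the vanishing $(\eta_0|\zeta_0)_o = 0$ (which is immediate from Definition \ref{defin_gromov}: in each $V^\alpha$ the highest-weight vector and the dual of the lowest-weight hyperplane realise the ratio $1$), yields
$$(\xi|\eta)_o = \iota\sigma(g,\eta_0) + \sigma(g,\zeta_0),$$
and a short computation using \eqref{eq-relation-Icocycle} and the explicit values $\sigma(a,\eta_0) = \log a$, $\sigma(a,\zeta_0) = -\iota\log a$ shows this expression is unchanged under $g \mapsto g\cdot am$. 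We are therefore reduced to proving, for every $g$ whose Cartan projection is attained at $a=e$,
$$\tfrac{1}{C_3}\|\iota\sigma(g,\eta_0) + \sigma(g,\zeta_0)\| \;\leq\; \|\underline{a}(g)\| \;\leq\; C_3\|\iota\sigma(g,\eta_0) + \sigma(g,\zeta_0)\| + C'.$$

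The left inequality is elementary: the standard bound $\|\sigma(g,\xi)\| \leq \|\underline{a}(g)\|$, valid for all $\xi \in \calF$ because $\sigma(g,\xi)$ lies in the convex hull of the Weyl orbit of $\underline{a}(g)$, together with the triangle inequality, gives the factor $2$. The right inequality is the core difficulty. The optimality condition forces $\underline{a}(g)/\|\underline{a}(g)\|$ to point perpendicularly to the flat $gAo$ in the appropriate Weyl-chamber sense. In each fundamental representation $V^\alpha$ this translates to the statement that $g\cdot x^\alpha(\eta_0)$ lies close to the attracting line of $g$ and $g\cdot x^\alpha(\zeta_0)$ close to the repelling hyperplane, so that the deviations $\chi^\alpha(\underline{a}(g) - \sigma(g,\eta_0))$ and $\chi^\alpha(\underline{a}(g) + \iota\sigma(g,\zeta_0))$ are each controlled by a single summand of $\chi^\alpha((\xi|\eta)_o)$ up to a bounded additive error. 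Summing the weight contributions yields the inequality, with $C'$ absorbing the bounded Iwasawa remainders that persist even when the Gromov product vanishes.

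The main obstacle will be this last representation-theoretic step, which is where the input from Bochi--Potrie--Sambarino is genuinely used: the Cartan projection measures asymptotic expansion in fundamental representations, while the Gromov product measures the angle between attracting and repelling flags, and only at the foot-of-perpendicular position are these two pieces of data comparable in norm.
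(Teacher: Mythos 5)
The paper gives no proof of this lemma at all --- it is imported verbatim as \cite[Proposition 8.12]{bochiAnosovRepresentationsDominated2019} --- so the only question is whether your write-up is a complete self-contained argument. It is not. Your reduction is sound: normalising $g$ with $g(\eta_0,\zeta_0)=(\xi,\eta)$ so that $\dis_X(o,(\xi\eta)_X)=\|\underline{a}(g)\|$, the identity $(\xi|\eta)_o=\iota\sigma(g,\eta_0)+\sigma(g,\zeta_0)$ (using $(\eta_0|\zeta_0)_o=0$ and \eqref{equ.gromov}), the check of invariance under $g\mapsto gam$, and the lower bound via $\|\sigma(g,\cdot)\|\leq\|\underline{a}(g)\|$ are all correct. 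But the upper bound $\|\underline{a}(g)\|\leq C_3\|\iota\sigma(g,\eta_0)+\sigma(g,\zeta_0)\|+C'$ is the entire content of the lemma, and your treatment of it is a heuristic, not a proof --- as you yourself concede by calling it ``the main obstacle'' where ``the input from Bochi--Potrie--Sambarino is genuinely used.''

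Concretely, three things in that paragraph are unsubstantiated or ill-formed. First, the claim that perpendicularity of the minimizing geodesic to the flat forces $\rho_\alpha(g)x_+^\alpha$ to be ``close to the attracting line of $g$'' in each $V^\alpha$ is exactly the quantitative statement one must prove: a lower bound on $\delta_\alpha$ between $\rho_\alpha(g)x_+^\alpha$ and the top singular direction of $\rho_\alpha(g)$ in terms of the Gromov product, and the first-variation condition in $X$ does not obviously yield a uniform such bound representation by representation. Second, ``controlled by a single summand of $\chi^\alpha((\xi|\eta)_o)$'' is not meaningful: $\chi^\alpha((\xi|\eta)_o)=-\log\delta_\alpha(x^\alpha(\xi),x^\alpha(\eta_o^\perp))$ is one scalar, not a sum. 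Third, the Gromov product is built from $\eta_o^\perp$ (the opposite point through $o$), whereas your deviations $\chi^\alpha(\underline{a}(g)-\sigma(g,\eta_0))$ and $\chi^\alpha(\underline{a}(g)+\iota\sigma(g,\zeta_0))$ are Cartan--Iwasawa quantities for $g$; relating the two is where the constants $C_3,C'$ actually arise (e.g.\ by reducing, after a rotation, to $\eta=\zeta_0$, $\xi=n^-\eta_0$ with $n^-\in N^-$, and estimating $\inf_{a\in A}\|\underline{a}(n^-a)\|$ against $\|\sigma(n^-,\eta_0)\|$), and none of that is carried out. As it stands the hard inequality is still outsourced to the cited reference, so the proposal does not constitute a proof.
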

\noindent By $G$-invariance, we deduce that for every $x \in X$ and $(\xi,\eta) \in \calF^{(2)}$
$$ \frac{1}{C_3} \Vert (\xi | \eta)_x \Vert \leq \dis_X(x, (\xi \eta)_X) \leq 
C_3 \Vert (\xi | \eta)_x \Vert + C' .$$

For all $x\in X$ and $(\xi,\eta)\in\calF^{(2)}$, we define the $(0,1]$-valued function
\[f_x(\xi,\eta)=\exp(-\rho(\xi|\eta)_x). \]
We define measures $\nu_x$ on $\calF^{(2)}$ by
\begin{equation}\label{equ.nu}
    \dd\nu_x(\xi,\eta)=\frac{\dd\mu_x(\xi)\dd\mu_x(\eta)}{f_x(\xi,\eta)}.
\end{equation}

\begin{prop}\label{prop-PS-BMS}
For all $x\in X$, the measure $\nu_x$ is $G$-invariant and equal to $\nu_o$. We denote it by $\nu$.
\end{prop}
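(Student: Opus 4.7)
The plan is to split the proposition into two statements: (a) $\nu_x = (h_x)_*\nu_o$ for any $h_x \in G$ with $h_x o = x$, and (b) $\nu_o$ is $G$-invariant. Taken together they yield $\nu_x = \nu_o$ for every $x$, and $G$-invariance of each $\nu_x$ is then automatic. Step (a) is essentially tautological: the relation $\mu_x = (h_x)_*\mu_o$ from \eqref{equ.gmux} combined with the defining property $(h_x\xi'|h_x\eta')_x = (\xi'|\eta')_o$ of the Gromov product at $x$ gives $f_x\circ(h_x\times h_x) = f_o$, so a direct change of variables in the integral defining $\nu_x$ yields $\nu_x = (h_x)_*\nu_o$.

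The substance of the proof is step (b). Fix $g \in G$ and a continuous test function $\phi$ on $\calF^{(2)}$. By definition of the pushforward,
\[
\int \phi \, \dd g_*\nu_o = \int \phi(g\xi,g\eta)\, \exp\!\big(\rho(\xi|\eta)_o\big)\, \dd\mu_o(\xi)\,\dd\mu_o(\eta).
\]
Rewriting $(\xi|\eta)_o$ in terms of $(g\xi|g\eta)_o$ via \eqref{equ.gromov} and using the Weyl-invariance identity $\rho\circ\iota = \rho$ (the opposition involution permutes positive roots, and $\rho$ is half their sum), the weight $\exp(\rho(\xi|\eta)_o)$ becomes
\[
\exp\!\big(\rho(g\xi|g\eta)_o\big)\,\exp\!\big(-\rho\sigma(g,\xi)\big)\,\exp\!\big(-\rho\sigma(g,\eta)\big).
\]
Now substitute $\xi' = g\xi$, $\eta' = g\eta$ and apply the quasi-invariance \eqref{equ.gmu0} to each factor of $\mu_o$, which inserts Radon--Nikodym weights $\exp(-\rho\sigma(g^{-1},\xi'))$ and $\exp(-\rho\sigma(g^{-1},\eta'))$. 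Applying the cocycle relation \eqref{eq-relation-Icocycle} to the product $g g^{-1} = e$ gives $\sigma(g, g^{-1}\xi') = -\sigma(g^{-1},\xi')$ (and similarly for $\eta'$), so the Radon--Nikodym weights cancel the Gromov-shift weights exactly, and what remains is $\int \phi\,\dd\nu_o$. Hence $g_*\nu_o = \nu_o$.

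The main obstacle is not conceptual but notational: one must carefully track the opposition involution $\iota$ appearing in \eqref{equ.gromov} and verify its disappearance after projection by $\rho$. The structural reason the cancellation works is the exact compatibility of the Gromov product transformation with the Radon--Nikodym derivative of $\mu_o$, mediated by the symmetry $\rho\circ\iota = \rho$.
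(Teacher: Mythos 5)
Your proof is correct and rests on exactly the same ingredients as the paper's — the transformation rule \eqref{equ.gromov} for the Gromov product, the Radon--Nikodym derivative \eqref{equ.gmu0}, the cocycle relation \eqref{eq-relation-Icocycle}, and the identity $\rho\circ\iota=\rho$ — but you distribute the work in the opposite way. The paper performs the explicit cancellation (the place where $\rho\circ\iota=\rho$ is invoked) to prove $\nu_x=\nu_o$ for every $x$, and then deduces $G$-invariance softly from the tautological identities $f_x(g\xi,g\eta)=f_{g^{-1}x}(\xi,\eta)$ and \eqref{equ.gmux}, together with the already-established $\nu_{g^{-1}x}=\nu_x$. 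You instead perform the explicit cancellation to prove $g_*\nu_o=\nu_o$ and obtain $\nu_x=\nu_o$ softly from the tautological equivariance $\nu_x=(h_x)_*\nu_o$. Since base-point independence plus equivariance is logically interchangeable with $G$-invariance of $\nu_o$ plus equivariance, neither decomposition buys anything substantive over the other; both hinge on the same exact cancellation of the Gromov-product shift against the Radon--Nikodym weights of $\mu_o$, mediated by $\rho\circ\iota=\rho$. Your verification of the sign $\sigma(g,g^{-1}\xi')=-\sigma(g^{-1},\xi')$ via the cocycle relation is the correct way to close the computation.
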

\noindent In the hyperbolic case, the measures $\mu_x$ are called Patterson-Sullivan and $\nu \otimes Leb_{\mathbb{R}}$ is the Bowen-Margulis-Sullivan measure.
In the $\slr$ case, Thirion \cite{thirion} gave a construction of this measure and proved those properties. 
We include a proof for completeness.
\begin{proof}
By \eqref{equ.gromov}, for all $x\in X$, all $(\xi,\eta)\in \calF^{(2)}$ and every $h_x \in G$ such that $h_x o =x$
\begin{align*}
  f_x(\xi,\eta)=f_o(h_x^{-1}\xi,h_x^{-1}\eta) &= f_o(\xi,\eta)\exp(-\rho(\iota\sigma(h_x^{-1},\xi)+\sigma(h_x^{-1},\eta))) \\
  &= f_o(\xi,\eta)\exp(-\rho(\iota\beta_\xi(x,o)+\beta_\eta(x,o)))
\end{align*}
On the other hand,
\[\frac{\dd\mu_x}{\dd\mu_o}(\xi)=\frac{\dd(h_x)_*\mu_o}{\dd\mu_o}(\xi)=e^{-\rho\sigma(h_x^{-1},\xi)}. \]
We obtain the same formula for $\eta$.
Combining the above two equations and using that $\rho\iota\sigma(h_x^{-1},\xi)=\rho\sigma(h_x^{-1},\xi)$, we obtain that 
\[\nu_x=\nu_o. \]
By definition of the Gromov product, we have for all $g \in G$
\[f_x(g\xi,g\eta)=f_{g^{-1}x}(\xi,\eta). \]
By equation \eqref{equ.gmux} and using that $\nu_{g^{-1}x}=\nu_x$,
\begin{align*}
    \dd\nu_x(g\xi,g\eta)=\frac{\dd\mu_x(g\xi)\dd\mu_x(g\eta)}{f_x(g\xi,g\eta)}=\frac{\dd\mu_{g^{-1}x}(\xi)\dd\mu_{g^{-1}x}(\eta)}{f_{g^{-1}x}(\xi,\eta)}=\dd\nu_{g^{-1}x}(\xi,\eta)=\dd\nu_x(\xi,\eta).
\end{align*}
Hence $\nu_x$ is $G$-invariant.
\end{proof}

With this $G$-invariant measure $\nu$ on $\calF^{(2)}$, we deduce the disintegration of the Haar measure on $G/M$ along Hopf coordinates.

\begin{prop}\label{prop-disintegration}
The product measure $\nu\otimes Leb$ on $\calF^{(2)}\times\frak a$ is a disintegration in Hopf coordinates of a Haar measure on $G/M$.
\end{prop}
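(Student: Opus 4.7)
The plan is to exploit the left $G$-equivariance of the Hopf coordinate map together with the $G$-invariance of $\nu$ established in Proposition \ref{prop-PS-BMS}, and then invoke the uniqueness (up to a positive scalar) of left $G$-invariant Radon measures on $G/M$. Concretely, because $M$ is compact, the space $G/M$ carries a unique $G$-invariant Radon measure up to scaling, namely the quotient of the Haar measure on $G$. So it suffices to verify that the pushforward $\cal{H}^{-1}_*(\nu\otimes\leb)$ is a nonzero left $G$-invariant Radon measure on $G/M$.

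First I would recall the equivariance formula \eqref{equ-hxi}, which says that in Hopf coordinates an element $h\in G$ acts by $(\xi,\eta,Y)\mapsto (h\xi,h\eta,Y+\sigma(h,\xi))$. The key point is that the third coordinate is translated by a quantity depending only on $\xi$ (and $h$), not on $Y$. Thus, for any compactly supported continuous test function $f$ on $\calF^{(2)}\times\frak{a}$, Fubini and the translation invariance of $\leb$ yield
\begin{equation*}
\int f(h\xi,h\eta,Y+\sigma(h,\xi))\,\dd\nu(\xi,\eta)\,\dd\leb(Y)
=\int\left(\int f(h\xi,h\eta,Y)\,\dd\leb(Y)\right)\dd\nu(\xi,\eta).
\end{equation*}
Now the $G$-invariance of $\nu$, which is the content of Proposition \ref{prop-PS-BMS}, lets me push $h$ through the outer integral, giving back $\int f\,\dd\nu\otimes\dd\leb$. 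Hence $\nu\otimes\leb$ is invariant under the $G$-action read in Hopf coordinates, so $\cal{H}^{-1}_*(\nu\otimes\leb)$ is a left $G$-invariant Radon measure on $G/M$; it is clearly nonzero since $\nu$ and $\leb$ are nonzero.

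To finish, I would invoke the uniqueness statement: $G$ acts transitively on $G/M$ with compact isotropy groups (conjugates of $M$), so any two $G$-invariant Radon measures on $G/M$ are proportional. Since the Haar measure $\vol$ on $G$ descends (using that $M$ is unimodular and compact) to a left $G$-invariant Radon measure on $G/M$, we conclude that $\cal{H}^{-1}_*(\nu\otimes\leb)$ is a positive multiple of this quotient Haar measure. Rescaling the normalization of $\nu$ (which is fixed by the choice of $\mu_o$) if necessary, this is a Haar measure on $G/M$, proving that $\nu\otimes\leb$ is its disintegration in Hopf coordinates.

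There is no real obstacle in this argument; the only subtle point is the consistency check that the $\xi$-dependent translation of the $\frak{a}$-fiber does not disrupt the product structure, which is precisely where the fact that the Iwasawa cocycle depends on $\xi$ but not on $Y$ is used. The nontrivial content of the proposition is really packaged into the prior Proposition \ref{prop-PS-BMS} and the cocycle identity \eqref{eq-relation-Icocycle}.
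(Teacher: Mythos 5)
Your proposal is correct and follows essentially the same route as the paper: the paper's proof simply observes that $\nu\otimes \mathrm{Leb}$ is $G$-invariant by Proposition \ref{prop-PS-BMS} and is a measure on $G/M$ via Hopf coordinates, hence is a Haar measure. Your write-up merely makes explicit the two steps the paper leaves implicit — that the $\xi$-dependent translation in the $\frak{a}$-fiber preserves $\mathrm{Leb}$, and the uniqueness up to scalar of $G$-invariant Radon measures on $G/M$ — both of which are handled correctly.
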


\begin{proof}
The product measure $\nu\otimes Leb$ is $G$-invariant by Proposition \ref{prop-PS-BMS} and by Hopf coordinates, it is a measure on $G/M$. 
So it is a Haar measure on $G/M$.
\end{proof}

\section{An effective angular equidistribution}
We present the equidistribution result of Gorodnik--Nevo \cite{gorodnikCountingLatticePoints2012}.
First, for elements whose Cartan projection is in the interior of the Weyl chamber, we build a pair of angular points in $\calF$. 
In rank one, these points are the endpoints of the half-geodesics based at the origin point and going through the image and the inverse image of the origin.

Then we state Gorodnik--Nevo's equidistribution theorem, where they only require the lattice to be irreducible.
Since we apply their result to a ball shaped domain and to a parallelotope domain.
Finally, we give an equivalent of the  

\subsection{Cartan regular isometries}
Recall that by Cartan decomposition, for every element $g \in G$ there exist $k,l\in K$ and a unique element $\underline{a}(g) \in \frak{a}^+$ such that $g=k \exp (\underline{a}(g)) l^{-1}$. 
Note that $k$ and $l$ are defined up to right multiplication by elements in $Z_K( \exp(\underline{a}(g)))$.

\begin{defin}\label{defin-cartan-regular}
For all $x \in X$, we denote by $\underline{a}_x: G \rightarrow \mathfrak{a}^+$
the map that assigns to every $g \in G$ the $\frak{a}^+$-distance between $x$ and $gx$, i.e. $\underline{a}_x(g):= \adis(x,gx).$ We say that $g$ is \emph{$x$-cartan regular} if $\underline{a}_x(g) \in \frak{a}^{++}$.

Let $g$ be an $x$-cartan regular element, consider $h,h'\in G$ such that $ho=h'o=x$ with $he^{\underline{a}_x(g)}o=gx$ and $h'e^{ \underline{a}_x(g^{-1})}o=g^{-1}x$.
We set $g_x^+:=h \eta_0$ and $g_x^-:=h'\eta_0$. In particular, when $x=o$, we can take $h=k$ and $h'=lk_\iota$.
\end{defin}
Note that for every $g\in G$ we have $\underline{a}_x(g)= \underline{a}( h_x^{-1} g h_x)$, independently of the choice of $h_x \in G$ such that $h_xo=x$.
Furthermore, provided that $g$ is $x$-cartan regular, 
\begin{equation}\label{equ.gammax}
    g_x^{\pm}=h_x(h_x^{-1} g h_x)_o^{\pm}.
\end{equation}
Remark that $(x,g_x^+) \in X \times \cal{F}$ (resp. $(x,g_x^-)$) is the unique geometric Weyl chamber based on $x$ containing $gx$ (resp. $g^{-1}x$).
In the $\mathrm{PSL}(2,\mathbb{R})$ case, an element $g$ is $x$-cartan regular when $gx\neq x$, then $g_x^+ \in \partial \mathbb{H}^2$ (resp. $g_x^-$) is the asymptotic endpoint of the half geodesic based on $x$ going through $gx$ (resp. $g^{-1}x$).
Recall a lemma about comparing Cartan projections.
\begin{lem}[\cite{kasselCorank08} Lemma 2.3]\label{lem-cartan-diff}
For all $h,h'\in G$ we have the following inequalities,
$$ \Vert \underline{a}( h h') - \underline{a}(h) \Vert \leq  \Vert \underline{a}(h') \Vert
\text{ and }\Vert \underline{a}( h' h) - \underline{a}(h) \Vert \leq  \Vert \underline{a}(h') \Vert. $$
\end{lem}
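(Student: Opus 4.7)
My strategy is to recognise both inequalities as instances of a single $1$-Lipschitz property of the Cartan projection, viewed on the symmetric space $X = G/K$. By right $K$-invariance, $\underline{a}$ descends to $F : X \to \frak{a}^+$, $gK \mapsto \underline{a}(g)$, and since $\dis_X(g_1K, g_2K) = \|\underline{a}(g_1^{-1}g_2)\|$, I claim the master estimate
\[
  \|\underline{a}(g_1) - \underline{a}(g_2)\| \;\leq\; \|\underline{a}(g_1^{-1}g_2)\| \qquad \text{for all } g_1, g_2 \in G.
\]

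Granting this master estimate, the first stated inequality is immediate from the substitution $(g_1, g_2) = (h, hh')$, since $g_1^{-1}g_2 = h'$. The second inequality does not follow by direct substitution — the naive choice $(g_1, g_2) = (h, h'h)$ produces $\|\underline{a}(h^{-1}h'h)\|$ on the right, which is not in general $\|\underline{a}(h')\|$. To fix this, I would invoke the opposition involution $\underline{a}(g^{-1}) = -w_0(\underline{a}(g))$ (where $w_0$ is the longest Weyl element) together with Weyl-invariance of $\|\cdot\|$ to rewrite
\[
  \|\underline{a}(h'h) - \underline{a}(h)\| \;=\; \|\underline{a}(h^{-1}(h')^{-1}) - \underline{a}(h^{-1})\|,
\]
and then apply the master estimate to the pair $(h^{-1}, (h')^{-1})$, yielding the bound $\|\underline{a}((h')^{-1})\| = \|\underline{a}(h')\|$.

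Everything thus hinges on the master estimate. My preferred interpretation is as a $1$-Lipschitz property of the orbit-space projection for the isometric left action of $K$ on $X$: the $K$-orbits in $X$ are parametrised by $\frak{a}^+$ via $F$, and orbit projections for isometric actions are automatically $1$-Lipschitz with respect to the quotient pseudo-distance on $X/K$. It therefore suffices to identify this quotient pseudo-distance on $\frak{a}^+$ with the Killing-form norm. Along a single flat $A\cdot o$ the identification is immediate: $\dis_X(\exp H_1 \cdot o, \exp H_2 \cdot o) = \|\underline{a}(\exp(H_2 - H_1))\| = \|H_2 - H_1\|$ by Weyl-invariance of $\|\cdot\|$.

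The main obstacle, and the geometric core of the lemma, is to verify that a single flat already realises the infimum in the quotient metric, i.e.
\[
  \inf_{k \in K} \dis_X\!\bigl(\exp H_1\cdot o,\; k\exp H_2\cdot o\bigr) \;=\; \|H_1 - H_2\| \qquad (H_1, H_2 \in \frak{a}^+).
\]
This is essentially equivalent to the master estimate itself, and would most naturally be established via Kostant's convexity theorem (the projection to $\frak{a}$ of the $K$-orbit $K\exp(H_2)o$ lies in the convex hull of the Weyl orbit of $H_2$) combined with the CAT($0$) structure of $X$. A hands-on fallback would be to fix a faithful linear representation $\rho : G \to \mathrm{GL}(V)$ with a $K$-invariant inner product for which $\rho(\exp\frak{p})$ consists of positive self-adjoint operators, identify $\underline{a}$ with the vector of log singular values of $\rho$, and invoke the classical Mirsky/Weyl singular-value perturbation inequalities.
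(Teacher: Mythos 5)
The paper offers no proof of this lemma --- it is imported verbatim from Kassel (\cite{kasselCorank08}, Lemma 2.3) --- so there is no in-paper argument to compare with; I can only assess your proposal on its own terms. Your formal reductions are correct: both inequalities do follow from the master estimate $\Vert \underline{a}(g_1)-\underline{a}(g_2)\Vert\leq\Vert\underline{a}(g_1^{-1}g_2)\Vert$, and your treatment of the second one --- passing to inverses and using that the opposition involution $\iota=-\mathrm{Ad}(w_0)$ is a linear isometry of $(\frak{a},\Vert\cdot\Vert)$ with $\underline{a}(g^{-1})=\iota\,\underline{a}(g)$ --- is exactly right; you are also right that the naive substitution fails because $\underline{a}$ is not conjugation-invariant.

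The gap is that the master estimate, i.e.\ the $1$-Lipschitz property of the $\frak{a}^+$-valued distance $\adis(o,\cdot)$, is the entire analytic content of the lemma, and you do not prove it. You reformulate it (correctly) as the statement that a single maximal flat realises the $K$-orbit distance, observe yourself that this ``is essentially equivalent to the master estimate itself'', and then only gesture at two completions. The first (Kostant convexity plus CAT($0$)) is not carried out, and it is not clear how it would be: the CAT($0$) comparison controls the scalar distance $\Vert\adis\Vert$, not the vector-valued one, and the Busemann-function argument it suggests only yields $\langle u,\adis(o,y)-\adis(o,z)\rangle\leq \dis_X(y,z)$ for $u$ in the \emph{closed positive} chamber, which does not imply the norm bound since $\adis(o,y)-\adis(o,z)$ need not lie in $\frak{a}^+$. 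What one actually needs is the containment $\underline{a}(gh)\in\underline{a}(g)+\mathrm{conv}\big(\calW\cdot\underline{a}(h)\big)$, from which the bound follows because $\Vert\cdot\Vert$ is convex and $\calW$-invariant. The second route has two concrete defects: the additive Mirsky/Weyl perturbation bounds are not the relevant inequalities (one needs the multiplicative Lidskii--Wielandt/Gel'fand--Naimark majorization $\log\sigma(AB)-\log\sigma(A)\prec\log\sigma(B)$), and for an arbitrary faithful representation the Euclidean norm on log-singular-value vectors is only \emph{equivalent}, not equal, to the Killing norm on $\frak{a}$, so you would only obtain the inequality with some constant $C>1$. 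Both defects are repairable --- take $\rho=\mathrm{Ad}$, for which $B(H,H)=\sum_{\alpha}m_\alpha\,\alpha(H)^2$ makes the two norms coincide, and apply the multiplicative Lidskii majorization --- but as written the proposal reduces the lemma to an equivalent unproven statement rather than proving it.
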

We translate it using our notations.
\begin{lem}\label{lem-cartan-points}
For all $g\in G$, every $x,y\in X$, the following bound holds:
$$ \Vert \underline{a}_x(g)-\underline{a}_y(g) \Vert \leq 2 \dis_X(x,y) .$$ 
\end{lem}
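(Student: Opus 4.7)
The plan is to reduce this to two applications of the previous Lemma~\ref{lem-cartan-diff}. First, I would pick $h_x, h_y \in G$ with $h_x o = x$ and $h_y o = y$, and set $u := h_x^{-1} h_y$, so that by definition $\|\underline{a}(u)\| = \dis_X(x,y)$. Using the freedom in Definition~\ref{defin-cartan-proj}, write $\underline{a}_x(g) = \underline{a}(h_x^{-1} g h_x)$ and $\underline{a}_y(g) = \underline{a}(h_y^{-1} g h_y)$. Substituting $h_y = h_x u$ gives
\begin{equation*}
\underline{a}_y(g) \;=\; \underline{a}\bigl(u^{-1}\, h_x^{-1} g h_x\, u\bigr).
\end{equation*}

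Next, I would set $v := h_x^{-1} g h_x$ and apply Lemma~\ref{lem-cartan-diff} in two successive steps: once to strip the right factor $u$, and once to strip the left factor $u^{-1}$. By the triangle inequality,
\begin{equation*}
\bigl\|\underline{a}(u^{-1} v u) - \underline{a}(v)\bigr\|
\;\leq\; \bigl\|\underline{a}(u^{-1} v u) - \underline{a}(v u)\bigr\| + \bigl\|\underline{a}(v u) - \underline{a}(v)\bigr\|
\;\leq\; \bigl\|\underline{a}(u^{-1})\bigr\| + \bigl\|\underline{a}(u)\bigr\|.
\end{equation*}

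To finish, I need the fact that $\|\underline{a}(u^{-1})\| = \|\underline{a}(u)\|$. This follows from the standard identity $\underline{a}(g^{-1}) = -w_0 \cdot \underline{a}(g)$, where $w_0$ is the longest element of the Weyl group $\calW$ sending $\mathfrak{a}^+$ to $-\mathfrak{a}^+$; since $\calW$ acts by isometries for the Killing norm $\|\cdot\|$, the two norms agree. Combining, we obtain $\|\underline{a}_y(g) - \underline{a}_x(g)\| \leq 2\|\underline{a}(u)\| = 2\, \dis_X(x,y)$, as desired.

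The argument is essentially one of bookkeeping; the only mild subtlety is justifying inversion-invariance of the norm of the Cartan projection, but this is immediate from the Weyl-group action. There is no serious obstacle.
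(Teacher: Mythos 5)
Your proof is correct and follows essentially the same route as the paper's: both reduce the claim to two applications of Lemma~\ref{lem-cartan-diff} applied to the conjugate $u^{-1}vu$ together with the inversion-invariance $\|\underline{a}(u^{-1})\|=\|\underline{a}(u)\|$ (which the paper uses without comment and you justify via the opposition involution). No issues.
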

\begin{proof} 
Let $x,y\in X$ and choose $h_x,h_y \in G$ such that $h_x o =x$ and $h_y o =y$.
We compare the Cartan projection of $h= h_y^{-1} g h_y$ to the Cartan projection of its conjugate by $h'=h_y^{-1} h_x$. Using that $\Vert \underline{a}(h') \Vert =\Vert \underline{a}(h'^{-1}) \Vert $ and Lemma \ref{lem-cartan-diff} we get
$$ \Vert \underline{a}_x(g) - \underline{a}_y(g)  \Vert \leq 2 \Vert \underline{a}(h_y^{-1} h_x) \Vert .$$
Since $\Vert \underline{a}(h_y^{-1} h_x) \Vert= \dis_X(x,y)$, we deduce the Lemma.
\end{proof}

\subsection{Angular distribution of Lattice points}
We introduce here some subsets of $G$. They will be used to obtain the main term and the exponentially decaying error term in our main Theorems \ref{corol-counting}, \ref{thm-introequid}. 

\noindent For $t>1$, let $D_t:= K \exp(\frakD_t)K$ and where $\frakD_t$ may be one of the two following types of domains
\begin{align*}
     \text{Ball domain } & B_{\frak{a}}(0,t)\cap \frak a^+,\\
     \text{Parallelotope domain } & t  \calP \text{ where }
     \calP= \prod_{\beta \in \Pi} [0,a_\beta] \text{ with } 0 < a_\beta \;,\; \forall \beta \in \Pi.
\end{align*}

Denote the subset of Cartan-regular elements by
$$D_t^{reg}:= D_t\cap (K \exp(\frak a^{++} )K).$$
For all $x\in X$, we consider similar sets
$$D_t(x):= h_x D_t h_x^{-1},$$
$$D_t^{reg}(x):= h_x D_t^{reg} h_x^{-1},$$
where $h_x\in G$ is any element such that $h_x o =x$. These sets do not depend on the choice of $h_x$.
\begin{theorem}\label{theo-gorodnik-nevo}
\hypG
Let $\Gamma <G$ be an irreducible lattice. There exist $\kappa>0$ and $C_4>0$. 
Let $x \in X$.
Then for all Lipschitz test functions $\psi \in Lip(\cal{F}\times\calF)$, there exists $E(t,\psi, x)=O(Lip(\psi) C_x  \vol(D_t)^{-\kappa})$ when $t > C_4 \dis_X(o,x)$ such that

$$ \frac{1}{ \vol(D_t)} \sum_{\gamma \in D^{reg}_t(x)\cap \Gamma} \psi(\gamma_x^+,\gamma_x^-) = \frac{1}{ \vol(\Gamma \backslash G)}   \int_{ \cal{F}\times\calF  } \psi \; \dd \mu_x\otimes\mu_x + E(t,\psi, x) .$$
\end{theorem}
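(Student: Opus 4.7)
My plan is to deduce this from Gorodnik--Nevo's effective lattice point equidistribution, the adaptation amounting to packaging the angular data $\psi$ as a function on $G$ and tracking $x$-dependence.

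First I would reduce to $x = o$. Using $\gamma_x^\pm = h_x (h_x^{-1}\gamma h_x)_o^\pm$ from \eqref{equ.gammax}, the sum rewrites as a sum over the conjugate lattice $h_x^{-1}\Gamma h_x$ (same covolume, still irreducible) applied to $\psi \circ (h_x,h_x)$. Lemma \ref{lem-cartan-points} controls $\|\underline{a}_x(\gamma) - \underline{a}(\gamma)\| \leq 2 \dis_X(o,x)$, so $D_t^{reg}(x) \cap h_x^{-1}\Gamma h_x$ and $D_t^{reg} \cap h_x^{-1}\Gamma h_x$ differ only in a boundary layer of relative volume $O(e^{-c(t - \dis_X(o,x))})$ once $t > C_4\dis_X(o,x)$. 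The Lipschitz constant of $\psi\circ(h_x,h_x)$ is bounded by $Lip(\psi)$ times a factor growing at most exponentially in $\dis_X(o,x)$; this is absorbed in $C_x$.

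Second, I would encode the angular data as a function on $G$. For $g = k\exp(\underline{a}(g))l^{-1}$ with $\underline{a}(g) \in \frak{a}^{++}$, define
\[
F_{\psi,t}(g) := \mathbbm{1}_{D_t^{reg}}(g) \, \psi(k\eta_0, \, l k_- \eta_0).
\]
Regularity of $\underline{a}(g)$ forces $Z_K(\exp \underline{a}(g)) = M$, and $M$ fixes both $\eta_0$ and $k_-\eta_0 = \zeta_0$, so $F_{\psi,t}$ is well-defined. With the Haar measure in Cartan coordinates $dg = J(\underline{a})\,dk\,d\underline{a}\,dl$ and the identity $\mu_o = (k \mapsto k\eta_0)_* \mathrm{Haar}_K$, one computes
\[
\int_G F_{\psi,t}\,dg = \vol(D_t^{reg}) \int_{\calF \times \calF} \psi \, d\mu_o \otimes d\mu_o,
\]
and $\vol(D_t^{reg}) = \vol(D_t)(1 + O(e^{-ct}))$ by standard Harish-Chandra estimates. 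The sum over $\gamma \in \Gamma$ equals $\widetilde F_{\psi,t}(e)$ where $\widetilde F(g) := \sum_\gamma F(\gamma g)$.

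Third, I would recover $\widetilde F_{\psi,t}(e)$ from its average via an effective mean ergodic theorem. Convolve with a bump $\chi_\epsilon$ supported in a ball of radius $\epsilon$ in $G$ with $\int\chi_\epsilon = 1$. Right-translation by $g \in \mathrm{supp}\,\chi_\epsilon$ changes $F_{\psi,t}$ only in a relative-volume $O(\epsilon)$ boundary shell of $D_t^{reg}$ plus an $\epsilon\, Lip(\psi)$ variation of the angular factor, giving
\[
\bigl|\widetilde F_{\psi,t}(e) - \langle \widetilde F_{\psi,t},\chi_\epsilon\rangle_{L^2(\Gamma\backslash G)}\bigr| \ll \epsilon \, Lip(\psi) \, \vol(D_t).
\]
By unfolding and the quantitative mean ergodic theorem of Gorodnik--Nevo applied to the $G$-representation on $L^2_0(\Gamma\backslash G)$, which has a spectral gap because $\Gamma$ is an irreducible lattice,
\[
\langle \widetilde F_{\psi,t},\chi_\epsilon\rangle = \frac{\int_G F_{\psi,t}}{\vol(\Gamma\backslash G)} + O\!\left(\|F_{\psi,t}\|_2\,\|\chi_\epsilon\|_2\, \vol(D_t)^{-\kappa'}\right),
\]
for some $\kappa' > 0$. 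Since $\|F_{\psi,t}\|_2^2 \ll \|\psi\|_\infty^2 \vol(D_t)$ and $\|\chi_\epsilon\|_2 \ll \epsilon^{-\dim G/2}$, optimizing $\epsilon \asymp \vol(D_t)^{-\eta}$ for suitable $\eta > 0$ yields the claimed $E(t,\psi,x) = O(Lip(\psi)\,C_x\,\vol(D_t)^{-\kappa})$.

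The hard part is the input spectral gap for $L^2_0(\Gamma\backslash G)$ that is uniform enough to give polynomial decay in $\vol(D_t)$, which for general semisimple $G$ Gorodnik--Nevo obtain by combining Kazhdan's property $(T)$ in the higher rank factors with Selberg's bound in any rank-one factors via property $(\tau)$; verifying that this gives a single $\kappa$ depending only on $G$ and $\Gamma$ is the content the appendix will need to check. A subsidiary difficulty is keeping the $x$-dependence explicit: the conjugate lattice $h_x^{-1}\Gamma h_x$ is not among the congruence family, but its spectral gap is unchanged by conjugation, while the effect on norms of $\psi$ and the Cartan regularization shell is all polynomial in $e^{\dis_X(o,x)}$ and absorbed into $C_x$.
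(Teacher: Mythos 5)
Your overall architecture coincides with the paper's Appendix proof: conjugate the lattice to reduce to $x=o$, package the angular data as a function on $G$ supported on the domain, compare the lattice sum with a smoothed version via an $\epsilon$-bump, feed this into the quantitative mean ergodic theorem supplied by the spectral gap for irreducible lattices, and optimize $\epsilon$; the main-term computation and the handling of the $x$-dependence (conjugation preserves covolume and spectral gap, and costs a factor $C_x$ in the Lipschitz norm and in the injectivity radius, whence $t>C_4\dis_X(o,x)$) are correct. The first genuine gap is in the smoothing step: the assertion that right-translation by $g\in\calO_\epsilon$ causes only "an $\epsilon\, Lip(\psi)$ variation of the angular factor" fails near the walls of the Weyl chamber. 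The angular components $(k,l)$ of the Cartan decomposition depend on $g$ in a Lipschitz way, with a uniform constant, only when $\underline{a}(g)$ stays at a fixed distance $\delta>0$ from $\partial\frak{a}^+$ (the effective Cartan decomposition, Lemma \ref{lem_effective_cartan}); arbitrarily close to the walls the constant blows up, and this bad region is not an "$O(\epsilon)$ boundary shell". One must excise the slab of elements within distance $\delta$ of the walls and control its lattice points separately, via the volume gain of Lemma \ref{volume_reste} and the packing bound of Lemma \ref{count-reste}; this is exactly why the paper first proves the statement for $S_t=D_t\cap K\widetilde{A}^\delta K$ (Theorem \ref{theo-gorodnik-nevo1}) and only then adds back the almost-singular elements (Corollary \ref{theo-gorodnik-nevo2}).

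The second gap is the passage from $L^2$ to a pointwise statement. Unfolding shows that $\langle\widetilde F_{\psi,t},\chi_\epsilon\rangle$ equals $\|F_{\psi,t}\|_{1}\,\big(\pi(\beta)\widetilde\chi_\epsilon\big)(\Gamma e)$ with $\beta=F_{\psi,t}/\|F_{\psi,t}\|_1$, i.e.\ a pointwise evaluation of an averaged function, whereas the quantitative mean ergodic theorem \eqref{equ.meanergodic} only bounds $\|\pi(\beta)\widetilde\chi_\epsilon-\int\widetilde\chi_\epsilon\|_{2}$. Your displayed error term therefore does not follow "by unfolding and the mean ergodic theorem": one must insert Chebyshev's inequality to produce some $h\in\calO_\epsilon$ at which the pointwise bound holds (this is where injectivity of $\calO_\epsilon\to\Gamma\backslash G$, hence the lower bound on $t$ after conjugation, enters), and then use two-sided majorants and minorants $\rho^{\pm}_{t,\epsilon}$ of the counting function to carry the estimate at $h$ back to the count at $e$ (Lemma \ref{lem_rhot}). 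Relatedly, your one-sided comparison $|\widetilde F_{\psi,t}(e)-\langle\widetilde F_{\psi,t},\chi_\epsilon\rangle|\ll\epsilon\, Lip(\psi)\vol(D_t)$ tacitly uses an a priori count $|\Gamma\cap D_t|\ll\vol(D_t)$, which should be justified by a packing argument. Both repairs are standard and are precisely what Steps 1--3 of the Appendix carry out.
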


This is due to Gorodnik-Nevo in \cite{gorodnikCountingLatticePoints2012}. We include the proof of this version for Lipschitz functions in the appendix. The main term is due to Gorodnik-Oh in \cite{gorodnik_orbits_2007}.
\begin{rem}
    The above formula should also work for more general Parallelotope domains $\calP'= \prod_{\beta \in \Pi} [a_\beta,b_\beta]$ with $0 \leq a_\beta<b_\beta \;,\; \forall \beta \in \Pi$.
    They are unions of $\vert\Pi\vert+1$ parallelotopes defined as above where only one of them has dominant volume growth, the other volumes grow exponentially slower. 
\end{rem}

\subsection{Volume growth of the ball domain}\label{sec:volume-ball}

Applying the Harish-Chandra formula for $D_t$ (see \cite[Chapter I Theorem 5.8]{helgasonGroupsGeometricAnalysis2000}) yields
\begin{equation}\label{equ:borel-harish-chandra}
\vol (D_t) = \int_{\frakD_t} \Pi_{\alpha \in \Sigma^+ }  \sinh(\alpha (Y) )^{m_\alpha } \dd Leb (Y),
\end{equation}
where $m_\alpha\in\N$ is the multiplicity of the positive root $\alpha$.
Now denote by $\rho$ the half of the sum of positive roots with multiplicities and set $\delta_0:=2\max_{Y\in B_{\frak a}(0,1)}\rho(Y)$.
Such choice will allow a uniform volume estimate (Cf. Lemma \ref{lem_volrest_proof} in the Appendix).

\begin{equation}\label{volume_D_t}
\vol(D_t) \sim t^{\frac{ \dim  A-1}{2}} e^{\delta_0 t}.
\end{equation}

\subsection{Volume growth of the parallelotope domain}\label{sec:volume-paral}
In this subsection, we denote by $D_t:=K \exp({t \mathcal{P}}) K$ where $\mathcal{P}$ is a parallelotope domain defined as above.
\begin{lem}\label{lem-volume-paral}
Let $\mathcal{P}:= \prod_{\beta \in \Pi} [0,a_\beta]$ where $a_\beta>0$ for all $\beta\in \Pi$ and $\delta_\calP:= \sup_{\calP} 2 \rho $.
There exits a positive $\delta^-<\delta_\calP$ and $C_G>0$ such that, as $t$ goes to $+ \infty$,
\begin{equation}\label{volume_tP}
\vol(K \exp{(t \mathcal{P})} K) = C_G e^{\delta_\calP t} + O(e^{\delta^- t})  .
\end{equation}
\end{lem}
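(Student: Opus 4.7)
The plan is to apply the Harish-Chandra integration formula \eqref{equ:borel-harish-chandra} with $\frakD_t = t\mathcal{P}$, then fully expand each hyperbolic sine as a difference of exponentials. Writing
\[
\prod_{\alpha\in\Sigma^+}\sinh(\alpha(Y))^{m_\alpha}
= \frac{1}{2^{N}}\sum_{\varepsilon\in\{\pm 1\}^{\Sigma^+}} \mathrm{sgn}(\varepsilon)\, e^{\lambda_\varepsilon(Y)},
\]
where $N=\sum_{\alpha}m_\alpha$, $\mathrm{sgn}(\varepsilon)=\prod_\alpha\varepsilon_\alpha^{m_\alpha}$, and $\lambda_\varepsilon=\sum_{\alpha\in\Sigma^+}\varepsilon_\alpha m_\alpha\alpha$, reduces the computation of $\vol(D_t)$ to a finite sum of integrals of linear exponentials over $t\mathcal{P}$. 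The distinguished sign pattern $\varepsilon\equiv +1$ gives $\lambda_\varepsilon = 2\rho$, which will produce the main term; all other $\varepsilon$ will give exponentially smaller contributions.

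Next, I would pass to coordinates adapted to $\mathcal{P}$. Since the faces of $\mathcal{P}$ are parallel to the walls $\ker\alpha$, $\alpha\in\Pi$, there is a basis $(H_\beta)_{\beta\in\Pi}$ of $\mathfrak{a}$ (proportional to the fundamental coweights) such that $\mathcal{P}=\{\sum_\beta s_\beta H_\beta \mid s_\beta\in[0,a_\beta]\}$, and in particular $\mathcal{P}\subset\overline{\mathfrak{a}^+}$ forces $\alpha(H_\beta)\geq 0$ for every positive root $\alpha$. Any linear form $\lambda$ on $\mathfrak{a}$ is then separable in these coordinates, so
\[
\int_{t\mathcal{P}} e^{\lambda(Y)}\,dY \;=\; |\det H|\,\prod_{\beta\in\Pi}\int_0^{ta_\beta} e^{\lambda(H_\beta)s_\beta}\,ds_\beta,
\]
and each factor equals either $\tfrac{e^{ta_\beta\lambda(H_\beta)}-1}{\lambda(H_\beta)}$ or $ta_\beta$. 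Applying this with $\lambda=2\rho$ and using that each $\rho(H_\beta)>0$ (since $H_\beta\in\overline{\mathfrak{a}^+}\setminus\{0\}$ and $\rho$ is a strict sum of positive roots) yields the main term
\[
C_G\, e^{\delta_{\mathcal{P}} t}
\quad\text{with}\quad
\delta_{\mathcal{P}} \;=\; \sum_{\beta\in\Pi} 2a_\beta\rho(H_\beta) \;=\; \sup_{\mathcal{P}}2\rho,
\]
and $C_G = 2^{-N}|\det H|\prod_\beta (2\rho(H_\beta))^{-1}$, together with lower order pieces of the form $O(e^{(\delta_{\mathcal{P}}-2a_{\beta_0}\rho(H_{\beta_0}))t})$ coming from dropping the $-1$ in each factor.

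The main obstacle is to show that for every other sign pattern $\varepsilon\neq\mathbf{1}$ the exponential growth rate of $\int_{t\mathcal{P}}e^{\lambda_\varepsilon(Y)}\,dY$ is strictly less than $\delta_{\mathcal{P}}$. The key identity is
\[
2\rho-\lambda_\varepsilon \;=\; 2\sum_{\alpha:\,\varepsilon_\alpha=-1} m_\alpha\, \alpha,
\]
which is a nonzero, nonnegative combination of positive roots. Evaluated on any $H_\beta\in\overline{\mathfrak{a}^+}$ this is $\geq 0$, so $\lambda_\varepsilon(H_\beta)\leq 2\rho(H_\beta)$ for every $\beta$; and since the positive roots span $\mathfrak{a}^*$, there exists at least one $\beta_0$ with $\lambda_\varepsilon(H_{\beta_0})<2\rho(H_{\beta_0})$. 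Combining the factor-by-factor analysis,
\[
\sup_{\mathcal{P}}\lambda_\varepsilon \;\leq\; \sum_{\beta:\lambda_\varepsilon(H_\beta)>0} a_\beta\lambda_\varepsilon(H_\beta) \;\leq\; \sum_\beta a_\beta\cdot 2\rho(H_\beta) - a_{\beta_0}(2\rho(H_{\beta_0})-\lambda_\varepsilon(H_{\beta_0})) \;<\; \delta_{\mathcal{P}}.
\]
Each integral $\int_{t\mathcal{P}}e^{\lambda_\varepsilon(Y)}dY$ is therefore at most $P(t)\,e^{(\sup_{\mathcal{P}}\lambda_\varepsilon)t}$ for some polynomial $P$ (the polynomial appearing only from coordinate directions where $\lambda_\varepsilon(H_\beta)=0$). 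Choosing $\delta^-$ strictly between $\max_{\varepsilon\neq\mathbf{1}}\sup_{\mathcal{P}}\lambda_\varepsilon$ and $\delta_{\mathcal{P}}$ absorbs all such polynomial factors and yields the stated estimate $\vol(D_t) = C_G e^{\delta_{\mathcal{P}} t} + O(e^{\delta^- t})$.
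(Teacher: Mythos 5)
Your proof follows essentially the same route as the paper's: the Harish-Chandra formula, expansion of the $\sinh$ product into exponentials, separation of variables in the basis of $\mathfrak{a}$ dual to the simple roots, with the $2\rho$-term giving the main contribution $C_G e^{\delta_{\mathcal{P}}t}$ and the identity $2\rho-\lambda=2\sum k_\alpha\alpha$ (a nonzero nonnegative combination of positive roots) forcing every other exponent to have strictly smaller supremum on $\mathcal{P}$ — indeed your justification of the strict gap $\delta^-<\delta_{\mathcal{P}}$ and of the polynomial factors from directions with $\lambda(H_\beta)=0$ is more explicit than the paper's. One cosmetic point: for $m_\alpha>1$ the binomial expansion of $(e^{\alpha}-e^{-\alpha})^{m_\alpha}$ produces exponents $(m_\alpha-2k)\alpha$ for all $0\le k\le m_\alpha$, not only $\pm m_\alpha\alpha$, so the signed sum should be indexed by one sign per factor (equivalently by tuples $0\le k_\alpha\le m_\alpha$, as in the paper's set $R$); your strict-deficit argument applies verbatim to these additional terms.
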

\begin{proof}
Starting with the Harish-Chandra density, we first develop the hyperbolic sine and factor by $1/2$, then we further develop the products of sums of exponentials using that $2\rho = \sum_{\alpha\in \Sigma^+} m_\alpha \alpha$.  
$$\prod_{\alpha \in \Sigma^+} \sinh(\alpha(Y))^{m_\alpha} 
= 2^{- \sum_{\alpha\in \Sigma^+}m_\alpha} \prod_{\alpha \in \Sigma^+} \Big( e^{\alpha(Y)} - e^{-\alpha(Y)}\Big)^{m_\alpha} = 2^{- \sum_{\alpha\in \Sigma^+}m_\alpha} \bigg(
e^{ 2 \rho(Y)} + \sum_{\omega \in R} p_\omega e^{\omega(Y)} \bigg)$$
where $R = \Big\lbrace 2 \rho - \sum_{\alpha\in \Sigma^+} 2k_\alpha \alpha(Y) \; \vert \; 0 \leq k_\alpha \leq m_\alpha \text{ and } k_\alpha \in \mathbb{Z}_+ \Big\rbrace 
\setminus \lbrace 2 \rho \rbrace$  is a subset of linear forms of $\mathfrak{a}$ and the coefficients $p_\omega \in \mathbb{Z}$ are integers. 
Hence, by the Harish-Chandra formula 
$$
    \vol(K \exp{(t \mathcal{P})} K) = 2^{- \sum_{\alpha\in \Sigma^+}m_\alpha} \bigg( \int_{t\calP}  e^{ 2 \rho(Y)} \dd Leb(Y) + \sum_{\omega \in R} p_\omega \int_{t\calP} e^{\omega(Y)} \dd Leb(Y) \bigg)
$$
Recall that the set of simple roots $\Pi$ form a basis of $\mathfrak{a}^*$ and that any positive root is a vector of non-negative integers in this basis.
For any element $\omega \in R\cup \lbrace 2\rho \rbrace$, denote by $\big( n_\beta(\omega) \big)_{\beta\in \Pi}$ its integer coefficients.
Denote by $J_\Pi$ the Jacobian between the dual basis of $\Pi$ and the canonical basis i.e. such that $\dd Leb(Y) = J_\Pi \prod_{\beta \in \Pi} \dd y_\beta$.
Then by a change of variables, using that $\calP=\prod_{\beta \in \Pi} [0,a_\beta]$ and splitting the exponential in the dual basis we deduce that 
\begin{align*}
    \vol(K \exp{(t \mathcal{P})} K) 
    &= \frac{J_\Pi}{2^{\sum_{\alpha\in \Sigma^+}m_\alpha} } \bigg( \prod_{\beta \in \Pi} \int_0^{t a_\beta} e^{n_\beta(2 \rho) y_\beta} \dd y_\beta 
    + \sum_{\omega\in R} p_\omega  \prod_{\beta \in \Pi} \int_0^{t a_\beta} e^{n_\beta(\omega) y_\beta} \dd y_\beta \bigg) \\
    &= \frac{J_\Pi}{2^{\sum_{\alpha\in \Sigma^+}m_\alpha}} \bigg( 
        \prod_{\beta \in \Pi} \frac{ e^{t n_\beta(2\rho) a_\beta}-1}{ n_\beta(2\rho) }
        + \sum_{\omega\in R} p_\omega \prod_{\beta \in \Pi} \frac{ e^{t n_\beta(\omega) a_\beta} -1}{n_\beta(\omega)}
        \bigg). 
\end{align*}
Finally, set $C_G:=\frac{J_\Pi}{2^{\sum_{\alpha\in \Sigma^+}m_\alpha} \prod_{\beta\in \Pi}n_\beta(2\rho)}$ and 
$\delta_\calP:= \sum_{\beta\in \Pi}n_\beta(2\rho) a_\beta$.
Note that $\delta_\calP = \sup_{\calP} 2\rho$.
By developing the products into sums, the main term is $C_G e^{\delta_\calP t}$ and the remaining terms are $O(e^{\delta^-t})$ for some $\delta^- \in (0,\delta_\calP)$ determined by the simple roots and $(a_\beta)_{\beta\in \Pi}$.
\end{proof}

\subsection{Regularity of volume growths}

\begin{lem}\label{lem-vollip}
The function $t\mapsto \log  \vol(D_t)$ is uniformly locally Lipschitz for $t>1$. 
\end{lem}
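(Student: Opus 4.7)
The plan is to establish a uniform bound of the form $\vol(D_{t+h}) \le e^{Ch}\vol(D_t)$ for all $t>1$ and $h\in[0,1]$, with $C$ independent of $t$. Combined with the symmetric bound obtained by relabelling $t':=t+h$ (and applying the same argument with $h$ replaced by $|h|$), this immediately gives the uniform local Lipschitz property for $\log\vol(D_t)$.

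The key observation, valid in both cases, is the scaling identity $\mathfrak{D}_{t+h} = \lambda\,\mathfrak{D}_t$ with $\lambda := 1+h/t$. Indeed, $B_{\mathfrak{a}}(0,t+h)\cap\mathfrak{a}^+ = \lambda\bigl(B_{\mathfrak{a}}(0,t)\cap\mathfrak{a}^+\bigr)$ and $(t+h)\mathcal{P} = \lambda\cdot t\mathcal{P}$. Applying the Harish-Chandra formula \eqref{equ:borel-harish-chandra} and changing variables $Y=\lambda Z$ gives
\[
\vol(D_{t+h}) \;=\; \lambda^{r}\int_{\mathfrak{D}_t}\prod_{\alpha\in\Sigma^+}\sinh\bigl(\lambda\alpha(Z)\bigr)^{m_\alpha}\,dZ,
\]
where $r=\dim\mathfrak{a}$.

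The next ingredient is the elementary pointwise bound $\sinh(\lambda x) \le e^{(\lambda-1)x}\sinh(x)$ for $\lambda\ge 1$ and $x\ge 0$, which follows from a direct inspection of the two defining exponentials. Taking the product over $\alpha\in\Sigma^+$ (with multiplicities) and using the identity $2\rho = \sum_{\alpha\in\Sigma^+}m_\alpha\alpha$ bounds the dilated density by $e^{(\lambda-1)\cdot 2\rho(Z)}$ times the original Harish-Chandra density. Since $\mathfrak{D}_t$ is contained in a ball of radius $Rt$ for some $R>0$ depending only on the chosen domain ($R=1$ in the ball case; $R=\max_{Z\in\mathcal{P}}\|Z\|$ in the parallelotope case), on $\mathfrak{D}_t$ one has $(\lambda-1)\cdot 2\rho(Z)\le (h/t)\cdot 2\|\rho\|Rt = 2\|\rho\|Rh$.

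Combining the three ingredients yields
\[
\vol(D_{t+h}) \;\le\; \lambda^{r}\,e^{2\|\rho\|Rh}\,\vol(D_t).
\]
Since $\log\lambda = \log(1+h/t)\le h/t\le h$ for $t\ge 1$, taking logarithms gives $\log\vol(D_{t+h})-\log\vol(D_t)\le Ch$ with $C := r+2\|\rho\|R$, independent of $t>1$, and the analogous lower bound follows by symmetry as described above. There is no genuine obstacle here: the lemma is a direct consequence of the scaling symmetry of the domains, the product structure of the Harish-Chandra density, and the one-line $\sinh$ inequality.
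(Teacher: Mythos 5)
Your route is genuinely different from the paper's: the paper passes to polar coordinates $(r,\theta)\in\mathbb{R}^+\times\mathfrak{a}_1^+$ and invokes Lemma A.3 of Eskin--Mozes--Shah, which dominates the Harish-Chandra density $\xi(r,\theta)$ by $C\int_0^r\xi(s,\theta)\,ds$, whereas you exploit the exact scaling $\mathfrak{D}_{t+h}=\lambda\,\mathfrak{D}_t$ and compare the densities pointwise. Your scheme is more elementary and self-contained (and the reduction to a one-sided bound is fine; in fact the lower bound is free from the monotonicity $D_t\subset D_{t+h}$). However, your pivotal $\sinh$ inequality is false as stated: for $\lambda\ge 1$ and $x\ge 0$,
\[
e^{(\lambda-1)x}\sinh(x)=\tfrac12\bigl(e^{\lambda x}-e^{(\lambda-2)x}\bigr)\le\tfrac12\bigl(e^{\lambda x}-e^{-\lambda x}\bigr)=\sinh(\lambda x),
\]
because $(\lambda-2)x\ge -\lambda x$; so the inequality you need goes the wrong way, and it fails substantially near $x=0$ (for $\lambda=2$, $x=0.1$ one has $\sinh(0.2)\approx 0.201$ versus $e^{0.1}\sinh(0.1)\approx 0.111$). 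This is precisely the delicate region, namely the part of $\mathfrak{D}_t$ near the walls of the Weyl chamber where $\alpha(Z)$ is small.

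The repair is short. Writing
\[
\frac{\sinh(\lambda x)}{\sinh(x)}=e^{(\lambda-1)x}\,\frac{1-e^{-2\lambda x}}{1-e^{-2x}}
\]
and checking that $1-e^{-2\lambda x}\le \lambda\,(1-e^{-2x})$ for $\lambda\ge1$, $x\ge0$ (both sides vanish at $x=0$ and the derivative of the difference is $2\lambda(e^{-2x}-e^{-2\lambda x})\ge 0$), one obtains the correct bound $\sinh(\lambda x)\le \lambda\, e^{(\lambda-1)x}\sinh(x)$. The extra factor contributes $\lambda^{\sum_{\alpha\in\Sigma^+} m_\alpha}$ to the product, which is harmless since $\log\lambda\le h/t\le h$ for $t\ge 1$; your final constant becomes $C=r+\sum_{\alpha\in\Sigma^+}m_\alpha+2\|\rho\|R$. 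With this one correction the argument is complete and gives an alternative, external-lemma-free proof of the statement.
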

The proof is given in Lemma \ref{lem_vollip_proof} in the Appendix. This means that there exists $C>0$ such that for all $0<\epsilon<1$ and $t>1$, we have
\[ \vol(D_{t+\epsilon})\leq e^{C\epsilon} \vol(D_t). \]

\section{Counting almost singular lattice elements}

For $0<s<t$, let 
$$D_t^{s} := \lbrace g \in D_t \; \vert \; \underline{a}(g) \in \overline{B_{\frak{a}}( \partial \frak{a}^+ ,s )  } \rbrace $$
be the set of elements in $D_t$ whose Cartan projection have distance at most $s$ to the boundary of the Weyl chamber.

For all $x\in X$, we define a similar set (independently from the choice of $h_x$ such that $h_x o = x$)
$$D_t^{s}(x):= h_x D_t^{s} h_x^{-1}. $$

We want to obtain estimates for singular elements. 
\begin{lem}\label{volume_reste}
There exists $\epsilon_D>0$ which only depends on $D_1$ such that for every $0<\epsilon<\epsilon_D$, there exists $\kappa(\epsilon)>0$ such that for $t>1$ 
\begin{equation}
\frac{  \vol(D_t^{\epsilon t})}{\vol (D_t) } = O( \vol(D_t)^{-\kappa(\epsilon)} ).
\end{equation}
\end{lem}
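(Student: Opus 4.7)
The plan is to reduce the claim to a statement about the linear form $2\rho$ on the Lie algebra and exploit that its maximum on $\mathfrak{D}_t$ is attained at a point strictly inside $\mathfrak{a}^{++}$. Starting from the Harish-Chandra formula \eqref{equ:borel-harish-chandra}, I would apply the elementary bound $\sinh(x)^m \leq 2^{-m} e^{mx}$ for $x \geq 0$ to get
\[
\vol(D_t^{\epsilon t}) \;\leq\; 2^{-\sum_\alpha m_\alpha} \int_{\mathfrak{D}_t \cap \overline{B_{\mathfrak{a}}(\partial \mathfrak{a}^+,\epsilon t)}} e^{2\rho(Y)} \, \dd \leb(Y).
\]
The target is then to show that on the truncated integration domain, $2\rho(Y)$ falls strictly below $\delta_0 t$ (resp.\ $\delta_\calP t$).

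The key geometric input is the following: in both cases (ball or parallelotope), the maximizer $Y^*$ of $2\rho$ on $\mathfrak{D}_1$ lies strictly inside $\mathfrak{a}^{++}$ and is therefore at positive distance $c_0>0$ from $\partial \mathfrak{a}^+$. For the ball, this follows from the fact that the Killing-dual of $\rho$ belongs to $\mathfrak{a}^{++}$ (since $\rho$ is a positive combination of positive roots, hence dominant), so the maximizer on the unit ball is $\rho^\sharp/\|\rho^\sharp\|\in \mathfrak{a}^{++}$. For the parallelotope $\calP = \prod_\beta [0,a_\beta]$ in the simple-root coordinates, the maximizer is the corner $\sum_\beta a_\beta \omega_\beta$ where the $\omega_\beta$ are the fundamental weights dual to $\Pi$; since all $a_\beta>0$, this corner is a strictly positive combination of the $\omega_\beta$ and hence lies in $\mathfrak{a}^{++}$.

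Next I would define
\[
\tilde\delta(\epsilon) := \sup\{ 2\rho(Y) : Y \in \mathfrak{D}_1,\ d(Y,\partial\mathfrak{a}^+) \leq \epsilon\}.
\]
By compactness of $\mathfrak{D}_1$ and continuity of $2\rho$, $\tilde\delta$ is upper semicontinuous at $\epsilon = 0$, and $\tilde\delta(0) < \delta_0$ because the unique maximizer $Y^*$ lies in $\mathfrak{a}^{++}$ at distance $c_0$ from the boundary. Hence there exists $\epsilon_D \in (0,c_0)$ such that $\tilde\delta(\epsilon) < \delta_0$ for all $\epsilon < \epsilon_D$. By the scaling $\mathfrak{D}_t = t\mathfrak{D}_1$ and linearity of $\rho$, any $Y$ in the integration region above satisfies $2\rho(Y) \leq \tilde\delta(\epsilon)\, t$.

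Finally, the Lebesgue volume of the truncated region is polynomial in $t$ (bounded by $\vol(\mathfrak{D}_t) = O(t^{\dim A})$), so
\[
\vol(D_t^{\epsilon t}) \;=\; O\!\bigl(t^{\dim A} e^{\tilde\delta(\epsilon) t}\bigr).
\]
Combined with the asymptotics of \S\ref{sec:volume-ball} and Lemma \ref{lem-volume-paral}, namely $\vol(D_t) \asymp t^{(\dim A - 1)/2} e^{\delta_0 t}$ or $\vol(D_t) \asymp e^{\delta_\calP t}$, the ratio $\vol(D_t^{\epsilon t})/\vol(D_t)$ decays like $e^{-(\delta_0 - \tilde\delta(\epsilon)) t}$ up to a polynomial, so one can set $\kappa(\epsilon) := (\delta_0 - \tilde\delta(\epsilon))/(2\delta_0) > 0$ (say) and absorb the polynomial factor for $t$ large. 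The main obstacle is the strict interior-maximizer statement underlying $\tilde\delta(0) < \delta_0$; once this is clear, the rest is an integration estimate.
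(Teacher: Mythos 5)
Your argument is correct and follows essentially the same route as the paper: bound $\vol(D_t^{\epsilon t})$ by $\int_{\frakD_t^{\epsilon t}}e^{2\rho(Y)}\dd Y$ via Harish--Chandra, show that the maximum of $2\rho$ drops by a definite amount on the $\epsilon t$-neighbourhood of the walls, and compare with the volume asymptotics; the paper simply outsources the middle step to Lemma 9.2 of Gorodnik--Weiss (``strict convexity''), whereas you prove it directly from the fact that the unique maximizer of $2\rho$ on $\frakD_1$ lies in $\mathfrak a^{++}$, which is arguably cleaner for the (non-strictly-convex) parallelotope. One small caveat: your parenthetical justification that $\rho^\sharp\in\mathfrak a^{++}$ (``positive combination of positive roots, hence dominant'') is not a valid implication in general --- the standard argument is that $s_\alpha$ permutes $\Sigma^+\setminus\R\alpha$, so $s_\alpha\rho=\rho-c\alpha$ with $c>0$ and hence $\langle\rho,\alpha\rangle>0$ for every simple $\alpha$.
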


\begin{proof}
The proof is similar to Lemma 9.2 and 9.4 in \cite{gorodnikDistributionLatticeOrbits2007}. Let $\frakD_t^s=\{Y\in\frakD_t,\, d(Y,\partial \frak a^+)\leq s \}$. Recall $\frakD_t=\frak a^+\cap B_{\frak a}(0,t)$ for ball domain and $\frakD_t=\calP(c_\alpha)$ for parallelotope domain. Then by \eqref{equ:borel-harish-chandra}, we have
\begin{equation}\label{equ:vol uper}
 \vol(D_t^s)\leq \int _{\frakD_t^s}e^{2\rho(Y)}\dd Y. 
\end{equation}
By Lemma 9.2 in \cite{gorodnikDistributionLatticeOrbits2007}, if $\epsilon$ smaller than some constant $\epsilon_D$, then by the strict convexity of $B_{\frak a}(0,1)$ and $\calP(c_\alpha)$, there exists $\kappa'(\epsilon)>0$ such that
\[
\max_{Y\in \frakD_1^\epsilon}2\rho(Y)\leq \delta_0-\kappa'(\epsilon). \]
So by \eqref{equ:vol uper}, we have $ \vol(D_t^{\epsilon t})\ll t^{\dim  A}e^{(\delta_0-\kappa'(\epsilon))t}$. Due to the asymptotic of $ \vol(D_t)$ \eqref{volume_D_t}, the proof is complete.
\end{proof}

\begin{lem}\label{count-reste}
Let $\Gamma$ be a lattice in $G$, then
for all $t>1$ and $\epsilon<\epsilon_D$,
\begin{equation*}
    \frac{|\Gamma\cap D_t^{\epsilon t}|}{ \vol(D_t)}=O( \vol(D_t)^{-\kappa(\epsilon)}).
\end{equation*}
\end{lem}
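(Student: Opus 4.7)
The plan is to run the standard lattice point counting argument using an injectivity neighborhood of the identity, and then feed the resulting volume bound through Lemma \ref{volume_reste} and Lemma \ref{lem-vollip}. First, since $\Gamma$ is a lattice, it is discrete in $G$, so I can fix a relatively compact open neighborhood $U$ of the identity such that $\{\gamma U\}_{\gamma\in\Gamma}$ is a pairwise disjoint family. Set $c := 1 + \sup_{u\in U}\|\underline{a}(u)\| < \infty$.

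Next I want to show the containment $\Gamma\cap D_t^{\epsilon t}\subset \bigsqcup \gamma U \subset D_{t+c'}^{\epsilon t+c'}$ for some constant $c'>0$ depending only on $U$ and on the shape of $\frakD_1$. The key input is Lemma \ref{lem-cartan-diff}: for $\gamma\in\Gamma\cap D_t^{\epsilon t}$ and $u\in U$, $\|\underline{a}(\gamma u)-\underline{a}(\gamma)\|\leq\|\underline{a}(u)\|\leq c$, so $\underline{a}(\gamma u)\in\frak a^+$ lies within Euclidean distance $c$ of $\underline{a}(\gamma)$ and within distance $\epsilon t+c$ of $\partial\frak a^+$. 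For the ball domain this immediately gives $\underline{a}(\gamma u)\in B_{\frak a}(0,t+c)\cap\frak a^+$; for the parallelotope domain $\frakD_t=t\calP$, a $c$-neighborhood of $t\calP$ intersected with $\frak a^+$ is contained in $(t+c')\calP$ for some $c'$ proportional to $c/\min_\beta a_\beta$. Either way, $\gamma U\subset D_{t+c'}^{\epsilon t+c'}$.

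Disjointness then yields the volume inequality
\begin{equation*}
|\Gamma\cap D_t^{\epsilon t}|\cdot \vol(U)\leq \vol\bigl(D_{t+c'}^{\epsilon t+c'}\bigr).
\end{equation*}
To apply Lemma \ref{volume_reste} I rewrite $\epsilon t+c'\leq \tilde\epsilon(t+c')$ with $\tilde\epsilon:=\epsilon+c'/(t+c')$. For $t$ large this $\tilde\epsilon$ is still smaller than $\epsilon_D$ (possibly after shrinking the admissible range of $\epsilon$ by a constant factor at the outset, which is harmless). Lemma \ref{volume_reste} then gives $\vol(D_{t+c'}^{\tilde\epsilon(t+c')}) = O(\vol(D_{t+c'})^{1-\kappa(\tilde\epsilon)})$, and Lemma \ref{lem-vollip} gives $\vol(D_{t+c'})\ll \vol(D_t)$ with a uniform multiplicative constant $e^{Cc'}$. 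Combining these and defining $\kappa(\epsilon):=\kappa(\tilde\epsilon)>0$ yields the desired bound. The bounded-$t$ regime is immediate: $|\Gamma\cap D_t^{\epsilon t}|$ is uniformly bounded since $D_t$ is relatively compact and $\Gamma$ is discrete, while $\vol(D_t)$ is bounded below.

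The only minor obstacle is the parameter accounting in the third paragraph: the additive enlargement $\epsilon t\mapsto \epsilon t+c'$ is not a multiplicative enlargement of $\epsilon$, so I cannot directly feed it into Lemma \ref{volume_reste}, and instead must absorb the additive $c'$ into a slightly larger $\tilde\epsilon$. Everything else is a routine application of the lattice/volume dictionary and the volume estimates already established.
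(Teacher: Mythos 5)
Your proposal is correct and follows essentially the same route as the paper: thicken each lattice point by a small neighborhood $U$ of the identity on which translates are disjoint, use Lemma \ref{lem-cartan-diff} to show the thickened set lies in $D_{t+c'}^{\epsilon t + c'}$, and conclude from the volume estimates of Lemma \ref{volume_reste} and Lemma \ref{lem-vollip}. Your explicit handling of the additive enlargement $\epsilon t \mapsto \epsilon t + c'$ via a slightly larger $\tilde\epsilon$ is a point the paper glosses over (it invokes Lemma \ref{volume_reste} directly on $D_{t+\ell\epsilon'}^{\epsilon t+\ell\epsilon'}$), so your version is if anything marginally more careful.
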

\begin{proof}
Let $\epsilon'>0$ be a small constant such that the ball centered at $e$ with radius $\epsilon'$ satisfies $B(e,\epsilon')^2\cap\Gamma=\{e\}$. Then we have
\begin{equation*}
    |\Gamma\cap D_t^{\epsilon t}|\leq \frac{ \vol( B(e,\epsilon')D_t^{\epsilon t})}{ \vol(B(e,\epsilon'))}.
\end{equation*}
By Lemma \ref{lem-cartan-diff}, we have for $h'\in B(e,\epsilon')$ and $h\in D_t^{\epsilon t}$,
\[ \Vert \underline{a}( h' h) - \underline{a}(h) \Vert \leq  \Vert \underline{a}(h') \Vert\leq \ell\epsilon', \]
for some $\ell>0$. Therefore the product set
\[ B(e,\epsilon')D_t^{\epsilon t}\subset D_{t+\ell\epsilon'}^{\epsilon t+\ell\epsilon'}.  \]
Hence we have
\[|\Gamma\cap D_t^{\epsilon t}|\leq  \frac{ \vol(D_{t+\ell\epsilon'}^{\epsilon t+\ell\epsilon'})}{ \vol(B(e,\epsilon'))}, \]
which is $O( \vol(D_t)^{1-\kappa(\epsilon)})$ due to Lemma \ref{lem-vollip} and \eqref{volume_reste}.
\end{proof}

As a corollary, we have
\begin{lem}\label{lem-dtx}
For $0<\epsilon<\epsilon_D/2$, $t>1$ and $x\in X$ with $\dis_X(o,x)<\min\{\frac{\epsilon}{2(1-2\epsilon)},\frac{\kappa(2\epsilon)}{4(1-\kappa(2\epsilon))} \} t$, we have
\begin{equation*}
    \frac{|\Gamma\cap D_t^{\epsilon t}(x)|}{ \vol(D_t)}=O( \vol(D_t)^{-\kappa(2\epsilon)/2}).
\end{equation*}
\end{lem}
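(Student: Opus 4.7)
The plan is to transfer the $x$-based set $D_t^{\epsilon t}(x)$ to a comparable set based at the origin, then invoke Lemma \ref{count-reste}, and finally pay the resulting inflation with the volume growth estimate. Set $d:=\dis_X(o,x)$ and fix $h_x\in G$ with $h_xo=x$. By definition $\gamma\in D_t^{\epsilon t}(x)$ means $h_x^{-1}\gamma h_x\in D_t^{\epsilon t}$, i.e. $\underline a(h_x^{-1}\gamma h_x)\in \overline{B_{\frak a}(\partial \frak a^+,\epsilon t)}$ and $\|\underline a(h_x^{-1}\gamma h_x)\|\le t$. Lemma \ref{lem-cartan-points} (with $y=o$) gives
\[
\bigl\|\underline a(\gamma)-\underline a(h_x^{-1}\gamma h_x)\bigr\|\le 2d,
\]
so $\|\underline a(\gamma)\|\le t+2d$ and $\dis(\underline a(\gamma),\partial\frak a^+)\le \epsilon t+2d$. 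In other words $\gamma\in D_{t+2d}^{\epsilon t+2d}$.

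Next I absorb the shift of the boundary-neighbourhood parameter into a doubling of $\epsilon$. A direct computation shows that the first bound $d<\tfrac{\epsilon}{2(1-2\epsilon)}\,t$ is exactly the inequality
\[
\epsilon t+2d\le 2\epsilon(t+2d),
\]
so with $t':=t+2d$ we obtain the inclusion $D_t^{\epsilon t}(x)\cap\Gamma\subset D_{t'}^{2\epsilon t'}\cap\Gamma$. Since $\epsilon<\epsilon_D/2$, the parameter $2\epsilon$ satisfies $2\epsilon<\epsilon_D$, so Lemma \ref{count-reste} applied at time $t'$ yields
\[
|\Gamma\cap D_t^{\epsilon t}(x)|\le |\Gamma\cap D_{t'}^{2\epsilon t'}|=O\!\bigl(\vol(D_{t'})^{\,1-\kappa(2\epsilon)}\bigr).
\]

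The last step is to bound $\vol(D_{t'})$ by a controlled power of $\vol(D_t)$. Using the asymptotics \eqref{volume_D_t} in the ball case and Lemma \ref{lem-volume-paral} in the parallelotope case (equivalently, iterating the log-Lipschitz bound of Lemma \ref{lem-vollip}), one gets, uniformly in $t>1$ and $0\le 2d\le t$,
\[
\vol(D_{t'})\ll \vol(D_t)\cdot e^{\delta_0\cdot 2d}\asymp \vol(D_t)^{\,1+2d/t},
\]
up to polynomial-in-$t$ factors that are harmless since $\log\vol(D_t)\asymp t$. Hence
\[
\frac{|\Gamma\cap D_t^{\epsilon t}(x)|}{\vol(D_t)}\ll \vol(D_t)^{\,(1+2d/t)(1-\kappa(2\epsilon))-1}.
\]
The second bound $d<\tfrac{\kappa(2\epsilon)}{4(1-\kappa(2\epsilon))}\,t$ is precisely the inequality $(1+2d/t)(1-\kappa(2\epsilon))\le 1-\tfrac{\kappa(2\epsilon)}{2}$, which gives the desired conclusion
\[
\frac{|\Gamma\cap D_t^{\epsilon t}(x)|}{\vol(D_t)}=O\!\bigl(\vol(D_t)^{-\kappa(2\epsilon)/2}\bigr).
\]

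The only delicate point is the bookkeeping between the two hypotheses on $d$: the first one turns a "$\epsilon t$-neighbourhood of the walls for a shifted time" into a "$2\epsilon t'$-neighbourhood", while the second one absorbs the volume inflation $\vol(D_{t'})/\vol(D_t)$ into a loss of a factor $\kappa(2\epsilon)/2$ in the exponent; I expect this arithmetic together with justifying the volume comparison uniformly in $t$ (using either the Harish-Chandra asymptotics or the log-Lipschitz Lemma \ref{lem-vollip}) to be the only technical step.
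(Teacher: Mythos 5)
Your proposal is correct and follows essentially the same route as the paper: reduce to an origin-based set via Lemma \ref{lem-cartan-points}, use the first hypothesis on $\dis_X(o,x)$ to absorb the shift into a doubled parameter $2\epsilon$ so that Lemma \ref{count-reste} applies at time $t+2\dis_X(o,x)$, and use the second hypothesis together with the volume asymptotics \eqref{volume_D_t} to convert $\vol(D_{t+2\dis_X(o,x)})^{1-\kappa(2\epsilon)}$ into $O(\vol(D_t)^{1-\kappa(2\epsilon)/2})$. The arithmetic identification of the two hypotheses with the two inequalities you need matches the paper's exactly.
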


\begin{proof}
By Lemma \ref{lem-cartan-points}, we have
\[\| a_o(\gamma)-a_x(\gamma )\|\leq 2 \dis_X(x,o). \]
Therefore by Lemma \ref{count-reste} with $2\epsilon$ we obtain
\begin{align*}
|\Gamma\cap D_t^{\epsilon t}(x)|&\leq |\Gamma\cap D_{t+2 \dis_X(x,o)}^{\epsilon t+2 \dis_X(x,o)}|\ll  \vol(D_{t+2 \dis_X(x,o)})^{1-\kappa(2\epsilon)},
\end{align*}
where we use the hypothesis that $\epsilon t+2 \dis_X(x,o)\leq 2\epsilon( t+2 \dis_X(x,o))$.

By hypothesis, we have
\[ (1-\kappa(2\epsilon))(t+2\dis_X(o,x))\leq (1-\kappa(2\epsilon)/2)t. \]
Then by \eqref{volume_D_t}, we have
\begin{align*}
 \vol(D_{t+2 \dis_X(x,o)})^{1-\kappa(2\epsilon)}=O( \vol(D_t)^{1-\kappa(2\epsilon)/2}).
\end{align*}
The proof is complete.
\end{proof}

As a corollary, combined with Lemma \ref{lem-dtx} we have

\begin{lem}\label{lem-dtxt}
There exist $C_5>0$ and $C>0$ such that if $t> C_5\dis_X(o,x)$, then 
\[ \frac{|\Gamma\cap D_t(x)|}{ \vol(D_t)}\leq C. \]
\end{lem}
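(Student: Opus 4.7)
The plan is to decompose $D_t(x)$ into a "near singular" part and a "regular" part, handling each with a different tool already available in the excerpt.

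First, fix an auxiliary constant $\epsilon\in(0,\epsilon_D/2)$, to be thought of as small but chosen once and for all. I would write
\[
|\Gamma\cap D_t(x)| \;\leq\; |\Gamma\cap D_t^{\epsilon t}(x)| \;+\; |\Gamma\cap D_t^{reg}(x)|,
\]
where $D_t^{reg}(x)$ is the set of $x$-Cartan regular elements in $D_t(x)$ (those whose Cartan projection at $x$ lies in $\mathfrak{a}^{++}$). The point is that $D_t(x)\setminus D_t^{reg}(x)$ consists of elements with Cartan projection on $\partial\mathfrak{a}^+$ and so is contained in $D_t^{\epsilon t}(x)$ for any positive $\epsilon$.

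Next I would bound each piece separately. The near-singular term is immediately controlled by Lemma \ref{lem-dtx}: provided $\dis_X(o,x)$ is smaller than the explicit linear-in-$t$ threshold appearing there, one has
\[
|\Gamma\cap D_t^{\epsilon t}(x)| \;=\; O\!\big(\vol(D_t)^{1-\kappa(2\epsilon)/2}\big).
\]
For the regular piece, I would apply Theorem \ref{theo-gorodnik-nevo} with the Lipschitz test function $\psi\equiv 1$ on $\calF\times\calF$. Since $\mu_x$ is a probability measure, the main term equals $\vol(D_t)/\vol(\Gamma\backslash G)$, while the error $E(t,1,x)=O(C_x\,\vol(D_t)^{-\kappa})$ satisfies $C_x\,\vol(D_t)^{-\kappa}\ll 1$ whenever $t$ exceeds a sufficiently large linear multiple of $\dis_X(o,x)$ (using $\log C_x\ll\dis_X(o,x)$ together with $\vol(D_t)\gg e^{\delta_0 t}$ from \eqref{volume_D_t}, and similarly for parallelotope domains via Lemma \ref{lem-volume-paral}). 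This yields
\[
|\Gamma\cap D_t^{reg}(x)| \;\leq\; \Big(\tfrac{1}{\vol(\Gamma\backslash G)}+O(1)\Big)\vol(D_t).
\]

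Finally, take $C_5$ to be the maximum of $C_4$ from Theorem \ref{theo-gorodnik-nevo} and the reciprocal of the threshold constant $\min\{\tfrac{\epsilon}{2(1-2\epsilon)},\tfrac{\kappa(2\epsilon)}{4(1-\kappa(2\epsilon))}\}$ from Lemma \ref{lem-dtx}, enlarged if necessary so that $C_x\,\vol(D_t)^{-\kappa}=O(1)$. Under $t>C_5\dis_X(o,x)$, adding the two bounds, dividing by $\vol(D_t)$, and noting that the near-singular contribution is a negative power of $\vol(D_t)$ (hence $o(1)$) produces a uniform bound $C$. The only mildly delicate step is tracking that the $x$-dependence in Gorodnik--Nevo's error is absorbed by the chosen $C_5$; everything else is an assembly of the preceding lemmas.
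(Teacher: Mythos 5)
Your proposal is correct and follows essentially the same route as the paper: split $\Gamma\cap D_t(x)$ into the $x$-Cartan regular part, controlled by Theorem \ref{theo-gorodnik-nevo} with $\psi\equiv 1$ (absorbing the $C_x$ in the error term by taking $t$ a large enough multiple of $\dis_X(o,x)$), and the remainder, which lies in $D_t^{\epsilon t}(x)$ and is controlled by Lemma \ref{lem-dtx}. The paper's proof is just a terser version of exactly this argument.
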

\begin{proof}
Due to the definition $C_x=C_1e^{C_0\dis_X(o,x)}$, we know that if $t\gg \dis_X(o,x)$, then by taking $\psi=1$ Theorem \ref{theo-gorodnik-nevo} implies that
\[ |\Gamma\cap D_t^{reg}(x)|\ll  \vol(D_t). \]
For the part $|\Gamma\cap(D_t(x)-D_t^{reg}(x))|$, if $t\gg \dis_X(o,x)$, then we can use Lemma \ref{lem-dtx} to bound it. Combining these two parts, we obtain the lemma.
\end{proof}

\section{A configuration for being loxodromic}

Recall Definition \ref{def-Jordan-proj} that the elements in $G$ of Jordan projection in $\frak{a}^{++}$ are called loxodromic.
Equivalently, loxodromic elements are conjugated to elements in $A^{++}M$.
Let $g\in G^{lox}$ be a loxodromic element, choose $h_g \in G$ such that $h_g^{-1} gh_g \in \exp(\lambda(g)) M$.
Note that $g h_gM= h_g e^{\lambda(g)}M$. 
Denote by $g^+:= h_g \eta_0$ (resp. $g^-:= h_g \zeta_0$) the attracting (resp. repelling) fixed points in $\calF$ for the action of $g$. They are independent of the choice of $h_g$.
Hence for every $Y \in \frak{a}$, in Hopf coordinates 
\begin{equation}\label{eq_gg+}
    g(g^+, g^- , Y)=(g^+,g^-, Y+ \lambda(g)).
\end{equation}

\subsection{The Furstenberg boundary}
\paragraph{Representations of a semisimple Lie group}
Let us first recall a few facts about representations of a semisimple Lie group.
Let $(V,\rho)$ be a representation of $G$ into a real vector space of finite dimension.
For every real character $\chi : \frak a \rightarrow \mathbb{R}$, we denote by
$$ V_\chi := \lbrace v \in V \; \vert \; \rho(u)v=\chi(u)v, \; \forall u \in \frak a \rbrace $$
the associated vector space. 
The set of \emph{restricted weights} is the subset 
$$ \Sigma(\rho):= \lbrace \chi \; \vert \; V_\chi \neq \{ 0\} \rbrace. $$
They are partially ordered using the positive Weyl chamber as follows.
$$ (\chi_1 \leq \chi_2) \Leftrightarrow ( \chi_1 (u) \leq \chi_2(u), \; \forall u \in \frak a^+  ). $$
When the representation $\rho$ is irreducible, the set of restricted weights admits a maximum, called the \emph{maximal restricted weight}.
The irreducible representation $\rho$ is \emph{proximal} when the associated vector space of the maximal restricted weight is a line.

\paragraph{Restricted weights of the fundamental representations}
For the adjoint representation, the set of restricted weights coincides with the set of restricted roots $\Sigma$. Denote by $\Sigma^+$ the set of positive restricted roots and by $\Pi \subset \Sigma^+$ the set of simple roots. 
Tits (\cite[Lemma 6.32]{benoist-quint}) proved that for every $\alpha\in \Pi$, there exists an irreducible and proximal representation $(\rho_\alpha,V^\alpha)$ of $G$ such that the restricted weights are in
\begin{equation}\label{eq-restricted-weights}
\bigg\lbrace \chi^\alpha,\; \chi^\alpha - \alpha ,\; \chi^\alpha -\alpha - \sum_{\beta \in \Pi } n_\beta \beta \; \bigg\vert \; n_\beta \in \mathbb{Z}_+ \bigg\rbrace.
\end{equation}
Furthermore, the maximal weights $(\chi^\alpha)_{\alpha\in \Pi}$ of these representations form a basis of $\frak{a}^*$.

\paragraph{Distances in the projective space}
For every $\alpha\in \Pi$, we choose a Euclidean norm $\Vert . \Vert$ on $V^\alpha$ such that the elements in $\rho_\alpha(A)$ (resp. $\rho_\alpha(K)$) are symmetric (resp. unitary).
Note that $\Vert \rho_\alpha(a) \Vert=\exp( \chi^\alpha(\log a))$  for all $a\in A^+$.
Abusing notation, we denote by $\Vert . \Vert$ the induced  Euclidean norm on $V^\alpha \wedge V^\alpha$.
Remark that for all $a\in A^+$,
\begin{equation}\label{norm-wedge}
     \Vert \wedge_2 \rho_\alpha(a) \Vert =\exp( (2\chi^\alpha- \alpha)\log a) .
\end{equation}
We define the distance in the projective space for all $x,y\in\mathbb{P}(V^\alpha)$ as follows,
\begin{equation}\label{defin-dist-proj}
\dis_\alpha (x,y):= \frac{\Vert v_x \wedge v_y \Vert}{\Vert v_x \Vert. \Vert v_y\Vert}
\end{equation}
independently of the choice of $v_x, v_y \in V$ such that $x=\mathbb{R}v_x$ and $y=\mathbb{R}v_y$.
Note that this distance is equivalent to the induced Riemannian distance on $\mathbb{P}(V^\alpha)$, since we are taking the sine of the angle in $[0,\pi/2]$ between two lines.
For all $x \in \mathbb{P}(V^\alpha)$ and $\varepsilon \in (0,1]$, denote by $B(x,\varepsilon)$ the ball centered at $x$ of radius $\varepsilon$ for this distance.

Denote by $x_+^\alpha \in \mathbb{P}(V^\alpha)$ the projective point corresponding to the eigenspace for the maximal restricted weight $\chi^\alpha$. 
Since $\rho_\alpha(A)$ are symmetric endomorphisms for the Euclidean norm on $V^\alpha$, the orthogonal hyperplane to $x_+^\alpha$ is $\rho_\alpha(A)$-invariant and abusing notations we write 
$$(x_+^\alpha)^\perp = \oplus_{\chi \in \Sigma(\rho_\alpha) \setminus \lbrace \chi^\alpha \rbrace } V^\alpha_{\chi}.$$
For all projective point $y\in \mathbb{P}(V^\alpha)$, we denote by $y^\perp \subset V^\alpha$ the orthogonal hyperplane and by $\varphi_y \in (V^\alpha)^*$ a linear form such that $\ker \varphi_y =y^\perp$. 
For all $x,y \in \mathbb{P}(V^\alpha)$, we define (independently of the choice of non-zero $v_x \in x$)
\begin{equation}\label{defin-delta-proj}
\delta_\alpha(y,x):= \frac{ \vert \varphi_y(v_x) \vert }{ \Vert \varphi_y \Vert. \Vert v_x \Vert}.
\end{equation}
By properties of the norms and distances on the projective space, the previous function is symmetric and for all $x,y\in \mathbb{P}(V^\alpha)$,
\begin{equation}\label{defin-delta-dist}
\delta_\alpha(y,x)= \delta_\alpha(x,y)= \dis_\alpha (y^\perp, x) = \dis_\alpha (y, x^\perp).
\end{equation}
Hence $\dis_\alpha (x_+^\alpha, (x_+^\alpha)^\perp)=1.$
For all $\varepsilon>0$, denote by 
$\cal{V}_\varepsilon ( (x_+^\alpha)^\perp  )^\complement := \lbrace y \in \mathbb{P}(V^\alpha) \;\vert \; \delta_\alpha(y, x_+^\alpha) \geq \varepsilon \rbrace.$ 
We give a proof of the following classical dynamical lemma for completeness.
\begin{lem}\label{lem_rhoa-action}
Let $\varepsilon >0$ and $a \in A^{+}$. Assume there exists $\alpha\in \Pi$ such that $\alpha (\log a) \geq -2\log (\varepsilon)$.
Then $\rho_\alpha (a) \cal{V}_\varepsilon ((x_+^\alpha)^\perp )^\complement  \subset B(x_+^\alpha,\varepsilon)$.  
\end{lem}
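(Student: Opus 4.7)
The plan is to decompose $V^\alpha$ into the top weight line and its orthogonal complement, use the hypothesis to bound $\lVert w\rVert/|c|$ from the definition of $\mathcal{V}_\varepsilon$, then exploit the spectral gap created by $\alpha(\log a)$ to push $\rho_\alpha(a)v$ close to $x_+^\alpha$.

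Concretely, let $v_+$ be a unit vector spanning the top weight line $V^\alpha_{\chi^\alpha}=x_+^\alpha$. Because $\rho_\alpha(a)$ is symmetric for the chosen Euclidean norm, the orthogonal decomposition $V^\alpha = \mathbb{R}v_+ \oplus (x_+^\alpha)^\perp$ is $\rho_\alpha(a)$-stable. First I would take $y\in\mathcal{V}_\varepsilon((x_+^\alpha)^\perp)^\complement$, pick a representative $v=cv_+ + w$ with $w\in(x_+^\alpha)^\perp$, and translate the condition $\delta_\alpha(y,x_+^\alpha)\geq\varepsilon$, via \eqref{defin-delta-dist}, into $|c|/\sqrt{c^2+\lVert w\rVert^2}\geq\varepsilon$. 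A normalisation $c=1$ then yields $\lVert w\rVert\leq \sqrt{1-\varepsilon^2}/\varepsilon\leq 1/\varepsilon$.

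Next, using the description \eqref{eq-restricted-weights} of the restricted weights of $(\rho_\alpha,V^\alpha)$, every weight on $(x_+^\alpha)^\perp$ has the form $\chi^\alpha-\alpha-\sum_\beta n_\beta\beta$ with $n_\beta\in\mathbb{Z}_+$. Since $a\in A^+$, the sum $\sum_\beta n_\beta\beta(\log a)\geq 0$, so $\lVert \rho_\alpha(a)w\rVert \leq e^{(\chi^\alpha-\alpha)(\log a)}\lVert w\rVert$. Meanwhile $\rho_\alpha(a)v = e^{\chi^\alpha(\log a)}v_+ + \rho_\alpha(a)w$, and the orthogonality of the two summands gives $\lVert\rho_\alpha(a)v\rVert\geq e^{\chi^\alpha(\log a)}$. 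Plugging these bounds into \eqref{defin-dist-proj} and observing that $v_+\wedge \rho_\alpha(a)v = v_+\wedge\rho_\alpha(a)w$ with the wedge of orthogonal unit-normalisable vectors, I would obtain
\begin{equation*}
\dis_\alpha(\rho_\alpha(a)y,x_+^\alpha) = \frac{\lVert \rho_\alpha(a)w\rVert}{\lVert \rho_\alpha(a)v\rVert} \leq e^{-\alpha(\log a)}\lVert w\rVert \leq \frac{e^{-\alpha(\log a)}}{\varepsilon}.
\end{equation*}

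Finally, the hypothesis $\alpha(\log a)\geq -2\log\varepsilon$ converts to $e^{-\alpha(\log a)}\leq\varepsilon^2$, and the right-hand side above becomes $\leq \varepsilon$, giving $\rho_\alpha(a)y\in B(x_+^\alpha,\varepsilon)$ as desired. I do not anticipate any serious obstacle: the only non-trivial ingredient is the weight description \eqref{eq-restricted-weights}, which produces exactly the spectral gap of size $e^{\alpha(\log a)}$ between the top weight $\chi^\alpha$ and the next restricted weights on $(x_+^\alpha)^\perp$. The mild subtlety is to make sure one uses the orthogonality of $v_+$ with $\rho_\alpha(a)w$ both when lower-bounding $\lVert \rho_\alpha(a)v\rVert$ and when simplifying the wedge in the numerator of $\dis_\alpha$, so that the computation does not lose factors depending on $\lVert w\rVert$.
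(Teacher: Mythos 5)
Your proof is correct. The paper takes a shorter route: it computes the gap between the first and second singular values, $\gamma_{1,2}(\rho_\alpha(a))=\|\wedge_2\rho_\alpha(a)\|/\|\rho_\alpha(a)\|^2=e^{-\alpha(\log a)}$, and then invokes Lemma 14.2(iii) of Benoist--Quint, which gives $\dis_\alpha(\rho_\alpha(a)y,x_+^\alpha)\,\delta_\alpha(x_+^\alpha,y)\leq\gamma_{1,2}(\rho_\alpha(a))$ as a black box; combined with $\delta_\alpha(y,x_+^\alpha)\geq\varepsilon$ this yields the claim. What you have done is unwind that cited lemma in the special case at hand: since $\rho_\alpha(a)$ is symmetric and diagonal on the orthogonal weight decomposition, the abstract singular-value argument becomes the explicit estimate $\dis_\alpha(\rho_\alpha(a)y,x_+^\alpha)=\|\rho_\alpha(a)w\|/\|\rho_\alpha(a)v\|\leq e^{-\alpha(\log a)}\|w\|/|c|$, and the translation of $\delta_\alpha(y,x_+^\alpha)\geq\varepsilon$ into $\|w\|/|c|\leq 1/\varepsilon$ is exactly the role $\delta_\alpha$ plays in the Benoist--Quint inequality. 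The quantity $e^{-\alpha(\log a)}$ you extract from the weight description \eqref{eq-restricted-weights} is precisely the paper's $\gamma_{1,2}(\rho_\alpha(a))$, so the mechanism is identical; your version is self-contained and makes the role of the orthogonality of the weight spaces explicit, at the cost of a slightly longer computation. One cosmetic point: the hypothesis gives $e^{-\alpha(\log a)}\leq\varepsilon^2$ rather than a strict inequality, but since $\|w\|/|c|\leq\sqrt{1-\varepsilon^2}/\varepsilon<1/\varepsilon$ your final bound is in fact strict, so membership in the open ball is not an issue.
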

\begin{proof}
We use the notations in §14.1 \cite{benoist-quint}. 
Let $\alpha \in \Pi$ such that $\alpha(\log a) \geq -2\log(\varepsilon)$.
Recall \eqref{norm-wedge} that $\Vert \wedge_2 \rho_\alpha(a) \Vert =\exp( (2\chi^\alpha- \alpha)\log a)$ and $\Vert \rho_\alpha(a) \Vert=\exp(\chi^\alpha(\log a))$.
We compute the gap between the first and second eigenvalues of $\rho_\alpha(a)$,
\[\gamma_{1,2}(\rho_\alpha(a)):=\frac{\|\wedge_2 \rho_\alpha(a)\|}{\|\rho_\alpha(a) \|^2}=e^{-\alpha(\log a)}. \]
By assumption, $e^{-\alpha(\log a)} < \varepsilon^2$, hence $\gamma_{1,2}(\rho_\alpha(a)) < \varepsilon^2$.
Then we apply Lemma 14.2 (iii) in \cite{benoist-quint}, for every $y \in \cal{V}_\varepsilon ( (x_+^\alpha)^\perp )^\complement$,
\[ \dd_\alpha( \rho_\alpha(a) y ,x_+^\alpha)\delta_\delta(x_+^\alpha, y)<\gamma_{1,2}(\rho_\alpha(a)). \]
By definition $\delta_\alpha (y,x_+^\alpha) \geq \varepsilon$, hence $\dd_\alpha(\rho_\alpha(a)y,x_+^\alpha) <\varepsilon$ and we deduce that $\rho_\alpha(a) \cal{V}_\varepsilon ( (x_+^\alpha)^\perp)^\complement \subset B(x_+^\alpha,\varepsilon)$.
\end{proof}

\paragraph{Distances and balls in $\cal{F}$}
Using the fundamental representations $(\rho_\alpha)_{\alpha\in \Pi}$, Tits (Cf. \cite[Lemma 6.32]{benoist-quint}) also proved that the following map is an embedding:
\begin{align*}
\cal{F} &\longrightarrow \prod_{\alpha\in \Pi} \mathbb{P}(V^\alpha) \\
\xi=k \eta_0 & \longmapsto (x^\alpha(\xi))_{\alpha\in \Pi}:= (\rho_\alpha(k) x_+^\alpha)_{\alpha \in \Pi}.
\end{align*}

Denote by $\dis_P$ Riemannian distance on the product space $\Pi_{\alpha\in\Pi}\P(V^\alpha)$. 
Recall that on any product space $X\times Y$ where $(X,g_1)$ and $(Y,g_2)$ are endowed with Riemannian metrics $g_1$ and $g_2$, the product Riemannian metric $g$ is given for all $(x,y ; v,w) \in T_{(x,y)} X \times Y$
where $(x,v)\in T_x X$ and $(y,w) \in T_y Y$, by
\[g(x,y;v,w)=g_1(x,v)+g_2(y,w). \]
The Riemannian distance $\dis_R$ associated to this product Riemannian metric satisfies
\[ \max\{\dis_1,\dis_2\}\leq \dis_R \leq \dis_1+\dis_2. \]
Since for every $\alpha \in \Pi$, the distance $\dis_\alpha$ is equivalent to the Riemannian distance on the projective space, we deduce that $\dis_P$ is equivalent to the maximal metric i.e. $\dis_P \asymp \dis:=\sup_{\alpha \in \Pi} \dis_\alpha $.
Using Tits' embedding of $\calF$ in to the product space $\Pi_{\alpha\in\Pi}\P(V^\alpha)$, we deduce that the induced metric is non-degenerate on $\calF$. 
We thus define the following distance on $\cal{F}$ for all $\xi,\eta \in \cal{F}$
\begin{equation}\label{defin-dist}
\dis (\xi,\eta):= \sup_{\alpha \in \Pi} \dis_\alpha (x^\alpha(\xi),x^\alpha(\eta)).
\end{equation}
It is equivalent to the Riemannian distance on $\calF$ induced by the embedding on the product space $\Pi_{\alpha\in\Pi}\P(V^\alpha)$. 
For all $\xi \in \calF$ and $\varepsilon\in (0,1)$, we denote the balls for this distance by
\begin{equation}\label{defin-ballFd}
    B(\xi,\varepsilon):= \lbrace \eta\in \calF \; \vert \; \dis(\xi,\eta)<\varepsilon \rbrace.
\end{equation}
\noindent Similarly, noting that $(\zeta_0)_o^\perp = \eta_0$, we set
\begin{equation}\label{defin-delta}
\delta (\xi,\eta):= \inf_{\alpha \in \Pi} \delta_\alpha (x^\alpha(\xi),x^\alpha(\eta_o^\perp)) = \inf_{\alpha \in \Pi} \dis_\alpha(x^\alpha(\xi), x^\alpha(\eta_o^\perp)^\perp ).
\end{equation}
For all $\xi \in \mathcal{F}$ and $\varepsilon\in (0,1)$, we denote the balls for $\delta$ by
\begin{equation}\label{defin-ball-Fdel}
    \cal{V}_\varepsilon (\xi) := \lbrace \eta \in \calF \; \vert \; \delta(\eta,\xi) <\varepsilon \rbrace.
\end{equation}
Using the above notations given for the balls in $\calF$ for $\delta$ and $\dis$ and their $K$-invariance, we upgrade the dynamical Lemma \ref{lem_rhoa-action} to elements in $G$ whose Cartan projection is far from the walls of the Weyl chambers.
\begin{lem}\label{lem-action-aplus}
For all $g \in G$, choose $k,l\in K$ by Cartan decomposition such that $g=k \exp(\underline{a}(g)) l^{-1}$.
Let $\varepsilon>0$ and assume that $\dis ( \underline{a}(g) , \partial \frak{a}^+) \gg -2 \log \varepsilon$,
then $g \cal{V}_\varepsilon(l \zeta_0 )^\complement  \subset B( k \eta_0,\varepsilon)$. 
\end{lem}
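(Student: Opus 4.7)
The plan is to reduce to the case where $g \in A^+$ via the Cartan decomposition $g = k\exp(\underline{a}(g))l^{-1}$, and then apply the diagonal Lemma \ref{lem_rhoa-action} simultaneously to each fundamental representation $(\rho_\alpha, V^\alpha)$ for $\alpha \in \Pi$. The first thing I would check is that both $\dis$ and $\delta$ are $K$-invariant: each $\rho_\alpha(K)$ acts by isometries on $V^\alpha$, so $\dis_\alpha$ and $\delta_\alpha$ are $K$-invariant on $\mathbb{P}(V^\alpha)$, and taking sup/inf over $\alpha \in \Pi$ preserves this. For $\delta$, one also needs $(k\eta)_o^\perp = k\eta_o^\perp$ for $k\in K$, which holds because $K = K_o$. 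Writing $a:=\exp(\underline{a}(g))$ and applying $k^{-1}$ to the desired inclusion, the claim reduces to showing
\[
a \cdot \mathcal{V}_\varepsilon(\zeta_0)^\complement \subset B(\eta_0, \varepsilon).
\]

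Next I would translate the condition $\eta' \in \mathcal{V}_\varepsilon(\zeta_0)^\complement$ into Tits' projective coordinates. By construction $x^\alpha(\eta_0) = x_+^\alpha$, and since $(\zeta_0)_o^\perp = \eta_0$ by Remark \ref{rem-zeta-eta}, the definition \eqref{defin-delta} gives
\[
\delta(\eta', \zeta_0) = \inf_{\alpha \in \Pi} \delta_\alpha(x^\alpha(\eta'), x_+^\alpha).
\]
So $\delta(\eta', \zeta_0) \geq \varepsilon$ is exactly the condition $x^\alpha(\eta') \in \mathcal{V}_\varepsilon((x_+^\alpha)^\perp)^\complement$ for every simple root $\alpha$.

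The hypothesis $\dis(\underline{a}(g), \partial \mathfrak{a}^+) \gg -2\log \varepsilon$ is equivalent, up to a constant depending only on the norms of the simple roots, to $\alpha(\underline{a}(g)) \geq -2\log \varepsilon$ for every $\alpha \in \Pi$, since $\partial \mathfrak{a}^+ = \bigcup_{\alpha\in\Pi} \ker \alpha$ and the Euclidean distance to $\partial\mathfrak{a}^+$ on $\mathfrak{a}^+$ is $\min_\alpha \alpha(\underline{a}(g))/\|\alpha\|$. Thus Lemma \ref{lem_rhoa-action} applies to each $\alpha$ and yields $\rho_\alpha(a) x^\alpha(\eta') \in B(x_+^\alpha, \varepsilon)$. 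By $G$-equivariance of Tits' embedding (which holds because the highest-weight line $x_+^\alpha$ is stabilized by $P$, the stabilizer of $\eta_0$), this reads $\dis_\alpha(x^\alpha(a\eta'), x_+^\alpha) < \varepsilon$ for every $\alpha \in \Pi$; taking the supremum gives $a\eta' \in B(\eta_0, \varepsilon)$, and pushing back by the left action of $k$ using $K$-equivariance completes the argument.

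There is no real obstacle: the statement is essentially a simultaneous repackaging of Lemma \ref{lem_rhoa-action} over all simple roots, combined with $K$-invariance and the proximal/equivariance features of the fundamental representations. The only care required is bookkeeping the identifications — the equivariance $x^\alpha(g\eta) = \rho_\alpha(g)x^\alpha(\eta)$, the identity $(\zeta_0)_o^\perp = \eta_0$, and the comparison between $\dis(\cdot, \partial \mathfrak{a}^+)$ and the values of the simple roots — all of which are routine.
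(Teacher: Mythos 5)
Your proof is correct and follows essentially the same route as the paper's: comparing $\dis(\cdot,\partial\frak{a}^+)$ with $\inf_{\alpha\in\Pi}\alpha(\cdot)$, applying Lemma \ref{lem_rhoa-action} simultaneously over all simple roots via the Tits embedding, using $(\zeta_0)_o^\perp=\eta_0$, and finishing by left $K$-invariance of $\dis$ and $\delta$. Your write-up is merely more explicit about the bookkeeping (the reduction to $a\in A^+$ and the equivariance $x^\alpha(g\eta)=\rho_\alpha(g)x^\alpha(\eta)$), which the paper leaves implicit.
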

\begin{proof}
Note that $\alpha (v) \asymp \dis(v, \ker \alpha)$ for all $v \in \frak{a}^+$.
Hence by taking the infimum over $\alpha \in \Pi$, then using that $\inf_{\alpha \in \Pi} \dis (v, \ker \alpha) = \dis (v, \cup_{\alpha \in \Pi} \ker \alpha )$ and finally, because $\frak a^+$ is a salient cone, $\partial \frak{a}^+ = \frak{a}^+ \cap \big( \cup_{\alpha \in \Pi} \ker \alpha \big)$, we deduce that for all $v \in \frak{a}^+$,
$$ \dis(v, \partial \frak{a}^+) \asymp \inf_{\alpha \in \Pi} \alpha (v).$$
Now using the underlying constant in $\asymp$, we may assume that, $\inf_{\alpha \in \Pi} \alpha(\underline{a}(g)) \geq - 2 \log \varepsilon$.
Applying the dynamical Lemma \ref{lem_rhoa-action} simultaneously for all $\alpha \in \Pi$, using Remark \ref{rem-zeta-eta} that $(\zeta_0)_o^\perp= \eta_0$, we deduce that $e^{\underline{a}(g)} \cal{V}_\varepsilon (\zeta_0)^\complement \subset B(\eta_0,\varepsilon)$.
Finally, we deduce the lemma by invariance of left $K$-action on both $\dis$ and $\delta$.
\end{proof}

\paragraph{Action of $G$ on $\cal{F}$}
We want to understand how the left action of $G$ on $\calF$ distorts the balls for $\delta$ and $\dis$.
Let $C_\frak{a}>1$ be a positive constant such that for all $v \in \frak{a}$,
 $$\frac{1}{\sqrt{C_{\frak{a}} }} \Vert v \Vert \leq  \sup_{\alpha \in \Pi}  \vert \chi^\alpha ( v ) \vert \leq  \sqrt{C_\frak{a}} \Vert v \Vert  .$$
This constant gives the comparison of the sup-norm induced by the dual basis $(\chi^\alpha)_{\alpha \in \Pi}$ with the Euclidean norm $\Vert \; \Vert$ on $\mathfrak{a}$.

\begin{lem}\label{lem-actiong}
The distances $\dis$ and $\delta$ are left $K$-invariant.
There exist $C_0, C_1>1$ such that for all $g$ in $G$ and $\xi,\eta$ in $\calF$, we have the following inequalities:
\begin{itemize}
    \item[(i)] $\dis (g \xi, g \eta) \leq C_1 e^{ C_0 \dis_X(o,go)   } \dis(\xi,\eta),$
    \item[(ii)] $\delta (g \xi, g \eta) \leq C_1 e^{ C_0 \dis_X(o,go)   } \delta (\xi,\eta),$
    \item[(iii)] $ \Vert \sigma (g, \xi) - \sigma(g,\eta) \Vert  \leq C_1 e^{ C_0 \dis_X(o,go)   } \dis(\xi,\eta),$
    \item[(iv)] $ \Vert \sigma(g,\xi) \Vert \leq C_\frak{a} \dis_X(o,go) .$
\end{itemize}
Furthermore, for every $x,y \in X$ and $\xi \in \calF$, (iv) is the same as
\begin{itemize}
        \item[(iv')]$ \Vert \beta_{\xi}(x,y) \Vert \leq C_{\frak{a}} \dis_X(x,y) .$
\end{itemize}
\end{lem}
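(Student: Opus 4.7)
The plan is to reduce everything to estimates in the fundamental proximal representations $(\rho_\alpha, V^\alpha)_{\alpha\in\Pi}$ via the Tits embedding $\calF \hookrightarrow \prod_{\alpha\in\Pi}\P(V^\alpha)$. The $K$-invariance of $\dis$ and $\delta$ is immediate: each $\rho_\alpha(k)$ is orthogonal on $V^\alpha$ and so preserves $\dis_\alpha$ and $\delta_\alpha$; the $K$-invariance of $\dis$ and $\delta$ follows by passing to the sup, resp.\ inf, over $\alpha$. I would first prove (iv) and (iv'), then use (iv) to deduce (ii) via the Gromov-product cocycle relation \eqref{equ.gromov}, and finally handle (i) and (iii) by direct projective-distance estimates.

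For (iv), the crucial identity is $\chi^\alpha(\sigma(g,\xi)) = \log\|\rho_\alpha(g)\,v_\xi\|$ for any unit vector $v_\xi\in x^\alpha(\xi)$, which follows from the Iwasawa decomposition $gk_\xi\in K\exp(\sigma(g,\xi))N$ together with the fact that $\rho_\alpha(N)$ fixes the highest-weight line $x_+^\alpha$ (proximality of $\rho_\alpha$). Then $|\log\|\rho_\alpha(g)v_\xi\||\leq \max\bigl(\chi^\alpha(\underline a(g)),\,\chi^\alpha(\underline a(g^{-1}))\bigr)$, and applying $|\chi^\alpha(\cdot)|\leq\sqrt{C_\frak a}\,\|\cdot\|$ bounds this by $\sqrt{C_\frak a}\,\dis_X(o,go)$. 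Combining with $\|\sigma(g,\xi)\|\leq\sqrt{C_\frak a}\,\sup_\alpha|\chi^\alpha(\sigma(g,\xi))|$ yields (iv); statement (iv') then follows by unfolding Definition \ref{defin-buseman} and using $G$-invariance of $\dis_X$, namely $\|\beta_\xi(x,y)\|=\|\sigma(h_x^{-1}h_y,h_y^{-1}\xi)\|\leq C_\frak a\,\dis_X(o,h_x^{-1}h_y\, o)=C_\frak a\,\dis_X(x,y)$.

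For (ii), I would start from the identity $\delta(\xi,\eta) = \exp\bigl(-\sup_{\alpha\in\Pi}\chi^\alpha((\xi|\eta)_o)\bigr)$, obtained by comparing Definition \ref{defin_gromov} with the $\delta_\alpha$-formula \eqref{defin-delta-proj}. Subtracting the same identity for $(g\xi,g\eta)$, the difference of the two sups is controlled by $\sup_\alpha|\chi^\alpha((g\xi|g\eta)_o - (\xi|\eta)_o)|$, which by the cocycle relation \eqref{equ.gromov} equals $\sup_\alpha|\chi^\alpha(\iota\sigma(g,\xi) + \sigma(g,\eta))|$. Since $\iota$ is an isometry of $\frak a$, applying (iv) bounds this by $C\,\dis_X(o,go)$, which exponentiates to (ii). For (i) and (iii), by Cartan decomposition $g=k\exp(Y)l^{-1}$ with $Y=\underline a(g)$ and by $K$-invariance, I reduce to $g=\exp(Y)$ with $\dis_X(o,go)=\|Y\|$. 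The Lipschitz constant of $\rho_\alpha(\exp Y)$ on $(\P(V^\alpha),\dis_\alpha)$ is at most $\|\rho_\alpha(\exp Y)\|\cdot\|\rho_\alpha(\exp(-Y))\| = e^{(\chi^\alpha-\chi^\alpha_{\min})(Y)}\leq e^{C\|Y\|}$, where $\chi^\alpha_{\min}$ is the lowest weight; taking sup over $\alpha$ yields (i). For (iii), using again the identity from (iv) together with the elementary estimate $|\log\|Av\|-\log\|Aw\||\leq \|A\|\|A^{-1}\|\,\|v-w\|$ for invertible $A$ and unit vectors $v,w$, applied with representatives satisfying $\|v_\xi-v_\eta\|\asymp\dis_\alpha(x^\alpha(\xi),x^\alpha(\eta))$, and converting norms via $C_\frak a$, yields (iii).

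The main obstacle is (ii): a naive direct argument via projective-distance estimates fails because the transverse point $\eta_o^\perp$ depends on the base point $o$ and is not $g$-equivariant (one only has $g\eta_o^\perp=(g\eta)_{go}^\perp$, not $(g\eta)_o^\perp$). This forces the detour through the Gromov product and the cocycle relation, which absorbs the $g$-action cleanly at the cost of $\|\sigma(g,\xi)\|+\|\sigma(g,\eta)\|$, both controlled by (iv). The constants $C_0, C_1$ produced by this strategy depend only on $C_\frak a$, the set of weights of the $(\rho_\alpha)_{\alpha\in\Pi}$, and the comparison constants between $\dis_\alpha$ and the chord distance on the unit sphere.
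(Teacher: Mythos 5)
Your proposal is correct, and for part (ii) it takes a genuinely different route from the paper. The paper proves (i) and (ii) by the same mechanism: the projective Lipschitz bound $\dis_\alpha(\rho_\alpha(g)x,\rho_\alpha(g)y)\leq\|\rho_\alpha(g)\|^2\|\rho_\alpha(g)^{-1}\|^2\dis_\alpha(x,y)$ from (13.1) of \cite{benoist-quint}, applied in case (ii) after first establishing the identity $(x^\alpha((g\eta)_o^\perp))^\perp=\rho_\alpha(g)\,(x^\alpha(\eta_o^\perp))^\perp$ via the Iwasawa decomposition $gk_1\in KAN$ and the $\rho_\alpha(AN)$-invariance of $(x^\alpha(\zeta_0))^\perp$. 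So your diagnosis that "a naive direct argument fails" because $\eta_o^\perp$ is not $g$-equivariant is not quite right: the transverse \emph{point} is indeed not equivariant, but its orthogonal \emph{hyperplane} in each $\P(V^\alpha)$ is, and that is exactly what $\delta_\alpha$ sees. Your detour through the identity $\delta(\xi,\eta)=\exp(-\sup_\alpha\chi^\alpha((\xi|\eta)_o))$ and the cocycle relation \eqref{equ.gromov} is nevertheless a valid alternative: it reduces (ii) entirely to (iv) plus the norm comparison $\sup_\alpha|\chi^\alpha(\cdot)|\leq\sqrt{C_{\frak a}}\|\cdot\|$, at the price of importing \eqref{equ.gromov} from \cite{sambarinoOrbitalCountingProblem2015}, whereas the paper's argument stays entirely inside the projective embeddings and yields the same constant for (i) and (ii). Your from-scratch proofs of (iii) and (iv) (which the paper simply cites from \cite{benoist-quint} and \cite{dg20}) are correct, including the key identity $\chi^\alpha(\sigma(g,\xi))=\log\|\rho_\alpha(g)v_\xi\|$. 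One small quantitative slip: the Lipschitz constant of $\rho_\alpha(\exp Y)$ on $(\P(V^\alpha),\dis_\alpha)$ is bounded by $\|\wedge_2\rho_\alpha(\exp Y)\|\cdot\|\rho_\alpha(\exp(-Y))\|^2\leq\|\rho_\alpha(\exp Y)\|^2\|\rho_\alpha(\exp(-Y))\|^2$, not by $\|\rho_\alpha(\exp Y)\|\cdot\|\rho_\alpha(\exp(-Y))\|$ as you assert; this only changes the value of $C_0$ (the paper takes $C_0=4C_{\frak a}$) and does not affect the statement.
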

\noindent In particular, for all $x\in X$ we set $C_x:= C_1 e^{C_0 \dis_X(o,x)}$. 
Then for all $h_x \in G$ such that $h_xo=x$ and all $\xi \in \calF$ and every $r \in (0, C_x^{-1}) $, the inequalities given by (i) and (ii) imply 
\begin{itemize}
    \item[(i')]  $B(h_x \xi, C_x^{-1} r) \subset h_x B(\xi,r) \subset B(h_x \xi, C_x r)$,
    \item[(ii')] $ \cal{V}_{C_x r}(h_x \xi) \subset h_x \cal{V}_r(\xi) \subset  \cal{V}_{C_x^{-1} r}(h_x\xi)$.
\end{itemize}

\begin{proof}
For each $V^\alpha$, by (13.1) in \cite{benoist-quint}, we have
\[\dis_\alpha(x^\alpha(g\xi),x^\alpha(g\eta))\leq \|\rho_\alpha(g)\|^2\|\rho_\alpha(g^{-1})\|^2 \dis_\alpha (x^\alpha(\xi),x^\alpha(\eta)). \]
By \eqref{defin-dist} and $\|\rho_\alpha(g)\|=\|\rho_\alpha \exp(\underline{a}(g)) \|=\exp(\chi^\alpha(\underline{a}(g)))$, we obtain the first inequality for $C_0=4 C_{\frak{a}}$.

For (ii), we first prove that $(x^\alpha((g\eta)_o^\perp))^\perp=\rho_\alpha(g)x^\alpha(\eta_o^\perp)^\perp$. There exist $k_1,k\in K$ such that $\eta=k_1\eta_0$ and $gk_1=kan\in KAN$. Then due to $k$ preserving $o$ and the Euclidean metric on $V^\alpha$, we obtain
\[(x^\alpha((g\eta)_o^\perp))^\perp=(x^\alpha((k\eta_0)_o^\perp))^\perp=\rho_\alpha(k)(x^\alpha((\eta_0)_o^\perp))^\perp. \]
Due to $AN$ preserving $(x^\alpha((\eta_0)_o^\perp))^\perp=(x^\alpha(\zeta_0))^\perp$, we deduce that $\rho_\alpha(k)(x^\alpha((\eta_0)_o^\perp))^\perp=\rho_\alpha(gk_1)(x^\alpha((\eta_0)_o^\perp))^\perp$. Therefore, we obtain $(x^\alpha((g\eta)_o^\perp))^\perp=\rho_\alpha(g)(x^\alpha(\eta_o^\perp))^\perp$.
Then for all $\xi,\eta \in \calF,$
\begin{align*}
    \delta_\alpha(x^\alpha(g\xi),x^\alpha((g\eta)^\perp_o)) &=\dis_\alpha(x^\alpha(g\xi),x^\alpha((g\eta)_o^\perp)^\perp)=\dis_\alpha(\rho_\alpha(g)x^\alpha(\xi)^\perp,\rho_\alpha(g)x^\alpha(\eta_o^\perp)^\perp) \\
    &\leq \Vert \rho_\alpha(g) \Vert^2 \|\rho_\alpha(g)^{-1} \|^2  \;  \dis_\alpha(x^\alpha(\xi),x^\alpha(\eta_o^\perp)^\perp) .
\end{align*}
Therefore, since $\Vert \rho_\alpha(g) \Vert \|\rho_\alpha(g)^{-1} \| \leq \exp ( 2 \sup ( 
 \chi^\alpha ( \underline{a}(g)), \chi^\alpha( \iota \underline{a}(g)) ) )$ and $C_0=4 C_{\frak{a}}$, we deduce that
\[ \delta(g\xi,g\eta)=\inf_{\alpha\in\Pi}\delta_\alpha(x^\alpha(g\xi),x^\alpha((g\eta)_o^\perp))\leq C_1e^{C_0\|\underline{a}(g)\|} \delta(\xi,\eta). \]

(iii) is given in \cite[Lemma 13.1]{benoist-quint}.

(iv), see \cite[Lemma 3.12]{dg20} for a similar statement, and it is also a direct consequence of \cite[Lemma 6.33 (ii), Corollary 8.20]{benoist-quint}. 

Finally (iv') is a consequence of the formulas $\beta_\xi(x,y)=\sigma( h_x^{-1} h_y, h_y^{-1} \xi)$ and $\dis_X(x,y)=\Vert \underline{a}(h_x^{-1} h_y) \Vert$ independently of the choice of $h_x, h_y \in G$ such that $h_xo=x$ and $h_yo=y$.
\end{proof}

\subsection{Distances on $G/M$ }\label{sec-dist}

On one hand, denote by $\dis_1$ the left $G$-invariant and right $K$-invariant Riemannian distance on $G/M$.
It is the higher rank analogue of the distance on the unit tangent bundle of $\mathbb{H}^2$. 
The map $(G/M, \dis_1) \rightarrow (X,\dis_X)$ is continuous and equivariant for the left $G$-action.

On the other hand, using Hopf coordinates, we consider $\dis_2$, a distance equivalent to the Riemannian product distance induced by the embedding $\calF^{(2)}\times \mathfrak{a} \hookrightarrow \calF \times \calF \times \mathfrak{a}$.
These distances are locally equivalent, however, since $\dis_2$ is not left $G$-invariant they are not globally equivalent.

We compute an expanding estimate for the action of $G$ on $G/M$ for $\dis_2$.
Then we deduce the constants underlying the local equivalence of $\dis_1$ and $\dis_2$. 

\paragraph{Distance induced by Hopf coordinates}

For every pair $(\xi^+,\xi^-, v), (\eta^+,\eta^- ,w) \in \calF^{(2)} \times \frak{a}$, we define 
\begin{equation}\label{defin-distHopf}
    \dis_2 \big( (\xi^+,\xi^-,v), (\eta^+,\eta^-,w) \big) := \sup ( \dd(\xi^+,\eta^+), \dd(\xi^-,\eta^-), \Vert v-w \Vert ).
\end{equation}
Due to the definitions \eqref{defin-dist} of the distances on $\calF$, the distance $\dis_2$ is not left $G$-invariant even though it is left $K$-invariant.
Since $\dis$ is equivalent to the Riemannian distance on $\calF$ and the maximal metric is equivalent to the Riemannian metric on the product space $\calF \times \calF \times \mathfrak{a}$, the distance $\dis_2$ is equivalent to the product distance.
It is non-degenerate because of the embedding $\calF^{(2)}\times \mathfrak{a} \hookrightarrow \calF \times \calF \times \mathfrak{a}$.

Abusing notations, for every $z_1, z_2 \in G$, we also denote by  
$\dis_2( z_1M , z_2 M ) := \dis_2\big( \cal{H}(z_1 M) , \cal{H}(z_2M)  \big).$
For all $(\xi^+,\xi^-,v) \in \calF^{(2)} \times \frak{a}$, all $r\in (0,\frac{1}{2}\delta(\xi^+,\xi^-) )  $, the ball of radius $r$ for $\dis_2$ centered in that element is
$$ B(\xi^+,r) \times B(\xi^-,r) \times B_{\frak{a}}(v,r) .$$

\begin{lem}\label{lem-d2}
For $g\in G$ and $z_1,z_2$ in $G$, we have
\[\dis_2(gz_1M,gz_2M)\leq  C_1e^{C_0\|\underline{a}(g)\|}\dis_2(z_1M,z_2M). \]
\end{lem}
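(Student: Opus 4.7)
The plan is to unpack the definition of $\dis_2$ via Hopf coordinates and bound each of the three components (the two $\calF$-coordinates and the $\mathfrak{a}$-coordinate) separately, using the estimates from Lemma \ref{lem-actiong}. Recall that $\mathcal{H}(zM) = (z\eta_0, z\zeta_0, \sigma(z,\eta_0))$, so by definition
\[
\dis_2(gz_1M,gz_2M) = \sup\bigl(\dis(gz_1\eta_0, gz_2\eta_0),\; \dis(gz_1\zeta_0, gz_2\zeta_0),\; \|\sigma(gz_1,\eta_0) - \sigma(gz_2,\eta_0)\|\bigr).
\]

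For the first two terms I will apply Lemma \ref{lem-actiong}(i) with $\xi = z_i\eta_0$ (respectively $z_i\zeta_0$), noting that $\dis_X(o,go) = \|\underline{a}(g)\|$. This immediately gives
\[
\dis(gz_1\eta_0, gz_2\eta_0) \leq C_1 e^{C_0\|\underline{a}(g)\|}\, \dis(z_1\eta_0, z_2\eta_0) \leq C_1 e^{C_0\|\underline{a}(g)\|}\, \dis_2(z_1M,z_2M),
\]
and likewise for the $\zeta_0$-component.

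The cocycle component is slightly more involved: applying the cocycle identity \eqref{eq-relation-Icocycle} we rewrite
\[
\sigma(gz_i,\eta_0) = \sigma(g, z_i\eta_0) + \sigma(z_i, \eta_0),
\]
so that the difference splits as
\[
\sigma(gz_1,\eta_0) - \sigma(gz_2,\eta_0) = \bigl(\sigma(g,z_1\eta_0) - \sigma(g, z_2\eta_0)\bigr) + \bigl(\sigma(z_1,\eta_0) - \sigma(z_2,\eta_0)\bigr).
\]
For the first bracket I will invoke Lemma \ref{lem-actiong}(iii) with $\xi=z_1\eta_0$, $\eta=z_2\eta_0$, obtaining a bound by $C_1 e^{C_0\|\underline{a}(g)\|}\dis(z_1\eta_0,z_2\eta_0) \leq C_1 e^{C_0\|\underline{a}(g)\|}\dis_2(z_1M,z_2M)$. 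The second bracket is bounded by $\dis_2(z_1M,z_2M)$ directly from the definition \eqref{defin-distHopf}.

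Combining all three estimates and taking the supremum yields the claimed inequality, possibly after enlarging the constant $C_1$ by one to absorb the extra unit coming from the second bracket (since $e^{C_0\|\underline{a}(g)\|}\geq 1$). There is essentially no obstacle: the lemma is a direct packaging of the component-wise estimates of Lemma \ref{lem-actiong} together with the Iwasawa cocycle relation, and the only mild subtlety is keeping track of the extra additive term in the $\mathfrak{a}$-coordinate.
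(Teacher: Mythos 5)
Your proposal is correct and follows essentially the same route as the paper: both unpack $\dis_2$ in Hopf coordinates, bound the two $\calF$-components by Lemma \ref{lem-actiong}(i), and split the $\mathfrak{a}$-component via the Iwasawa cocycle relation (the paper cites \eqref{equ-hxi}, which is exactly your cocycle computation) before applying Lemma \ref{lem-actiong}(iii). Your explicit remark about absorbing the extra additive $\|v_1-v_2\|$ term into the constant is the same (silent) step the paper takes in its final inequality.
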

\begin{proof}
We write down $z_1M$ and $z_2M$ in Hopf coordinates, we denote by $(\xi_i^+,\xi_i^-,v_i):= \cal{H}(z_iM)$ for $i=1,2$.
By \eqref{equ-hxi} and Lemma \ref{lem-actiong} (i),(ii),(iii), we have
\begin{align*}
    \dis_2(gz_1M,gz_2M)&= \dis_2((g\xi_1^+,g\xi_1^-,v_1+\sigma(g,\xi_1^+)),(g\xi_2^+,g\xi_2^-,v_2+\sigma(g,\xi_2^+))\\
    &= \sup\big( \dd (g \xi_1^+, g\xi_2^+ ), \dd(g \xi_1^-,g \xi_2^-) , \Vert v_1 -v_2 + \sigma(g, \xi_1^+) - \sigma(g,\xi_2^+) \Vert   \big)\\
    &\leq \sup \big( C_1e^{C_0\|\underline{a}(g)\|} \dd(\xi_1^+,\xi_2^+) ,  C_1e^{C_0\|\underline{a}(g)\|} \dd(\xi_1^-,\xi_2^-), C_1e^{C_0\|\underline{a}(g)\|} \dd(\xi_1^+,\xi_2^+)+\|v_1-v_2\| \big) \\
     &\leq  C_1e^{C_0\|\underline{a}(g)\|} \dis_2(z_1M,z_2M).
\end{align*}
\end{proof}
\noindent As a consequence, for all $z_1\in G/M$, small $r>0$ and $g\in G$
$$ g B_2(z_1,r) \subset B_2 (gz_1, C_1e^{C_0\|\underline{a}(g)\|} r ).$$

\paragraph{Local equivalence constants}
Since $\dis_1$ and $\dis_2$ are Riemannian metrics of $G/M$, they are locally equivalent.
We fix a neighbourhood $O$ of $eM$ and a constant $C_2>0$ such that for every $z_1, z_2 \in O$,
\[\frac{1}{C_2} \dis_2(z_1,z_2)   \leq \dis_1(z_1,z_2)\leq C_2 \dis_2(z_1,z_2). \]
For any $r>0$, denote by $B_1(zM, r)\subset G/M$ the ball of radius $r$ centered on $zM$, for the distance $\dis_1$. 
Fix $\epsilon_0>0$ such that $B_1(eM,\epsilon_0) \cup B_2(eM,\epsilon_0) \subset O$.
\begin{defin}
For $x\in X$, let 
\begin{equation}\label{equ-cx}
    C_x=8C_2C_1\exp(C_0\dis_X(o,x)).
\end{equation}
\end{defin}

\begin{lem}\label{lem-dist-compare}
For $x\in X$ and $z_1,z_2\in G/M$ with $x=\pi(z_1)$, if $\dis_2(z_1,z_2)<\frac{\epsilon_0}{C_x}$ or $\dis_1(z_1,z_2)<\epsilon_0$, then
\[\dis_1(z_1,z_2)\leq \frac{C_x}{4}\dis_2(z_1,z_2). \]
\end{lem}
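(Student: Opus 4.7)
The plan is to reduce to the local comparison on the neighbourhood $O$ of $eM$ by translating $z_1$ to $eM$ via the left $G$-action, exploiting that $\dis_1$ is left $G$-invariant while $\dis_2$ distorts in a controlled way. Concretely, choose $h_x \in G$ with $h_x o = x$, so that $z_1 = h_x M$ and $\|\underline{a}(h_x^{-1})\| = \dis_X(o,x)$. Set $z_i' := h_x^{-1} z_i$, so $z_1' = eM$ and by left $G$-invariance of $\dis_1$ we have $\dis_1(z_1,z_2) = \dis_1(z_1', z_2')$. By Lemma \ref{lem-d2} applied to $g = h_x^{-1}$,
\[
\dis_2(z_1',z_2') \leq C_1 e^{C_0 \dis_X(o,x)} \dis_2(z_1,z_2) = \frac{C_x}{8 C_2}\, \dis_2(z_1,z_2),
\]
using the definition \eqref{equ-cx} of $C_x$.

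The first step is to verify in either hypothesis that $z_2' \in O$, so that the local equivalence $\dis_1 \leq C_2 \dis_2$ applies at $(z_1', z_2')$. If $\dis_2(z_1,z_2) < \epsilon_0/C_x$, then the displayed inequality gives $\dis_2(z_1',z_2') < \epsilon_0/(8 C_2) < \epsilon_0$, hence $z_2' \in B_2(eM, \epsilon_0) \subset O$. If instead $\dis_1(z_1,z_2) < \epsilon_0$, then by left $G$-invariance $\dis_1(z_1',z_2') < \epsilon_0$, so $z_2' \in B_1(eM,\epsilon_0) \subset O$.

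Once $z_2' \in O$, the local equivalence fixed just before the lemma yields $\dis_1(z_1',z_2') \leq C_2 \dis_2(z_1',z_2')$. Combining with the distortion bound from Lemma \ref{lem-d2},
\[
\dis_1(z_1,z_2) = \dis_1(z_1',z_2') \leq C_2 \dis_2(z_1',z_2') \leq C_2 \cdot \frac{C_x}{8 C_2}\, \dis_2(z_1,z_2) = \frac{C_x}{8}\, \dis_2(z_1,z_2),
\]
which is stronger than the claimed bound $\tfrac{C_x}{4} \dis_2(z_1,z_2)$.

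The proof is essentially a bookkeeping argument, so there is no real obstacle. The one subtle point is that $\dis_2$ is not $G$-invariant, which is precisely why the constant $C_x$ had to be defined with the factor $e^{C_0 \dis_X(o,x)}$ and a numerical slack ($8 C_2 C_1$): the factor $C_2$ cancels the local equivalence constant, $C_1 e^{C_0 \dis_X(o,x)}$ absorbs the Lemma \ref{lem-d2} distortion, and the factor $8$ provides the buffer needed to land inside $B_2(eM,\epsilon_0)$ in Case 1 and to close the estimate with margin to spare.
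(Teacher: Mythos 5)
Your proof is correct and follows essentially the same route as the paper's: translate $z_1$ to $eM$ by $h_x^{-1}$, use the left $G$-invariance of $\dis_1$ and the distortion bound of Lemma \ref{lem-d2} for $\dis_2$, check that both hypotheses place $h_x^{-1}z_2$ inside the neighbourhood $O$, and then apply the local equivalence constant $C_2$. Your bookkeeping is in fact slightly sharper than the paper's (you obtain $C_x/8$ rather than $C_x/4$), but the argument is the same.
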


\begin{proof}

We take $h_x$ such that $h_x^{-1}z_1=eM$.
Then we have either 
$$\dis_2(h_x^{-1}z_1,h_x^{-1}z_2)\leq C_x\dis_2(z_1,z_2)<\epsilon_0,$$
(due to Lemma \ref{lem-d2})
or $\dis_1(h_x^{-1}z_1,h_x^{-1}z_2)=\dis_1(z_1,z_2)<\epsilon_0$. Due to the choice of $\epsilon_0$, we can apply local equivalence at $eM$ and Lemma \ref{lem-d2} to obtain
\[\dis_1(z_1,z_2)=\dis_1(h_x^{-1}z_1,h_x^{-1}z_2)\leq C_2\dis_2(h_x^{-1}z_1,h_x^{-1}z_2)\leq C_x\dis_2(z_1,z_2)/4. \]
The proof is complete.
\end{proof}

\subsection{Corridors of maximal flats}
Recall from Definition \ref{defin-max-flat}, for every point $y\in X$ and every $\xi \in \mathcal{F}$, we denote by $\xi_y^\perp \in\calF$ the opposite element such that $y \in \big(\xi \xi_y^\perp \big)_X$. 
\begin{defin}\label{defin_corridors}
Let $x \in X$ and $r>0$.
We denote by $\cal{F}^{(2)}(x,r)$ the open \emph{corridor of maximal flats} at distance $r$ of $x$  
\begin{equation}\label{eq_corridor}
\cal{F}^{(2)} (x,r) := \lbrace (\xi,\eta) \in \mathcal{F}^{(2)} \; \vert \; \dis_X (x, (\xi \eta)_X ) <r \rbrace. 
\end{equation}
We denote by $\widetilde{ \cal{F}^{(2)}} (x,r)$ the set of Weyl chambers based in $B_X(x,r)$
\begin{equation}\label{eq_based-wc}
\widetilde{ \cal{F}^{(2)}} (x,r) := \Big\lbrace \big(\xi,\xi_y^\perp , \beta_{\xi} (o,y) \big) \in \mathcal{F}^{(2)}\times \frak{a} \; \Big\vert \; y \in B_X(x,r) \Big\rbrace. 
\end{equation}
\end{defin}
By Hopf coordinates \eqref{equ_hopfmap}, we obtain
\begin{fait}\label{prop-corridor}
For all $x \in X$ and $r>0$, the set $\widetilde{ \cal{F}^{(2)}} (x,r)$ is the preimage of $B_X(x,r)$ by the projection $G/M \rightarrow G/K$. 
\end{fait}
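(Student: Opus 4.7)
The plan is to show this is essentially a matter of unwinding two identifications of $G/M$: with $X \times \calF$ (Fact \ref{fait-asymptotic-point}) and with $\calF^{(2)} \times \frak{a}$ via Hopf coordinates (equation \eqref{equ_hopfmap}).

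First, I would recall that under the diffeomorphism $G/M \simeq X \times \calF$ sending $gM \mapsto (go, g\eta_0)$, the natural projection $\pi: G/M \to G/K = X$ is simply the first projection $(y,\xi) \mapsto y$. Consequently,
\begin{equation*}
\pi^{-1}\big(B_X(x,r)\big) = B_X(x,r) \times \calF
\end{equation*}
inside $X \times \calF$.

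Second, I would invoke the translated Hopf map \eqref{equ_hopfmap}, which reads $(y,\xi) \longmapsto \big(\xi,\, \xi_y^\perp,\, \beta_\xi(o,y)\big)$ and is a bijection between $X \times \calF$ and $\calF^{(2)} \times \frak{a}$. Transporting the set $B_X(x,r) \times \calF$ through this map gives
\begin{equation*}
\Big\{\big(\xi,\xi_y^\perp, \beta_\xi(o,y)\big) \;\Big|\; y \in B_X(x,r),\; \xi \in \calF \Big\},
\end{equation*}
which is exactly $\widetilde{\calF^{(2)}}(x,r)$ as defined in \eqref{eq_based-wc}. This gives the claimed equality.

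Since the Hopf map is a diffeomorphism and the projection $G/M \to X$ is intrinsic, there is no real obstacle here; the only point that deserves care is verifying that for each $(y,\xi)$ the element $\xi_y^\perp$ is well-defined as the unique opposite to $\xi$ through which the maximal flat passes $y$ (Definition \ref{defin-max-flat}), so that the parameterisation on the right-hand side is single-valued. This was already established in the Background section, so the argument is complete.
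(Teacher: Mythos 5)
Your argument is correct and is exactly what the paper intends: the Fact is stated with no proof beyond the phrase ``By Hopf coordinates \eqref{equ_hopfmap}'', and your unwinding of the two identifications $G/M \simeq X\times\calF$ and \eqref{equ_hopfmap} is precisely that observation made explicit.
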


\begin{lem}\label{lem-compare}
Let $x \in X$ and $\min\{\frac{\epsilon_0}{2},\frac{\log 2}{C_0} \}>r>0$.
Then for every $\varepsilon \in (0, C_x^{-1}r)$, all $(\xi^+,\xi^-) \in \cal{F}^{(2)}(x,r)$,
$$ B(\xi^+,\varepsilon) \times B(\xi^-,\varepsilon) \subset \cal{F}^{(2)}(x,2r).$$
\end{lem}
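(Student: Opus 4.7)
My plan is to lift the problem from $\calF^{(2)}$ to $G/M$ through the Hopf coordinate map, exploit the distance comparison of Lemma~\ref{lem-dist-compare}, and then project back down to~$X$. The key idea is that a small perturbation of the endpoints $\xi^\pm$ should move the base point of a well-chosen Weyl chamber only slightly, so the corresponding maximal flat still passes close to~$x$.

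By hypothesis I first pick $y\in(\xi^+\xi^-)_X$ with $\dis_X(x,y)<r$, which forces $(\xi^+)_y^\perp=\xi^-$. Let $z_1:=g_{y,\xi^+}M\in G/M$ be the geometric Weyl chamber based at $y$ and asymptotic to $\xi^+$ (Fact~\ref{fait-asymptotic-point}). By~\eqref{equ_hopfmap}, its Hopf coordinates are $(\xi^+,\xi^-,v_0)$ with $v_0:=\beta_{\xi^+}(o,y)$.

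Before constructing the perturbed Weyl chamber I need to know $(\eta^+,\eta^-)$ is transverse. Taking $h_y\in G$ with $h_yo=y$, $h_y\eta_0=\xi^+$, $h_y\zeta_0=\xi^-$, Lemma~\ref{lem-actiong}(i) places $h_y^{-1}\eta^\pm$ within distance $C_y\varepsilon$ of $\eta_0,\zeta_0$. Using $\dis_X(o,y)\le\dis_X(o,x)+r$ together with $r<\log 2/C_0$, I get $C_y\le 2C_x$, and hence $C_y\varepsilon<2r<\epsilon_0$. Since $\calF^{(2)}$ is open around $(\eta_0,\zeta_0)$, taking $\epsilon_0$ small enough from the outset (absorbable into the constants of~\S\ref{sec-dist}) forces $(h_y^{-1}\eta^+,h_y^{-1}\eta^-)\in\calF^{(2)}$, hence $(\eta^+,\eta^-)\in\calF^{(2)}$ by $G$-equivariance.

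Now set $z_2:=\mathcal{H}^{-1}(\eta^+,\eta^-,v_0)\in G/M$ and let $y':=\pi(z_2)$; by~\eqref{equ_hopfmap}, $y'\in(\eta^+\eta^-)_X$. The definition of $\dis_2$ and the fact that the third Hopf coordinate is unchanged yield
\begin{equation*}
\dis_2(z_1,z_2)=\max\bigl(\dis(\xi^+,\eta^+),\dis(\xi^-,\eta^-),0\bigr)<\varepsilon<C_x^{-1}r\le \frac{\epsilon_0}{2C_x}\le \frac{\epsilon_0}{C_y},
\end{equation*}
meeting the hypothesis of Lemma~\ref{lem-dist-compare} at the base point $y=\pi(z_1)$; it therefore gives
\begin{equation*}
\dis_1(z_1,z_2)\le \tfrac{C_y}{4}\dis_2(z_1,z_2)\le \tfrac{2C_x}{4}\cdot C_x^{-1}r=r/2.
\end{equation*}
Since the projection $(G/M,\dis_1)\to(X,\dis_X)$ is $1$-Lipschitz, $\dis_X(y,y')<r/2$, and the triangle inequality concludes
\begin{equation*}
\dis_X(x,(\eta^+\eta^-)_X)\le \dis_X(x,y')\le \dis_X(x,y)+\dis_X(y,y')<r+r/2<2r.
\end{equation*}

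The main technical points are purely bookkeeping: the hypothesis $r<\log 2/C_0$ is what allows replacing $C_y$ by $2C_x$, and $r<\epsilon_0/2$ is what makes the threshold of Lemma~\ref{lem-dist-compare} cooperate with the stated bound on~$\varepsilon$. The only genuinely non-cosmetic step is the transversality check, which rests on openness of $\calF^{(2)}$ near $(\eta_0,\zeta_0)$; quantitatively this can be made precise via the continuity of $\delta$ from~\eqref{defin-delta}, noting that $\delta(\eta_0,\zeta_0)=1$.
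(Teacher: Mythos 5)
Your proof is correct and follows essentially the same route as the paper: base a geometric Weyl chamber at a point $y\in(\xi^+\xi^-)_X\cap B_X(x,r)$, perturb only the boundary coordinates in Hopf coordinates while freezing the $\frak a$-coordinate, and convert the resulting $\dis_2$-estimate into a $\dis_1$-estimate via Lemma \ref{lem-dist-compare} before projecting back to $X$. You are in fact somewhat more explicit than the paper's own short argument, since you spell out the transversality of $(\eta^+,\eta^-)$ and the passage from $C_y$ to $2C_x$, which is precisely where the hypothesis $r<\log 2/C_0$ is used.
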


\begin{proof}
Let $(\xi^+,\xi^-) \in \cal{F}^{(2)}(x,r)$ as in the hypothesis.
There exists $y \in B_X(x,r)$ such that $\xi^-=(\xi^+)_y^\perp$.
Now we choose in Hopf coordinates $z:=(\xi^+,\xi^- \; , \; \beta_{\xi^+}(o,y) )$.
By properties of $\dis_1$, that $B_X(y,r) \subset B_X(x,2r)$ and comparison Lemma \ref{lem-dist-compare} between the distances, we have 
$$ B_2\Big(z, \frac{4}{C_x} r \Big) \subset B_1(z,r) \subset \widetilde{ \cal{F}^{(2)}} (x,2r).$$
Finally, the proof is completed by projecting the ball $B_2\Big(z, \frac{4}{C_x} r \Big)$ into the coordinates in $\calF^{(2)}$.
\end{proof}

\begin{lem}\label{lem-cartan-lox}
Let $g\in G$ and $x\in X$.
Assume there is a transverse pair $(\xi^+,\xi^-) \in \cal{F}^{(2)}$ of fixed points for the action of $g$ on $\cal{F}$.
Then
$$ \Vert \lambda(g) - \underline{a}_x(g) \Vert \leq 2 \dis_X (x, (\xi^+ \xi^-)_X ) .$$
\end{lem}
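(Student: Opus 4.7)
The plan is to find a point $y$ on the maximal flat $(\xi^+\xi^-)_X$ where the Cartan projection $\underline{a}_y(g)$ equals the Jordan projection $\lambda(g)$ exactly, and then invoke the comparison Lemma \ref{lem-cartan-points} between $\underline{a}_x(g)$ and $\underline{a}_y(g)$.

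First, I would use the transversality of the fixed points to get a good normal form for $g$. Since $(\xi^+,\xi^-) \in \calF^{(2)}$, there exists $h \in G$ such that $h\eta_0 = \xi^+$ and $h\zeta_0 = \xi^-$, which means $(\xi^+\xi^-)_X = hAo$. The stabilizers of $\eta_0$ and $\zeta_0$ in $G$ are the opposite parabolics $P = MAN$ and $P^- = MAN^-$, whose intersection is the Levi subgroup $MA$. Since $g$ fixes both $\xi^+$ and $\xi^-$, we get $h^{-1}gh \in MA$, i.e.\ there exist $m \in M$ and $v \in \mathfrak{a}$ with
\[ g = h\, m\exp(v)\, h^{-1}. \]
Because $M$ centralizes $A$ and $m$ is elliptic, this is precisely a Jordan decomposition, so the hyperbolic part of $g$ is $h \exp(v) h^{-1}$. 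Consequently $\lambda(g)$ is the unique element of $\mathfrak{a}^+$ in the $\calW$-orbit of $v$.

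Second, I would show that $\underline{a}_y(g) = \lambda(g)$ for every $y \in (\xi^+\xi^-)_X$. Any such $y$ can be written as $y = h\exp(w)\, o$ for some $w \in \mathfrak{a}$; taking $h_y := h\exp(w)$, the conjugate
\[ h_y^{-1}\, g\, h_y \;=\; \exp(-w)\, m\exp(v)\, \exp(w) \;=\; m\exp(v), \]
by commutativity of $A$ and the fact that $M$ centralizes $A$. Since $m \in K$ commutes with $\exp(v)$, we have $m\exp(v)\cdot o = \exp(v) \cdot o$, hence $\underline{a}(m\exp(v))$ equals the $\mathfrak{a}^+$-representative of $v$, which is $\lambda(g)$. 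Therefore $\underline{a}_y(g) = \lambda(g)$.

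Finally, I would choose $y$ to be the closest point projection of $x$ onto the flat $(\xi^+\xi^-)_X$, so that $\dis_X(x,y) = \dis_X(x,(\xi^+\xi^-)_X)$, and apply Lemma \ref{lem-cartan-points} to conclude
\[ \|\lambda(g) - \underline{a}_x(g)\| \;=\; \|\underline{a}_y(g) - \underline{a}_x(g)\| \;\leq\; 2\dis_X(x,y) \;=\; 2\dis_X(x,(\xi^+\xi^-)_X). \]
The only subtlety is step two, namely verifying carefully that the conjugation by $h_y$ really produces an element of $MA$ and that $\underline{a}$ of $m\exp(v)$ agrees with the Jordan projection; both follow from the commutation of $M$ and $A$ together with $M \subset K$, so no genuine obstacle arises.
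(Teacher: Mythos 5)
Your proposal is correct and takes essentially the same route as the paper: both use that $g$ fixing the transverse pair forces $h^{-1}gh\in MA$, that consequently $\underline{a}_p(g)=\lambda(g)$ for every $p$ on the flat $hAo=(\xi^+\xi^-)_X$, and then conclude via Lemma \ref{lem-cartan-points} by taking the (attained) infimum over points of the flat. Your step two merely spells out the verification that the paper compresses into ``by Cartan decomposition''.
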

\begin{proof} 
For every transverse pair $(\xi^+,\xi^-)$, there exists, up to right multiplication by elements of $AM$, an $h\in G$ such that $h(\eta_0,\zeta_0)=(\xi^+,\xi^-)$. 
By assumption, $\xi^+$ and $\xi^-$ are fixed by $g$, i.e. $gh \in hAM$.
By Cartan decomposition, for every $p\in h AM o$, we have $\underline{a}_{p}(g)=\lambda(g)$.

Since $hAM o=hAo$, which is equal to the flat $(\xi^+\xi^-)_X$. 
It then follows from Lemma \ref{lem-cartan-points} that
for every $p \in (\xi^+ \xi^-)_X$
$$ \Vert \lambda(g) - \underline{a}_x(g) \Vert=\|\underline{a}_p(g)-\underline{a}_x(g) \| \leq 2 \dis_X(x,p). $$
Taking the infimum over the points in the flat $(\xi^+\xi^-)_X$ yields the upper bound.
\end{proof}

\subsection{The configuration}
Recall that for all $x\in X$, we defined the constant
$C_x=8C_2C_1 e^{C_0\dis_X(o,x)}.$
\begin{defin}\label{defin_t0}
Denote by $r_0$ the unique zero in $(0,1)$ of the real valued function $r \mapsto -\log r -\max\{C_3,2\}r$.
For all $\varepsilon>0$ and $x \in X$ we define some function
$$t_0(x,\varepsilon) \gg 2\log C_x-2 \log(\varepsilon),$$
where the constant underlying $\gg$ is the same as in Lemma \ref{lem-action-aplus}.
\end{defin}

\begin{prop}\label{prop-lemroblin}
For all $x\in X$ and $r\in (0,r_0)$ and $\varepsilon\in (0,\min\{C_x^{-1}r,\epsilon_0\} )$, 
 every $\gamma \in G$ satisfying the following conditions is loxodromic.
\begin{itemize}
\item[(i)] $\underline{a}_x(\gamma) \in \frak{a}^{++}$ and $\dd(\underline{a}_x(\gamma), \partial \frak{a}^+ ) \geq  t_0(x,\epsilon)$,
\item[(ii)] $(\gamma_x^+,\gamma_x^-) \in \mathcal{F}^{(2)}$ are transverse and $\dis_X \big(x, (\gamma_x^+ \gamma_x^-)_X  \big) <r$.
\end{itemize}
Furthermore, its attracting and repelling point satisfy $\gamma^\pm \in B(\gamma_x^\pm, \varepsilon)$.
\end{prop}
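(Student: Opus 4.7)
The plan is to reduce to the base point by conjugation $g := h_x^{-1}\gamma h_x$, then combine the dynamical contraction of Lemma \ref{lem-action-aplus} with the transversality input of Lemma \ref{lem-xietao} to exhibit a $g$-invariant $d$-ball around $g_o^+$, extract fixed points via Brouwer's theorem, and conclude loxodromicity through Lemma \ref{lem-cartan-lox}.

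Fix $h_x \in G$ with $h_x o = x$ and set $g := h_x^{-1}\gamma h_x$, so that $\underline{a}(g) = \underline{a}_x(\gamma)$, $g_o^\pm = h_x^{-1}\gamma_x^\pm$, and $\dis_X(o,(g_o^+g_o^-)_X) = \dis_X(x,(\gamma_x^+\gamma_x^-)_X) < r$. Set $\varepsilon' := \varepsilon/C_x$. It suffices to exhibit a pair of $g$-fixed points $g^\pm \in B(g_o^\pm,\varepsilon')$ together with $\lambda(g) \in \frak{a}^{++}$: then $\gamma = h_x g h_x^{-1}$ is loxodromic with $\gamma^\pm = h_x g^\pm \in h_x B(g_o^\pm,\varepsilon') \subset B(\gamma_x^\pm,\varepsilon)$ by Lemma \ref{lem-actiong}(i').

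The heart of the argument is to establish $g\,\overline{B(g_o^+,\varepsilon')} \subset B(g_o^+,\varepsilon')$. Since $\dis(\underline{a}(g),\partial\frak{a}^+) \geq t_0(x,\varepsilon) \gg -2\log\varepsilon'$ by the threshold built into Definition \ref{defin_t0}, Lemma \ref{lem-action-aplus} yields $g\,\calW_{\varepsilon'}(g_o^-)^{\complement} \subset B(g_o^+,\varepsilon')$ (where I use $\calW$ in place of $\cal V$ to denote the $\delta$-ball notation of \eqref{defin-ball-Fdel}), so it remains to show $B(g_o^+,\varepsilon') \subset \cal V_{\varepsilon'}(g_o^-)^{\complement}$. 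Lemma \ref{lem-xietao} and the transversality hypothesis give $\|(g_o^+|g_o^-)_o\| \leq C_3 r$; unwinding Definition \ref{defin_gromov} via the identity $\delta_\alpha(x^\alpha(\xi),x^\alpha(\eta_o^\perp)) = e^{-\chi^\alpha(\xi|\eta)_o}$ then produces $\delta(g_o^+,g_o^-) \geq e^{-c\, C_3 r}$ for a normalization constant $c$ bounding the fundamental weights against $\|\cdot\|$. Since $\delta(\cdot,g_o^-)$ is Lipschitz in $\dis$ and $\varepsilon' \leq \varepsilon < r < r_0$, the defining inequality $-\log r > \max\{C_3,2\}\,r$ of $r_0$ is precisely tailored so that $e^{-c\, C_3 r} - O(\varepsilon') > \varepsilon'$, yielding the inclusion and hence the self-containment $g\,\overline{B(g_o^+,\varepsilon')} \subset B(g_o^+,\varepsilon')$.

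Once self-containment is in place, $\overline{B(g_o^+,\varepsilon')}$ is a topological disk in $\cal F$ on which $g$ acts continuously, so Brouwer's fixed-point theorem produces a fixed point $g^+$. The same argument applied to $g^{-1}$, whose Cartan projection is $\iota\underline{a}(g)$ (hence equally far from the walls) and whose associated angular pair is $((g^{-1})_o^+,(g^{-1})_o^-) = (g_o^-,g_o^+)$, yields $g^- \in B(g_o^-,\varepsilon')$. The pair $(g^+,g^-)$ is transverse as a small perturbation of the transverse pair $(g_o^+,g_o^-)$, and continuity of the Gromov product combined with Lemma \ref{lem-xietao} gives $\dis_X(o,(g^+g^-)_X) = O(r)$. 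Lemma \ref{lem-cartan-lox} then bounds $\|\lambda(g) - \underline{a}(g)\| = O(r)$, and since $\underline{a}(g)$ lies at distance $\gg r$ from $\partial\frak{a}^+$, we conclude $\lambda(g) \in \frak{a}^{++}$, proving $g$, and hence $\gamma$, is loxodromic. The main obstacle is the constant matching in the contraction step: one must simultaneously control the contraction radius $\varepsilon'$ and the transverse gap $\delta(g_o^+,g_o^-)$ supplied by the Gromov product estimate, which is exactly the role of the bespoke definitions of $r_0$ and $t_0(x,\varepsilon)$.
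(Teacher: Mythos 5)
Your overall strategy is the same as the paper's: contract a ball around $\gamma_x^+$ using Lemma \ref{lem-action-aplus}, feed in the transversality hypothesis through the Gromov product and Lemma \ref{lem-xietao} to check that the ball sits inside $\mathcal{V}_{\varepsilon}(\gamma_x^-)^{\complement}$, extract fixed points, and finish with Lemma \ref{lem-cartan-lox}. However, there is a genuine gap at the step where you claim that ``continuity of the Gromov product combined with Lemma \ref{lem-xietao} gives $\dis_X(o,(g^+g^-)_X)=O(r)$.'' The upper bound in Lemma \ref{lem-xietao} is $\dis_X(o,(\xi\eta)_X)\leq C_3\|(\xi|\eta)_o\|+C'$ with an \emph{additive} constant $C'$ that does not vanish as $r\to 0$; even with $\|(g^+|g^-)_o\|=O(r)$ you only obtain $\dis_X(o,(g^+g^-)_X)=O(r)+C'$, not $O(r)$. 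Consequently Lemma \ref{lem-cartan-lox} yields $\|\lambda(g)-\underline{a}(g)\|\leq O(r)+2C'$, and the final deduction $\lambda(g)\in\frak{a}^{++}$ would require $t_0(x,\varepsilon)$ to dominate the absolute constant $2C'$, which is not built into Definition \ref{defin_t0} (whose implied constant comes from Lemma \ref{lem-action-aplus}) nor into the definition of $r_0$ (which only involves $C_3$ and $2$). The paper circumvents this entirely: it does not re-apply the Gromov-product estimate to the perturbed pair, but instead invokes Lemma \ref{lem-compare} (which rests on the local comparison of $\dis_1$ and $\dis_2$ from Lemma \ref{lem-dist-compare}) to conclude $B(\gamma_x^+,\varepsilon)\times B(\gamma_x^-,\varepsilon)\subset\mathcal{F}^{(2)}(x,2r)$, giving the clean bound $\|\lambda(\gamma)-\underline{a}_x(\gamma)\|\leq 4r$ with no additive constant.

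A second, smaller omission: you assert that exhibiting \emph{some} $g$-fixed point $g^+\in B(g_o^+,\varepsilon')$ together with $\lambda(g)\in\frak{a}^{++}$ identifies $g^+$ with the attracting fixed point. A loxodromic element fixes several points of $\calF$ (one for each Weyl chamber containing its axis), so this identification needs an argument. The paper supplies one at the end of its proof: the basin of attraction of $\gamma^+$ is open and dense, hence meets $B(\gamma_x^+,\varepsilon)$, and since that ball is forward-invariant the attracting fixed point must lie in it. Your invariant-ball setup contains the ingredients for this, but the step should be made explicit.
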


\begin{proof}
There exist $k_{\gamma_x^+}, l_{\gamma_x^-} \in h_x K$ (as $h$ and $h'k_\iota$ in Definition \ref{defin-cartan-regular}), defined up to right multiplication by elements of $M$ and independent of the choice of representative $h_x \in G$ such that
$\gamma = k_{\gamma_x^+} e^{\underline{a}_x(\gamma)} l_{\gamma_x^-}^{-1} .$
Apply Lemma \ref{lem-action-aplus}, to the element $h_x^{-1}\gamma h_x=h_x^{-1}k_{\gamma_x^+} e^{\underline{a}_x(\gamma)} (h_x^{-1}l_{\gamma_x^-})^{-1} \in K A^{++}K$,
$$ h_x^{-1}\gamma h_x\; \cal{V}_{C_x^{-1}\varepsilon}( h_x^{-1}\gamma_x^-)^\complement \subset B(h_x^{-1}\gamma_x^+,C_x^{-1}\varepsilon).$$
We multiply by $h_x$ on the left 
$ \gamma h_x \cal{V}_{C_x^{-1}\varepsilon}(h_x^{-1}\gamma_x^- )^\complement \subset h_x B(h_x^{-1}\gamma_x^+,C_x^{-1}\varepsilon).$
Using the properties of $C_x>0$ (Lemma \ref{lem-actiong}), we deduce the following inclusions 
\begin{itemize}
\item[•] $h_x B(h_x^{-1}\gamma_x^+,C_x^{-1}\varepsilon) \subset B(\gamma_x^+, \varepsilon)$,
\item[•] $\cal{V}_{\varepsilon}(\gamma_x^-)^\complement \subset h_x \cal{V}_{C_x^{-1}\varepsilon}(h_x^{-1}\gamma_x^-)^\complement$.
\end{itemize}
Hence
$ \gamma \cal{V}_{\varepsilon}(\gamma_x^-)^\complement \subset B(\gamma_x^+,\varepsilon).$
Recall that $\iota$ is the opposition involution and $k_\iota \in N_K(A)$ such that $\iota=-Ad(k_\iota)$, then
$$\gamma^{-1}=l_{\gamma_x^-}k_\iota \; e^{\iota \underline{a}_x(\gamma)}\; (k_{\gamma_x^+}k_\iota)^{-1}.$$
Since $\iota \underline{a}_x(g)$ is at distance at most $t_0$ from $\partial \frak{a}^+$ and $(\gamma^{-1})_x^\pm = \gamma_x^\mp$, we deduce that
$ \gamma^{-1} \cal{V}_\varepsilon(\gamma_x^+)^\complement \subset B(\gamma_x^-,\varepsilon).$

Due to $\dis_X(o,((h_x^{-1}\gamma_x^+)(h_x^{-1}\gamma_x^-))_X)=\dis_X(x,(\gamma_x^+\gamma_x^-)_X)<r$, by Lemma \ref{lem-xietao} and Definition \ref{defin_gromov}, we obtain
\[ \delta(h_x^{-1}\gamma_x^+,h_x^{-1}\gamma_x^-)\geq e^{-C_3r}. \]
Then by Lemma \ref{lem-actiong}, we have
\[ \delta(\gamma_x^+,\gamma_x^-)\geq C_x^{-1}\delta(h_x^{-1}\gamma_x^+,h_x^{-1}\gamma_x^-)\geq C_x^{-1}e^{-C_3r}. \]
Due to the choice of $\epsilon,r$, we have $C_x^{-1}e^{-C_3r}>2\epsilon$. Hence we have $B(\gamma_x^\pm, \varepsilon) \subset \cal{V}_\varepsilon(\gamma_x^\mp)^\complement$. Then we deduce that $\gamma$ (resp. $\gamma^{-1}$) has an attracting fixed point $\xi^+\in B(\gamma_x^+,\varepsilon)$ (resp. $\xi^-\in B(\gamma_x^-,\varepsilon)$).

Since $\gamma$ admits a fixed maximal flat $(\xi^+\xi^-)_X$, we apply Lemma \ref{lem-cartan-lox},
$$ \Vert \lambda(\gamma)-\underline{a}_x(\gamma) \Vert \leq 2 \dis_X(x, (\xi^+\xi^-)_X ).$$
By hypothesis $\varepsilon <C_x^{-1}r$, Lemma \ref{lem-compare} implies that 
$ B(\gamma_x^+,\varepsilon) \times B(\gamma_x^-,\varepsilon) \subset \cal{F}^{(2)}(x,2r)$.
Hence $\lambda(\gamma) \in B( \underline{a}_x(\gamma), 4r)$.
Using that $r<r_0$ and $\varepsilon < C_x^{-1}r$, we get a lower bound
$t_0(x,\varepsilon) >  - 2 \log r > 4r.$
We deduce that $B(\underline{a}_x(\gamma),4r) \subset \frak{a}^{++}$, therefore $\gamma$ is loxodromic.

Finally, because the bassin of attraction of $\gamma^+$ (resp. $\gamma^-$) is a dense open set of $\calF$, there are points in $B(\gamma_x^+,\varepsilon)$ (resp. $B(\gamma_x^-,\varepsilon)$) that $\gamma$ (resp. $\gamma^{-1}$) will attract to $\gamma^+$ (resp. $\gamma^-$). 
Since $\calF$ is Hausdorff for $\dd$, we deduce that $\gamma^+=\xi^+$ (resp. $\gamma^-=\xi^-$). 
\end{proof}

\section{Conjugacy classes and periodic tori}
In this section, we remove the torsion free assumption and only assume $\Gamma<G$ to be a cocompact lattice.
We denote in brackets the $\Gamma$-conjugacy classes of elements in $\Gamma$.
Denote by $[\Gamma^{lox}]$ (resp. $[\Gamma^{sing}]$) the set of $\Gamma$-conjugacy classes of loxodromic (resp.  singular) elements.

The following Lemma is due to Selberg.
\begin{lem}[ \cite{selberg}, \cite{pr} ]\label{lem-selberg}
Let $\Gamma$ be a cocompact lattice.
Let $F$ be a right $A$-orbit in $\Gamma\backslash G/M$. If $\Lambda(F)\cap\frak a^{++} \neq \emptyset $, then $F$ is a compact periodic $A$-orbit.
\end{lem}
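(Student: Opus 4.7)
The plan is to exploit the regular period $Y_0\in\Lambda(F)\cap\frak a^{++}$ to extract a loxodromic element of $\Gamma$, then use cocompactness of $\Gamma$ to show $\Lambda(F)$ has full rank in $\frak a$, which is what makes $F$ compact. Pick a lift $g\in G$ of some $z\in F$. The relation $ze^{Y_0}=z$ in $\Gamma\backslash G/M$ yields $\gamma_0\in\Gamma$ and $m_0\in M$ with $\gamma_0=ge^{Y_0}m_0^{-1}g^{-1}$; since $M$ centralizes $A$ and $Y_0$ is regular, $e^{Y_0}m_0^{-1}\in A^{++}M$ is loxodromic with Jordan projection $Y_0$, hence so is $\gamma_0$. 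Running the same computation for every $Y\in\Lambda(F)$ identifies $\exp(\Lambda(F))$ with the image of $H:=g^{-1}\Gamma g\cap AM$ under the quotient $AM\twoheadrightarrow A$. Since $\Gamma$ is discrete in $G$ and $M$ is compact, $H$ is discrete in $AM$ and its image in $A$ is discrete, so $\Lambda(F)$ is discrete in $\frak a$.

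It remains to show $\Lambda(F)$ has full rank. I would invoke the classical cocompactness-of-centralizer argument: fix a compact $K_0\subset G$ with $K_0\Gamma=G$; for any $h\in Z_G(\gamma_0)$ write $h=k\delta$ with $k\in K_0$, $\delta\in\Gamma$, so that
\[\delta\gamma_0\delta^{-1}=k^{-1}h\gamma_0 h^{-1}k=k^{-1}\gamma_0 k\in K_0^{-1}\gamma_0 K_0.\]
Discreteness forces $K_0^{-1}\gamma_0 K_0\cap\Gamma$ to be finite, so $\delta$ lies in one of finitely many cosets $\delta_i Z_\Gamma(\gamma_0)$, whence $Z_G(\gamma_0)\subset K_0\{\delta_1,\dots,\delta_n\}Z_\Gamma(\gamma_0)$. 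Hence $Z_\Gamma(\gamma_0)$ is cocompact in $Z_G(\gamma_0)$.

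Next I would pin down $Z_G(\gamma_0)^0$: since $Y_0$ is regular, on each root space $\frak g_\alpha$ the endomorphism $\operatorname{Ad}(e^{Y_0}m_0^{-1})|_{\frak g_\alpha}=e^{\alpha(Y_0)}\operatorname{Ad}(m_0^{-1})|_{\frak g_\alpha}$ cannot have eigenvalue $1$, because $|e^{\alpha(Y_0)}|\neq 1$ while $\operatorname{Ad}(m_0^{-1})$ has unit-modulus eigenvalues. Therefore $\operatorname{Lie}(Z_G(e^{Y_0}m_0^{-1}))\subset\frak a\oplus\frak m$, so $Z_G(\gamma_0)^0\subset g(AM)g^{-1}$ and contains $gAg^{-1}$. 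Linearity of $G$ makes $Z_G(\gamma_0)/Z_G(\gamma_0)^0$ finite, so $Z_\Gamma(\gamma_0)\cap Z_G(\gamma_0)^0$ is cocompact in $Z_G(\gamma_0)^0$. Conjugating back by $g^{-1}$ and projecting along $AM\to A$, whose kernel $M$ is compact, produces a cocompact subgroup of $A$ contained in $\exp(\Lambda(F))$. Hence $\Lambda(F)$ is a maximal grid in $\frak a$ and $F\simeq A/\exp(\Lambda(F))$ is compact.

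The main obstacle is the cocompactness of $Z_\Gamma(\gamma_0)$ in $Z_G(\gamma_0)$: this is exactly where cocompactness of $\Gamma$ and the semisimplicity of $\gamma_0$ both enter, and it is the substance of what the excerpt attributes to Selberg.
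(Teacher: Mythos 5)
Your proof is correct, and its skeleton is the same as the paper's: extract from the regular period $Y_0$ a loxodromic element $\gamma_0=ge^{Y_0}m_0^{-1}g^{-1}\in\Gamma$, and reduce everything to the cocompactness of $Z_\Gamma(\gamma_0)$ in $Z_G(\gamma_0)$. The difference is in how that key input is obtained and then used. The paper treats it as a black box: it invokes Selberg's lemma (or Prasad--Raghunathan, Lemma 1.10) for the properness of $\Gamma_\gamma\backslash G_\gamma\to\Gamma\backslash G$, deduces compactness of $\Gamma_\gamma\backslash G_\gamma$ from cocompactness of $\Gamma$, asserts $G_\gamma=gAMg^{-1}$, and concludes directly that $\Gamma gAM=\Gamma G_\gamma g$ is compact in $\Gamma\backslash G$. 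You instead prove the cocompactness of the centralizer from scratch by the classical argument (writing $h=k\delta$ and using that $K_0^{-1}\gamma_0K_0\cap\Gamma$ is finite), which is exactly the content of the cited lemma in the cocompact case; this makes your proof self-contained and avoids appealing to the general properness statement. You are also more careful than the paper on the structure of the centralizer: the paper's claim that $gAMg^{-1}$ commutes with $\gamma$ is not literally true (nothing forces $M$ to centralize $m_0$), whereas your version --- $gAg^{-1}\subseteq Z_G(\gamma_0)$ and $Z_G(\gamma_0)^0\subseteq gAMg^{-1}$ by the eigenvalue argument on root spaces, plus finiteness of the component group --- is what one actually needs. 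Finally, you conclude via full rank and discreteness of $\Lambda(F)$ (which also yields the ``maximal grid'' structure used elsewhere in the paper), rather than via compactness of $\Gamma G_\gamma g$ in $\Gamma\backslash G$; both endgames are fine, yours being marginally longer but giving slightly more information.
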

\begin{proof}
We can write $F=\Gamma g AM$. For a non zero $Y\in \Lambda(F)\cap\frak a^{++}$, by definition $\Gamma gM=\Gamma g\exp(Y)M$.
Hence there exists an element $\gamma\in\Gamma$ such that $\gamma g=g\exp(Y)m_Y$. By Selberg's Lemma in \cite{selberg} or \cite[Lemma 1.10]{pr}, the map $\Gamma_\gamma\backslash G_\gamma \rightarrow \Gamma \backslash G$ is proper, where $G_\gamma$ (resp. $\Gamma_\gamma$) denotes the centralizer of $\gamma$ in $G$ (resp. $\Gamma$). 
Therefore, $\Gamma_\gamma\backslash G_\gamma$ is compact.

Then $G_\gamma$ is a conjugated to $AM$. 
Now $gAMg^{-1}$ commutes with $\gamma$, so $G_\gamma = gAMg^{-1}$ and $\Gamma_\gamma = \Gamma \cap G_\gamma$. 
Then $\Gamma_\gamma\backslash G_\gamma = (\Gamma\cap G_\gamma)\backslash G_\gamma$ compact implies that $\Gamma gAM=\Gamma G_\gamma g$ is compact in $\Gamma\backslash G$. So $F$ is compact in $\Gamma\backslash G/M$.
\end{proof}
In the first paragraph, we give a relation between conjugacy classes of loxodromic elements and periodic tori.
In the second paragraph, we give a proof for completeness that singular elements of a cocompact lattice do not have a unipotent part.

\subsection{The case of loxodromic elements}

For every loxodromic element $\gamma \in \Gamma^{lox}$, we denote by  
$\calL_\gamma$ the measure of $G/M$ supported on the $A$-orbit of Hopf coordinates $( \gamma^+ , \gamma^-; \frak a)$ such that its disintegration in Hopf coordinates is given by
\begin{equation}\label{defin_mes-tore}
\calL_\gamma := D_{\gamma^+} \otimes D_{\gamma^-} \otimes Leb_{\frak{a}},
\end{equation}
where $D_{\gamma^\pm}$ is the Dirac measure at $\gamma^\pm$. 

Remark that the quotient in $\Gamma\backslash G/M$ of the $A$-orbit $( \gamma^+ , \gamma^-; \frak a)$ only depends on the conjugacy class $[\gamma]$.
Denote by $F_{[\gamma]}$ the quotient of this $A$-orbit in $\Gamma\backslash G/M$. 
We claim that every point in $F_{[\gamma]}$ is periodic for the Weyl chamber flow $\Gamma \backslash G/M \curvearrowleft e^{\lambda(\gamma)}$.
Indeed, by \eqref{eq_gg+}, that is $\gamma(\gamma^+,\gamma^-,Y)=(\gamma^+,\gamma^-,Y+\lambda(\gamma))$ for every $Y\in \frak{a}$, hence $\lambda(\gamma)\in\Lambda(F_{[\gamma]})$. 
If we take $g_\gamma$ an element such that $(\gamma^+,\gamma^-,0)=g(\eta_0,\zeta_0,0)$ i.e. that Jordan diagonalizes $\gamma$, then the formula also implies $g_\gamma^{-1}\gamma g_\gamma\in \exp(\lambda(\gamma))M$. 
With this $g_\gamma$, we may express this $A$-orbit $F_{[\gamma]}=\Gamma g_\gamma AM$.

Let $\calG (A):=\{(Y,F)|\ F\in C(A),\ Y\in\Lambda(F)\cap \frak a^{++} \}$.

\begin{prop}\label{lem-gaglox}
Let $\Gamma$ be a cocompact lattice.
 	If the action of $\Gamma$ on $G/M$ is free, then the following map is well-defined and bijective.
  \begin{align*}
      \Psi : [\Gamma^{lox}] &\longrightarrow \calG(A) \\
       [\gamma] & \longmapsto (\lambda(\gamma), F_{[\gamma]}).
  \end{align*}
\end{prop}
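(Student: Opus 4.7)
The plan is to verify that $\Psi$ is well-defined using the conjugation invariance of the Jordan projection together with Selberg's Lemma \ref{lem-selberg}, and then to prove bijectivity by exhibiting an essentially explicit inverse for surjectivity and invoking the freeness of the $\Gamma$-action on $G/M$ for injectivity.

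First I would check well-definedness. Since $\lambda$ is conjugation-invariant in $G$, $\lambda(\gamma)$ depends only on $[\gamma]$. Pick $g_\gamma\in G$ with $g_\gamma^{-1}\gamma g_\gamma=\exp(\lambda(\gamma))m_\gamma\in A^{++}M$; then $F_{[\gamma]}=\Gamma g_\gamma AM$, and replacing $\gamma$ by $\delta\gamma\delta^{-1}$ amounts to replacing $g_\gamma$ by $\delta g_\gamma$, so $F_{[\gamma]}$ depends only on $[\gamma]$. Equation \eqref{eq_gg+} yields $\lambda(\gamma)\in\Lambda(F_{[\gamma]})\cap\frak a^{++}$, and Selberg's Lemma \ref{lem-selberg} then promotes $F_{[\gamma]}$ to a compact periodic $A$-orbit, so $\Psi([\gamma])\in\calG(A)$.

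For surjectivity, given $(Y,F)\in\calG(A)$, I would write $F=\Gamma gAM$ for some $g\in G$. The condition $Y\in\Lambda(F)$ is exactly $\Gamma g\exp(Y)M=\Gamma gM$, so there exist $\gamma\in\Gamma$ and $m\in M$ with $\gamma g=g\exp(Y)m$, giving $g^{-1}\gamma g=\exp(Y)m\in A^{++}M$. Hence $\gamma\in\Gamma^{lox}$ with $\lambda(\gamma)=Y$; taking $g_\gamma=g$ shows $F_{[\gamma]}=\Gamma gAM=F$ and $\Psi([\gamma])=(Y,F)$.

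The step that will require the most care is injectivity: this is where the freeness of the $\Gamma$-action on $G/M$ enters. Suppose $\Psi([\gamma_1])=\Psi([\gamma_2])=(Y,F)$ with $g_i^{-1}\gamma_i g_i\in\exp(Y)M$ and $F=\Gamma g_iAM$. Then $g_2=\delta g_1 am$ for some $\delta\in\Gamma$, $a\in A$, $m\in M$. Conjugating and using that $M$ centralizes $A$ (so that $am$ commutes through $\exp(Y)M$), I would deduce that $g_1^{-1}\delta^{-1}\gamma_2\delta g_1\in\exp(Y)M$; comparing with $g_1^{-1}\gamma_1 g_1\in\exp(Y)M$ then yields $\gamma_1^{-1}\delta^{-1}\gamma_2\delta\in\Gamma\cap g_1Mg_1^{-1}$. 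Freeness of the $\Gamma$-action on $G/M$ means precisely that $\Gamma\cap gMg^{-1}=\{e\}$ for every $g\in G$, so $\gamma_2=\delta\gamma_1\delta^{-1}$ and $[\gamma_1]=[\gamma_2]$. Apart from this use of freeness, the argument is a direct unfolding of definitions.
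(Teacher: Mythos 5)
Your proof is correct and follows essentially the same route as the paper's: surjectivity by unfolding the definition of a period at a chosen base point of $F$, well-definedness from conjugation equivariance (with Selberg's Lemma~\ref{lem-selberg} guaranteeing compactness of $F_{[\gamma]}$), and injectivity from the triviality of $\Gamma\cap gMg^{-1}$. If anything, your injectivity step is slightly more complete than the paper's, since you explicitly reduce two arbitrary representatives to the same base point $g_1$ via $g_2=\delta g_1am$ before applying freeness.
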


\begin{proof}
We first prove that the following map is surjective.
\begin{align*}
    \widetilde{\Psi} : \Gamma^{lox} & \longrightarrow \calG(A) \\
    \gamma & \longmapsto (\lambda(\gamma), F_{[\gamma]}).
\end{align*}
Indeed, fix any compact periodic $A$-orbit $F$. We may denote it by $\Gamma g AM$, for some $g\in G$. 
 For every regular period in this $A$-orbit $Y\in\Lambda(F)\cap\frak a^{++}$, by definition, $xe^{Y}=x$ for all $x \in F$. 
 Now we fix any point $x\in F$ and choose any $g\in G$ such that $\Gamma gM=x$.
 We deduce that there exists an element $\gamma_Y\in\Gamma$ such that $\gamma_Yg =g\exp(Y)m_Y$ for some $m_Y \in M$.
 Hence the surjectivity of the map $\widetilde{\Psi}$.

 Note that $(\beta \gamma_Y \beta^{-1} ) \beta g = \beta g e^{Y}m_Y$ for all $\beta \in \Gamma$ i.e.
$\widetilde{\Psi}( \beta \gamma_Y \beta^{-1}  )=\widetilde{\Psi}( \gamma_Y )$.
It implies that the quotient map $\Psi$ is well-defined.

 Now let us prove the injectivity of the quotient map.
Consider $\gamma_Y$ as above and assume by contradiction there exists a distinct $\gamma_Y'\in \Gamma$ such that $\gamma_Y'g=g\exp(Y)m_Y'$ for some $m_Y'\in M$.
 Since $\gamma_Y\neq \gamma_Y'$ we deduce that $\gamma_Y^{-1}\gamma_Y'=gm_Y^{-1}m_Y'g^{-1}$ is not the identity. 
 This element $\gamma_Y^{-1}\gamma_Y'\in \Gamma$ fixes $gM$ in $G/M$, which contradicts that $\Gamma$ acts on $G/M$ freely.
 Hence $\Psi$ is injective.
\end{proof}

\subsection{The case of singular elements}
The following Proposition holds under the hypothesis that $\Gamma<G$ is torsion free and cocompact.
It is tautological for loxodromic elements.
The result should be well known for experts in the domain. 
We give the proof for singular elements in $\Gamma$ for completeness.
\begin{prop}\label{prop-singulier}
    Let $\Gamma<G$ be a cocompact lattice.
    Assume that the action of $\Gamma$ on the symmetric space $X=G/K$ is free.
    Then for all (non trivial) element $\gamma \in \Gamma$, its unipotent part in Jordan decomposition is trivial i.e.  there exists $h \in G$ and $k_\gamma \in Z_K(e^{\lambda(\gamma)})$ such that
    $$ \gamma = h e^{\lambda(\gamma)} k_\gamma h^{-1}.$$
\end{prop}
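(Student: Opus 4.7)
The plan is to combine cocompactness of $\Gamma \backslash X$ with freeness of the $\Gamma$-action and the classical identification, on a symmetric space of non-compact type, of axial isometries with Jordan-semisimple elements of $G$.

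First, I will show that the displacement function $d_\gamma(x) := \dis_X(x, \gamma x)$ on $X = G/K$ attains its infimum $\ell := \inf_{x \in X} d_\gamma(x)$. Given a minimising sequence $(x_n)$, project to the compact quotient $\Gamma \backslash X$ and extract a convergent subsequence, yielding $\gamma_n \in \Gamma$ with $\gamma_n x_n \to x_\infty$ in $X$. Setting $\gamma_n' := \gamma_n \gamma \gamma_n^{-1}$, the $\Gamma$-invariance of $\dis_X$ gives $d_{\gamma_n'}(\gamma_n x_n) = d_\gamma(x_n) \to \ell$, so both $\gamma_n x_n$ and $\gamma_n \gamma x_n = \gamma_n'(\gamma_n x_n)$ stay in a fixed compact subset of $X$. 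Hence the $\gamma_n'$ lie in a bounded subset of $G$, and discreteness of $\Gamma$ forces them, along a further subsequence, to equal a single element $\gamma^\star$ conjugate to $\gamma$ in $G$; passing to the limit then gives $d_{\gamma^\star}(x_\infty) = \ell$. The Jordan data being conjugation-invariant, it suffices to prove the proposition for $\gamma^\star$, so we may assume $d_\gamma(x_0) = \ell$ for some $x_0 \in X$.

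Freeness of the $\Gamma$-action on $X$ and $\gamma \neq e$ rule out $\ell = 0$ (else $\gamma$ would fix $x_0$), so $\gamma$ is an axial isometry of $X$: since $d_\gamma$ is convex on the Hadamard manifold $X$, its non-empty minimum set is a closed convex union of parallel geodesics on which $\gamma$ acts as a pure translation of length $\ell > 0$. The main input is then the classical equivalence, for isometries of a symmetric space of non-compact type, between attaining the displacement infimum and being Jordan-semisimple in $G = \mathrm{Isom}_0(X)$ (see e.g.\ Eberlein, \emph{Geometry of nonpositively curved manifolds}, §2.19, or Ballmann--Gromov--Schroeder). Therefore $\gamma_u = e$, and the Jordan decomposition collapses to $\gamma = \gamma_e \gamma_h$ with $\gamma_e$ and $\gamma_h$ commuting.

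To extract the normal form, pick $h_1 \in G$ with $h_1^{-1} \gamma_h h_1 = e^{\lambda(\gamma)}$. Then $h_1^{-1} \gamma_e h_1$ is elliptic and commutes with $e^{\lambda(\gamma)}$, so it lies in the reductive, Cartan-involution-stable centraliser $Z_G(e^{\lambda(\gamma)})$ whose maximal compact subgroup is $Z_K(e^{\lambda(\gamma)})$. Applying a Cartan decomposition inside $Z_G(e^{\lambda(\gamma)})$ yields $h_2 \in Z_G(e^{\lambda(\gamma)})$ sending $h_1^{-1} \gamma_e h_1$ into $Z_K(e^{\lambda(\gamma)})$; since $h_2$ commutes with $e^{\lambda(\gamma)}$, setting $h := h_1 h_2$ and $k_\gamma := h^{-1} \gamma_e h$ produces $\gamma = h e^{\lambda(\gamma)} k_\gamma h^{-1}$ with $k_\gamma \in Z_K(e^{\lambda(\gamma)})$, as claimed. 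The step I expect to be the main obstacle is the axial/semisimple equivalence invoked above: a self-contained proof would feed $\gamma_u \neq e$ into a Jacobson--Morozov $\mathfrak{sl}_2$-triple to produce a one-parameter subgroup $\exp(tH)$ along whose orbit $d_\gamma$ strictly decreases to its infimum without attaining it, contradicting the attainment established in the first paragraph.
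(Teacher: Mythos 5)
Your proof is correct, and it reaches the conclusion by a genuinely different route in the key step. You and the paper share the same geometric starting point: cocompactness plus freeness force the displacement function $d_\gamma$ to attain a strictly positive minimum, so $\gamma$ is an axial isometry (in fact you are more careful here than the paper, which simply asserts the existence of a minimising point $p$, whereas you run the standard minimising-sequence/compactness/discreteness argument to produce a conjugate attaining the infimum). Where you diverge is in passing from ``axial'' to ``trivial unipotent part'': the paper does this by hand, writing $\gamma = h a_\gamma k_\gamma h^{-1}$ with $k_\gamma \in K$ from the translation axis, proving by induction that $a_\gamma^{-n} k_\gamma a_\gamma^{n} \in K$ for all $n \in \Z$, and then invoking its appendix criterion (Proposition \ref{prop-append-parab}) that boundedness of this two-sided conjugation orbit forces $k_\gamma \in Z_K(a_\gamma)$, which exhibits the Jordan decomposition directly with $a_\gamma = e^{\lambda(\gamma)}$. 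You instead cite the classical Eberlein/Ballmann--Gromov--Schroeder equivalence between attainment of the displacement infimum and algebraic semisimplicity of $\gamma$ in $G = \mathrm{Isom}_0(X)$, and then need an extra (correct) step to land in the stated normal form: conjugating the hyperbolic part to $e^{\lambda(\gamma)}$ and using conjugacy of maximal compact subgroups in the $\theta$-stable reductive centraliser $Z_G(e^{\lambda(\gamma)})$ to move the elliptic part into $Z_K(e^{\lambda(\gamma)})$. The trade-off is clear: the paper's argument is self-contained (modulo its own appendix) and produces the normal form in one stroke from the axis; yours outsources the hardest implication to a standard reference, gains a cleaner logical structure and a rigorous attainment step, but would require the Jacobson--Morozov argument you only sketch if one insisted on a self-contained proof.
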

\begin{proof}
The relation is tautological for loxodromic elements in $\Gamma$. We prove it for singular elements.

Note that for every non trivial $\gamma \in \Gamma$, the injectivity radius of the manifold $\Gamma \backslash G/K$ is a lower bound of $\inf_{x\in X} \dis_X(x,\gamma x)$.
By hypothesis, $\Gamma \backslash G/K$ is a cocompact manifold.
Therefore its injectivity radius has a positive lower bound and $\inf_{x\in X} \dis_X(x,\gamma x)>0$ for any non trivial $ \gamma \in \Gamma$.

Fix a non trivial $\gamma \in \Gamma^{sing}$.
Consider a point that we denote by $p\in X$ where this infimum is realised i.e. $\dd_{X}(p, \gamma p)= \inf_{x\in X} \dis_X(x,\gamma x).$ 
We prove that the bi-infinite geodesic going through $p$ and $\gamma p$ is a translation axis of $\gamma$ on $X$.
Indeed, let $y\in [p,\gamma p]$, then on one hand $ \dis_X(y,\gamma y) \geq \dis_X(p,\gamma p)$.
On the other hand, by triangle inequality, left $G-$invariance of the distance $\dis_X$, we deduce that
    $$ \dis_X(y,\gamma y) \leq \dis_X(y,\gamma p)+ \dis_X(\gamma p, \gamma y) = \dis_X(y,\gamma p) + \dis_X(p,y).$$
Since $y$ is in the geodesic segment, we deduce that $\dis_X(y,\gamma y) \leq \dis_X(p,\gamma p)$, proving that it satisfies the minimizing equality.
By $G-$invariance over $\dis_X$ and gluing all the geodesic segments $ \gamma^\Z .[p,\gamma p]$ together, the same minimizing equality holds for every point in the geodesic $(p,\gamma p)$.  

Set $a_\gamma :=\exp \big( \underline{a}_p(\gamma) \big) \in A^+$. 
It is a non trivial element since $\Vert \underline{a}_p(\gamma) \Vert= \dis_X(p,\gamma p)$.
Recall that non trivial geodesics of $X$ based at $p$ are of the form $h e^{\R v}o$, where $v\in \mathfrak{a}^+$ is non zero and $h\in G$ is any element such that $ho=p$.
Hence, we fix $h \in G$ such that $ho=p$ and such that $he^{\R \underline{a}_p(\gamma) }o=( h a_\gamma^t o)_{t\in \R}$ denotes the bi-infinite geodesic $(p,\gamma p)$.  
Since the latter is a translation axis of $\gamma$, we deduce that for all integer $n\in \Z$, 
\begin{equation}\label{eq-prop-sing1}
     \gamma^n h o = h a_\gamma^n o.
\end{equation}

By the above relation, there exists $k_\gamma \in K$ such that $\gamma = h a_\gamma k_\gamma h^{-1}$.
We prove by induction that for all integer $n \in \Z$
\begin{equation}\label{eq-prop-sing2}
    a_\gamma^{-n} k_\gamma a_\gamma^n \in K.
\end{equation}
Note that the relation $a_\gamma k_\gamma = h^{-1} \gamma h$ yields the base case $n=0$.
Assume the relation is true up to some non negative integer $n \geq 0$.
By \eqref{eq-prop-sing1} on the one hand, $a_\gamma^{-n-2}  h^{-1} \gamma^{n+2} h \in K$.
On the other hand, 
$$ a_\gamma^{-n-2} \; \big( h^{-1} \gamma^{n+2} h \big)= a_\gamma^{-n-2} \; \overrightarrow{ \prod_{i=1}^{n+2} } h^{-1} \gamma h  = a_\gamma^{-n-2} \; \overrightarrow{ \prod_{i=1}^{n+2}} a_\gamma k_\gamma = \overrightarrow{ \prod_{i=1}^{n+2}}  a_\gamma^{-(n+2-i)} k_\gamma a_\gamma^{n+2-i} = \overrightarrow{ \prod_{k=n+1}^{0}} a_\gamma^{-k} k_\gamma a_\gamma^{k} .$$
By induction, the second until the last terms in multiplicative order are in $K$.
Furthermore, the entire product is in $K$. 
Consequently, the first term on the left, $a_\gamma^{-n-1} k_\gamma a_\gamma^{n+1}\in K$. 
Therefore, \eqref{eq-prop-sing2} is true for all non negative integers. 
For negative integers, using \eqref{eq-prop-sing1} similarly, $h^{-1} \gamma^{n+1}h \; a_\gamma^{-n-1} \in K$ for all $n>0$. 
For the computation, we notice the telescopic product
$$  \big( h^{-1} \gamma^{n+1} h \big) \; a_\gamma^{-n-1}  =  \Big( \overrightarrow{ \prod_{i=1}^{n+1} } h^{-1} \gamma h \Big) \; a_\gamma^{-n-1}
=\Big( \overrightarrow{ \prod_{i=1}^{n+1} } a_\gamma k_\gamma \Big) \; a_\gamma^{-n-1}
= \overrightarrow{ \prod_{i=1}^{n+1}} a_\gamma^{i} k_\gamma a_\gamma^{-i}  .$$
At each step, only the last term on the right is new, hence \eqref{eq-prop-sing2} holds for negative integers.

Since $K$ is compact, the sequence $(a_\gamma^{-n} k_\gamma a_\gamma^n)_{n\in \Z}$ is bounded. 
By Proposition \ref{prop-append-parab} (iii), we deduce that the compact element $k_\gamma$ is also commutes with $a_\gamma$ i.e. $k_\gamma \in Z_K(a_\gamma)$. 

To conclude, we have found a non trivial $a_\gamma \in A^+$, a commuting element $k_\gamma \in Z_K(a_\gamma)$ and $h\in G$ such that 
$$\gamma = h a_\gamma k_\gamma h^{-1}.$$
We recognize a Jordan decomposition of the singular element $\gamma$, where $h a_\gamma h^{-1}$ (resp. $h k_\gamma h^{-1}$) is the hyperbolic (resp. elliptic) part and the unipotent part is trivial.
\end{proof}
The above result implies that when $\Gamma$ is torsion free and uniform, every closed geodesic in the manifold $\Gamma \backslash G/K$ corresponds to a unique conjugacy class of non trivial elements in $\Gamma$. 
As a corollary, we deduce an upper bound for the distance between the Jordan projection and the Cartan projection.

\begin{corol}\label{lem-cartan-loxc}
Let $\Gamma<G$ be a torsion free, cocompact irreducible lattice. 
For every non trivial $\gamma\in \Gamma$, there exists $\gamma_0\in[\gamma]$ in its $\Gamma$-conjugacy class such that
\[\|\lambda(\gamma)-\underline{a}_o(\gamma_0) \|\leq 
C_\Gamma. \]

\noindent Furthermore, there exists an element $g\in G$ with $\|g\|\leq C_\Gamma$ and $k_\gamma \in Z_K(e^{\lambda(\gamma)})$ such that 
\[\gamma_0=g \exp(\lambda(\gamma))k_\gamma g^{-1}. \]
\end{corol}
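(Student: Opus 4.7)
The plan is to combine Proposition \ref{prop-singulier} (for singular $\gamma$) with the tautological fact that loxodromic elements of $G$ are conjugate into $A^{++}M$ (for loxodromic $\gamma$), and then to use the cocompactness of $\Gamma\backslash G$ to replace the conjugating element by a bounded one.

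\textbf{Step 1: Uniform ``Jordan'' decomposition.} For every non-trivial $\gamma\in \Gamma$ there exist $h\in G$ and $k_\gamma \in Z_K(e^{\lambda(\gamma)})$ such that
\[
\gamma \;=\; h \exp(\lambda(\gamma))\, k_\gamma\, h^{-1}.
\]
If $\gamma$ is loxodromic, this is exactly the statement that $\gamma$ is conjugate into $A^{++}M$, together with the inclusion $M \subset Z_K(e^{\lambda(\gamma)})$. If $\gamma$ is singular, this is Proposition \ref{prop-singulier}, whose proof is precisely where cocompactness and torsion-freeness enter (via the existence of a minimizing translation axis, which forces the unipotent part to vanish).

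\textbf{Step 2: Straightening $h$ using a fundamental domain.} By cocompactness of $\Gamma\backslash G$, fix once and for all a relatively compact open $F\subset G$ with $\Gamma \cdot F = G$, and set
\[
N_\Gamma \;:=\; \sup_{f\in \overline{F}} \|f\|, \qquad D_\Gamma \;:=\; \sup_{f\in \overline{F}} \dis_X(o,fo),
\]
both finite. Choose $\beta\in\Gamma$ with $\beta h \in F$ and let $g:=\beta h$, $\gamma_0:=\beta\gamma\beta^{-1}$. Then $\gamma_0\in[\gamma]$ and
\[
\gamma_0 \;=\; g \exp(\lambda(\gamma))\, k_\gamma\, g^{-1}, \qquad \|g\|\leq N_\Gamma,\qquad \dis_X(o,go)\leq D_\Gamma,
\]
which already yields the second assertion as soon as $C_\Gamma \geq N_\Gamma$. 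Note that $\lambda(\gamma)$ and $k_\gamma$ are the same data as in Step 1; only the outer factor is modified.

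\textbf{Step 3: Cartan projection estimate.} Since $\lambda(\gamma)\in\mathfrak{a}^+$ and $k_\gamma\in K$, the decomposition
\[
\exp(\lambda(\gamma))\, k_\gamma \;=\; e \cdot \exp(\lambda(\gamma))\cdot k_\gamma \;\in\; K\, A^+\, K
\]
is already in Cartan form, hence $\underline{a}(\exp(\lambda(\gamma)) k_\gamma) = \lambda(\gamma)$. Applying Lemma \ref{lem-cartan-diff} twice (first absorbing $g$ on the left, then $g^{-1}$ on the right) and using $\|\underline{a}(g^{-1})\|=\|\underline{a}(g)\|$ gives
\[
\|\underline{a}_o(\gamma_0)-\lambda(\gamma)\|
\;=\; \bigl\|\underline{a}\bigl(g\exp(\lambda(\gamma))k_\gamma g^{-1}\bigr)-\underline{a}\bigl(\exp(\lambda(\gamma))k_\gamma\bigr)\bigr\|
\;\leq\; 2\,\|\underline{a}(g)\|
\;\leq\; 2 D_\Gamma,
\]
where the first equality uses $\underline{a}_o = \underline{a}$. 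Setting $C_\Gamma := \max(N_\Gamma, 2 D_\Gamma)$ finishes both assertions.

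\textbf{Main obstacle.} The argument is essentially a bookkeeping combination of Proposition \ref{prop-singulier}, Lemma \ref{lem-cartan-diff}, and cocompactness; the only conceptual work has been done upstream in ruling out the unipotent part of singular elements of $\Gamma$. The one subtlety worth checking explicitly is the compatibility of the decomposition with $\Gamma$-conjugation: both $\lambda(\gamma)$ and $k_\gamma$ are conjugation-invariant data attached to the class $[\gamma]$ (since they come from Jordan decomposition), while the ambiguous factor $h$ transforms by left multiplication, which is exactly what lets us absorb a suitable $\beta \in \Gamma$ without disturbing the $A$- or $K$-part of the decomposition.
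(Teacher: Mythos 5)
Your proof is correct and follows essentially the same route as the paper: uniform Jordan form via Proposition \ref{prop-singulier}, conjugation by a lattice element to make the conjugator bounded using cocompactness, and Lemma \ref{lem-cartan-diff} to compare $\underline{a}_o(\gamma_0)$ with $\lambda(\gamma)$. The only (harmless) variation is that you straighten $h$ into a relatively compact fundamental set of $G$ directly, whereas the paper slides a point of the translation axis $he^{\R\lambda(\gamma)}o$ into a fundamental domain of $X$ and absorbs the flow parameter $e^{t\lambda(\gamma)}$ into $g$; your version even sidesteps the small commutation check that the paper's extra factor requires.
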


\begin{proof}
    Fix a fundamental domain $\calD_
\Gamma  \subset X$ of diameter less than $2\,diam(\Gamma\backslash X)$ and containing $o$. 
Set $C_\Gamma:= 4 \,diam(\Gamma\backslash X).$
Fix a non trivial element $\gamma \in \Gamma$.

Since $\Gamma$ is cocompact and torsion free, the action of $\Gamma$ on the symmetric space $X$ is free.
Hence by the previous Proposition \ref{prop-singulier}, there exists $h\in G$ and $k_\gamma \in Z_K(e^{\lambda(\gamma)})$ such that 
$$\gamma=h \exp(\lambda(\gamma))k_\gamma h^{-1}. $$
Note that $he^{\R \lambda(\gamma)}o$ is a bi-infinite geodesic on $X$ and a translation axis for the action of $\gamma$.
Since $\calD_\Gamma$ is a fundamental domain for the left action of $\Gamma$ on $X$, there exists $\beta \in \Gamma$ such that 
$\beta he^{\R \lambda(\gamma)}o \cap \calD_\Gamma \neq \emptyset.$
We choose a time parameter $t\in \R$ on the geodesic such that 
$$\beta h e^{t \lambda(\gamma)}o \in \calD_\Gamma.$$
Set $g:= \beta he^{t \lambda(\gamma)}$ and consider $\gamma_0:= \beta \gamma \beta^{-1}\in [\gamma]$.
Then $$ \gamma_0 =  g \exp(\lambda(\gamma))k_\gamma g^{-1}.$$
Furthermore, $\Vert g\Vert \leq \dis_X(o,go) \leq C_\Gamma$.
Finally, applying Lemma \ref{lem-cartan-diff} to $\lambda(\gamma)=\underline{a}_{go}(\gamma_0)$ and $\underline{a}_{o}(\gamma_0)$, we deduce the first upper bound.
\end{proof}

\section{Equidistribution of flats}\label{sec:equidistribution}

For every loxodromic element $\gamma \in \Gamma^{lox}$, denote by $L_{\gamma}$ the quotient measure on $\Gamma \backslash G/M$ of $\calL_{[\gamma]}$ (Cf. \eqref{defin_mes-tore}).
Note that $L_{[\gamma]}$ is supported on $F_{[\gamma]}$ and is equal to the measure $L_{F_{[\gamma]}}$ given in the introduction. It is also given by the following construction: we push on $F_{[\gamma]}$, the restriction of $Leb_{\frak{a}}$ to any fundamental domain in $\frak{a}$ of the periods $\Lambda(F_{[\gamma]})$, by right $A$-action of the exponential of such a fundamental domain, starting from any base point on $F_{[\gamma]}$.
The construction is independent of both the choice of the fundamental domain and the base point on $F_{[\gamma]}$.

By Proposition \ref{lem-gaglox}, there is a bijection between $\calG_{lox}$ and $\calG(A)$. Let 
\[\calG_{lox}(\frakD_t)=\{[\gamma]\in\calG_{lox},\ \lambda(\gamma)\in\frakD_t \}. \]
By summing over the compact periodic orbits $F\in C(A)$ first, then summing over $Y \in \Lambda(F) \cap \frakD_t$, we deduce that
\begin{equation}\label{equ:conjugacy}
 \frac{1}{ \vol(D_t)}\sum_{[\gamma] \in \calG_{lox}(\frakD_t)}L_{[\gamma]}=\frac{1}{ \vol(D_t)}\sum_{F\in C(A)}|\Lambda(F)\cap \frakD_t |L_F,
 \end{equation}
 the measure on the right hand side is exactly the measure in the Theorem \ref{thm-introequid}. This formula is also a higher rank analogue of the first part of \eqref{equ-prime}.
Set
\[\calM^t_\Gamma:=\frac{ \vol(\Gamma\backslash G)}{  \vol( D_t)}  \sum_{[\gamma] \in \calG_{lox}(\frakD_t)} 
L_{[\gamma]}.\]

Let $m_{G/M}$ be the Haar measure on $G/M$, given by $\nu\otimes Leb_{\frak a}$ from Proposition \ref{prop-disintegration}.
Let $m_{\Gamma \backslash G/M}$ be the quotient measure on  $\Gamma\backslash G/M$. 
Theorem \ref{thm-introequid} is equivalent to the following one if $\Gamma$ is torsion free or if it acts on $G/M$ freely. 
\begin{theorem}\label{thm_cocompact-revet}
Let $\Gamma<G$ be a cocompact irreducible lattice which acts freely on $G/M$. Then there exists $u>0$ such that for any Lipschitz function $f$ on $\Gamma\backslash G/M$, as $t\rightarrow \infty$
\begin{equation}\label{eq_equ-plats}
\frac{ \vol(\Gamma\backslash G)}{ \vol(D_t)}\sum_{F\in C(A)}|\Lambda(F)\cap \frakD_t |\int f\dd L_F=\int f\ \dd \calM^t_\Gamma
=\int f\ \dd m_{\Gamma\backslash G/M}+O(e^{-u t}|f|_{Lip}),
\end{equation}
where the Lipschitz norm is with respect to the Riemannian distance $\dis_1$ on $\Gamma\backslash G/M$.
\end{theorem}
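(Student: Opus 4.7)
The plan is to follow Roblin's strategy adapted to higher rank: lift everything to the cover $G/M$, apply the Gorodnik--Nevo angular equidistribution of Theorem~\ref{theo-gorodnik-nevo} via the geometric configuration of Proposition~\ref{prop-lemroblin}, then descend by a partition of unity. Fix a fundamental domain $\mathcal{D}\subset G/M$ for $\Gamma$ and set $\tilde f=\mathbbm 1_{\mathcal D}\cdot (f\circ\pi)$. The first task is the covering identity
\[
\sum_{[\gamma]\in\calG_{lox}(\frakD_t)}\int f\, dL_{F_{[\gamma]}}=\sum_{\gamma\in\Gamma^{lox},\ \lambda(\gamma)\in\frakD_t}\int\tilde f\, d\calL_\gamma,
\]
which follows from the fact that for each conjugacy class $[\gamma_0]$ the $\Gamma$-translates of the $A$-orbit underlying $\calL_{\gamma_0}$ tile the preimage $\pi^{-1}(F_{[\gamma_0]})$ indexed by $\Gamma/\Gamma_{F_{[\gamma_0]}}$, together with the invariance of $\lambda$ under conjugation. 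A finite $C^1$ partition of unity $(\chi_i)$ on the compact quotient $\Gamma\backslash G/M$ subordinate to $\dis_1$-balls of radius $\epsilon$ lying in $\mathcal{D}$ reduces the statement to $f_i=f\chi_i$. By Lemma~\ref{lem-dist-compare} the lift $\tilde{f_i}$ has Hopf-coordinate support in
\[B(\xi_i,C_{x_i}\epsilon)\times B(\eta_i,C_{x_i}\epsilon)\times B_{\frak a}(Y_i,C_{x_i}\epsilon),\]
where $(\xi_i,\eta_i,Y_i)$ are the Hopf coordinates of the center $z_i$ and $x_i$ is its projection to $X$.

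Because $\calL_\gamma=D_{\gamma^+}\otimes D_{\gamma^-}\otimes \mathrm{Leb}_{\frak a}$, the integration against $\tilde{f_i}$ collapses to $\Phi_i(\gamma^+,\gamma^-)$ with
\[\Phi_i(\xi,\eta):=\int_{\frak a}\tilde{f_i}(\xi,\eta,Y)\,dY,\]
a Lipschitz function supported in $B(\xi_i,C_{x_i}\epsilon)\times B(\eta_i,C_{x_i}\epsilon)\subset\calF\times\calF$. Proposition~\ref{prop-lemroblin} applied at $x_i$ with parameters $r<r_0$ and $\varepsilon<\min\{C_{x_i}^{-1}r,\epsilon_0\}$ allows us to replace fixed points by angular points: whenever $\underline a_{x_i}(\gamma)\in\frak a^{++}$ lies at $\dis$-distance $\geq t_0(x_i,\varepsilon)$ from $\partial\frak a^+$ and $(\gamma^+_{x_i},\gamma^-_{x_i})\in\calF^{(2)}(x_i,r)$, the element $\gamma$ is loxodromic with $\gamma^\pm\in B(\gamma^\pm_{x_i},\varepsilon)$ and $\|\lambda(\gamma)-\underline a_{x_i}(\gamma)\|<4r$. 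This permits to replace the constraint $\lambda(\gamma)\in\frakD_t$ by $\underline a_{x_i}(\gamma)\in\frakD_t$ and to substitute
\[\Phi_i(\gamma^+,\gamma^-)=\Phi_i(\gamma^+_{x_i},\gamma^-_{x_i})+O(|\Phi_i|_{\mathrm{Lip}}\,\varepsilon).\]
The near-singular and near-boundary elements excluded by the hypotheses of Proposition~\ref{prop-lemroblin} are counted by Lemma~\ref{lem-dtx} and Lemma~\ref{volume_reste}, which bound their total contribution by $O(\vol(D_t)^{1-\kappa/2})$.

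Applying Theorem~\ref{theo-gorodnik-nevo} to $\psi=\Phi_i$ at $x_i$ gives
\[
\frac{\vol(\Gamma\backslash G)}{\vol(D_t)}\sum_{\gamma\in D^{reg}_t(x_i)\cap\Gamma}\Phi_i(\gamma^+_{x_i},\gamma^-_{x_i})=\int\Phi_i\,d\mu_{x_i}\otimes d\mu_{x_i}+O\!\big(|\Phi_i|_{\mathrm{Lip}}C_{x_i}\vol(D_t)^{-\kappa}\big).
\]
To identify the main term with $\int f_i\,dm_{\Gamma\backslash G/M}$, observe that $z_i$ has $(x_i,\xi_i)$ as its base-plus-asymptotic coordinates, so $x_i\in(\xi_i\eta_i)_X$; Lemma~\ref{lem-xietao} then gives $f_{x_i}(\xi_i,\eta_i)=1$ and $f_{x_i}(\xi,\eta)=1+O(\epsilon)$ on $\mathrm{supp}(\tilde{f_i})$. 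Combined with the disintegration $dm_{G/M}=f_{x_i}^{-1}\, d\mu_{x_i}\otimes d\mu_{x_i}\otimes d\mathrm{Leb}_{\frak a}$ of Proposition~\ref{prop-disintegration},
\[\int\Phi_i\,d\mu_{x_i}\otimes d\mu_{x_i}=\int\tilde{f_i}\,dm_{G/M}+O(\epsilon\,\|\tilde{f_i}\|_1)=\int f_i\,dm_{\Gamma\backslash G/M}+O(\epsilon\,\|f_i\|_1).\]
Summing over $i$ recovers $f$, and choosing $\epsilon$ as a small negative power of $\vol(D_t)$ balances the Lipschitz approximation errors against the Gorodnik--Nevo error $O(\vol(D_t)^{-\kappa})$; using the asymptotics $\vol(D_t)\asymp t^{(\dim A-1)/2}e^{\delta_0 t}$ from~\eqref{volume_D_t} in the ball case and Lemma~\ref{lem-volume-paral} in the parallelotope case, the total error takes the form $O(e^{-ut}|f|_{\mathrm{Lip}})$ for some $u>0$.

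\textbf{Main obstacle.} The principal difficulty is the simultaneous balancing of three error scales: the Gorodnik--Nevo error is of size $\vol(D_t)^{-\kappa}$, the Lipschitz replacements introduce an error proportional to $\varepsilon$ multiplied by counts of size $\vol(D_t)$, and Proposition~\ref{prop-lemroblin} requires $-2\log\varepsilon\ll t_0(x_i,\varepsilon)\leq\dd(\underline a_{x_i}(\gamma),\partial\frak a^+)$, constraining how small the geometric scale can be taken. One must choose $r,\varepsilon,\epsilon$ simultaneously so that every error is bounded by a fixed negative power of $\vol(D_t)$, while verifying that the near-boundary of $\frakD_t$ and the near-singular set controlled by Lemmas~\ref{volume_reste} and~\ref{lem-dtx} contribute negligibly under these choices. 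A secondary subtlety is that the support of $\tilde{f_i}$ in Hopf coordinates inflates by the factor $C_{x_i}$ relative to its $\dis_1$-support, due to the non-$G$-invariance of $\dis_2$; this is harmless because $x_i$ ranges over the compact fundamental domain and so $C_{x_i}$ is uniformly bounded.
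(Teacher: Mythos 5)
Your plan coincides with the paper's own proof in all essentials: unfolding the conjugacy-class sum to $G/M$, a partition of unity at scale shrinking with $t$, replacing fixed points $\gamma^{\pm}$ by angular points $\gamma_{x_i}^{\pm}$ via Proposition \ref{prop-lemroblin}, discarding near-singular elements with Lemmas \ref{volume_reste} and \ref{lem-dtx}, applying Theorem \ref{theo-gorodnik-nevo} to the fiber-integrated function $\Phi_i$, and identifying the main term through the Gromov-product density $f_{x_i}$. The parameter balancing you flag as the main obstacle is exactly what the paper resolves by the explicit choices $\varepsilon=e^{-u_1 t}$, $r=e^{-u_2 t}$ in \eqref{equ-parameter0}--\eqref{equ-parameter}, so your proposal is a correct outline of the same argument.
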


\begin{rem}
The constant $C_G$ equals $\|m_{\Gamma\backslash G/M} \|/ \vol(\Gamma\backslash G)$, which comes from the choice of $m_{G/M}=\nu\otimes Leb_{\frak a}$ and only depends on $G$. 

We can separate a Lipschitz function as the sum of its positive part and its negative part. So it is sufficient to prove Theorem \ref{thm_cocompact-revet} for non negative Lipschitz functions.
\end{rem}

We are going to prove Theorem \ref{thm_cocompact-revet} in this section.
Before starting the argument, we fix the parameters which will be used later. 
They come from Proposition \ref{prop-lemroblin}.
Choose $u_1>0$ small than $\min\{\epsilon_D,1\}/10$, where $\epsilon_D$ is the constant from Lemma \ref{volume_reste}. Set
\begin{equation}\label{equ-parameter0}
\varepsilon:= e^{-u_1 t} \text{ and }t_1 := 3 u_1 t. 
\end{equation}
Consider the decay rate function $u \mapsto \kappa(u) >0$ satisfying Lemma \ref{volume_reste} and the decay coefficient $\kappa>0$ given in Theorem \ref{theo-gorodnik-nevo}. Set 
\begin{equation}\label{equ-parameter}
    u_2 := \frac{1}{ 2\dim  (G/AM)} 
    \min \{ \delta_0 \kappa(6 u_1), \delta_0 \kappa, u_1 \}\text{ and } r:= e^{-u_2 t}.
\end{equation}

In this part we use $Lip_2$ to denote Lipschitz norm with respect to the product distance $d_2$ on $G/M$ or the product distance on $\calF^{(2)}$, according to which space the function lives on.

We lift everything to $G/M$ and prove a local version on $G/M$ in Section \ref{sec-loccor} and \ref{sec-corweyl}. This local version works for all irreducible lattices, which will be used in a forthcoming paper for non-cocompact lattices.
Then in Section \ref{sec-global}, we use the partition of unity to obtain a global version (Theorem \ref{thm_cocompact-revet}) on $\Gamma\backslash G/M$.

\subsection{Local convergence on corridors}\label{sec-loccor}

Recall the notation $\underline{a}_x(\gamma):= \adis(x,\gamma x)= \underline{a}(h_x^{-1} \gamma h_x )$.
For every $\gamma \in \Gamma$ such that $\underline{a}_x(\gamma)\in \frak{a}^{++}$, 
the geometric Weyl chamber based on $x$ containing $\gamma x$ (resp. $\gamma^{-1} x$)
determines $\gamma_x^+ \in \mathcal{F}$ (resp. $\gamma_x^-$). 

For $x\in X$ and $t>0$, we define the following measures on $\calF\times\calF$:

\begin{equation}\label{eq_nu1}
\nu_{x,1}^t:= \frac{ \vol(\Gamma\backslash G)}{ \vol(D_t)}\sum_{ \gamma \in \Gamma \cap D_t^{reg}(x)  } D_{\gamma_x^+} \otimes D_{\gamma_x^-},
\end{equation}

\begin{equation}\label{eq_nu2}
\nu_{x,2}^t:= \frac{ \vol(\Gamma\backslash G)}{ \vol(D_t)} \sum_{ \gamma \in \Gamma^{lox} \cap D_t^{reg}(x) } D_{\gamma^+} \otimes D_{\gamma^-}.
\end{equation}

Recall that $(\mu_x)_{x \in X}$ denotes the Patterson-Sullivan density given in Proposition \ref{prop-PS-BMS} and $\nu$ is the associated conformal measure on $\mathcal{F}^{(2)}$. 
Let $Lip^+_c( \mathcal{F}^{(2)}(x,r))$ be the space of positive compactly supported Lipschitz functions on $\calF^{(2)}(x,r)$.

\begin{lem}\label{lem-corridor-1GN}
Let $\Gamma$ be an irreducible lattice in $G$. Fix $x\in X$. Then for every test function $\psi \in Lip^+_c( \mathcal{F}^{(2)}(x,r))$ for every $t>C_4 \dis_X(o,x)$, there exists a function $E(t,\psi,x)$ such that 
\begin{equation}\label{eq-lem-corridor-1GN}
e^{-C_3r} \int \psi  \dd \nu - E(t,\psi, x) 
\leq \int \psi  \dd \nu_{x,1}^t
\leq \int \psi \dd \nu + E(t,\psi,x)
\end{equation}
where $E(x,\psi,t)=O( C_x Lip(\psi)  \vol(D_t)^{-\kappa})$ when $t \rightarrow \infty$.
\end{lem}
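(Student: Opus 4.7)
The plan is to combine the Gorodnik--Nevo angular equidistribution (Theorem \ref{theo-gorodnik-nevo}) with a pointwise comparison between the product measure $\mu_x \otimes \mu_x$ and the $G$-invariant measure $\nu$ restricted to the corridor $\calF^{(2)}(x,r)$. First, I would extend $\psi$ by zero to a function on $\calF \times \calF$; since $\psi$ has compact support inside the open set $\calF^{(2)}(x,r)$, the extension preserves the Lipschitz constant. Applying Theorem \ref{theo-gorodnik-nevo} to this test function and using the definition \eqref{eq_nu1} of $\nu_{x,1}^t$, I obtain, for $t > C_4 \dis_X(o,x)$,
\begin{equation*}
\int \psi \, d\nu_{x,1}^t \;=\; \int_{\calF \times \calF} \psi \, d\mu_x\otimes\mu_x \;+\; \vol(\Gamma\backslash G)\cdot O\!\left(C_x\,\mathrm{Lip}(\psi)\,\vol(D_t)^{-\kappa}\right).
\end{equation*}

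Next, I would relate $\mu_x \otimes \mu_x$ to $\nu$ using the disintegration formula. By Proposition \ref{prop-PS-BMS} applied with base point $x$ and the definition \eqref{equ.nu}, we have $d\mu_x(\xi)\,d\mu_x(\eta) = e^{-\rho(\xi|\eta)_x}\, d\nu(\xi,\eta)$. To control the exponential factor on the support of $\psi$, I would invoke Lemma \ref{lem-xietao}: for every $(\xi,\eta) \in \calF^{(2)}(x,r)$, we have $\|(\xi|\eta)_x\| \leq C_3\, \dis_X(x, (\xi\eta)_X) < C_3 r$. Moreover, each coordinate $\chi^\alpha(\xi|\eta)_x$ is nonnegative by Definition \ref{defin_gromov} (since the argument of each logarithm lies in $(0,1]$), and $\rho$ is dominant, hence a nonnegative combination of the fundamental weights $\chi^\alpha$. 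Consequently $\rho(\xi|\eta)_x \in [0, C_3 r]$ (after absorbing $\|\rho\|$ into $C_3$), and on the support of $\psi$,
\begin{equation*}
e^{-C_3 r} \;\leq\; e^{-\rho(\xi|\eta)_x} \;\leq\; 1.
\end{equation*}

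Using $\psi \geq 0$, integrating the previous chain of inequalities against $\psi\,d\nu$ yields
\begin{equation*}
e^{-C_3 r} \int \psi\, d\nu \;\leq\; \int \psi\, d\mu_x\otimes\mu_x \;\leq\; \int \psi\, d\nu,
\end{equation*}
and substitution into the Gorodnik--Nevo asymptotic delivers the two-sided inequality of the lemma with $E(t,\psi,x) = \vol(\Gamma\backslash G)\cdot O(C_x\,\mathrm{Lip}(\psi)\,\vol(D_t)^{-\kappa})$, which has the required size. The argument is essentially bookkeeping rather than conceptual: the main points to check carefully are that zero-extension of $\psi$ stays Lipschitz on the full product $\calF\times\calF$ (guaranteed by compact support), that $\rho$ is nonnegative on the cone where the Gromov product takes values (which follows from dominance of $\rho$ and the sign of each $\chi^\alpha(\xi|\eta)_x$), and the consistent tracking of the constant $C_3$ through the application of Lemma \ref{lem-xietao} after multiplication by $\|\rho\|$.
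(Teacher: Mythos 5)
Your proposal is correct and follows the same route as the paper: apply the Gorodnik--Nevo angular equidistribution (Theorem \ref{theo-gorodnik-nevo}) to get the main term $\int\psi\,\dd\mu_x\otimes\mu_x$, then compare $\mu_x\otimes\mu_x$ with $\nu$ via the density $f_x=e^{-\rho(\xi|\eta)_x}\in[e^{-C_3r},1]$ on the corridor, using Lemma \ref{lem-xietao}. The extra details you supply (zero-extension of $\psi$, nonnegativity of $\rho(\xi|\eta)_x$ via dominance of $\rho$, absorbing $\|\rho\|$ and $\vol(\Gamma\backslash G)$ into the constants) are all correct and merely make explicit what the paper leaves implicit.
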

\begin{proof}
By Theorem \ref{theo-gorodnik-nevo}, we obtain the main term with the measure $\mu_x\otimes\mu_x$. Since $(\xi,\eta)\in\calF^{(2)}(x,r)$, so by Lemma \ref{lem-xietao}, we obtain 
\[ 1 \leq  f_x(\xi,\eta)^{-1} \leq e^{C_3r}. \]
Using the relation $\dd\nu(\xi,\eta)=\frac{\dd\mu_x(\xi)\dd\mu_x(\eta)}{f_x(\xi,\eta)}$, we deduce that $\int \psi \dd \mu_x \otimes \mu_x  \leq  \int \psi \dd \nu \leq  e^{C_3 r} \int \psi \dd \mu_x \otimes \mu_x.$
Hence the Lemma.
\end{proof}

\begin{lem}\label{lem-corridor-3RGN}
Let $\Gamma$ be a lattice in $G$.
Fix $x\in X$, for every $t \geq \frac{2 \log C_x}{u_1}$, for every test function $\psi \in Lip^+_c( \mathcal{F}^{(2)}(x,r))$,

$$
\bigg\vert \int \psi \dd \nu_{x,2}^t - \int \psi \dd \nu_{x,1}^t  \bigg\vert \leq 
\varepsilon Lip_2(\psi) \frac{\vert \Gamma \cap D_t(x) \vert  \vol(\Gamma \backslash G)}{ \vol(D_t)}
+ 3\Vert \psi \Vert_{\infty} \frac{ \vert \Gamma \cap D_t^{t_1}(x) \vert  \vol(\Gamma \backslash G) }{ \vol(D_t)},$$
where $\varepsilon$ and $t_1$ are given in \eqref{equ-parameter0}. 
\end{lem}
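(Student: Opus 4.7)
The plan is to rewrite the difference as a single sum
\[\int \psi\,d\nu_{x,2}^t - \int \psi\,d\nu_{x,1}^t = \frac{\vol(\Gamma\backslash G)}{\vol(D_t)} \sum_{\gamma \in \Gamma \cap D_t^{reg}(x)} \Bigl[\mathbf{1}_{\gamma \in \Gamma^{lox}}\,\psi(\gamma^+,\gamma^-) - \psi(\gamma_x^+,\gamma_x^-)\Bigr]\]
and split the index set according to whether $\gamma$ lies in the \emph{singular layer} $D_t^{t_1}(x)$ (where $\underline{a}_x(\gamma)$ is within $t_1$ of $\partial\mathfrak{a}^+$) or in its complement, the \emph{deep layer}. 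Proposition \ref{prop-lemroblin} is the essential tool on the deep layer; the singular layer is handled by the trivial supremum bound.

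First I would check that the parameter choices $\varepsilon=e^{-u_1 t}$, $t_1=3u_1 t$, $r=e^{-u_2 t}$ from \eqref{equ-parameter0}--\eqref{equ-parameter} combined with $t \geq 2\log C_x / u_1$ fulfil the hypotheses of Proposition \ref{prop-lemroblin}: namely $t_1\geq 2\log C_x + 2u_1 t = 2\log C_x - 2\log\varepsilon$, which dominates $t_0(x,\varepsilon)$; and since $u_2<u_1$ and $\log C_x \leq u_1 t/2$, one has $\varepsilon< C_x^{-1} r$ and $\varepsilon<\epsilon_0$ for $t$ large. Then for every $\gamma \in D_t^{reg}(x)\setminus D_t^{t_1}(x)$ with $(\gamma_x^+,\gamma_x^-)\in \calF^{(2)}(x,r)\supset \mathrm{supp}(\psi)$, the proposition says $\gamma$ is loxodromic and $d(\gamma^\pm,\gamma_x^\pm)<\varepsilon$, so by Lipschitz continuity $|\psi(\gamma^+,\gamma^-)-\psi(\gamma_x^+,\gamma_x^-)|\leq \varepsilon\, Lip_2(\psi)$, and summing over at most $|\Gamma\cap D_t(x)|$ such terms yields the first error contribution. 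Non-loxodromic $\gamma$ in the deep layer contribute zero: by the contrapositive of Proposition \ref{prop-lemroblin}, such $\gamma$ has $(\gamma_x^+,\gamma_x^-) \notin \calF^{(2)}(x,r)$, hence $\psi(\gamma_x^+,\gamma_x^-)=0$.

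On the singular layer, split further according to whether $\gamma \in \Gamma^{lox}$ or not. Non-loxodromic $\gamma \in D_t^{t_1}(x)\cap D_t^{reg}(x)$ appear only in $\nu_{x,1}^t$ and are bounded pointwise by $\|\psi\|_\infty$, contributing at most $\|\psi\|_\infty\,|\Gamma\cap D_t^{t_1}(x)|$. Loxodromic $\gamma$ in the singular layer give terms bounded by $2\|\psi\|_\infty$ via the triangle inequality, contributing at most $2\|\psi\|_\infty\,|\Gamma\cap D_t^{t_1}(x)|$. Summing produces the $3\|\psi\|_\infty\,|\Gamma\cap D_t^{t_1}(x)|$ term.

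The main obstacle is the residual configuration of a loxodromic $\gamma \in D_t^{reg}(x)\setminus D_t^{t_1}(x)$ for which $(\gamma^+,\gamma^-)\in \mathrm{supp}(\psi)$ but $(\gamma_x^+,\gamma_x^-)\notin \calF^{(2)}(x,r)$, where Proposition \ref{prop-lemroblin} cannot be invoked at $x$ directly. In this case Lemma \ref{lem-cartan-lox} gives $\|\lambda(\gamma)-\underline{a}_x(\gamma)\|<2r$, keeping $\lambda(\gamma)$ deep in the chamber. The fix is a base-point change: picking $y$ on the axis $(\gamma^+\gamma^-)_X$ within distance $r$ of $x$ (where $\gamma_y^\pm=\gamma^\pm$) and quantifying the difference between $\gamma_x^\pm$ and $\gamma_y^\pm$ by controlling the $K$-parts of the Cartan decomposition under the perturbation $h_y^{-1}h_x$ via Lemma \ref{lem-actiong} should yield $d(\gamma_x^\pm,\gamma^\pm)<\varepsilon$; Lipschitz continuity then gives $|\psi(\gamma^+,\gamma^-)|=|\psi(\gamma^+,\gamma^-)-\psi(\gamma_x^+,\gamma_x^-)|\leq\varepsilon\,Lip_2(\psi)$, absorbed into the first error term.
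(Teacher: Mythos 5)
Your decomposition is exactly the paper's: split the difference into (a) the non-loxodromic terms of $\nu_{x,1}^t$ and (b) the loxodromic terms $\psi(\gamma_x^+,\gamma_x^-)-\psi(\gamma^+,\gamma^-)$, then cut each according to the singular layer $D_t^{t_1}(x)$ versus its complement, invoke Proposition \ref{prop-lemroblin} on the deep layer (your parameter verification $t_1=3u_1t\geq 2\log C_x-2\log\varepsilon$ and $\varepsilon<C_x^{-1}r$ is the same computation the paper does), and use the trivial $\Vert\psi\Vert_\infty$ bound on the singular layer to produce the $3\Vert\psi\Vert_\infty\vert\Gamma\cap D_t^{t_1}(x)\vert$ term. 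Up to and including that point your argument reproduces the paper's proof.

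The ``residual configuration'' you isolate in your last paragraph --- a loxodromic $\gamma$ in the deep layer with $(\gamma^+,\gamma^-)\in\mathrm{supp}(\psi)$ but $(\gamma_x^+,\gamma_x^-)\notin\calF^{(2)}(x,r)$, so that the term $-\psi(\gamma^+,\gamma^-)$ is not controlled by either Proposition \ref{prop-lemroblin} or the vanishing of $\psi$ at $(\gamma_x^+,\gamma_x^-)$ --- is a genuine subtlety, and the paper's own proof of \eqref{eq-lem-corridor-ineq3} passes over it in silence (it only treats $\gamma$ with $(\gamma_x^+,\gamma_x^-)\in\calF^{(2)}(x,r)$). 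So you were right to flag it. However, your proposed repair does not, as sketched, give what you claim: moving the base point from $x$ to a point $y$ on the axis with $\dis_X(x,y)<r$ and comparing the two Cartan decompositions (via the effective Cartan decomposition, Lemma \ref{lem_effective_cartan}, or Lemma \ref{lem-actiong}) perturbs the angular components by $O(r)$, not by $\varepsilon$; and since $u_2<u_1$ one has $r=e^{-u_2t}>\varepsilon=e^{-u_1t}$, so the per-term bound you would actually obtain is $O(r)\,Lip_2(\psi)$, which is strictly weaker than the $\varepsilon\,Lip_2(\psi)$ appearing in the statement. To get $\varepsilon$-closeness of $\gamma_x^\pm$ to $\gamma^\pm$ one must instead exploit the contraction coming from the regularity of $\underline{a}_x(\gamma)$ (as in Lemma \ref{lem-action-aplus} applied to the fixed points themselves), not merely the smallness of the base-point displacement; this requires establishing a transversality bound $\delta(\gamma^+,\gamma_x^-)\geq\varepsilon$ first, which your sketch does not supply. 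As written, then, your proof either leaves this case open or proves the lemma with $\varepsilon$ replaced by $O(r)$ in the first error term (which would still suffice for the error collection in Section 5.3, but is not the statement as given).
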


\begin{proof}
We split the difference between $\frac{ \vol(D_t)}{ \vol(\Gamma\backslash G)} \int \psi \dd \nu_{x,1}^t$ and $\frac{ \vol(D_t)}{ \vol(\Gamma\backslash G)} \int \psi \dd \nu_{x,2}^t$,
\begin{align*}
    \sum_{\gamma \in \Gamma \cap D_t^{reg}(x)} \psi (\gamma_x^+,\gamma_x^-) - \sum_{\gamma \in \Gamma^{lox} \cap D_t^{reg}(x)} \psi(\gamma^+,\gamma^-) 
    &=\sum_{\gamma \in \Gamma\cap D_t^{reg}(x)} \psi (\gamma_x^+,\gamma_x^-) - \sum_{\gamma \in \Gamma^{lox} \cap D_t^{reg}(x)} \psi(\gamma_x^+,\gamma_x^-)  \\
    & \hspace{2cm} + \sum_{\gamma \in \Gamma^{lox} \cap D_t^{reg}(x)} \psi(\gamma_x^+,\gamma_x^-) - \psi(\gamma^+,\gamma^-). 
\end{align*}

For the first term on the right hand side, note that $\Gamma^{lox} \subset \Gamma$, hence
$$  \sum_{\gamma \in \Gamma\cap D_t^{reg}(x)} \psi (\gamma_x^+,\gamma_x^-) - \sum_{\gamma \in \Gamma^{lox} \cap D_t^{reg}(x)} \psi(\gamma_x^+,\gamma_x^-) = \sum_{\gamma \in  (\Gamma \setminus \Gamma^{lox}) \cap D_t^{reg}(x)}\psi(\gamma_x^+,\gamma_x^-) .$$
Note that $t\geq t_1=3u_1 t>0$ since $u_1 \leq 1/10$, hence we have the following inclusion 
$$D_t^{reg}(x) \subset D_t^{t_1}(x) \sqcup \big( D_t(x) \setminus D_t^{t_1}(x)\big).$$ 
Using that $t \geq \frac{2 \log C_x}{u_1}$, we deduce that $t_1=3 u_1 t \geq t_0:= 2 \log C_x - 2\log \varepsilon = 2 \log C_x + 2 u_1 t$. 
Apply Proposition \ref{prop-lemroblin} to every 
every $\gamma \in D_t(x) \setminus D_t^{t_1}(x)$ such that $(\gamma_x^+,\gamma_x^-)\in\calF^{(2)}(x,r) $.
Any such element is loxodromic i.e. $D_t(x) \setminus D_t^{t_1}(x) \subset G^{lox}$.
Hence $\Gamma \cap \big( D_t(x) \setminus D_t^{t_1}(x) \big) \subset \Gamma^{lox}$ is a set of loxodromic elements. 
So the non-loxodromic must lie in $ \big(\Gamma \setminus \Gamma^{lox}\big) \cap D_t^{reg}(x) \subset D_t^{t_1}(x)$.
We deduce the following upper bound.
\begin{equation}\label{eq-lem-corridor-ineq1}
\bigg\vert \sum_{\gamma \in  (\Gamma \setminus \Gamma^{lox}) \cap D_t^{reg}(x)}\psi(\gamma_x^+,\gamma_x^-) \bigg\vert \leq  \Vert \psi \Vert_\infty \vert \Gamma \cap D_t^{t_1}(x) \vert .
\end{equation}

For the lower term, we split the sum over the partition $\Gamma^{lox}\cap (D_t(x) \setminus D_t^{t_1}(x))$ and $\Gamma^{lox} \cap D_t^{t_1}(x)$.
\begin{align*}
    \sum_{\gamma \in \Gamma^{lox} \cap D_t^{reg}(x)} \psi(\gamma_x^+,\gamma_x^-) -\psi(\gamma^+,\gamma^-)
    &= \underset{\gamma \in D_t(x) \setminus D_t^{t_1}(x)}{\sum_{\gamma \in \Gamma^{lox}}} \psi(\gamma_x^+,\gamma_x^-) -\psi(\gamma^+,\gamma^-) \\
    & \hspace*{1 cm} + \sum_{\gamma \in \Gamma^{lox} \cap D_t^{t_1}(x)\cap D_t^{reg}(x)} \psi(\gamma_x^+,\gamma_x^-) -\psi(\gamma^+,\gamma^-).
\end{align*}
We bound the lower term. 
\begin{equation}\label{eq-lem-corridor-ineq2}
\bigg\vert \sum_{\gamma \in \Gamma^{lox} \cap D_t^{t_1}(x)\cap D_t^{reg}(x)} \psi(\gamma_x^+,\gamma_x^-) -\psi(\gamma^+,\gamma^-) \bigg\vert \leq  2 \Vert \psi \Vert_\infty \vert \Gamma \cap D_t^{t_1}(x) \vert.
\end{equation}
By Proposition \ref{prop-lemroblin}, the elements $\gamma \in \Gamma \cap (D_t(x) \setminus D_t^{t_1}(x))$ with $(\gamma_x^+,\gamma_x^-)\in\calF^{(2)}(x,r) $ are loxodromic and their attractive and repelling points are at distance at most $\varepsilon$ of respectively $\gamma_x^\pm$.
Using that $\psi$ is Lipschitz and supported on $\calF^{(2)}(x,r)$, we bound above the last term.
\begin{equation}\label{eq-lem-corridor-ineq3}
 \bigg\vert \sum_{\gamma \in \Gamma \cap (D_t(x) \setminus D_t^{t_1}(x))} \psi(\gamma_x^+,\gamma_x^-) -\psi(\gamma^+,\gamma^-) \bigg\vert \leq \varepsilon Lip_2(\psi) \; \vert \Gamma \cap D_t(x)\vert .
\end{equation}
Finally, we use the triangle inequality, regroup the terms \eqref{eq-lem-corridor-ineq1}, \eqref{eq-lem-corridor-ineq2} and \eqref{eq-lem-corridor-ineq3}, then multiply everything by $\frac{ \vol(\Gamma \backslash G)}{ \vol(D_t)}$ to obtain the main upper bound.
\end{proof}

\subsection{From corridors to Weyl chambers}\label{sec-corweyl}

\begin{lem}\label{lem-couloirs-lipschitz}
Let $\widetilde{\psi} \in Lip^+_c( \widetilde{ \cal{F}^{(2)}} (x,r) )$ be a compactly supported non-negative, Lipschitz function and set 
$$ \psi := \int_{\frak{a}} \widetilde{ \psi} (.,. \; ; \; Y)  \dd Y.$$
Then $\psi \in Lip^+_c( \cal{F}^{(2)} (x,r))$ and the following norm bounds hold:
\begin{itemize}
\item[(a)] $Lip_2(\psi) \leq 2(2r)^{\dim  \frak{a}} Lip_2 (\widetilde{\psi}) $.
\item[(b)] $\Vert \psi \Vert_\infty \leq (2r)^{\dim  \frak{a}} \Vert \widetilde{\psi} \Vert_\infty.$
\end{itemize}
\end{lem}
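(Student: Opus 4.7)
My plan is to reduce both bounds to a single geometric estimate: for each transverse pair $(\xi,\eta)\in\calF^{(2)}(x,r)$, the vertical fiber
\[S(\xi,\eta):=\{Y\in\frak{a}\mid (\xi,\eta,Y)\in\widetilde{\calF^{(2)}}(x,r)\}\]
has Lebesgue measure at most $(2r)^{\dim\frak{a}}$. Granting this, the support statement is immediate: if $\psi(\xi,\eta)\neq 0$ then some $Y\in\supp\widetilde{\psi}\subset\widetilde{\calF^{(2)}}(x,r)$, so by Fact~\ref{prop-corridor} the Hopf triple $(\xi,\eta,Y)$ projects to a basepoint in $B_X(x,r)\cap(\xi\eta)_X$, forcing $\dis_X(x,(\xi\eta)_X)<r$, i.e.\ $(\xi,\eta)\in\calF^{(2)}(x,r)$.

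For the fiber bound, I use the right-$A$-equivariance of Hopf coordinates established in \S2: pick any $z_0M\in G/M$ over $(\xi,\eta)$ with Hopf coordinate $(\xi,\eta,Y_0)$. Then the orbit $\{z_0\exp(Y)M\}_{Y\in\frak{a}}$ has Hopf coordinates $(\xi,\eta,Y_0+Y)$ and basepoints $z_0\exp(Y)o$ tracing out the flat $(\xi\eta)_X$. Because $G$ acts isometrically on $X$ and $A$ acts on $Ao$ by Euclidean translations of magnitude $\|\log a\|$, the map $Y\mapsto z_0\exp(Y)o$ is an isometry from $(\frak{a},\|\cdot\|)$ onto $(\xi\eta)_X$ with its induced Riemannian metric. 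Hence $S(\xi,\eta)$ is isometric, up to the translation by $Y_0$, to $(\xi\eta)_X\cap B_X(x,r)$. Since the flat is totally geodesic in the CAT($0$) space $X$, this intersection is a Euclidean ball of radius at most $r$ in the $\dim\frak{a}$-dimensional Euclidean space $(\xi\eta)_X$; it is therefore contained in a cube of side $2r$ and has Lebesgue measure at most $(2r)^{\dim\frak{a}}$.

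The bound (b) follows directly from $\psi(\xi,\eta)\leq\|\widetilde{\psi}\|_\infty\,|S(\xi,\eta)|$. For (a), extend $\widetilde{\psi}$ by zero to $\calF^{(2)}\times\frak{a}$; since $\widetilde{\psi}$ is compactly supported in the open set $\widetilde{\calF^{(2)}}(x,r)$, the extension remains Lipschitz with the same constant $Lip_2(\widetilde{\psi})$. Writing $S_i:=S(\xi_i,\eta_i)$, the integrand $Y\mapsto\widetilde{\psi}(\xi_1,\eta_1,Y)-\widetilde{\psi}(\xi_2,\eta_2,Y)$ vanishes outside $S_1\cup S_2$, and on each $Y$-slice $\dis_2((\xi_1,\eta_1,Y),(\xi_2,\eta_2,Y))=\dis_2((\xi_1,\eta_1),(\xi_2,\eta_2))$, so applying the Lipschitz bound pointwise yields
\begin{align*}
|\psi(\xi_1,\eta_1)-\psi(\xi_2,\eta_2)| &\leq \int_{S_1\cup S_2}|\widetilde{\psi}(\xi_1,\eta_1,Y)-\widetilde{\psi}(\xi_2,\eta_2,Y)|\dd Y\\
&\leq Lip_2(\widetilde{\psi})\,\dis_2((\xi_1,\eta_1),(\xi_2,\eta_2))\,|S_1\cup S_2|\\
&\leq 2(2r)^{\dim\frak{a}}\,Lip_2(\widetilde{\psi})\,\dis_2((\xi_1,\eta_1),(\xi_2,\eta_2)).
\end{align*}
The heart of the argument is the fiber identification in the second paragraph; the Lipschitz step is essentially bookkeeping, with the only mildly technical point being the verification that extending a compactly supported Lipschitz function on an open set by zero preserves the Lipschitz constant.
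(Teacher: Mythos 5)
Your proof is correct, and in fact the paper states Lemma \ref{lem-couloirs-lipschitz} without any proof, so you are supplying the missing argument; the route you take (identifying the $\mathfrak{a}$-fiber over $(\xi,\eta)$ with $(\xi\eta)_X\cap B_X(x,r)$ via the isometry $Y\mapsto g_{\xi,\eta}e^Yo$, bounding its measure by $(2r)^{\dim\mathfrak{a}}$ through the diameter bound, and then slicing the difference over $S_1\cup S_2$) is exactly the natural one the authors intend. The only point worth flagging is your claim that the zero extension keeps the \emph{same} Lipschitz constant: that is automatic in a geodesic metric space, but $\dis_2$ (built from the sine-of-angle metrics $\dis_\alpha$) is only bi-Lipschitz to a geodesic metric, so in principle the constant could degrade by a universal factor; in the paper's actual use the lifted test functions are already globally defined on $G/M$, so this is harmless and does not affect the argument.
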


For $x\in X$ and $t>0$, we define the following measure on $\calF^{(2)}\times\frak a$ by

\begin{equation}\label{eq_M2}
\cal{M}_{x,2}^t:= \frac{ \vol(\Gamma\backslash G)}{ \vol(D_t)}\sum_{ \gamma \in \Gamma^{lox}\cap D_t^{reg}(x) } \calL_\gamma = \nu_{x,2}^t \otimes Leb_{\frak{a}}. 
\end{equation}

\begin{lem}\label{lem_M2-haar}
Let $\Gamma$ be an irreducible lattice in $G$.
Fix $x\in X$, for every $t \geq \max\{\frac{2 \log C_x}{u_1}, C_4\dis_X(o,x) \}$, for every test function $\widetilde{ \psi }  \in Lip^+_c( \widetilde{\cal{F}}^{(2)}(x,r))$,  

\begin{align*}
\bigg\vert \int \widetilde{\psi} \;  \dd \cal{M}_{x,2}^t -
 \int & \widetilde{\psi} \; \dd m_{G/M} \bigg\vert \leq  \quad  C_3r \int \widetilde{\psi} \dd m_{G/M} \quad + \\
&  (2r)^{\dim  \frak{a}} \bigg( E(t,\widetilde{\psi},x) + 2\varepsilon Lip_2(\widetilde{\psi})  \frac{\vert \Gamma \cap D_t(x) \vert  \vol(\Gamma \backslash G) }{ \vol(D_t)} + 3 \Vert \widetilde{\psi} \Vert_\infty \frac{\vert \Gamma \cap D_t^{t_1}(x) \vert  \vol(\Gamma \backslash G)}{ \vol(D_t)}  \bigg), 
\end{align*}
where $E(x,\widetilde{ \psi},t)= O(C_x Lip(\widetilde{\psi})  \vol(D_t)^{-\kappa}  )$ as introduced in Lemma~\ref{lem-corridor-1GN} and $\varepsilon$, $t_1$ are given in \eqref{equ-parameter0}.
\end{lem}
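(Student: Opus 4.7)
The plan is to reduce the integral on $G/M$ to an integral on $\calF^{(2)}$ via the product structure, then invoke the two previous corridor lemmas.

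First, I would use the product disintegration. By Proposition \ref{prop-disintegration}, $m_{G/M} = \nu \otimes Leb_{\frak{a}}$, and by the very definition \eqref{eq_M2}, $\cal{M}_{x,2}^t = \nu_{x,2}^t \otimes Leb_{\frak{a}}$. Setting
\[\psi(\xi^+,\xi^-) := \int_{\frak{a}} \widetilde{\psi}(\xi^+,\xi^-,Y)\dd Y,\]
Fubini yields $\int \widetilde{\psi} \dd \cal{M}_{x,2}^t = \int \psi \dd \nu_{x,2}^t$ and $\int \widetilde{\psi} \dd m_{G/M} = \int \psi \dd \nu$. Since $\widetilde{\psi}$ is supported in $\widetilde{\cal{F}^{(2)}}(x,r)$, the fibers in $\mathfrak{a}$ of this support have Lebesgue measure $\leq (2r)^{\dim \mathfrak{a}}$ (as $\widetilde{\cal{F}^{(2)}}(x,r)$ is the preimage of $B_X(x,r)$, see Fact \ref{prop-corridor}), and Lemma \ref{lem-couloirs-lipschitz} guarantees that $\psi \in Lip_c^+(\cal{F}^{(2)}(x,r))$ with the quantitative bounds $Lip_2(\psi) \leq 2(2r)^{\dim \mathfrak{a}} Lip_2(\widetilde{\psi})$ and $\|\psi\|_\infty \leq (2r)^{\dim \mathfrak{a}} \|\widetilde{\psi}\|_\infty$.

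Second, I would apply the triangle inequality to compare $\nu_{x,2}^t$ first with the intermediate measure $\nu_{x,1}^t$, and then $\nu_{x,1}^t$ with $\nu$:
\[\Bigl| \int \psi \dd \nu_{x,2}^t - \int \psi \dd \nu \Bigr| \leq \Bigl| \int \psi \dd \nu_{x,2}^t - \int \psi \dd \nu_{x,1}^t \Bigr| + \Bigl| \int \psi \dd \nu_{x,1}^t - \int \psi \dd \nu \Bigr|.\]
Lemma \ref{lem-corridor-3RGN} controls the first term by
\[\varepsilon Lip_2(\psi) \tfrac{|\Gamma \cap D_t(x)| \vol(\Gamma\backslash G)}{\vol(D_t)} + 3\|\psi\|_\infty \tfrac{|\Gamma \cap D_t^{t_1}(x)| \vol(\Gamma\backslash G)}{\vol(D_t)}.\]
For the second term, Lemma \ref{lem-corridor-1GN} gives $e^{-C_3 r} \int \psi \dd \nu - E(t,\psi,x) \leq \int \psi \dd \nu_{x,1}^t \leq \int \psi \dd \nu + E(t,\psi,x)$. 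Since $\psi \geq 0$ and $1 - e^{-C_3 r} \leq C_3 r$, this yields the bound $C_3 r \int \psi \dd \nu + E(t,\psi,x)$.

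Third, I would substitute the norm estimates from Lemma \ref{lem-couloirs-lipschitz} back in. The term $C_3 r \int \psi \dd \nu$ becomes $C_3 r \int \widetilde{\psi} \dd m_{G/M}$ by Fubini (no loss of the $(2r)^{\dim \mathfrak{a}}$ factor — this is the ``main error'' term). The remaining three contributions all carry a prefactor $(2r)^{\dim \mathfrak{a}}$: for the Gorodnik--Nevo error, $E(t,\psi,x) = O(C_x Lip(\psi) \vol(D_t)^{-\kappa}) \leq 2(2r)^{\dim \mathfrak{a}} E(t,\widetilde{\psi},x)$ since $Lip(\psi) \leq 2(2r)^{\dim \mathfrak{a}} Lip(\widetilde{\psi})$, and absorbing the factor $2$ in the implicit constant of $O(\cdot)$ reproduces the stated form; the remaining two terms pick up $(2r)^{\dim \mathfrak{a}}$ directly from the norm bounds on $\psi$. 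Combining yields the announced inequality.

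There is no real obstacle here — the statement is essentially a bookkeeping corollary of Lemmas \ref{lem-corridor-1GN}, \ref{lem-corridor-3RGN}, and \ref{lem-couloirs-lipschitz}, together with the product structure of both measures. The only mild subtlety is to notice that the ``main error term'' $C_3 r \int \widetilde{\psi} \dd m_{G/M}$ does \emph{not} acquire the $(2r)^{\dim\mathfrak{a}}$ prefactor, because it arises by multiplying $\int \psi \dd \nu$ (already the full mass against $\widetilde{\psi}$, via Fubini) by the relative error $C_3 r$, rather than by estimating $\|\psi\|_\infty$ or $Lip_2(\psi)$.
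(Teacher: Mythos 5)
Your proposal is correct and follows essentially the same route as the paper's own proof: disintegrate both measures via Fubini to reduce to $\psi=\int_{\frak a}\widetilde\psi(\cdot,\cdot;Y)\dd Y$ on $\calF^{(2)}$, split via the intermediate measure $\nu_{x,1}^t$, apply Lemmas \ref{lem-corridor-1GN} and \ref{lem-corridor-3RGN}, and convert norms back with Lemma \ref{lem-couloirs-lipschitz}. Your closing remark about why the term $C_3 r\int\widetilde\psi\dd m_{G/M}$ escapes the $(2r)^{\dim\frak a}$ prefactor is exactly the point the paper makes by applying Fubini to that term at the end.
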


\begin{proof}

We set $\psi (\xi^+,\xi^-) := \int_{\frak{a}} \widetilde{\psi}(\xi^+,\xi^- ;  v) \dd v.$
Using Fubini's theorem on the $\frak{a}$ coordinate and Proposition \ref{prop-disintegration} that $m_{G/M}=\nu \otimes Leb_{\frak a}$, we deduce that
\begin{align*}
 \int \widetilde{\psi} \dd \cal{M}_{x,2}^t = \int \psi \dd \nu_{x,2}^t \text{ and }  \int \widetilde{\psi} \dd m_{G/M} = \int \psi \dd \nu.
\end{align*}
We only need to bound $\int \psi \dd \nu_{x,2}^t - \int \psi \dd \nu$.
By definition of these measures,
$$
 \int \psi \dd \nu_{x,2}^t - \int \psi \dd \nu  = \quad \int \psi \dd \nu_{x,1}^t - \int \psi \dd \nu + \int \psi \dd \nu_{x,2}^t -  \int \psi \dd \nu_{x,1}^t .
$$
Using Lemma \ref{lem-corridor-3RGN} on the last term on the right, then Lemma \ref{lem-corridor-1GN}, the convexity inequality $e^{-r}-1 \geq -r$ and non-negativity of $\psi$ to the other term, we deduce the following bound. 
\begin{align*}
\bigg\vert \int \psi \dd \nu_{x,2}^t - \int \psi \dd \nu \bigg \vert \leq 
C_3 r &\int \psi \dd \nu + E(t, \psi, x) \\
&+ \varepsilon Lip_2(\psi) \frac{\vert \Gamma \cap D_t(x) \vert  \vol(\Gamma \backslash G)}{ \vol(D_t)}
+ 3\Vert \psi \Vert_{\infty} \frac{ \vert \Gamma \cap D_t^{t_1}(x) \vert  \vol(\Gamma \backslash G)}{ \vol(D_t)}.
\end{align*} 
By Lemma \ref{lem-couloirs-lipschitz} (a) (b), the Lipschitz constants and norms between $\psi$ and $\widetilde{\psi}$ satisfy 
$ Lip_2 (\psi) \leq 2 (2r)^{\dim  \frak{a}} Lip_2(\widetilde{\psi}) $
and $\Vert \psi \Vert_\infty \leq (2r)^{\dim  \frak{a}} \Vert \widetilde{\psi} \Vert_\infty$.  
We deduce the domination $E(t,\psi, x)= (2r)^{\dim  \frak{a}} O(Lip_2(\widetilde{\psi}) C_x  \vol(D_t)^{-\kappa})$ and abusing notation we write 
$$E(t, \psi, x) = (2r)^{\dim  \frak{a}} E(t, \widetilde{\psi},x). $$
Replacing the Lipschitz constants and norms in the upper bound by abuse of notation on $E(t,\psi,x)$ and lastly applying Fubini on the first term yields
\begin{align*}
\bigg\vert \int \psi \dd \nu_{x,2}^t - \int & \psi \dd \nu \bigg \vert \quad \leq \quad
C_3 r \int \widetilde{ \psi} \dd m_{G/M} \quad +\\
&(2r)^{\dim  \frak{a}} \bigg( E(t,\widetilde{\psi},x) +  2\varepsilon Lip_2(\widetilde{\psi}) \frac{\vert \Gamma \cap D_t(x) \vert  \vol(\Gamma \backslash G)}{ \vol(D_t)}
+ 3\Vert \widetilde{\psi} \Vert_{\infty} \frac{ \vert \Gamma \cap D_t^{t_1}(x) \vert  \vol(\Gamma \backslash G)}{ \vol(D_t)} \bigg).
\end{align*} 
\end{proof}

The measure in equidistribution is denoted by
\begin{equation}\label{eq_Mes}
\cal{M}^t:= \frac{ \vol(\Gamma\backslash G)}{ \vol(D_t)}  \sum_{ \gamma \in \Gamma^{lox}, \lambda(\gamma)\in\frakD_t}\calL_\gamma. 
\end{equation}

\begin{lem}\label{lem-cocom-weyl}
There exists $C>0$. Fix $x\in X$, for every test function $\widetilde{\psi}\in Lip^+_c( \widetilde{\cal{F}}^{(2)}(x,r))$,
\begin{equation}\label{eq-lem-cocom-weyl}
(1-Cr) \int  \widetilde{\psi} \dd \cal{M}_{x,2}^{t-2r} \leq  \int  \widetilde{\psi} \dd \cal{M}^t   \leq (1+Cr) \int  \widetilde{\psi} \dd \cal{M}_{x,2}^{t+2r}+\|\widetilde{\psi}\|_\infty\frac{|\Gamma\cap D_{t+2r}^{2r}(x)| \vol(\Gamma \backslash G)}{ \vol(D_t)}. 
\end{equation}
\end{lem}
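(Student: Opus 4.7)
The two measures $\calM^t$ and $\calM_{x,2}^t$ share the same summand $\calL_\gamma = D_{\gamma^+}\otimes D_{\gamma^-}\otimes\mathrm{Leb}_\mathfrak{a}$ and differ only in their indexing set: $\calM^t$ sums over $\gamma\in\Gamma^{lox}$ with $\lambda(\gamma)\in\frakD_t$, whereas $\calM_{x,2}^t$ sums over $\gamma\in\Gamma^{lox}\cap D_t^{reg}(x)$. The plan is to compare these two indexing conditions using the support restriction imposed by $\widetilde{\psi}$ together with Lemma \ref{lem-cartan-lox}.

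First, since $\widetilde{\psi}$ is supported in $\widetilde{\calF^{(2)}}(x,r)$, which by Fact \ref{prop-corridor} is the preimage of $B_X(x,r)$ under the projection $G/M\to X$, only those loxodromic $\gamma$ whose axis $(\gamma^+\gamma^-)_X$ meets $B_X(x,r)$, i.e.\ with $(\gamma^+,\gamma^-)\in\calF^{(2)}(x,r)$, can contribute to $\int\widetilde{\psi}\,\dd\calL_\gamma$. For any such $\gamma$, applying Lemma \ref{lem-cartan-lox} with $\xi^{\pm}=\gamma^{\pm}$ produces the key estimate
\[\|\lambda(\gamma)-\underline{a}_x(\gamma)\|\le 2r.\]
Moreover, the fibre of $\widetilde{\calF^{(2)}}(x,r)$ over any fixed $(\gamma^+,\gamma^-)$ is a subset of $\mathfrak{a}$ of Lebesgue measure $O(r^{\dim\mathfrak{a}})=O(1)$, coming from the diameter of $B_X(x,r)\cap(\gamma^+\gamma^-)_X$.

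For the lower bound, I would take any $\gamma\in\Gamma^{lox}\cap D_{t-2r}^{reg}(x)$ with axis near $x$ (the others contribute zero). Since $\underline{a}_x(\gamma)\in\frakD_{t-2r}\cap\mathfrak{a}^{++}$, the displayed estimate together with the convex shape of $\frakD$ forces $\lambda(\gamma)\in\frakD_t$, so $\gamma$ contributes to $\calM^t$ with the same integrand against $\widetilde{\psi}$. Summing over these $\gamma$ and invoking $\vol(D_{t-2r})\ge(1-Cr)\vol(D_t)$ from Lemma \ref{lem-vollip} yields the lower bound. For the upper bound, any $\gamma$ contributing to $\int\widetilde{\psi}\,\dd\calM^t$ has axis near $x$ (by the support hypothesis) and $\underline{a}_x(\gamma)\in B_\mathfrak{a}(\frakD_t,2r)\cap\mathfrak{a}^+\subset\frakD_{t+2r}$. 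I would then split such $\gamma$ into two parts: (a) those with $d(\underline{a}_x(\gamma),\partial\mathfrak{a}^+)>2r$, which lie in $\Gamma^{lox}\cap D_{t+2r}^{reg}(x)$ and are captured by $\calM_{x,2}^{t+2r}$, producing the factor $\vol(D_{t+2r})/\vol(D_t)\le 1+Cr$; and (b) those in $\Gamma\cap D_{t+2r}^{2r}(x)$, each contributing at most $\|\widetilde{\psi}\|_\infty\cdot O(1)$ by the fibre bound above, which after multiplication by $\vol(\Gamma\backslash G)/\vol(D_t)$ matches the stated error term.

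The main technical point is verifying the set-theoretic inclusions $B_\mathfrak{a}(\frakD_t,2r)\cap\mathfrak{a}^{++}\subset\frakD_{t+Cr}$ and $B_\mathfrak{a}(\frakD_{t-2r},2r)\cap\mathfrak{a}^{++}\subset\frakD_t$ uniformly over the two domain types. For the ball domain this is immediate from the Euclidean triangle inequality; for the parallelotope domain, a coordinate computation in the basis of $\mathfrak{a}$ dual to the simple roots $\Pi$ yields the same kind of inclusion with a constant depending on $\min_{\beta\in\Pi}a_\beta$, which is absorbed into the universal constant $C$ of the lemma.
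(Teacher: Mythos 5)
Your proposal is correct and follows essentially the same route as the paper: reduce to $\gamma$ with $(\gamma^+,\gamma^-)\in\calF^{(2)}(x,r)$ via the support of $\widetilde{\psi}$, apply Lemma \ref{lem-cartan-lox} to get $\|\lambda(\gamma)-\underline{a}_x(\gamma)\|\leq 2r$, sandwich the index sets (with the almost-singular elements absorbed into $\Gamma\cap D_{t+2r}^{2r}(x)$), and convert $\vol(D_{t\pm 2r})/\vol(D_t)$ into $1\pm Cr$ via Lemma \ref{lem-vollip}. Your explicit fibre-measure bound for the error term and the remark on the parallelotope inclusion are details the paper leaves implicit, but they do not change the argument.
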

\begin{proof}
By Lemma \ref{lem-cartan-lox}, for every loxodromic element $g \in G^{lox}$ such that $(g^+,g^-)\in \cal{F}^{(2)}(x,r)$ then 
$$ \Vert \lambda(g) - \underline{a}_x(g) \Vert \leq 2r .$$
Hence using triangle inequality we deduce the inclusions
\begin{align*}
\Gamma^{lox} \cap D_{t-2r}^{reg}(x)\cap \{\gamma|\ (\gamma^+,\gamma^-) &\in \cal{F}^{(2)}(x,r) \} \subset \\
& \big\lbrace \gamma \in \Gamma^{lox} \;\big\vert \; \lambda(\gamma)\in \frakD_t \; \text{and}\; (\gamma^+,\gamma^-) \in \cal{F}^{(2)}(x,r) \big\rbrace \\
& \hspace*{4cm} \subset (\Gamma^{lox} \cap D_{t+2r}^{reg}(x))\cup (\Gamma\cap D_{t+2r}^{2r}(x)) ,
\end{align*} 
here the set $\Gamma\cap D_{t+2r}^{2r}(x)$ is used to contain all the $\gamma$ in the middle set with $\underline{a}_x(\gamma)$ singular.
By integrating $\widetilde{\psi}$ over $\calL_\gamma$, summing and using that  $\widetilde{\psi}$ is supported on $\widetilde{\cal{F}}^{(2)}(x,r)$, we deduce
\begin{equation}\label{eq-lem-cocom-weyl-ineq1}
\frac{  \vol(D_{t-2r})}{ \vol(\Gamma \backslash G)} \int \widetilde{\psi} \dd \cal{M}_{x,2}^{t-2r}
\leq \frac{  \vol(D_t)}{ \vol(\Gamma \backslash G)} \int \widetilde{\psi} \dd \cal{M}^t
\leq \frac{  \vol(D_{t+2r})}{ \vol(\Gamma \backslash G)} \int \widetilde{\psi} \dd \cal{M}_{x,2}^{t+2r}
+\|\widetilde{\psi}\|_\infty|\Gamma\cap D_{t+2r}^{2r}(x)|.
\end{equation}
Finally, we multiply by $\frac{ \vol(\Gamma \backslash G)}{  \vol(D_t)}$, apply the local Lipschitz property of $t \mapsto \log ( \vol(D_t))$ (Lemma \ref{lem-vollip}).
\end{proof}

\begin{lem}\label{lem.changemetric}
Recall $\epsilon_0$ from Lemma \ref{lem-dist-compare}. For $0<s<\min\{\epsilon_0,(\log 2)/C_0 \}$ and any $z\in G/M$ and $x=\pi(z)\in X$, we have
\[B_1(z,s)\subset \widetilde{\calF^{(2)}}(x,s) \]
and for $\widetilde\varphi$ supported on $B(z,s)$
\[Lip_2\widetilde\varphi\leq C_x Lip \widetilde\varphi .\]
\end{lem}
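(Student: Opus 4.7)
The plan is to prove the two claims separately, both essentially by quoting the comparison Lemma \ref{lem-dist-compare} and monitoring how the constant $C_y$ from \eqref{equ-cx} grows as $y$ drifts away from $x$. First I would establish the inclusion $B_1(z,s)\subset \widetilde{\calF^{(2)}}(x,s)$ by observing that the Riemannian metric $\dis_1$ on $G/M$ descends to the Riemannian metric $\dis_X$ on $X=G/K$, so the projection $\pi:(G/M,\dis_1)\to (X,\dis_X)$ is $1$-Lipschitz. Hence any $z'\in B_1(z,s)$ satisfies $\dis_X(\pi(z'),x)\leq \dis_1(z',z)<s$, and Fact \ref{prop-corridor}, which identifies $\widetilde{\calF^{(2)}}(x,s)$ with $\pi^{-1}(B_X(x,s))$, immediately yields $z'\in \widetilde{\calF^{(2)}}(x,s)$.

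For the Lipschitz comparison, I would set $L:=\mathrm{Lip}\,\widetilde\varphi$ with respect to $\dis_1$ and aim to show $|\widetilde\varphi(z_1)-\widetilde\varphi(z_2)|\leq C_xL\,\dis_2(z_1,z_2)$ for all $z_1,z_2\in G/M$. By symmetry of this target inequality, one may assume $z_1\in \supp\widetilde\varphi\subset B(z,s)$. Setting $x_1:=\pi(z_1)$, the first part of the lemma gives $x_1\in B_X(x,s)$, and combining \eqref{equ-cx} with the hypothesis $s<(\log 2)/C_0$ yields the crucial monotone bound
\[ C_{x_1}=8C_2C_1 e^{C_0\dis_X(o,x_1)}\leq C_x e^{C_0 s}\leq 2C_x. \]
Now Lemma \ref{lem-dist-compare} applied to the triple $(x_1,z_1,z_2)$ gives $\dis_1(z_1,z_2)\leq (C_{x_1}/4)\dis_2(z_1,z_2)\leq (C_x/2)\dis_2(z_1,z_2)$, provided one of the smallness conditions there holds. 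Combined with the $\dis_1$-Lipschitz property this delivers
\[ |\widetilde\varphi(z_1)-\widetilde\varphi(z_2)|\leq L\,\dis_1(z_1,z_2)\leq \tfrac{C_x}{2}L\,\dis_2(z_1,z_2)\leq C_xL\,\dis_2(z_1,z_2). \]

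The subtle step, and the main obstacle, is the remaining regime in which the smallness hypothesis of Lemma \ref{lem-dist-compare} fails, i.e.\ both $\dis_1(z_1,z_2)\geq \epsilon_0$ and $\dis_2(z_1,z_2)\geq \epsilon_0/C_{x_1}$. Here one exploits that $\widetilde\varphi$ has compact support: since $\widetilde\varphi$ vanishes outside $B(z,s)$, the $\dis_1$-Lipschitz property forces $\|\widetilde\varphi\|_\infty\leq sL$, and the trivial bound $|\widetilde\varphi(z_1)-\widetilde\varphi(z_2)|\leq 2\|\widetilde\varphi\|_\infty\leq 2sL$ is then absorbed into $C_xL\,\dis_2(z_1,z_2)$ via the lower bound $\dis_2(z_1,z_2)\geq \epsilon_0/(2C_x)$, using $s<\epsilon_0$. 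This is where the hypothesis $s<\min\{\epsilon_0,(\log 2)/C_0\}$ gets used in full strength, and the factor of $C_x$ in the final Lipschitz bound accounts exactly for the loss in the local comparison between $\dis_1$ and $\dis_2$ over the ball $B(z,s)$.
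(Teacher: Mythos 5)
Your argument is correct and follows essentially the same route as the paper's: the inclusion comes from Fact \ref{prop-corridor} together with the fact that the projection $\pi$ does not increase distances, and the Lipschitz comparison comes from Lemma \ref{lem-dist-compare} combined with the bound $C_{\pi(z_1)}\le e^{C_0 s}C_x\le 2C_x$, which is exactly where the hypothesis $s<(\log 2)/C_0$ enters. The one place where you go beyond the paper is the regime in which neither smallness hypothesis of Lemma \ref{lem-dist-compare} is available; the paper's proof silently restricts to pairs $z_1,z_2\in B_1(z,s)$ and does not discuss this case, so your extra step is a genuine (and welcome) completion of the argument. However, the constants in that step do not quite close as written: from $\dis_2(z_1,z_2)\ge \epsilon_0/C_{\pi(z_1)}\ge \epsilon_0/(2C_x)$ you only get $C_x\,L\,\dis_2(z_1,z_2)\ge \epsilon_0 L/2$, while your upper bound for the difference is $2\|\widetilde\varphi\|_\infty\le 2sL$, so you need $s\le \epsilon_0/4$ rather than merely $s<\epsilon_0$. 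This is a cosmetic issue rather than a genuine gap --- in the application the lemma is invoked with $s=r=e^{-u_2 t}\to 0$, and one could equally shrink the admissible range of $s$ in the statement or sharpen the bookkeeping --- but as stated the final inequality of your last paragraph does not follow from the stated hypothesis on $s$ alone.
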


\begin{proof}
By Lemma \ref{prop-corridor}, we have the first part.

By Lemma \ref{lem-dist-compare}, we have for $z_1,z_2\in B_1(z,s)$
\[\dis_1(z_1,z_2)\leq C_{\pi(z_1)}\dis_2(z_1,z_2)/4. \]
Now due to the definition of $C_x$, we have $C_{\pi(z_1)}\leq C_{\pi(z)}\exp(C_0\dis_X(\pi(z),\pi(z_1))\leq 2C_{\pi(z)}$. Therefore
\[\dis_1(z_1,z_2)\leq C_x \dis_2(z_1,z_2). \]
Then use the definition of Lipschitz norm.
\end{proof}

\paragraph{Local version}

\begin{prop}\label{prop:local}
Let $\widetilde\psi$ be a Lipschitz function supported on a ball $B(z,r)\subset G/M$ and let $x=\pi(z)\in X$. If $t> \max\{C_5, \frac{2(1-2\epsilon)}{\epsilon}, \frac{4(1-\kappa(2\epsilon)}{\kappa(2\epsilon)} \}\dis_X(o,x)$, then
\[ \bigg|\int\widetilde\psi\dd\calM^t-\int\widetilde\psi\dd m_{G/M} \bigg|=O\left(r\|\widetilde\psi\|_1+\big({C_x} \vol(D_t)^{-\kappa}+{\epsilon}+{ \vol(D_t)^{-\kappa(6u_1)}}\big)Lip_2(\widetilde\psi)\right). \]
\end{prop}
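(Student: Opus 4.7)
The plan is to chain Lemma \ref{lem-cocom-weyl} with Lemma \ref{lem_M2-haar}, and then to control the residual counting terms via Section~4. First, splitting $\widetilde\psi=\widetilde\psi^+-\widetilde\psi^-$ reduces to the case $\widetilde\psi\geq 0$. By Lemma \ref{lem.changemetric}, $B_1(z,r) \subset \widetilde{\calF^{(2)}}(x,r)$ (and $Lip_2\widetilde\psi\leq C_x\, Lip\,\widetilde\psi$), so $\widetilde\psi \in Lip^+_c(\widetilde{\calF^{(2)}}(x,r))$, which is the test-function space accepted by both of those lemmas.

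Applying Lemma \ref{lem-cocom-weyl} sandwiches $\int\widetilde\psi\dd\calM^t$ between $(1\pm Cr)\int\widetilde\psi\dd\calM_{x,2}^{t\pm 2r}$ up to an additive error $\|\widetilde\psi\|_\infty |\Gamma\cap D_{t+2r}^{2r}(x)|\vol(\Gamma\backslash G)/\vol(D_t)$. Then Lemma \ref{lem_M2-haar}, applied with $t\pm 2r$ in place of $t$, replaces each $\int\widetilde\psi\dd\calM_{x,2}^{t\pm 2r}$ by $\int\widetilde\psi\dd m_{G/M}$ at the cost of a main term $C_3 r\int\widetilde\psi\dd m_{G/M}$, the Gorodnik--Nevo remainder $O(r^{\dim\frak a}C_x\, Lip_2(\widetilde\psi)\vol(D_t)^{-\kappa})$, a bulk-counting term $O(r^{\dim\frak a}\varepsilon\, Lip_2(\widetilde\psi)|\Gamma\cap D_{t\pm 2r}(x)|/\vol(D_{t\pm 2r}))$, and a wall-counting term $O(r^{\dim\frak a}\|\widetilde\psi\|_\infty|\Gamma\cap D_{t\pm 2r}^{t_1}(x)|/\vol(D_{t\pm 2r}))$. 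Lemma \ref{lem-vollip} lets one replace $\vol(D_{t\pm 2r})$ by $\vol(D_t)$ throughout.

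It remains to estimate the counting terms. Lemma \ref{lem-dtxt} yields $|\Gamma\cap D_t(x)|/\vol(D_t) = O(1)$, so the bulk-counting term contributes $O(\varepsilon\, Lip_2(\widetilde\psi))$. Lemma \ref{lem-dtx} applied with parameter $3u_1 < \epsilon_D/2$ (and $t_1 = 3u_1 t$) yields $|\Gamma\cap D_t^{t_1}(x)|/\vol(D_t) = O(\vol(D_t)^{-\kappa(6u_1)/2})$, and by the monotonicity $D_{t+2r}^{2r}\subset D_{t+2r}^{t_1}$ (valid for $2r\leq t_1$) the extra error from Lemma \ref{lem-cocom-weyl} obeys the same bound. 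Lipschitz continuity of $\widetilde\psi$ with vanishing boundary gives $\|\widetilde\psi\|_\infty \leq 2C_x r\, Lip_2(\widetilde\psi)$, absorbing every $\|\widetilde\psi\|_\infty$ prefactor into a $Lip_2(\widetilde\psi)$ prefactor. Since $\int\widetilde\psi\dd m_{G/M} = \|\widetilde\psi\|_1$, the main term becomes $r\|\widetilde\psi\|_1$; collecting everything and absorbing the polynomial factors $r^{\dim\frak a}$ produces the claimed bound $O(r\|\widetilde\psi\|_1 + (C_x\vol(D_t)^{-\kappa}+\varepsilon+\vol(D_t)^{-\kappa(6u_1)})Lip_2(\widetilde\psi))$.

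The main obstacle is bookkeeping rather than any individual estimate: one must verify that the single hypothesis $t > C\dis_X(o,x)$ simultaneously triggers all the separate thresholds in Lemmas \ref{lem-cocom-weyl}, \ref{lem_M2-haar}, \ref{lem-dtx}, \ref{lem-dtxt} together with the local-equivalence requirement $r < \min\{r_0,\epsilon_0,(\log 2)/C_0\}$, and that the exponent choices $u_1, u_2, \kappa, \kappa(6u_1)$ fixed in the preamble of Section~\ref{sec:equidistribution} keep every polynomial-in-$r$ factor (and each $C_x$) harmless against the exponentially decaying factor it multiplies.
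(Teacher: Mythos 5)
Your proposal is correct and follows essentially the same route as the paper: the paper's proof likewise chains Lemma \ref{lem-cocom-weyl} with Lemma \ref{lem_M2-haar}, uses Lemma \ref{lem-vollip} to trade $\vol(D_{t\pm 2r})$ for $\vol(D_t)$, and then invokes Lemmas \ref{lem-dtxt} and \ref{lem-dtx} (the latter with $2\epsilon=6u_1$) to control the bulk and near-singular counting terms, absorbing the $D_{t+2r}^{2r}$ remainder into the $D_{t\pm 2r}^{t_1}$ term exactly as you do.
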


\begin{proof}
Due to Lemma \ref{lem_M2-haar} and \ref{lem-cocom-weyl}, we have

\begin{align*}
\pm \bigg(  \int &\widetilde{\psi} \dd \cal{M}^t  - \int \widetilde{\psi} \dd m_{G/M}  \bigg)  
\leq r (C_3+ C)  \int \widetilde{\psi}  \dd m_{G/M} \quad + \\
&  (2r)^{\dim  \frak{a}} \bigg( E(t\pm2r,\widetilde{\psi},x) +  
 2\varepsilon Lip_2(\widetilde{\psi})  \frac{\vert \Gamma \cap D_{t\pm 2r}(x) \vert  \vol(\Gamma \backslash G)}{ \vol(D_{t\pm 2r})} + 
 4 \Vert \widetilde{\psi} \Vert_\infty \frac{\vert \Gamma \cap D_{t \pm 2r}^{t_1}(x) \vert  \vol(\Gamma \backslash G)}{ \vol(D_{t \pm 2r})}  \bigg).
\end{align*}
Let's estimate the error term in the lower part. 
By Lemma \ref{lem-corridor-1GN} and Lemma \ref{lem-vollip}, we have
$$E(t\pm2r,\widetilde{\psi},x)=O(C_{x}Lip_2(\widetilde\psi) \vol(D_t)^{-\kappa}).$$

By Lemma \ref{lem-dtxt}, we have if $t>C_5\dis_X(o,x)$, then

\begin{equation}\label{equ-gammadt}
2\varepsilon Lip_2(\widetilde{\psi})  \frac{\vert \Gamma \cap D_{t\pm 2r}(x) \vert  \vol(\Gamma \backslash G)}{ \vol(D_{t\pm 2r})} 
= O \bigg({\varepsilon}  Lip_2(\widetilde{\psi})\bigg).
\end{equation}

Using that $t_1=3u_1t$, we get by applying Lemma \ref{lem-dtx}, for $t$ as in the hypothesis then
\begin{equation}\label{equ-gammadt0}
    3 \Vert \widetilde{\psi} \Vert_\infty \frac{\vert \Gamma \cap D_{t \pm 2r}^{t_1}(x) \vert  \vol(\Gamma \backslash G)}{ \vol(D_{t \pm 2r})} 
    = O \big( \|\widetilde\psi_\Gamma\|_\infty   \vol(D_t)^{-\kappa(6u_1)} \big) .
\end{equation}
By Combining the above inequalities, we complete the proof.
\end{proof}

\subsection{Proof of the equidistribution}\label{sec-global}
\begin{center}
\fbox{
\begin{minipage}{.7 \textwidth}
From now on, to the end of this section, we suppose that $\Gamma$ is an irreducible \textit{cocompact lattice} in $G$ which acts freely on $G/M$.
\end{minipage}
}
\end{center}

Fix a non-negative test function $\widetilde{\psi}_\Gamma \in Lip^+_c(\Gamma \backslash G/M)$.  
We want to prove the following convergence and dominate its rate
$$ \int \widetilde{\psi}_\Gamma \dd \cal{M}_\Gamma^t  \xrightarrow[t \rightarrow + \infty]{} \int \widetilde{\psi}_\Gamma \dd m_{\Gamma\backslash G/M}.$$

\paragraph{Partition of unity}

By applying Vitali's covering lemma to the collection $\{B(y,r/10)\}_{y\in \Gamma\backslash G/M}$, there exists a finite set $\{y_i\}_{i\in I}$ such that $B(y_i,r/10)$ are pairwisely disjoint and $\cup_{i\in I}B(y_i,r/2)$ is a covering of $\Gamma\backslash G/M$. By disjointness, we know $|I|\ll r^{-\dim(G/M)}$. Fix a partition of unity of $\frac{1}{r}$-Lipschitz functions associated to the open cover $\cup_{i\in I}B(y_i,r)$. 
For the function $\widetilde\psi_\Gamma$ on $\Gamma\backslash G/M$, we can write it as $\widetilde\psi_\Gamma=\sum_{i\in I}\widetilde\psi_{\Gamma,i}$ using the partition of unity.
For each $y_i$, we can find a lift $z_i$ in $G/M$ such that $\dis(o,z_i)$ is less than the diameter of $\Gamma\backslash G/M$. By Lemma \ref{lem.changemetric}, we know that for $x_i=\pi(z_i)\in X$
\[B(z_i,r)\subset \widetilde{\calF^{(2)}}(x_i,r). \]
We can take $t$ large such that $r=e^{-u_2t}$ is smaller then the injectivity radius of $\Gamma\backslash G/M$. Then the two balls $B(z_i,r)$ and $B(y_i,r)$ are homeomorphic.
Let $\widetilde\psi_i$ be the lift of $\widetilde\psi_{\Gamma,i}$ on $B(z_i,r)$. 

Furthermore, for every $i\in I$, the function $\widetilde{\psi}_i$ is Lipschitz and satisfies the following norm bounds:
\begin{itemize}
\item[(p1)] $Lip_2(\widetilde\psi_i)\leq C_{x_i} Lip\widetilde\psi_i \leq C_{x_i}(Lip\widetilde\psi_\Gamma+\frac{1}{r}\|\widetilde\psi_\Gamma\|_\infty)\leq \frac{C_{x_i}}{r} |\widetilde\psi_\Gamma |_{Lip},$
\item[(p2)] $\Vert \widetilde{\psi}_i \Vert_\infty  \leq \Vert \widetilde{\psi}_\Gamma \Vert_\infty,$
\item[(p3)] $\sum_{i\in I} \Vert \widetilde{\psi}_i \Vert_1  \leq \Vert \widetilde{\psi}_\Gamma \Vert_1,$
\end{itemize}
where the first inequality is due to Lemma \ref{lem.changemetric}.

For every $i \in I$, we can apply Proposition \ref{prop:local}. Then we use (p1) and (p2) to replace the Lipschitz norm of $\widetilde{\psi}_i$ by $\widetilde{\psi_\Gamma}$. By compactness, the $x_i$'s are in a bounded set, therefore the constants $\lbrace C_{x_i} \rbrace_{i\in I}$ are uniformly bounded.
Therefore we have for $t$ large
\begin{equation}\label{equ-et}
    |\int\widetilde\psi_i\dd\calM^t-\int\widetilde\psi_i\dd m_{G/M}|=O\left(r\|\widetilde\psi_i\|_1+\frac{1}{r}( \vol(D_t)^{-\kappa}+{\epsilon}+{ \vol(D_t)^{-\kappa(6u_1)}})|\widetilde\psi_\Gamma|_{Lip}\right).
\end{equation}

\paragraph{Global domination}
By the partition of unity, we have
\[\int \widetilde\psi_\Gamma \dd \calM_\Gamma^t=\sum_i\int\widetilde\psi_{\Gamma,i} \dd \calM_\Gamma^t=\sum_i\int\widetilde\psi_i \dd \calM^t \]
and
\[\int \widetilde\psi_\Gamma \dd m_{\Gamma\backslash G/M}=\sum_i\int\widetilde\psi_{\Gamma,i} \dd m_{\Gamma\backslash G/M}=\sum_i\int\widetilde\psi_i \dd m_{G/M}.\]
Therefore, by local dominations, $|I|\ll r^{-\dim(G/M)}$ and \eqref{equ-et}, we obtain
\begin{align*}
\int \widetilde\psi_\Gamma d\calM_\Gamma^t-\int \widetilde\psi_\Gamma \dd &m_{\Gamma\backslash G/M} = O \Bigg(  r  \sum_{i\in I}  \Vert \widetilde{\psi}_i \Vert_1
\\
&+  r^{-\dim(G/AM)} \bigg(\frac{ \vol(D_t)^{-\kappa}}{r}  | \widetilde{\psi}_{\Gamma} |_{Lip}
+ \frac{\varepsilon}{r} | \widetilde{\psi}_{\Gamma} |_{Lip}+\frac{ \vol(D_t)^{-\kappa(6 u_1)}}{r} \vert \widetilde{\psi}_\Gamma \vert_{Lip} \bigg) \Bigg). 
\end{align*}
Using (p3) and $\Vert \widetilde{\psi}_\Gamma \Vert_1 \leq \|m_{\Gamma\backslash G/M} \| \; \vert \widetilde{\psi}_\Gamma \vert_{Lip}$, we deduce that
$$
\int \widetilde\psi_\Gamma d\calM_\Gamma^t-\int \widetilde\psi_\Gamma \dd m_{\Gamma\backslash G/M} =
O\Bigg( \bigg(  r  
+  \frac{ \vol(D_t)^{-\kappa} + \varepsilon + \vol(D_t)^{-\kappa(6 u_1)}
}{r^{ \dim  (G/AM)+1 }} 
  \bigg) \vert \widetilde{\psi}_\Gamma \vert_{Lip} \Bigg). 
$$

Recall the choice of parameter in \eqref{equ-parameter} where $\varepsilon=e^{-u_1t}$ and $r=e^{-u_2t}$.
Collecting all the error terms together, we obtain that there exists $u>0$ such that 
\[ \Big|\int \widetilde\psi_\Gamma \dd\calM_\Gamma^t-\int \widetilde\psi_\Gamma \dd m_{\Gamma\backslash G/M} \Big|= O(e^{-u t}|\widetilde\psi_\Gamma|_{Lip}). \]

\section{Counting conjugacy classes}\label{sec:conjugacy}
\begin{center}
\fbox{
\begin{minipage}{.7 \textwidth}
In this section, we only consider ball domains, i.e. $D_t=K\exp(B_{\frak a}(0,t))K$.
\end{minipage}
}
\end{center}
\paragraph{Centralizer of singular hyperbolic elements}
We need to introduce $L_\Theta$ to study the structure of the centralizer of a semisimple element in $G$. See for example \cite[Section 8.2]{bochiAnosovRepresentationsDominated2019}.

Let $\Theta$ be a subset of simple roots $\Pi$. Taking the convention $\Theta^\complement:= \Pi \setminus \Theta$,
we set $$\mathfrak{p}_{\Theta}:= \mathfrak{g}_0 \oplus \bigoplus_{\alpha \in \Sigma^+} \mathfrak{g}_\alpha
\oplus \bigoplus_{\alpha \in \langle \Theta^\complement \rangle} \mathfrak{g}_{-\alpha} ,$$
where $ \langle \Theta^\complement \rangle$ is the set of weights generated by $\Theta^\complement$.
Denote by $P_\Theta$ the associated standard parabolic subgroup.
For the opposite parabolic subgroup, $P_\Theta^-$, its Lie algebra is given by
$$\mathfrak{p}_{\Theta}^-= \mathfrak{g}_0 \oplus \bigoplus_{\alpha \in \Sigma^+} \mathfrak{g}_{-\alpha}
\oplus \bigoplus_{\alpha \in \langle \Theta^\complement \rangle} \mathfrak{g}_{\alpha}. $$
The Lie algebra of the Levi group $L_\Theta = P_\Theta \cap P_\Theta^-$ is given by
$$\mathfrak{l}_{\Theta}:=\mathfrak{p}_{\Theta} \cap \mathfrak{p}_{\Theta}^- = \mathfrak{g}_0
\oplus \bigoplus_{\alpha \in \langle \Theta^\complement \rangle} \mathfrak{g}_{\alpha} \oplus \mathfrak{g}_{-\alpha}. $$
Let's define the $\Theta$-singular subspace $$\mathfrak{a}_{\Theta}= \cap_{\alpha \in \Theta^\complement} \ker \alpha,$$
which has real dimension $\vert \Theta\vert$. Recall $\mathfrak{m}= \mathfrak{k} \cap \mathfrak{g}_0$ and $\mathfrak{g}_0= \mathfrak{m} \oplus \mathfrak{a}$. 
Denote by
$$ \mathfrak{h}_\Theta= \mathfrak{m} \oplus \mathfrak{a}_{\Theta}^\perp \oplus \bigoplus_{\alpha \in \langle \Theta^\complement \rangle} \mathfrak{g}_{\alpha} \oplus \mathfrak{g}_{-\alpha} $$
the subalgebra where $\mathfrak{a}_{\Theta}^\perp$ is the orthogonal of $\mathfrak{a}_\Theta$ in $\mathfrak{a}$ for the Killing form and by $H_\Theta$ its associated reductive Lie group.
Then $A_\Theta$ and $H_\Theta$ commute and 
$$L_\Theta=A_\Theta H_\Theta.$$
Let $\delta_0(H_\Theta):=  \max_{Y\in B_{\mathfrak{a}}(0,1)} \sum_{\alpha\in \langle \Theta^\complement \rangle} \alpha(Y) $ be defined using the root space of $H_\Theta$.
Since $\Theta$ is non empty, there is a uniform gap i.e. there exists
$c_G>0$
such that 
\begin{equation}\label{equ-cG}
\delta_0(H_\Theta)\leq \delta_0-c_G
\end{equation}
for all non empty $\Theta \subset \Pi$.
\begin{proof}[Proof of Theorem \ref{corol_conjugacy_count}]
We want an upper estimate, when $t$ is large, of
$$[\Gamma](t)= \lbrace [\gamma] \in [\Gamma] \; \vert \; \lambda(\gamma) \in B_{\mathfrak{a}}(0,t) \rbrace.$$
First we estimate the number of conjugacy classes of singular (non-loxodromic) elements whose Jordan projection has norm less than $t$.
By Corollary \ref{lem-cartan-loxc} and Lemma \ref{count-reste} we have when $t$ is large enough
\begin{equation}\label{equ:vol-singular-part}
    \big|[\Gamma^{sing}](t)\big|\leq |\Gamma\cap D_{t+C}^{C}|\ll e^{(\delta_0-\epsilon) t}.
\end{equation}
Let us now provide an upper estimate for the number of loxodromic elements.

Recall that
$$[\Gamma^{lox}](t)= \lbrace [\gamma] \in [\Gamma^{lox}] \; \vert \; \lambda(\gamma) \in B_{\mathfrak{a}}^{++}(0,t) \rbrace.$$
Set $\kappa:=2\delta_0/c_G$, where $c_G$ is defined in \eqref{equ-cG}.
Recall that the set $[\Gamma^{lox}](t)$ is in bijection with
$$ \calG^t(A)= \big\lbrace (Y,F) \; \big\vert \; F \in C(A) \text{ and } Y\in \Lambda(F) \cap B_{\mathfrak{a}}^{++} (0,t)\big\rbrace.$$
We consider the subset of balanced periodic tori
$$\mathcal{B}^t(\kappa):= \Big\lbrace (Y,F)\in \calG^t(A) \; \Big\vert \; \Lambda(F)\cap B\Big(0, \frac{t}{\kappa} \Big) = \{0 \}  \Big\rbrace $$
and unbalanced periodic tori
$$\mathcal{U}^t (\kappa):= \Big\lbrace (Y,F)\in \calG^t(A) \; \Big\vert \; \Lambda(F)\cap B\Big(0, \frac{t}{\kappa} \Big) \neq \{0 \}  \Big\rbrace .$$
Note that $\mathcal{B}^t(\kappa)$ (resp. $\mathcal{U}^t(\kappa)$ ) projects into a subset of periodic tori in $C(A)$ of systole larger (resp. smaller) than $\frac{t}{\kappa}$ : the balanced (resp. unbalanced) tori.

 We prove that the amount of unbalanced tori is negligible compared to the balanced ones.
 Then, using Theorem \ref{thm_cocompact-revet} below will allow us to deduce the upper estimate
	\begin{equation}\label{equ:vol}
	\sum_{[\gamma]\in\cal [\Gamma^{lox}] (t) } \vol(F_{[\gamma]})=\vol(D_t)(1+O(e^{-\epsilon t})). 
	\end{equation}

Abusing notations, we identify the elements of $\mathcal{B}^t(\kappa)$ and $\mathcal{U}^t(\kappa)$ with the corresponding elements in $[\Gamma^{lox}](t)$. 
	\paragraph{For the balanced part} 
 By definition, for every $[\gamma] \in \mathcal{B}^t(\kappa)$, its periodic torus $F_{[\gamma]}$ is balanced i.e. its systole is greater than $t/\kappa$. 
 Hence, there exists $c>0$ such that
	\[\vol(F_{[\gamma]})>ct^{r}. \]
 Consequently, by \eqref{equ:vol}, we deduce that
\begin{equation}\label{eq:A-number}
	| \mathcal{B}^t(\kappa) |  \leq \sum_{[\gamma]\in [\Gamma^{lox}]  (t) } \frac{\vol(F_{[\gamma]})}{ct^{r}}\ll \frac{ \vol(D_t)}{t^r}.
\end{equation}

\paragraph{For the unbalanced part}
We prove that there is a negligible amount of unbalanced elements. 
Recall that $[\Gamma]( \frac{t}{\kappa} )= \lbrace [\gamma] \in [\Gamma] \; \vert \; \lambda(\gamma) \in B(0,\frac{t}{\kappa}) \rbrace$.
Since any unbalanced periodic torus has a period of size less than $\frac{t}{\kappa}$, the number of unbalanced periodic tori is bounded above by $ n_W\big\vert [\Gamma] \big(\frac{t}{\kappa}\big) \big\vert $ where $n_W \geq 1$ is the number of Weyl chambers in $\mathfrak{a}$.
Then by summing over first the unbalanced periodic tori and then their regular periods, we deduce the following upper bound
\begin{equation}\label{equ:unbalanced}
 \vert \mathcal{U}^t(\kappa) \vert \ll \sum_{ [\beta] \in [\Gamma](\frac{t}{\kappa}) } \big\vert \Lambda(F_{[\beta]}) \cap B_{\mathfrak{a}}^{++}(0,t) \big\vert .
 \end{equation}

Since $\Gamma$ is cocompact, Corollary \ref{lem-cartan-loxc} provides an upper bound for the summation term
\begin{equation}\label{equ:beta gamma}
\Big|[\Gamma]\Big(\frac{t}{\kappa}\Big)\Big|\leq \big|\Gamma\cap D_{\frac{t}{\kappa}+C}\big|\ll t^{r}e^{\delta_0 t/\kappa}.
\end{equation}

Now for the summand, for each $[\beta] \in [\Gamma](\frac{t}{\kappa})$,
there exists a unique non-empty $\Theta \subset \Pi$ such that $\lambda(\beta) \in \mathfrak{a}_{\Theta}^{++}$.
It is given by the biggest subset $\Theta \subset \Pi$ such that $\alpha (\lambda(\beta))>0$ for all $\alpha \in \Theta$.
Denote by $G_\beta$ the centralizer of $\beta$ in $G$.
It is conjugated to a closed subgroup of the Levi $L_\Theta$ (which is reductive) i.e. there exists $g \in G$ such that $G_\beta < g^{-1} L_\Theta g$. By Corollary \ref{lem-cartan-loxc} and since $Z_K(\exp(\lambda(\beta))\subset H_{\Theta}$, we may assume that $\|g\|\leq C_\Gamma$ by choosing an appropriate element in the conjugacy class of $\beta$ and abusing notations.
By Selberg's lemma (\cite[Lemma 1.10]{pr}), $\Gamma_\beta$ is a cocompact lattice of $G_\beta$. 

Now, counting only the loxodromic elements, by Corollary \ref{lem-cartan-loxc}, we have 
$$\big\vert \Lambda(F_{[\beta]}) \cap B_{\mathfrak{a}}^{++}(0,t) \big\vert\leq |\Gamma_\beta \cap D_{t+C}| = |\Gamma_\beta\cap (D_{t+C}\cap G_\beta)|.$$ 
Then take some small ball $\calO_\epsilon \subset G$ of injective image in $\Gamma \backslash G$, we have
\[ |\Gamma_\beta\cap (D_{t+C}\cap G_\beta)| \leq \frac{ \vol(\calO_\epsilon D_{t+C}\cap G_\beta)}{ \vol(\calO_\epsilon \cap G_{\beta})}\leq \frac{ \vol(D_{t+C'}\cap G_\beta)}{ \vol(\calO_\epsilon\cap G_\beta)}. \]
It remains to estimate $\vol(D_{t+C'}\cap G_\beta)$.
Since $g G_\beta g^{-1} < L_\Theta$ and $\|g\|\leq C_\Gamma$, it is dominated by the volume growth of the Levi, i.e. there exists $C''>0$ such that
$$\vol(D_{t+C'}\cap G_\beta) \leq \vol(D_{t+C''}\cap g G_\beta g^{-1}) \leq \vol( D_{t+C''} \cap L_\Theta ).$$
By the same computation as in \cite[Theorem 6.2]{knieper_asymptotic_1997}, we obtain that 
\[ \vol(D_{t+C''}\cap L_\Theta)\ll t^{r}\exp\big(\delta_0(H_\Theta) t\big), \]
where $H_\Theta$ is the semisimple part of the Levi $L_\Theta$. 
Due to \eqref{equ-cG}, for all $[\beta] \in [\Gamma](\frac{t}{\kappa})$,
\begin{equation}\label{equ:gamma beta}
  \big\vert \Lambda(F_{[\beta]}) \cap B_{\mathfrak{a}}^{++}(0,t) \big\vert \ll t^{r}\exp((\delta_0-c_G) t). 
\end{equation}

Finally, by combining \eqref{equ:unbalanced}, \eqref{equ:beta gamma} and \eqref{equ:gamma beta} and our choice of $\kappa$, we get
\begin{equation}\label{eq:B}
|\mathcal{U}^t (\kappa)|\ll t^{2r} \exp \Big( t\big( \delta_0+ \frac{\delta_0}{\kappa} -c_G \big) \Big)\ll \vol(D_t)^{1-\epsilon}.
\end{equation}

\paragraph{Back to the main estimate}
	
	Combining \eqref{equ:vol-singular-part}, \eqref{eq:A-number} and \eqref{eq:B}, we deduce the upper estimate because the singular elements $\big|[\Gamma^{sing}](t) \big|$ and the unbalanced periodic tori $\big\vert \mathcal{U}^t(\kappa) \big\vert$ are negligible compared to the balanced periodic tori
	\[ \big|[\Gamma](t) \big| \sim  \big\vert \mathcal{B}^t(\kappa)   \big\vert  \ll \frac{\vol(D_t)}{t^r} \ll t^{-\frac{r+1}{2}} e^{\delta_0 t}  . \]
	Therefore, the proof is complete.
\end{proof}

\section{Appendix}
\label{appendix}

\subsection{Weyl subgroups and parabolic subgroups of G}

Recall the notion of parabolic subgroups $P_\Theta$, Levi subgroups $L_\Theta$ and $A_\Theta$ is the group corresponding to the Lie algebra $a_\Theta$ with $\Theta$ a subset of simple roots $\Pi$ from Section \ref{sec:conjugacy}. Let $W_\Theta$ be the Weyl subgroup generated by reflections $s_\alpha$ for $\alpha\in\Theta^\complement$.
\begin{rem}
The conventions are made such that $P_\Pi$ is the Borel subgroup, which is different from \cite{bourbaki_lie_2004}.

$$\begin{array}{c l c l c}
    \Pi & \supset  & \Theta & \supset  & \emptyset  \\
    \lbrace e_W \rbrace =W_{\Pi} & \subset & W_{\Theta} & \subset & W_\emptyset = W \\
    B= P_{\Pi}   & \subset & P_{\Theta} = B W_\Theta B  & \subset & P_\emptyset = G \\
    AM = L_{\Pi} & \subset & L_\Theta & \subset &L_\emptyset = G \\
    &&&& \\
    A^+=A^{+}_\Pi & \supset & A^{+}_{\Theta} & \supset &    A^{+}_{\emptyset} = \lbrace e_A \rbrace 
\end{array}$$
\end{rem}

\begin{prop}[See \cite{bourbaki_lie_2004}{Chap IV, \S2 section 5 Proposition 2}]\label{prop-bruhat}
Let $\Theta_1, \Theta_2 \subset \Pi$ and $w\in W$.
Then $$ P_{\Theta_1}wP_{\Theta_2} = BW_{\Theta_1} w W_{\Theta_2}B.$$
\end{prop}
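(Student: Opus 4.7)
The plan is to prove both inclusions separately. The inclusion $B W_{\Theta_1} w W_{\Theta_2} B \subseteq P_{\Theta_1} w P_{\Theta_2}$ is immediate, since $B \subseteq P_{\Theta_i}$ and, by choosing representatives in $N_G(A)$, the Weyl subgroup $W_{\Theta_i}$ may be regarded as lying inside $P_{\Theta_i}$ for $i = 1, 2$. The substantive content is the reverse inclusion.

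The starting point is the Bruhat decomposition of the standard parabolic, $P_{\Theta_i} = B W_{\Theta_i} B$, which gives
\[
P_{\Theta_1} w P_{\Theta_2} = \bigcup_{u \in W_{\Theta_1},\, v \in W_{\Theta_2}} B u B w B v B.
\]
The key tool is the standard multiplication rule for Bruhat cells: for any simple reflection $s$ and any $x \in W$,
\[
BsB \cdot BxB \subseteq BxB \cup BsxB, \qquad BxB \cdot BsB \subseteq BxB \cup BxsB.
\]

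I would then run two inductions to absorb the $u$ and $v$ factors into the double coset $W_{\Theta_1} w W_{\Theta_2}$. First, by induction on $\ell(u)$, I would establish that for any $u \in W_{\Theta_1}$ and any $x \in W$,
\[
B u B x B \subseteq B W_{\Theta_1} x B.
\]
Writing $u = s u'$ with $s \in \Theta_1^{\complement}$ a simple reflection and $\ell(u') < \ell(u)$, the inductive hypothesis gives $B u' B x B \subseteq B W_{\Theta_1} x B$; applying the multiplication rule to $BsB$ against each $B u'' x B$ with $u'' \in W_{\Theta_1}$ keeps us inside $B W_{\Theta_1} x B$, because $W_{\Theta_1}$ is closed under left multiplication by $s$. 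Symmetrically, by induction on $\ell(v)$, I would show that for any $y \in W$ and $v \in W_{\Theta_2}$,
\[
B y B v B \subseteq B y W_{\Theta_2} B.
\]
Applying the first statement with $x = w$ and then the second with $y$ ranging over $W_{\Theta_1} w$ yields $B u B w B v B \subseteq B W_{\Theta_1} w W_{\Theta_2} B$, which finishes the reverse inclusion after taking the union over $u$ and $v$.

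There is no serious obstacle once the Bruhat cell multiplication rule is taken as given; the argument is purely combinatorial. The only care needed is in the inductive step, where one must verify that each new element produced by the multiplication rule still lies in $W_{\Theta_i}$, which is automatic since $W_{\Theta_i}$ is generated by the simple reflections being used to decompose $u$ and $v$.
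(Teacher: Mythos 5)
Your argument is correct: it is the standard Tits-system (BN-pair) proof, resting on the cell multiplication rule $BsB\cdot BxB\subseteq BxB\cup BsxB$ together with $P_{\Theta_i}=BW_{\Theta_i}B$, and the two inductions on $\ell(u)$ and $\ell(v)$ go through exactly as you describe since $W_{\Theta_i}$ is generated by the simple reflections $s_\alpha$, $\alpha\in\Theta_i^{\complement}$, used in the decomposition. The paper itself gives no proof and simply cites Bourbaki (Chap.\ IV, \S 2, no.\ 5, Prop.\ 2), where essentially this same argument appears, so there is nothing to compare beyond noting that your write-up matches the cited source.
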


\begin{corol}\label{corol-bruhat}
For all $\Theta_1, \Theta_2 \subset \Pi$, the map $w\in W \rightarrow BwB \in B \backslash G/B$ induces, by passing the quotient, the following bijection.
\begin{align*}
    W_{\Theta_1}\backslash W / W_{\Theta_2} &\longrightarrow P_{\Theta_1} \backslash G/ P_{\Theta_2} \\
    W_{\Theta_1}w W_{\Theta_2} &\longmapsto P_{\Theta_1} w P_{\Theta_2}.
\end{align*} 

\end{corol}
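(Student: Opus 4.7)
The plan is to deduce the bijection from Proposition \ref{prop-bruhat} together with the classical Bruhat decomposition $G = \bigsqcup_{w \in W} BwB$.

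First I would check that the map is well-defined. Since $W_{\Theta_i} \subset P_{\Theta_i}$ (indeed, $P_{\Theta_i} = BW_{\Theta_i}B$), for any $w_1 \in W_{\Theta_1}w W_{\Theta_2}$ we have $P_{\Theta_1}w_1 P_{\Theta_2} \subset P_{\Theta_1}w P_{\Theta_2}$, and by symmetry equality holds. Hence the assignment $W_{\Theta_1}w W_{\Theta_2} \mapsto P_{\Theta_1}w P_{\Theta_2}$ does not depend on the chosen representative $w$.

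Next I would establish surjectivity. Given any $g \in G$, the Bruhat decomposition yields some $w \in W$ with $g \in BwB \subset P_{\Theta_1}w P_{\Theta_2}$, so the double coset $P_{\Theta_1} g P_{\Theta_2}$ is hit by $W_{\Theta_1} w W_{\Theta_2}$.

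Finally, for injectivity I would use Proposition \ref{prop-bruhat}: if $P_{\Theta_1}w_1 P_{\Theta_2} = P_{\Theta_1}w_2 P_{\Theta_2}$, then $BW_{\Theta_1}w_1 W_{\Theta_2}B = BW_{\Theta_1}w_2 W_{\Theta_2}B$, and each side decomposes as a union of Bruhat cells $\bigcup_{w' \in W_{\Theta_1}w_i W_{\Theta_2}} Bw'B$. The disjointness of the Bruhat decomposition then forces the equality of the index sets $W_{\Theta_1}w_1 W_{\Theta_2} = W_{\Theta_1}w_2 W_{\Theta_2}$, which is precisely injectivity.

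The only mildly delicate point is ensuring that Proposition \ref{prop-bruhat} is correctly applied so that the double coset $P_{\Theta_1}w P_{\Theta_2}$ decomposes as a \emph{disjoint} union of Bruhat cells indexed by $W_{\Theta_1}w W_{\Theta_2}$; but this disjointness is immediate from the Bruhat decomposition of $G$, so no serious obstacle arises. The whole argument is essentially bookkeeping on top of Proposition \ref{prop-bruhat}.
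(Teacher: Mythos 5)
Your proof is correct and is exactly the intended derivation: the paper states this corollary without proof as an immediate consequence of Proposition \ref{prop-bruhat}, and your three steps (well-definedness from $B\subset P_{\Theta_i}$ and $W_{\Theta_i}\subset P_{\Theta_i}$, surjectivity from the Bruhat decomposition, injectivity from Proposition \ref{prop-bruhat} combined with the disjointness of the Bruhat cells) are precisely the bookkeeping the authors leave to the reader.
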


Let $\tau$ be the Cartan involution of $G$ (see \cite{helgason1978differential}): it is an automorphism of $G$ which acts on $\frak a$ by $-id$, and on $A$ by $a\mapsto a^{-1}$. The involution $\tau$ induces an involution $\iota:\Pi\to\Pi$, such that $\tau(P_\Theta)$ is conjugated to $P_{\iota(\Theta)}$. We have $P_\Theta^-=\tau(P_\Theta)$; by definition it is a parabolic subgroup of type $\iota(\Theta)$.

\begin{prop}\label{prop-bruhat-dyn}\label{prop-append-parab}
Let $\Theta \subset \Pi$. 
Then
\begin{itemize}
    \item[(i)] The sequence $\big(a^{-n}ga^n\big)_{n \geq 1}$ is bounded for all $a\in A^{++}_\Theta$ if and only if $g\in P_\Theta$.
    \item[(i')] The sequence $\big(a^{n}g'a^{-n}\big)_{n \geq 1}$ is bounded for all $a\in A^{++}_\Theta$ if and only if $g'\in P^-_\Theta$.
    \item[(ii)] The sequence $\big(a^{-n}ga^n\big)_{n \in \mathbb{Z}}$ is bounded for all $a\in A^{++}_\Theta$ if and only if $g\in L_\Theta$.
\end{itemize}
\end{prop}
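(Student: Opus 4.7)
The plan is to combine the Bruhat decomposition from Corollary~\ref{corol-bruhat} with the Levi decomposition $P_\Theta = L_\Theta N_\Theta$ coming from the splitting $\mathfrak{p}_\Theta = \mathfrak{l}_\Theta \oplus \mathfrak{n}_\Theta$, where $\mathfrak{n}_\Theta = \bigoplus_{\alpha \in \Sigma^+ \setminus \langle \Theta^\complement \rangle} \mathfrak{g}_\alpha$ and $N_\Theta = \exp(\mathfrak{n}_\Theta)$. Fix $a = \exp(H)$ with $H \in \mathfrak{a}_\Theta^{++}$. By the very definition of $\mathfrak{a}_\Theta^{++}$ one has $\alpha(H) = 0$ for $\alpha \in \langle \Theta^\complement \rangle \cup \{0\}$, $\alpha(H) > 0$ for $\alpha \in \Sigma^+ \setminus \langle \Theta^\complement \rangle$, and $\alpha(H) < 0$ for $\alpha \in \Sigma^- \setminus \langle \Theta^\complement \rangle$. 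Consequently $\mathrm{Ad}(a^{-n})$ fixes $\mathfrak{l}_\Theta$ pointwise, contracts $\mathfrak{n}_\Theta$ to zero, and expands $\mathfrak{n}_\Theta^-$ exponentially.

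For the ``if'' direction of (i), write $g \in P_\Theta$ as $g = l\,u$ with $l \in L_\Theta$ and $u = \exp(X) \in N_\Theta$. Since $L_\Theta$ centralizes $A_\Theta$, I have $a^{-n} l a^n = l$, and since $\mathrm{Ad}(a^{-n})X \to 0$ I also have $a^{-n} u a^n = \exp(\mathrm{Ad}(a^{-n})X) \to e$. Therefore $a^{-n} g a^n \to l$, in particular bounded.

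For the ``only if'' direction, Corollary~\ref{corol-bruhat} applied with $\Theta_1 = \Theta_2 = \Theta$ writes $g = p_1 \dot{w} p_2$ for some $p_1, p_2 \in P_\Theta$ and a representative $\dot{w} \in N_K(A)$ of a unique class $w \in W_\Theta \backslash W / W_\Theta$, with $g \in P_\Theta$ if and only if $w \in W_\Theta$. Inserting $a^n a^{-n}$ twice and using that $a^{-n} p_i a^n$ stays in a compact set by the previous paragraph, boundedness of $a^{-n} g a^n$ is equivalent to boundedness of
\begin{equation*}
a^{-n} \dot{w} a^n \;=\; \dot{w} \cdot (\dot{w}^{-1} a^{-n} \dot{w}) \cdot a^n \;=\; \dot{w} \cdot (w^{-1}(a))^{-n} a^n \;=\; \dot{w} \cdot \bigl(a \, w^{-1}(a)^{-1}\bigr)^n,
\end{equation*}
where the last equality uses that $A$ is abelian. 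Since $A \cong \mathbb{R}^{\dim \mathfrak{a}}$ is torsion-free, this sequence is bounded only if $w^{-1}(a) = a$, i.e.\ $w \in \mathrm{Stab}_W(a)$. The standard description of stabilizers of chamber faces gives $\mathrm{Stab}_W(a) = W_\Theta$ for any $a$ in the chamber interior $A_\Theta^{++}$, so $w \in W_\Theta$ and hence $g \in P_\Theta$.

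Statement (i') is exactly (i) applied to the opposite chamber: replacing $a$ by $a^{-1} \in A_\Theta^{--}$ swaps the roles of $\mathfrak{n}_\Theta$ and $\mathfrak{n}_\Theta^-$, so the argument above goes through verbatim with $P_\Theta^-$ in place of $P_\Theta$. Statement (ii) then follows immediately, since boundedness of $a^{-n} g a^n$ for all $n \in \mathbb{Z}$ forces $g \in P_\Theta$ by (i) (taking $n \to +\infty$) and $g \in P_\Theta^-$ by (i') (taking $n \to -\infty$), hence $g \in P_\Theta \cap P_\Theta^- = L_\Theta$; the converse is immediate because $L_\Theta$ centralizes $A_\Theta$ pointwise. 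The one genuinely substantive step is the displayed computation above: abelianness of $A$ converts the boundedness condition into the linear equation $\log a = w^{-1}(\log a)$ on $\mathfrak{a}$, and the interior hypothesis $H \in \mathfrak{a}_\Theta^{++}$ is what pins down the solution set in $W$ to be exactly the reflection subgroup $W_\Theta$.
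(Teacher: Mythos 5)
Your proof is correct and follows essentially the same route as the paper's: Bruhat decomposition $g=p_1\dot w p_2$ relative to $P_\Theta$ (Corollary \ref{corol-bruhat}), reduction to the boundedness of $a^{-n}\dot w a^{n}$ using the easy direction on the $p_i$, the abelian computation converting boundedness into $\mathrm{Ad}(w)\log a=\log a$, and the identification of the stabilizer of a point of $\mathfrak a_\Theta^{++}$ with $W_\Theta$. The only cosmetic differences are that you write out the ``if'' direction via the Levi decomposition (which the paper treats as well known) and obtain (i') by passing to $a^{-1}$ in the opposite chamber rather than applying the Cartan involution $\tau$.
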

\begin{proof}
This direction $(\Leftarrow)$ is well-known. 

For (i), $(\Rightarrow)$, due to Corollary \ref{corol-bruhat},
we write $g=p_1 w_g p_2$ where $p_1,p_2 \in P_\Theta$ and $w_g\in W$.
Due to $(\Leftarrow)$ of (i), the sequences $a^{-n}p_ia^n$ are bounded as $n\rightarrow+ \infty$. 
The behavior of the sequence $a^{-n}ga^n$ when $n \rightarrow +\infty$ is the same as $a^{-n} w_g a^n$.
We write $a=e^v$ with $v\in \mathfrak{a}^{++}_\Theta$, then
$$ a^{-n}w_g a^n = e^{ n( \mathrm{Ad}(w_g)v - v )} w_g .$$
We conclude that the sequence $a^{-n}ga^n$ is bounded when $n \rightarrow +\infty$ if $\mathrm{Ad}(w_g)v=v$ i.e. $w_g \in W_\Theta$.
We finish by noticing $g\in p_1 W_\Theta p_2 = P_\Theta$.

For (i'), applying $\tau$ to (i) we obtain that the sequence $(\tau(a)^{-n}\tau(g) \tau(a)^n)_{n\geq 1}$ is bounded for all $g\in P_\Theta$. Due to $\tau(a)=a^{-1}$ and $\tau(P_\Theta)=P_{\Theta}^-$ we obtain (i'). 

The point (ii) follows from (i) and (i') by using $P_\Theta\cap P_\Theta^- = L_\Theta$.
\end{proof}

\subsection{Proof of Theorem \ref{theo-gorodnik-nevo}}

We give a proof of Theorem \ref{theo-gorodnik-nevo} by redoing the proof of Theorem 7.1 in \cite{gorodnikCountingLatticePoints2012} for Lipschitz functions. Here we have one notation issue, the quotient $\Gamma$ is on the left $\Gamma\backslash G$ to be consistent with the main part of the article, which is different from that in \cite{gorodnikCountingLatticePoints2012}.  
Fix notation $\vol$ and $\dd m_{\Gamma\backslash G}=\dd \vol/\vol(\Gamma\backslash G)$, which is a probability measure.

\paragraph{Quantitative mean ergodic theorem}
The main engine to obtain equidistribution is the quantitative mean ergodic theorem on $L^2(\Gamma\backslash G)$. For an absolutely continuous probability measure $\beta$ on $G$, let $\pi(\beta)f=\int \pi(g)f d\beta(g)$ where $\pi(g)$ is the right representation of $G$ on $\Gamma\backslash G$. By Theorem 4.5 in \cite{gorodnikCountingLatticePoints2012}, we have
\begin{equation}\label{equ.meanergodic}
\bigg\|\pi(\beta)f-\int f\bigg\|_{2}\leq C_q\|\beta\|_{q}^{1/n(G,\Gamma)} \|f\|_{2}, 
\end{equation}
where $n(G,\Gamma)$ is an integer depending on $G$, $\Gamma$ and $q$ is any constant in $[1,2)$ such that $\|\beta\|_q<\infty$. We explain  
why Theorem 4.5 in \cite{gorodnikCountingLatticePoints2012} works in the setting of irreducible lattices.

\paragraph{Verification of conditions in Theorem 4.5 in \cite{gorodnikCountingLatticePoints2012}}
 
There are two conditions, the group is simply connected as an algebraic group, the lattice satisfies that the representation of $G$ on $L^2_0(\Gamma\backslash G)$ is $L^{p+}$ for some $p\geq 2$.

For the first condition and for real linear algebraic semisimple Lie groups, we do not need that the group is simply connected. This condition is only required for the $p$-adic case, as can be observed in the proof of Theorem 4.5. 

Then the crucial condition is the second one. From the parameter $p$ we can compute the rate $n(G,\Gamma)$ in \eqref{equ.meanergodic}, which equals 1 if $p=2$ and $2\lceil p/4\rceil$ if $p>2$. In \cite{ohUniformPointwiseBounds2002}, an explicit estimate on $p$ is provided for certain cases. Additionally, in \cite[Remark 4.6]{gorodnikCountingLatticePoints2012}, the authors explained several cases where the second condition holds. We explain this condition also holds when $G$ is a connected real linear algebraic semisimple Lie group and $\Gamma$ is an irreducible lattice. This fact is certainly known among experts in the field and we include it for the sake of completeness.

The proof that the representation of $G$ on $L^2_0(\Gamma\backslash G)$ is $L^{p+}$ is a two-step process. The first step is to prove that we have a strong spectral gap, that is, each simple factor $G_j$ of $G$ has no almost invariant vector on $L^2_0(\Gamma\backslash G)$. The second step is to use the strong spectral gap to prove the representation is $L^{p+}$, which is well explained in \cite[Theorem 3.4]{kleinbock_margulis} and references therein. Hence we only need to explain why the strong spectral gap holds.

In Kelmer-Sarnak \cite[Page 284-285]{kelmerStrongSpectralGaps2009}, they explained the strong spectral gap for $G'=G_1'\times\cdots \times  G_r'$, where each $G_j'$ is a non-compact simple Lie group with trivial centre and $\Gamma$ an irreducible lattice. We shall employ \cite[Lemma 3.1]{kleinbock_margulis} (due to Furman-Shalom and Kleinbock-Margulis) to transfer the spectral gap to finite coverings, thereby deducing the strong spectral gap for semisimple Lie group $G$ without compact factor from this version.

Return to a connected real linear algebraic semisimple group $G$ without compact factor. There exist non-compact simple Lie groups $G_1,\cdots, G_r$ and a map $\pi_1:G_1\times \cdots\times G_r\rightarrow G$ with finite central kernel (see for example \cite[\S 22]{borel_linear_1991}). There also exists a quotient map $\pi_2$ from $G_1\times \cdots\times G_r$ to $G':=G_1'\times \cdots\times G_r'$ such that $G_j'$ has trivial centre. Letting $\Gamma'=\pi_2\pi_1^{-1}\Gamma$, we obtain an irreducible lattice $\Gamma'$ in $G'$. Then $\Gamma\backslash G\simeq \pi_1^{-1}\Gamma\backslash(G_1\times\cdots\times G_r)$ is a finite covering of $\Gamma'\backslash G'$. Applying the results of \cite{kelmerStrongSpectralGaps2009}, we know that each simple factor of $G_j$ has no almost invariant vector in $L^2_0(\Gamma'\backslash G')$. Therefore, invoking Lemma 3.1 in \cite{kleinbock_margulis}, we deduce that $G_j$ has no almost invariant vector in $L^2_0(\Gamma\backslash G)$.

\paragraph{Lipschitz well rounded domains}

For all $\epsilon>0$, denote by $\calO_\epsilon$ the ball of radius $\epsilon$ centered at identity in $G$.
Let $\epsilon_\inj>0$ be a constant such that for all $\epsilon\in (0,\epsilon_\inj)$, the map $\calO_\epsilon  \rightarrow \calO_{\epsilon} \Gamma \subset \Gamma\backslash G  $ is injective.

For a family of domains $(S_t)_{t>0}$, we call it Lipschitz well rounded if 
there exist $\epsilon_0>0,\ C>1$ such that for all $\epsilon<\epsilon_0$, there exist domains $S^+_t$, $S^-_t$ and  for all $t>1$
\begin{align}
   \label{equ.domain} & S^-_{t-\epsilon}\subset \cap_{g,h\in\calO_\epsilon}gS_th,\  \calO_\epsilon S_t\calO_\epsilon\subset S^+_{t+\epsilon}\\
    \label{equ.lip}&\vol(S^+_{t+\epsilon}-S^-_{t-\epsilon})\leq C\epsilon \vol(S_t).
\end{align}

\paragraph{Angular equidistribution for regular elements}
Let $\widetilde{A}^\delta=\{\exp(a),\ a\in\frak a^{++},\ d(a,\partial \frak a^{++})\geq\delta \}$.
\begin{theorem}\label{theo-gorodnik-nevo1}
\hypG
Let $\Gamma <G$ be an irreducible lattice. There exist $\kappa>0$ and $C_6>0$ only depending on $n(G,\Gamma)$ (from \eqref{equ.meanergodic}) and $G$. Let 
$(S_t)_{t >0}$ be Lipschitz well rounded and $S_t\subset K\widetilde{A}^\delta K$. 
There exists $C_7>0$ depending on $n(G,\Gamma)$, $G$ and the family $(S_t)_{t>0}$.
Then for all Lipschitz test functions $\psi \in Lip(\cal{F}^{(2)})$, there exists $E(t,\psi)=O(Lip(\psi)  \vol(S_t)^{-\kappa})$ when $t >\{ C_6 |\log\epsilon_\inj|,C_7\}$ such that

$$ \frac{1}{ \vol(S_t)} \sum_{\gamma \in S_t\cap \Gamma} \psi(\gamma_o^+,\gamma_o^-) = \frac{1}{ \vol(\Gamma \backslash G)}   \int_{ \calF\times\calF  } \psi \dd \mu_o\otimes\mu_o + E(t,\psi) ,$$
where all the implied constants only depending on $G$ and $n(G,\Gamma)$.
\end{theorem}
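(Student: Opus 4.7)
The strategy follows Gorodnik--Nevo \cite{gorodnikCountingLatticePoints2012}, incorporating the Lipschitz angular weight $\psi$ into the averaging measure. The goal is to express $\vol(S_t)^{-1}\sum_{\gamma\in S_t\cap\Gamma}\psi(\gamma_o^+,\gamma_o^-)$ as $\pi(\beta_t^\Psi)\widehat\chi_\epsilon(\Gamma e)$ for a suitable measure $\beta_t^\Psi$ on $G$ and a suitable bump function $\widehat\chi_\epsilon$ on $\Gamma\backslash G$, and then to appeal to \eqref{equ.meanergodic}. On the regular set $K\widetilde A^\delta K$ the Cartan decomposition $g=k\exp(\underline a(g))l^{-1}$ is unique modulo the right-$M$ ambiguity in $k,l$, so $\Psi(g):=\psi(k\eta_0,lk_\iota\eta_0)$ is well-defined with $\|\Psi\|_\infty=\|\psi\|_\infty$, with Lipschitz constant $\ll Lip(\psi)$ on $S_t\subset K\widetilde A^\delta K$ (by smoothness of the Cartan decomposition away from the walls), and satisfying $\Psi(\gamma)=\psi(\gamma_o^+,\gamma_o^-)$ for every regular $\gamma$.

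Fix $\epsilon\in(0,\epsilon_\inj)$, let $\chi_\epsilon:=\mathbf{1}_{\calO_\epsilon}/\vol(\calO_\epsilon)$ and let $\widehat\chi_\epsilon$ be its $\Gamma$-periodization on $\Gamma\backslash G$. Consider $\beta_t^\Psi:=\vol(S_t)^{-1}\Psi\mathbf{1}_{S_t}\,\dd\vol$ (split $\psi=\psi_+-\psi_-$ and normalize each part to a probability measure so as to apply \eqref{equ.meanergodic} by linearity). A direct unfolding at the identity coset gives
$$\pi(\beta_t^\Psi)\widehat\chi_\epsilon(\Gamma e)=\frac{1}{\vol(S_t)\vol(\calO_\epsilon)}\sum_{\gamma\in\Gamma}\int_{S_t\cap\gamma^{-1}\calO_\epsilon}\Psi(g)\,\dd g.$$
Lipschitz well-roundedness \eqref{equ.domain}--\eqref{equ.lip} restricts $\gamma$ to a sandwich between $S_{t-\epsilon}^-\cap\Gamma$ and $S_{t+\epsilon}^+\cap\Gamma$ (producing a volume error $O(\epsilon)$), while the Lipschitz regularity of $\Psi$ replaces $\Psi(g)$ by $\Psi(\gamma^{-1})$ at cost $\epsilon\cdot Lip(\Psi)$. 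After inversion symmetrization (which is compatible with $S_t$ since $S_t$ is $K$-bi-invariant in $K\widetilde A^\delta K$ and swaps $\gamma_o^+\leftrightarrow\gamma_o^-$), this becomes
$$\pi(\beta_t^\Psi)\widehat\chi_\epsilon(\Gamma e)=\frac{1}{\vol(S_t)}\sum_{\gamma\in S_t\cap\Gamma}\psi(\gamma_o^+,\gamma_o^-)+O\!\Bigl(\epsilon\bigl(|\psi|_{Lip}+\|\psi\|_\infty\bigr)\Bigr).$$

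Applying \eqref{equ.meanergodic} with $q$ close to $2$ yields $\|\pi(\beta_t^\Psi)\widehat\chi_\epsilon-\beta_t^\Psi(G)\cdot\mathbf{1}\|_2\ll\|\psi\|_\infty\vol(S_t)^{-(1-1/q)/n(G,\Gamma)}\vol(\calO_\epsilon)^{-1/2}$; passing from the $L^2$-norm to the pointwise value at $\Gamma e$ (by convolution with a localized Lipschitz test function) costs another $\vol(\calO_\epsilon)^{-1/2}$. The total mass $\beta_t^\Psi(G)$ is computed by the Harish-Chandra integration formula $\dd g = c\,\delta(Y)\,\dd k\,\dd Y\,\dd l$: since $\Psi$ depends only on the $K$-components, the $Y$-integral cancels $\vol(S_t)$ and
$$\beta_t^\Psi(G)=\frac{1}{\vol(K)^2}\int_{K\times K}\psi(k\eta_0,lk_\iota\eta_0)\,\dd k\,\dd l=\int_{\calF\times\calF}\psi\,\dd\mu_o\otimes\dd\mu_o,$$
producing the desired main term $\vol(\Gamma\backslash G)^{-1}\int\psi\,\dd\mu_o\otimes\dd\mu_o$. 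Choosing $\epsilon:=\vol(S_t)^{-\eta}$ for $\eta>0$ small enough depending only on $n(G,\Gamma)$ and $\dim G$ balances the $\epsilon\cdot|\psi|_{Lip}$ error against the $\|\psi\|_\infty\vol(S_t)^{-(1-1/q)/n(G,\Gamma)}\vol(\calO_\epsilon)^{-1}$ error and delivers $E(t,\psi)=O(Lip(\psi)\vol(S_t)^{-\kappa})$; the thresholds $t>C_6|\log\epsilon_\inj|$ and $t>C_7$ ensure respectively that $\epsilon<\epsilon_\inj$ and that the family $(S_t)_t$ is in its asymptotic regime.

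The main difficulty is the unfolding step: simultaneously converting from the continuous weighting ``$g\in S_t$ with weight $\Psi(g)$'' to the discrete weighting ``$\gamma\in S_t\cap\Gamma$ with weight $\psi(\gamma_o^+,\gamma_o^-)$''. The volume error $S_t\mapsto S_{t\pm\epsilon}^\pm$ is absorbed by Lipschitz well-roundedness, but the angular error from replacing $\Psi(g)$ by $\Psi(\gamma^{-1})$ requires precisely the Lipschitz hypothesis on $\psi$ to produce a linear-in-$\epsilon$ bound rather than an uncontrolled modulus of continuity; this is what allows the polynomial-in-$\vol(S_t)$ ergodic error to dominate in the final balancing and produce an exponent $\kappa$ depending only on $n(G,\Gamma)$ and $G$.
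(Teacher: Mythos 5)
Your proposal is correct and follows essentially the same route as the paper: unfolding against a normalized bump on $\Gamma\backslash G$, the quantitative mean ergodic theorem \eqref{equ.meanergodic} plus Chebyshev to pass to a pointwise count, Lipschitz well-roundedness for the radial sandwiching, the Harish-Chandra factorization for the main term, and optimization of $\epsilon$ as a negative power of $\vol(S_t)$; the only cosmetic difference is that the paper builds the weight $\psi$ into explicit upper/lower densities $\rho^{\pm}_{t,\epsilon}$ rather than into a single signed measure $\beta_t^{\Psi}$. Note that the uniform-in-$t$ Lipschitz control of $g\mapsto(\gamma_o^+,\gamma_o^-)$ on $K\widetilde{A}^{\delta}K$ that you attribute to "smoothness of the Cartan decomposition away from the walls" is precisely the effective Cartan decomposition (strong wavefront lemma), which is the same nontrivial input the paper uses as Lemma \ref{lem_effective_cartan}.
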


In order to obtain the domains we are interested, we need to add singular elements.

\begin{corol}\label{theo-gorodnik-nevo2}
\hypG
Let $\Gamma <G$ be an irreducible lattice. There exist $\kappa>0$ and $C_6>0$ only depending on $n(G,\Gamma)$ (from \eqref{equ.meanergodic}) and $G$. Let 
$(D_t)_{t >0}$ be one of the two type of domains. There exists $C_7>0$ depending on $n(G,\Gamma)$, $G$ and the family $(D_t)_{t>0}$.
 
Then for all Lipschitz test functions $\psi \in Lip(\cal{F}^{(2)})$, there exists $E(t,\psi)=O(Lip(\psi)  \vol(D_t)^{-\kappa})$ when $t >\{ C_6 |\log\epsilon_\inj|,C_7\}$ such that

$$ \frac{1}{ \vol(D_t)} \sum_{\gamma \in D_t^{reg}\cap \Gamma} \psi(\gamma_o^+,\gamma_o^-) = \frac{1}{ \vol(\Gamma \backslash G)}   \int_{ \calF\times\calF  } \psi \dd \mu_o\otimes\mu_o + E(t,\psi) ,$$
where all the implied constants only depending on $G$ and $n(G,\Gamma)$.
\end{corol}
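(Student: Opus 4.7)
The plan is to reduce the ball/parallelotope domain $D_t$ to a family of domains avoiding the walls of the Weyl chamber, so that Theorem \ref{theo-gorodnik-nevo1} applies, and then to show that the discrepancy is negligible using the volume estimates of Section 4.

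\textbf{Step 1: truncation away from the walls.} For a small parameter $\epsilon \in (0,\epsilon_D)$ to be chosen below (as a function of $t$), set
\[ S_t := D_t \cap K \widetilde{A}^{\epsilon t} K = D_t \setminus K\exp(\{Y \in \frakD_t : d(Y,\partial\frak a^+) < \epsilon t\}) K. \]
Then $S_t \subset D_t^{reg}$ and the set $D_t^{reg} \setminus S_t$ is contained in $D_t^{\epsilon t}$, whose lattice-point count is bounded by Lemma \ref{count-reste}:
\[ |\Gamma \cap (D_t^{reg} \setminus S_t)| \leq |\Gamma \cap D_t^{\epsilon t}| = O(\vol(D_t)^{1-\kappa(\epsilon)}). \]
Consequently the contribution of these elements to the averaged sum is bounded by $\|\psi\|_\infty \vol(D_t)^{-\kappa(\epsilon)}$, which already has the desired shape.

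\textbf{Step 2: Lipschitz well-roundedness of the truncated family.} I would verify that $(S_t)_{t>0}$ satisfies the conditions \eqref{equ.domain}--\eqref{equ.lip}. For the inner and outer approximations take
\[ S_t^- := D_{t-\epsilon'} \cap K \widetilde{A}^{\epsilon t + \epsilon'} K, \qquad S_t^+ := D_{t+\epsilon'} \cap K \widetilde{A}^{\epsilon t - \epsilon'} K. \]
The inclusions $S_{t-\epsilon'}^- \subset \bigcap_{g,h\in\calO_{\epsilon'}} g S_t h$ and $\calO_{\epsilon'} S_t \calO_{\epsilon'} \subset S_{t+\epsilon'}^+$ follow from Lemma \ref{lem-cartan-diff} applied simultaneously to the Cartan norm and to the distance to the walls. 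The volume comparison \eqref{equ.lip} reduces to the Harish-Chandra integral over the thin shell $\frakD_{t+\epsilon'}^{\epsilon t + \epsilon'} \setminus \frakD_{t-\epsilon'}^{\epsilon t - \epsilon'}$, which is a Lipschitz-boundary annular region in $\mathfrak a$. The density $\prod_\alpha \sinh(\alpha(Y))^{m_\alpha}$ is smooth and exponentially bounded, so the corresponding volume difference is at most $C\epsilon' \vol(S_t)$ for an absolute constant $C$, provided $\epsilon t$ is not too close to zero; this is guaranteed by choosing $\epsilon$ appropriately.

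\textbf{Step 3: assembly of the estimate.} Apply Theorem \ref{theo-gorodnik-nevo1} to $(S_t)$ with the gap $\delta = \epsilon t$: it yields
\[ \frac{1}{\vol(S_t)}\sum_{\gamma \in S_t \cap \Gamma} \psi(\gamma_o^+,\gamma_o^-) = \frac{1}{\vol(\Gamma\backslash G)}\int \psi \dd\mu_o\otimes\mu_o + O\big(\mathrm{Lip}(\psi)\vol(S_t)^{-\kappa}\big). \]
By Lemma \ref{volume_reste} we have $\vol(D_t) - \vol(S_t) = O(\vol(D_t)^{1-\kappa(\epsilon)})$, so $\vol(S_t)/\vol(D_t) \to 1$ and the main term transfers with error $O(\vol(D_t)^{-\kappa(\epsilon)})$. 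Combining with the bound from Step 1 and choosing $\epsilon$ as a fixed small constant (independent of $t$), all errors have the form $O((\mathrm{Lip}(\psi)+\|\psi\|_\infty)\vol(D_t)^{-\kappa'})$ for some $\kappa' = \min(\kappa,\kappa(\epsilon)) > 0$, which is exactly the form claimed.

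\textbf{Main obstacle.} The delicate point is Step 2: the Lipschitz-well-rounded constant $C$ in \eqref{equ.lip} must be independent of $t$, even though the inner gap parameter $\epsilon t$ grows with $t$. This works because the Harish-Chandra density is log-Lipschitz on bounded rescalings of $\frakD_t$ (this is essentially Lemma \ref{lem-vollip}), so the relative volume of an $\epsilon'$-neighbourhood of the boundary of $S_t$ stays linear in $\epsilon'$ with a uniform constant. Verifying this uniformity, and tracking that the constants $C_6, C_7$ in Theorem \ref{theo-gorodnik-nevo1} depend only on $n(G,\Gamma)$, $G$, and the Lipschitz parameters of $(S_t)$ --- all of which are now independent of $t$ --- is the technical heart of the argument.
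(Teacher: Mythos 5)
Your proposal is correct and follows the same overall mechanism as the paper: split $D_t^{reg}$ into a core truncated away from the walls of $\frak a^+$, apply Theorem \ref{theo-gorodnik-nevo1} to the core, and show the near-wall remainder is negligible via the volume and lattice-point bounds of Lemmas \ref{volume_reste} and \ref{count-reste}. The one genuine difference is the truncation width. The paper takes $S_t=D_t\cap K\widetilde{A}^{\delta}K$ with $\delta$ the \emph{fixed} constant produced by the effective Cartan decomposition (Lemma \ref{lem_effective_cartan}); you truncate at distance $\epsilon t$ from the walls. Be aware that the $\delta$ in Theorem \ref{theo-gorodnik-nevo1} is not a tunable parameter of the theorem — it is forced by Lemma \ref{lem_effective_cartan} — so your phrase ``apply with the gap $\delta=\epsilon t$'' should be read as the observation that $S_t\subset K\widetilde{A}^{\epsilon t}K\subset K\widetilde{A}^{\delta}K$ once $t\geq \delta/\epsilon$. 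What each choice buys: your growing truncation lets you invoke Lemma \ref{count-reste} verbatim for the remainder, but it shifts the burden onto Step 2, where the Lipschitz well-roundedness of a family whose inner boundary moves linearly in $t$ must be checked with constants uniform in $t$ (your shell near the wall-cutoff has weighted volume $\ll\epsilon'\,\vol(D_t)^{1-\kappa(\epsilon)}$, which is indeed $\ll\epsilon'\vol(S_t)$, but the inner/outer inclusions \eqref{equ.domain} need the cutoff shifted by $C\epsilon'$ with $C$ the constant of Lemma \ref{lem-cartan-diff}, not by $\epsilon'$ itself). The paper's fixed-width strip makes the well-roundedness verification (Lemmas \ref{lem_volrest_proof}, \ref{lem_vollip_proof}) cleaner, at the cost of noting that a strip of constant width $\delta+\epsilon_\inj$ is eventually contained in $D_t^{\epsilon t}$ so that Lemma \ref{volume_reste} still applies. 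Finally, when transferring the normalisation from $\vol(S_t)$ to $\vol(D_t)$ you implicitly use $|S_t\cap\Gamma|\ll\vol(S_t)$; this requires the standard injectivity-radius thickening argument, which the paper spells out and you should too.
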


\begin{proof}[Proof that Corollary \ref{theo-gorodnik-nevo2} $\Rightarrow$ Theorem \ref{theo-gorodnik-nevo}]
Due to \eqref{equ.gammax} $\gamma_x^+=h_x(h_x^{-1}\gamma h_x)^+_o$, we apply Theorem \ref{theo-gorodnik-nevo2} to the lattice $h_x^{-1}\Gamma h_x$ and the Lipschitz function $\psi'(\cdot,\cdot):=\psi(h_x\cdot,h_x\cdot)$. This is the reason that we need a uniformed version for lattices $h_x^{-1}\Gamma h_x$ and we made dependence of constants in Theorem \ref{theo-gorodnik-nevo2} more transparent. The constant $n(G,h_x^{-1}\Gamma h_x)$ is the same as $n(G,\Gamma)$ due to invariance of the Haar measure.
For $\epsilon_\inj$ of $h_x^{-1}\Gamma h_x$, we have
\[\inf_{\gamma\in\Gamma-\{e\}}\dis_G(o,h_x^{-1}\gamma h_x)\geq e^{-C\dis_X(o,x)}\inf_{\gamma\in\Gamma-\{e\}}\dis_G(o,\gamma). \]
By Lemma \ref{lem-actiong}, the action of $h_x$ on $\calF$ is $C_x$ Lipschitz. From these, we obtain Theorem \ref{theo-gorodnik-nevo}.
\end{proof}

\textbf{Step 1}: 
The first step is to transfer the counting problem to integrals, which can be treated by the mean ergodic theorem.

\begin{lem}[Effective Cartan decomposition, Proposition 7.3 in \cite{gorodnikCountingLatticePoints2012}, first appeared in \cite{gorodnikStrongWavefrontLemma2010}]\label{lem_effective_cartan}
There exist $\delta>0$ and $l_0,\epsilon_1>0$. If $\epsilon<\epsilon_1$, then for $g=k_1 ak_2\in K \widetilde{A}^\delta K$, we have
\[\calO_\epsilon g\calO_\epsilon\subset (\calO_{l_0\epsilon}\cap K)k_1M(\calO_{l_0\epsilon}\cap A)ak_2(\calO_{l_0\epsilon}K). \]
\end{lem}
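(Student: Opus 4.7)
The idea is to rewrite $g_0=g_1 g g_2$, with $g_1,g_2\in\calO_\epsilon$, in the prescribed product form by combining three ingredients: compactness of $K$ absorbs $K$-conjugations into uniformly controlled small elements of $G$; the Iwasawa decomposition $G=KAN$ splits a small perturbation into three small factors (in $K$, $A$, $N$); and the contraction of $\mathrm{Ad}(a^{-1})$ on $\frak n$ for $a\in\widetilde A^\delta$ lets us push a small $N$-factor across $a$ without expansion.

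I would first conjugate the perturbations into the middle: writing $g_1 k_1=k_1\tilde g_1$ and $k_2 g_2=\tilde g_2 k_2$ with $\tilde g_1:=k_1^{-1}g_1 k_1$ and $\tilde g_2:=k_2 g_2 k_2^{-1}$, compactness of $K$ produces a uniform constant $C_0$ so that $\tilde g_1,\tilde g_2\in\calO_{C_0\epsilon}$, and hence $g_0=k_1\tilde g_1\, a\, \tilde g_2 k_2$. For $\epsilon$ small enough, the global Iwasawa diffeomorphism $K\times A\times N\to G$ has smooth, hence Lipschitz near the identity, inverse, so I can decompose $\tilde g_1=k_L a_L n_L$ with $k_L\in\calO_{C_1\epsilon}\cap K$, $a_L\in\calO_{C_1\epsilon}\cap A$, and $n_L\in\calO_{C_1\epsilon}\cap N$, for some $C_1=C_1(G)$.

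The heart of the argument is the identity
\[\tilde g_1 a=k_L a_L n_L a=k_L\,a_L\,a\,(a^{-1}n_L a)=k_L\,a_L\,a\,n'_L,\]
where the new $N$-factor $n'_L:=a^{-1}n_L a$ is still small: writing $a=\exp H$ with $H\in\frak a^{++}$, $\mathrm{Ad}(a^{-1})$ acts on $\frak g_\alpha$ by $e^{-\alpha(H)}\le 1$ for every $\alpha\in\Sigma^+$, so $n'_L\in\calO_{C_1\epsilon}\cap N$. Assembling, and using once more that $K$-conjugation is bi-Lipschitz,
\[g_0=(k_1 k_L k_1^{-1})\cdot k_1\cdot e\cdot a_L\cdot a\cdot k_2\cdot(k_2^{-1}n'_L\tilde g_2 k_2),\]
with $(k_1 k_L k_1^{-1})\in\calO_{l_0\epsilon}\cap K$, $a_L\in\calO_{l_0\epsilon}\cap A$, and the tail $k_2^{-1}n'_L\tilde g_2 k_2\in\calO_{l_0\epsilon}\subset\calO_{l_0\epsilon}K$, provided $l_0$ is larger than each Lipschitz constant above. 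The middle $M$-factor is taken trivially.

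The only delicate point is the quantitative uniformity of the constants over $a\in\widetilde A^\delta$: $K$-conjugation is uniformly bi-Lipschitz by compactness, the Iwasawa inverse is $a$-independent, and $\mathrm{Ad}(a^{-1})$ is non-expanding on $\frak n$ for every $a\in A^+$. The strict regularity $H\in\frak a^{++}$ with $d(H,\partial \frak a^+)\ge\delta$ is only needed to guarantee that $a_L a$ remains in (a slightly smaller) $\widetilde A^{\delta/2}$ when $\epsilon<\epsilon_1$, so both $l_0$ and $\epsilon_1$ depend only on $G$.
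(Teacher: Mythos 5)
The paper itself does not prove this lemma; it imports it from Gorodnik--Nevo (Proposition 7.3) and Gorodnik--Oh--Shah, so there is no in-paper argument to compare against, and your proposal has to be judged on its own. Read literally, your argument does establish the printed inclusion: the conjugations $g_1k_1=k_1\tilde g_1$ and $k_2g_2=\tilde g_2k_2$, the Iwasawa splitting of $\tilde g_1$ with uniformly Lipschitz factors, and the non-expansion of $\mathrm{Ad}(a^{-1})$ on $\frak n$ for $a\in A^+$ are all correct, and the final regrouping lands in $(\calO_{l_0\epsilon}\cap K)\,k_1\,M\,(\calO_{l_0\epsilon}\cap A)\,a\,k_2\,\calO_{l_0\epsilon}$, which is contained in the right-hand side if the last factor $\calO_{l_0\epsilon}K$ is read as the product set $\calO_{l_0\epsilon}\cdot K$.

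That literal reading, however, makes the lemma too weak to do its job, and this is where I see a genuine gap. The last factor is surely intended to be $\calO_{l_0\epsilon}\cap K$, as in Gorodnik--Nevo's Proposition 7.3 and the strong wavefront lemma: in the application (the step following the lemma in the appendix) one needs the second angular component $k_2(\gamma)$ of the \emph{Cartan decomposition} of $\gamma\in\calO_\epsilon g\calO_\epsilon$ to be $O(\epsilon)$-close to $k_2(g)$ in $M\backslash K$, so as to bound $|\psi(k_1(\gamma),k_2(\gamma))-\psi(k_1(g),k_2(g))|$ by $l_0\epsilon\, Lip(\psi)$. A tail of the form $k_2vk'$ with $v\in\calO_{l_0\epsilon}$ and $k'\in K$ arbitrary gives no control whatsoever on $k_2(\gamma)$. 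Your proof does not yield the strong form: the residual factor $k_2^{-1}n_L'\tilde g_2k_2$ is a small element of $G$, not of $K$, and converting it into a $K$-element forces you to redecompose it (say as $N^-AK$, or via the open Bruhat cell $N^-MAN$) and push the resulting $N$- and $N^-$-pieces back across $a$; that iteration produces a convergent geometric series of corrections only because $\mathrm{Ad}(a^{\mp1})$ is a \emph{strict} contraction on $\frak n^{\pm}$ by a factor $e^{-\alpha(\log a)}\le e^{-c\delta}<1$ uniformly in $\alpha\in\Sigma^+$. This is exactly where the hypothesis $a\in\widetilde A^{\delta}$ enters, and the fact that your argument never uses $\delta$ is the warning sign: the strong statement is false for singular $a$ (already for $g=e$ in $\mathrm{SL}_2(\R)$, a unipotent $\exp(Z)$ with $\|Z\|=\epsilon$ has Cartan angular parts bounded away from the identity as $\epsilon\to0$). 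Your closing remark that $\delta$ is only needed to keep $a_La$ in $\widetilde A^{\delta/2}$ misidentifies its role.
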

For ease of notation, when there is no confusion, we will use $k_1,a,k_2$ to denote elements come from the Cartan decomposition  $g=k_1ak_2$. 
Notice that by identifying $\calF$ with $K/M$, we have $k_1M=\gamma_o^+$ and $k_2^{-1}M=\gamma_o^-$.
Let $$\rho_t(g)=\mathbbm{1}_{S_t}(a)\psi(k_1,k_2),$$
where $\psi(k_1,k_2)=\psi(k_1M,k_2^{-1}M)=\psi(g_o^+,g_o^-)$.

We introduce two auxiliary functions, which is the replacement of Lipschitz well-roundness of sets in \cite{gorodnikCountingLatticePoints2012}. Recall 
\[Lip \;\psi=\max\bigg\{|\psi|_\infty, \sup_{x\neq y}\frac{|\psi(x)-\psi(y)|}{d(x,y)} \bigg\}.\] 
Let
\begin{align*}
    \rho_{t,\epsilon}^+(g)&=\mathbbm{1}_{S^+_{t+\epsilon}}(g)(\psi(k_1,k_2)+(Lip\psi)l_0\epsilon)\\
    \rho_{t,\epsilon}^-(g)&=\mathbbm{1}_{S^-_{t-\epsilon}}(g)\max\{\psi(k_1,k_2)-(Lip\psi)l_0\epsilon,0\}.
\end{align*}
From the definition, we know $\rho^-_{t,\epsilon}\leq \rho_t\leq \rho^+_{t,\epsilon}$. 
\begin{lem}
For $g\in\calO_\epsilon\gamma\calO_\epsilon$ with $\epsilon\leq \epsilon_1$ we obtain
\begin{equation}\label{equ_rho-+}
\rho^-_{t,\epsilon}(g)\leq \rho_t(\gamma)\leq \rho^+_{t,\epsilon}(g).
\end{equation}
\end{lem}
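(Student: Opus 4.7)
The plan is to apply the effective Cartan decomposition of Lemma \ref{lem_effective_cartan} to any $g \in \calO_\epsilon \gamma \calO_\epsilon$ and to combine it with the Lipschitz well-roundedness \eqref{equ.domain} of $(S_t)_{t>0}$ together with the Lipschitz property of $\psi$. As is standard, by splitting $\psi$ into its positive and negative parts I may assume $\psi \geq 0$, which keeps $\rho^-_{t,\epsilon}$ non negative.

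First I will dispose of the degenerate case $\gamma \notin K\widetilde{A}^\delta K$: then $\rho_t(\gamma) = 0$ since $S_t \subset K\widetilde{A}^\delta K$, while if additionally $g \in S^-_{t-\epsilon}$, writing $g = h_1 \gamma h_2$ with $h_1,h_2 \in \calO_\epsilon$ and applying \eqref{equ.domain} would give $\gamma = h_1^{-1} g h_2^{-1} \in S_t$, a contradiction; hence $\rho^-_{t,\epsilon}(g) = 0$ and \eqref{equ_rho-+} holds. So assume $\gamma \in K\widetilde{A}^\delta K$ with Cartan decomposition $\gamma = k_{1,\gamma} a_\gamma k_{2,\gamma}$. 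By Lemma \ref{lem_effective_cartan}, every $g \in \calO_\epsilon \gamma \calO_\epsilon$ admits a presentation
$$g = u_1\, k_{1,\gamma}\, m\, a_1 a_\gamma\, k_{2,\gamma}\, u_2$$
with $u_1 \in \calO_{l_0\epsilon} \cap K$, $m \in M$, $a_1 \in \calO_{l_0\epsilon} \cap A$ and $u_2 \in \calO_{l_0\epsilon} K$. Since $a_\gamma \in \widetilde{A}^\delta$ and $a_1$ is close to the identity, $a_1 a_\gamma \in A^{++}$, so $g$ is itself Cartan-regular. The key step is to extract from this presentation a Cartan decomposition $g = k_{1,g} a_g k_{2,g}$ whose angular parts satisfy $d_\calF(k_{1,g}M, k_{1,\gamma}M) \leq l_0\epsilon$ and $d_\calF(k_{2,g}^{-1}M, k_{2,\gamma}^{-1}M) \leq l_0\epsilon$ (after enlarging $l_0$ by a bounded factor coming from the left $K$-action on $\calF$, which is Lipschitz by Lemma \ref{lem-actiong}). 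Granted this, the Lipschitz bound on $\psi$ yields $|\psi(k_{1,g},k_{2,g}) - \psi(k_{1,\gamma},k_{2,\gamma})| \leq Lip(\psi)\, l_0\epsilon$.

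For the upper bound $\rho_t(\gamma) \leq \rho^+_{t,\epsilon}(g)$: if $\gamma \notin S_t$ then $\rho_t(\gamma) = 0$ and the inequality is clear; otherwise $g \in \calO_\epsilon S_t \calO_\epsilon \subset S^+_{t+\epsilon}$ by \eqref{equ.domain}, so $\mathbbm{1}_{S^+_{t+\epsilon}}(g) = 1$, and combining with the Lipschitz estimate gives $\rho_t(\gamma) = \psi(k_{1,\gamma},k_{2,\gamma}) \leq \psi(k_{1,g},k_{2,g}) + Lip(\psi)\, l_0\epsilon = \rho^+_{t,\epsilon}(g)$. For the lower bound $\rho^-_{t,\epsilon}(g) \leq \rho_t(\gamma)$: if $g \notin S^-_{t-\epsilon}$ then $\rho^-_{t,\epsilon}(g) = 0$; otherwise, choosing $h_1,h_2 \in \calO_\epsilon$ with $g = h_1 \gamma h_2$, the inclusion $S^-_{t-\epsilon} \subset h_1 S_t h_2$ from \eqref{equ.domain} forces $\gamma \in S_t$, and the Lipschitz estimate then yields $\max\{\psi(k_{1,g},k_{2,g}) - Lip(\psi) l_0\epsilon, 0\} \leq \psi(k_{1,\gamma},k_{2,\gamma}) = \rho_t(\gamma)$.

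The main obstacle is the stability assertion used in the second paragraph: passing from the identity provided by Lemma \ref{lem_effective_cartan} to an actual Cartan decomposition of $g$ with angular parts close, modulo $M$, to those of $\gamma$. This reduces to the classical perturbation theory of singular values in the regular regime, where the centralizer $Z_K(a_\gamma)$ collapses to $M$; the hypothesis $a_\gamma \in \widetilde{A}^\delta$ is used in an essential way to stay away from the walls of the Weyl chamber where such perturbation estimates would deteriorate.
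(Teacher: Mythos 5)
Your argument is correct and follows essentially the same route as the paper: the effective Cartan decomposition (Lemma \ref{lem_effective_cartan}) to compare angular parts, the inclusions \eqref{equ.domain} to compare the indicator functions, and the Lipschitz property of $\psi$. The only difference is that you apply Lemma \ref{lem_effective_cartan} to perturb $\gamma$, which forces the preliminary case $\gamma\notin K\widetilde{A}^{\delta}K$, whereas the paper uses the symmetry $\gamma\in\calO_\epsilon g\calO_\epsilon$ and perturbs $g$, which lies in the regular region automatically whenever the relevant indicator is nonzero; both versions work.
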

\begin{proof}
If $\rho^-_{t,\epsilon}(g)\neq 0$, then $g\in S^-_{t-\epsilon}$. By $\gamma\in \calO_\epsilon g\calO_\epsilon$, we obtain 
\[a(\gamma)\in (\calO_{l_0\epsilon}\cap A)a(g)\cap S_t. \]
By \eqref{equ.domain}, we know $\mathbbm{1}_{S_t}(a(\gamma))=1$. By Lemma \ref{lem_effective_cartan} and Lipschitz property of $\psi$, we obtain  
\[\psi(k_1(\gamma),k_2(\gamma))\geq \psi(k_1(g),k_2(g))-(Lip\psi)l_0\epsilon. \]
This proves the left hand side. For the other side, the proof is similar.
\end{proof}
Take $\mathbbm{1}_\epsilon=\frac{1}{\vol(\calO_\epsilon)}\mathbbm{1}_{\calO_\epsilon}$ be the normalized characteristic function of $\calO_\epsilon$.
Let $\varphi_\epsilon(g\Gamma)=\sum_{\gamma\in\Gamma}\mathbbm{1}_\epsilon(g\gamma)$. The counting is connected to integral by the following.
\begin{lem}\label{lem_rhot}
For $h$ in $\calO_\epsilon$ with $\epsilon\leq \epsilon_1$, we have
\begin{equation}
    \int \varphi_\epsilon(g^{-1}h\Gamma)\rho^-_{t,\epsilon}(g)\dd \vol(g)\leq \sum_{\gamma\in \Gamma}\rho_t(\gamma)\leq \int \varphi_\epsilon(g^{-1}h\Gamma)\rho^+_{t,\epsilon}(g)\dd \vol(g).
\end{equation}
\end{lem}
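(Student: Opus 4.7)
The plan is to unfold $\varphi_\epsilon$, perform a change of variables, and reduce everything to the pointwise comparison \eqref{equ_rho-+} that was just established.

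First I would expand the definition:
$$ \int \varphi_\epsilon(g^{-1}h\Gamma)\rho^+_{t,\epsilon}(g)\dd\vol(g) = \frac{1}{\vol(\calO_\epsilon)} \sum_{\gamma\in\Gamma} \int_G \mathbbm{1}_{\calO_\epsilon}(g^{-1}h\gamma)\rho^+_{t,\epsilon}(g)\dd\vol(g). $$
For each fixed $\gamma$, I would substitute $v = g^{-1}h\gamma$, so $g = h\gamma v^{-1}$. Since $G$ is semisimple, Haar measure $\vol$ is bi-invariant and invariant under inversion, so $\dd\vol(g) = \dd\vol(v)$. The $v$-integral then ranges over $\calO_\epsilon$, giving
$$ \int_G \mathbbm{1}_{\calO_\epsilon}(g^{-1}h\gamma)\rho^+_{t,\epsilon}(g)\dd\vol(g) = \int_{\calO_\epsilon} \rho^+_{t,\epsilon}(h\gamma v^{-1})\dd\vol(v). $$

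Next, for $h\in\calO_\epsilon$ and $v\in\calO_\epsilon$, the ball $\calO_\epsilon$ is symmetric (so $v^{-1}\in\calO_\epsilon$), which places the point $h\gamma v^{-1}$ in $\calO_\epsilon \gamma \calO_\epsilon$. The pointwise inequality \eqref{equ_rho-+} then yields
$$ \rho^-_{t,\epsilon}(h\gamma v^{-1}) \leq \rho_t(\gamma) \leq \rho^+_{t,\epsilon}(h\gamma v^{-1}). $$
Integrating over $v\in\calO_\epsilon$ and dividing by $\vol(\calO_\epsilon)$, I would obtain the pointwise-in-$\gamma$ estimate
$$ \frac{1}{\vol(\calO_\epsilon)}\int_{\calO_\epsilon}\rho^-_{t,\epsilon}(h\gamma v^{-1})\dd\vol(v) \leq \rho_t(\gamma) \leq \frac{1}{\vol(\calO_\epsilon)}\int_{\calO_\epsilon}\rho^+_{t,\epsilon}(h\gamma v^{-1})\dd\vol(v). $$
Summing over $\gamma\in\Gamma$ and tracing back through the change of variables gives both inequalities of the lemma simultaneously.

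There is essentially no obstacle here beyond bookkeeping: the three ingredients (unimodularity of $\vol$, symmetry of $\calO_\epsilon$, and the already-proved \eqref{equ_rho-+}) combine in a direct way. The only minor point to keep track of is that the sum-and-integral interchange is legitimate because, for $\epsilon<\epsilon_\inj$, at most one $\gamma$ contributes at each $g$, so all sums are locally finite.
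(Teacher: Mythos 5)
Your proof is correct and is exactly the standard unfolding/change-of-variables argument (Gorodnik--Nevo's Lemma 2.1) that the paper itself invokes: expand $\varphi_\epsilon$, substitute $v=g^{-1}h\gamma$ using unimodularity, note $h\gamma v^{-1}\in\calO_\epsilon\gamma\calO_\epsilon$ by symmetry of $\calO_\epsilon$ under inversion (automatic for a ball in a left-invariant metric), and apply \eqref{equ_rho-+} pointwise before summing over $\gamma$. Nothing further is needed.
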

\begin{proof}
By using \eqref{equ_rho-+}, the proof is almost the same as Lemma 2.1 in \cite{gorodnikCountingLatticePoints2012}.
\end{proof}
\textbf{Step 2:} This step will estimate the error terms in the mean ergodic theorem.

We want to apply the mean ergodic theorem to probability measures $\frac{\rho_{t,\epsilon}^{\pm}}{\int\rho_{t,\epsilon}^{\pm}}$. Before doing so, we need to compute some integrals. The computation is a bit tedious. \textbf{This step is to verify similar stable mean ergodic theorems, the main consequence is \eqref{equ_rho-} and \eqref{equ_rho+}}.

Let's first compute the difference.

\begin{lem} We have for $\epsilon<\epsilon_0$
\begin{equation}\label{equ_diff}
\int \rho^+_{t,\epsilon} \dd \vol-\int \rho^-_{t,\epsilon} \dd \vol\ll \epsilon \bigg(  \int\psi+l_0 (Lip\psi) \bigg)\vol(S_t).
\end{equation}
\end{lem}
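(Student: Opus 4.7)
The plan is to split the difference $\rho^+_{t,\epsilon}-\rho^-_{t,\epsilon}$ over the two disjoint regions $S^+_{t+\epsilon}\setminus S^-_{t-\epsilon}$ and $S^-_{t-\epsilon}$ (which together cover the support of $\rho^+_{t,\epsilon}$), estimate each contribution separately, and finish with the Lipschitz well-roundness bound \eqref{equ.lip}.

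On the annulus $S^+_{t+\epsilon}\setminus S^-_{t-\epsilon}$ the function $\rho^-_{t,\epsilon}$ vanishes, so the integrand there equals $\psi(k_1,k_2)+l_0\epsilon\,Lip\,\psi$. Using the bi-$K$-invariance of the sets $S^\pm_t$ (inherited from $S_t\subset K\widetilde A^\delta K$) together with the Cartan decomposition $dg=J(a)\,dk_1\,da\,dk_2$, the $(k_1,k_2)$ integration decouples from the $a$ integration, and one obtains
\begin{equation*}
\int_{S^+_{t+\epsilon}\setminus S^-_{t-\epsilon}}\psi(k_1,k_2)\,d\vol(g)=\Bigl(\int_{K\times K}\psi(k_1,k_2)\,dk_1\,dk_2\Bigr)\,\vol\bigl(S^+_{t+\epsilon}\setminus S^-_{t-\epsilon}\bigr).
\end{equation*}
By \eqref{equ.lip} the last factor is $O(\epsilon\,\vol(S_t))$; the additive constant $l_0\epsilon\,Lip\,\psi$ contributes at most $Cl_0\epsilon^2\,Lip\,\psi\,\vol(S_t)$, which is absorbed into the $l_0\,Lip\,\psi$ part of the target. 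Under $\calF\simeq K/M$ and $K$-invariance of $\mu_o$, the remaining $K\times K$ integral is $\int\psi\,d\mu_o\otimes d\mu_o$, which is what the statement denotes by $\int\psi$.

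On the inner region $S^-_{t-\epsilon}$, a two-case check on the truncated maximum in the definition of $\rho^-_{t,\epsilon}$ gives the pointwise bound $\rho^+_{t,\epsilon}(g)-\rho^-_{t,\epsilon}(g)\leq 2l_0\epsilon\,Lip\,\psi$. Integrating yields a contribution bounded by $2l_0\epsilon\,Lip\,\psi\cdot\vol(S^-_{t-\epsilon})\leq 2l_0\epsilon\,Lip\,\psi\cdot\vol(S_t)$. Summing the annular and inner contributions gives the claimed inequality.

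The argument is essentially bookkeeping; the only point worth flagging is the factorization step that pulls $\int\psi$ out of the annular volume. Without the bi-$K$-invariance of $S^\pm_t$ one would be forced to use $\|\psi\|_\infty\leq Lip\,\psi$ and land in the weaker bound $\epsilon\,Lip\,\psi\,\vol(S_t)$, losing the sharpness of the $\int\psi$ term which matters when $\psi$ has small average (as in applications to test functions localized near a single pair in $\calF^{(2)}$). I do not anticipate any further obstacle.
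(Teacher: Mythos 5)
Your proof is correct and is essentially the paper's argument reorganized: the paper bounds $\int\rho^+_{t,\epsilon}$ above and $\int\rho^-_{t,\epsilon}$ below separately via the same Cartan-decomposition factorization and then regroups, which after expansion yields exactly your two terms $(\vol(S^+_{t+\epsilon})-\vol(S^-_{t-\epsilon}))\int\psi$ and $l_0\epsilon(Lip\,\psi)(\vol(S^+_{t+\epsilon})+\vol(S^-_{t-\epsilon}))$, controlled by \eqref{equ.lip} just as in your annulus/inner split. Your remark about the factorization being the point where $\int\psi$ (rather than $\|\psi\|_\infty$) survives is accurate and implicit in the paper's first displayed inequality.
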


\begin{proof} By definition
\begin{equation*}
\begin{split}
    &\int \rho^+_{t,\epsilon} \dd \vol-\int \rho^-_{t,\epsilon} \dd \vol\\
    &\leq \vol\big(S^+_{t+\epsilon}\big)\bigg(\int\psi+l_0\epsilon(Lip\psi)\bigg)-\vol\big(S^-_{t-\epsilon}\big)\bigg(\int\psi-l_0\epsilon(Lip\psi)\bigg)\\
    &=\bigg( \vol\big(S^+_{t+\epsilon}\big)-\vol\big(S^-_{t-\epsilon}\big)\bigg) \int\psi+l_0\epsilon(Lip\psi)\Big(\vol\big(S^+_{t+\epsilon}\big)+\vol\big(S^-_{t-\epsilon}\big)\Big)\\
    &\ll \bigg(\epsilon  \int\psi+l_0\epsilon(Lip\psi)\bigg)(\vol(S_t)+\epsilon ),
\end{split}
\end{equation*}
where the last inequality is from Lipschitz well-roundness \eqref{equ.lip}. 
\end{proof}

Let $\beta_{t,\epsilon}^{\pm}:=\rho_{t,\epsilon}^{\pm}/\int \rho_{t,\epsilon}^{\pm}$.

\begin{lem}
For $\epsilon<\min\{\int\psi/2l_0Lip\psi, \epsilon_0, 1/2C\}$, $t>1$ and $f\in L^2(\Gamma\backslash G)$
\begin{equation}\label{equ_rho-}
    \|\pi(\beta^{-}_{t,\epsilon})f-\int f\|_{2}\leq E(t) \|f\|_{2},
\end{equation}
with 
\begin{equation}\label{equ_et}
E(t)=(\frac{C}{\vol(S_t)^{q-1}}\frac{(Lip \psi)^q}{(\int\psi)^q})^{\kappa_2},
\end{equation}
 $\kappa_2=1/qn(G,\Gamma)$ and $C>0$ only depending on $G$.

For $\epsilon\leq \epsilon_0$, $t>1$ and $f\in L^2(\Gamma\backslash G)$
\begin{equation}\label{equ_rho+}
    \|\pi(\beta^{+}_{t,\epsilon})f-\int f\|_{2}\leq  E(t) \|f\|_{2}.
\end{equation}
\end{lem}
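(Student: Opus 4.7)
My plan is to apply the quantitative mean ergodic theorem \eqref{equ.meanergodic} to the probability measures $\beta^{\pm}_{t,\epsilon}$, so the whole task reduces to estimating $\|\beta^{\pm}_{t,\epsilon}\|_q=\|\rho^{\pm}_{t,\epsilon}\|_q/\int\rho^{\pm}_{t,\epsilon}$. The numerator will be controlled by a pointwise bound on $\rho^{\pm}_{t,\epsilon}$ together with the Lipschitz well-roundedness volume estimate, while the denominator will be bounded below using either the containment $S_t\subset S^+_{t+\epsilon}$ from \eqref{equ.domain} (for $\beta^+$) or, for $\beta^-$, the hypothesis $\epsilon<\int\psi/(2l_0 Lip\psi)$ to prevent the truncation $\max\{\cdot,0\}$ from swallowing the main term.

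For the numerator, on its support $S^{\pm}_{t\pm\epsilon}$ one has $|\rho^{\pm}_{t,\epsilon}|\leq (1+l_0\epsilon)Lip\psi$, and together with \eqref{equ.lip} and $\epsilon<1/(2C)$ this yields
$$\|\rho^{\pm}_{t,\epsilon}\|_q^q \ll (Lip\psi)^q\vol(S_t).$$
For the denominator, the Haar measure decomposition in Cartan coordinates splits $\int\rho^{\pm}_{t,\epsilon}\dd\vol$ into an integral over the $A^+$-part of $S^{\pm}_{t\pm\epsilon}$ against the Harish-Chandra density times the integral over $K\times K$ of $\psi+(Lip\psi)l_0\epsilon$ or $\max\{\psi-(Lip\psi)l_0\epsilon,0\}$ respectively. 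For $\beta^+$, the inclusion $S_t\subset S^+_{t+\epsilon}$ immediately gives $\int\rho^+_{t,\epsilon}\dd\vol\gg\vol(S_t)\int\psi$. For $\beta^-$, the hypothesis on $\epsilon$ yields $(Lip\psi)l_0\epsilon<\int\psi/2$, so
$$\int\max\{\psi-(Lip\psi)l_0\epsilon,0\}\dd(k_1,k_2)\geq\int\psi-(Lip\psi)l_0\epsilon\geq\int\psi/2,$$
and combined with $\vol(S^-_{t-\epsilon})\gg\vol(S_t)$ from \eqref{equ.lip}, this yields $\int\rho^-_{t,\epsilon}\dd\vol\gg\vol(S_t)\int\psi$.

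Combining numerator and denominator estimates gives
$$\|\beta^{\pm}_{t,\epsilon}\|_q\ll \frac{Lip\psi}{\vol(S_t)^{(q-1)/q}\int\psi},$$
and raising to the power $1/n(G,\Gamma)$ and feeding into \eqref{equ.meanergodic} produces exactly the claimed bound $E(t)\|f\|_2$ with $E(t)$ as in \eqref{equ_et} and $\kappa_2=1/(qn(G,\Gamma))$. The argument is essentially bookkeeping once the Haar-measure decomposition is in place; the main obstacle is preventing the truncation in $\rho^-_{t,\epsilon}$ from swallowing the lower bound, which is precisely what the hypothesis $\epsilon<\int\psi/(2l_0 Lip\psi)$ is designed for. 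The weaker hypothesis $\epsilon\leq\epsilon_0$ suffices for $\beta^+$ because no truncation occurs there.
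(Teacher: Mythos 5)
Your proposal is correct and follows essentially the same route as the paper: apply the quantitative mean ergodic theorem to $\beta^{\pm}_{t,\epsilon}$, bound $\|\rho^{\pm}_{t,\epsilon}\|_q^q$ from above by $\ll (Lip\,\psi)^q\vol(S_t)$ via the sup bound and the well-roundedness estimate, and bound $\int\rho^{\pm}_{t,\epsilon}$ from below by $\gg \vol(S_t)\int\psi$, using the hypothesis $\epsilon<\int\psi/(2l_0 Lip\,\psi)$ exactly where the paper does, namely to keep the truncation in $\rho^-_{t,\epsilon}$ from killing the main term. The only cosmetic difference is that the paper bounds $\|\rho^-_{t,\epsilon}\|_q$ via the pointwise inequality $\rho^-_{t,\epsilon}\leq\rho_t$ rather than your direct sup bound on the support, which is equivalent bookkeeping.
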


The main difference between the above two inequalities is that for $\beta^+_{t,\epsilon}$, we don't need an extra condition of $\epsilon$ depending on $\psi$.

\begin{proof}
We compute the integral of $\rho_{t,\epsilon}^-$. We have
\begin{align*}
    \int \rho_{t,\epsilon}^-\dd \vol\geq \vol(S^-_{t-\epsilon})(\int\psi-(Lip\psi)l_0\epsilon).
\end{align*}
Due to \eqref{equ.lip}, we obtain
\[  \vol(S^-_{t-\epsilon})\geq (1-C\epsilon )\vol(S_t).\]
Hence if $\epsilon\leq 1/2C$, then
\[ \int \rho_{t,\epsilon}^-\dd \vol\gg \vol(S_t)(\int\psi-(Lip\psi)l_0\epsilon). \]
Therefore if $\epsilon\leq \min\{\int\psi/2l_0 (Lip\psi),1/2C\}$, we obtain
\begin{equation}\label{equ_rhot-}
    \int \rho_{t,\epsilon}^-\dd \vol\gg \vol(S_t)\int\psi.
\end{equation}

By \eqref{equ.domain} 
\begin{equation}\label{equ_dt+}
\vol(S^+_{t+\epsilon})\geq \vol(S_t).
\end{equation}
Therefore
\begin{equation}\label{equ_rhot+}
    \int \rho_{t,\epsilon}^+\dd \vol\geq \int \rho_t\dd \vol\geq 
    \vol(S_t)\int\psi.
\end{equation}

After these preparation, we can start to compute the integral appears in error term of mean ergodic theorem. By \eqref{equ_rhot-}, we obtain when $\epsilon\leq \min\{\int\psi/2l_0 (Lip\psi),1/2C\}$
\begin{align*}
    \|\rho^{-}_{t,\epsilon}\|_q^q/(\int\rho_{t,\epsilon}^-)^q \ll \int |\rho_{t}|^q/(\vol(S_t)\int\psi)^q\leq  \frac{1}{\vol(S_t)^{q-1}}(Lip\psi)^q/(\int\psi)^q.
\end{align*}
For $\rho_{t,\epsilon}^+$, by \eqref{equ_rhot+} and \eqref{equ_dt+} we have
\begin{align*}
    \|\rho^{+}_{t,\epsilon}\|_q^q/(\int\rho_{t,\epsilon}^+)^q \ll \frac{1}{\vol(S_t)^{q-1}} \int(\psi+(Lip\psi)l_0\epsilon)^q/(\int\psi)^q.
\end{align*}
We obtain if $t>t_1$,
\begin{equation}
     \|\rho^{+}_{t,\epsilon}\|_q^q/(\int\rho_{t,\epsilon}^+)^q\ll \frac{1}{\vol(S_t)^{q-1}}(Lip\psi)^q/(\int\psi)^q.
\end{equation}

Applying the above formulas for $\widetilde\beta^{\pm}_{t,\epsilon}$, combined with mean ergodic estimate \eqref{equ.meanergodic}, we obtain the lemma.
\end{proof}

\textbf{Step 3:} 
The mean ergodic theorem only gives an estimate of $L^2$ norm, but what we need is an estimate at some points. So we need to use the Chebyshev inequality. The remaining work is to collect the error terms. This part is similar to the proof of Theorem 1.9 in \cite{gorodnikCountingLatticePoints2012}.
\begin{proof}[Proof of Theorem \ref{theo-gorodnik-nevo1} ]
Suppose $\epsilon\leq \epsilon_{inj}$. Applying \eqref{equ_rho+} to $f=\varphi_\epsilon$, by Chebyshev's inequality, we obtain for any $\eta>0$
\begin{equation}
    m_{\Gamma\backslash G}\{h\, : \, | \pi(\beta_{t,\epsilon}^{+})(\varphi_\epsilon)(h\Gamma)-\int\varphi_\epsilon|>\eta \}\leq (\frac{E(t)\|\varphi_\epsilon\|_{L^2}}{\eta})^2.
\end{equation}
If $(E(t)\|\varphi_\epsilon\|_{L^2}/\eta)^2<m_{\Gamma\backslash G}(\calO_\epsilon)/2=\vol(\calO_\epsilon)/2V(\Gamma)$,
(here we need $\|\varphi_\epsilon\|^2_{L^2(\Gamma\backslash G)}=\vol(\calO_\epsilon)/V(\Gamma)$.)
for example we can take $\eta=\frac{2E(t)}{\vol(\calO_\epsilon)}$, then due to $\calO_\epsilon$ injective there exists $h\in\calO_\epsilon$ such that \[\pi(\beta_{t,\epsilon}^{+})(\varphi_\epsilon)(h\Gamma)<\eta+\int\varphi_\epsilon. \]
Then by Lemma \ref{lem_rhot},
\begin{align*}
    \sum_{\gamma\in \Gamma\cap S_t}\psi(\gamma_o^+,\gamma_o^-)&=\sum_{\gamma\in\Gamma}\rho_t(\gamma)\leq\pi(\beta_{t,\epsilon}^+)(\varphi_\epsilon)(h\Gamma)\int\rho_{t,\epsilon}^+\leq(\eta+\frac{1}{V(\Gamma)})\int\rho_{t,\epsilon}^+\\
    &=\frac{\int\rho_t}{V(\Gamma)}(1+\eta V(\Gamma))+O(\epsilon (Lip\psi)\vol(S_t)),
\end{align*}
where the last inequality is due to \eqref{equ_diff}.
Therefore
\[ \frac{\sum_{\gamma\in \Gamma\cap S_t}\psi(\gamma_o^+,\gamma_o^-)}{\int\rho_t}-\frac{1}{V(\Gamma)} \leq\frac{E(t)}{2\vol(\calO_\epsilon)}+\epsilon\frac{Lip\psi}{\int\psi}\frac{1}{V(\Gamma)}\ll \frac{E(t)}{\epsilon^{d_0}}+\epsilon\frac{Lip\psi}{\int\psi},\]
where $d_0$ is the dimension of group $G$. Hence
\begin{equation}\label{equ_phileq}
\begin{split}
   \sum_{\gamma\in \Gamma\cap S_t}\psi(\gamma_o^+,\gamma_o^-)-\frac{\vol(S_t)}{V(\Gamma)}\int\psi\ll \vol(S_t)(\int\psi\left( \frac{E(t)}{\epsilon^{d_0}}+\epsilon\frac{Lip\psi}{\int\psi}\right)).
\end{split}
\end{equation}

In order to optimize the error term, we take $$\epsilon=(E(t)\int\psi/Lip\psi)^{1/(1+{d_0})},$$
then the error term in the above formula is
\[E(t)^{1/(1+{d_0})}(\frac{Lip\psi}{\int\psi})^{{d_0}/(1+{d_0})}\ll \vol(S_t)^{-\zeta}(\frac{Lip\psi}{\int\psi})^{({d_0}+q\kappa_2)/(1+{d_0})}\leq \vol(S_t)^{-\zeta}(\frac{Lip\psi}{\int\psi}),\]
where the last equality is due to \eqref{equ_et} and $q\kappa_2=1/n(G,\Gamma)\leq 1$, and where $\zeta=(q-1)\kappa_2/(1+{d_0})$. Here $\epsilon$ should be less than $\epsilon_1,\epsilon_\inj$, but 
\begin{equation}\label{equ_eps}
    \epsilon\leq \left(\frac{C}{\vol(S_t)^{(q-1)\kappa_2}}\frac{\int\psi}{Lip\psi} \right)^{1/(1+{d_0})}\leq \left(\frac{C}{\vol(S_t)^{(q-1)\kappa_2}} \right)^{1/(1+{d_0})}.
\end{equation}
The condition on $\epsilon$ is satisfied if $t$ is greater than some constant $t_2=C'|\log\epsilon_1|>0$ and $C_6|\log\epsilon_\inj|$.
Therefore by \eqref{equ_phileq}, we obtain one part of Theorem \ref{theo-gorodnik-nevo1} for $t>t_0=\max\{C_6|\log\epsilon_{inj}|,t_2\}$, with $t_0$ not depending on $\psi$.

For $\rho_{t,\epsilon}^-$, we can obtain the same bound with extra condition that $\epsilon<\min\{\int\psi/2l_0Lip\psi,1/2C \}$. Due to \eqref{equ_eps}, if $t$ is large than some constant $C_7$ only depending on $n(G,\Gamma)$, $G$ and $\vol(S(t))$, then we have $\epsilon<1/2C$. For the other inequality, if not then we have $\epsilon\geq \int\psi/2l_0Lip\psi$, by \eqref{equ_eps}, which implies
\begin{equation}
Lip\psi\gg \vol(S_t)^{\zeta_2}\int\psi,    
\end{equation}
with $\zeta_2=(q-1)\kappa_2/{d_0}$. Therefore by non-negativeness of $\psi$
\[\frac{\vol(S_t)}{V(\Gamma)}\left(\int\psi-C \vol(S_t)^{-\zeta_2}Lip\psi\right)\leq 0\leq \sum_{\gamma\cap S_t}\psi(k_1(\gamma),k_2(\gamma)). \]
By taking $$\kappa=\min \{\zeta, \zeta_1/2,\zeta_2 \}=\min\{\zeta_1/2,\frac{(q-1)}{q(1+d_0)n(G,\Gamma)}  \} ,$$
the proof is complete.
\end{proof}

\begin{rem}
Theorem \ref{theo-gorodnik-nevo1} is exactly Theorem 7.2 in \cite{gorodnikCountingLatticePoints2012} with an explicit error term, where no proof of Theorem 7.2 is given. But we cannot obtain this Theorem directly from Theorem 7.1 for Lipschitz well-rounded sets in \cite{gorodnikCountingLatticePoints2012} by approximating Lipschitz functions by level sets because the level sets of a Lipschitz function may not be uniformly Lipschitz well rounded. For one-dimensional cases, (i.e. $\rm{SL}_2(\R)$, Lipschitz function on $\rm{SO}(2)$), we can take a Lipschitz function $\psi$ as the distance to a Cantor set. Then the level sets $\{\psi<1/n\}$ approximate the Cantor set. Each set is Lipschitz well-rounded, but the constant in Lipschitz well-rounded blow up as $n$ tends to infinity because the number of intervals in $\{\psi<1/n \}$ goes to infinite.
\end{rem}

\subsection{Explicit cases: ball domain and parallelotope domain}

\paragraph{Verifying Lipschitz well roundness}
We only consider ball and parallelotope domains.
We take
\[S_t=D_t\cap K\tilde{A}^\delta K,\  S_t^+=D_t\cap K\tilde{A}^{\delta-\epsilon} K,\ S_t^-=D_t\cap K\tilde{A}^{\delta+\epsilon} K, \]
where $\delta$ are from Lemma \ref{lem_effective_cartan}.
By Lemma \ref{lem-cartan-diff}, we know that this choice of $S_t^\pm$ satisfies \eqref{equ.domain}.

\begin{figure}[h!]
    \centering
    \includegraphics[width=8cm]{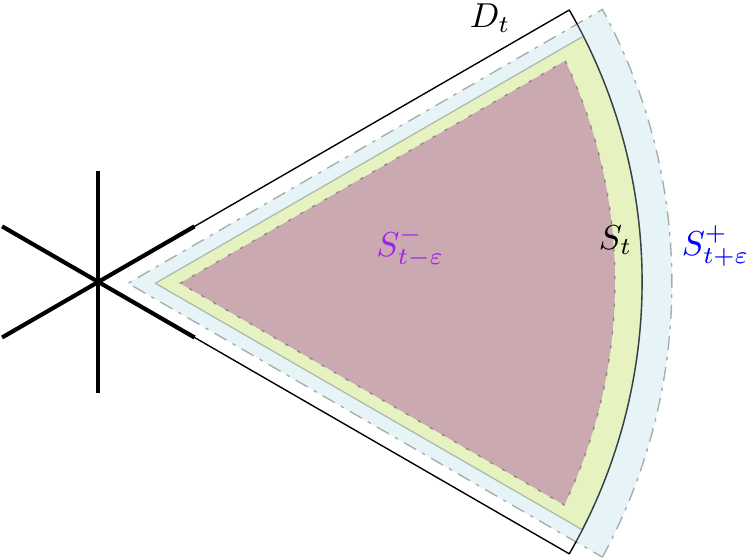}
    \caption{Ball domain, $S^+_{t+\varepsilon}$ is the blue outer layer delimited by the dash dots, $S_t$ in green yellow is the mid layer delimited by the gray line, $S^-_{t-\varepsilon}$ is the innermost layer delimited by the gray dotted line}
    \label{fig:DLdomain}
\end{figure}
\begin{figure}[h!]
    \centering
    \includegraphics[width=10cm]{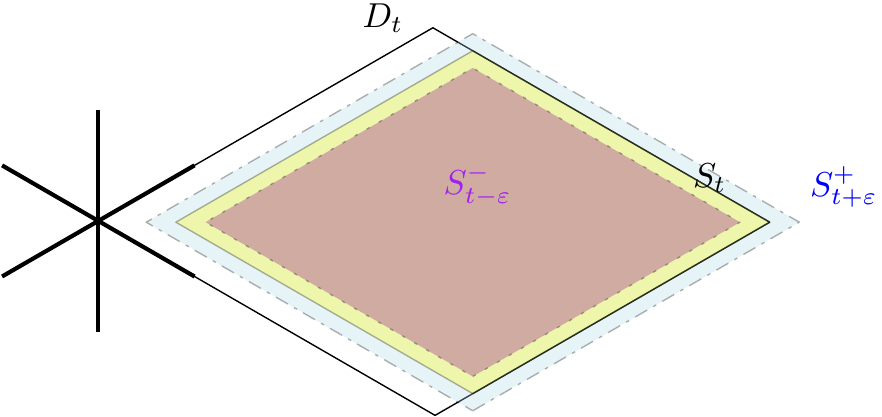}
    \caption{Parallelotope domain, $S^+_{t+\varepsilon}$ is the blue outer layer delimited by the gray dash dots, $S_t$ in green yellow is the mid layer delimited by the gray line and a portion of the black line, $S^-_{t-\varepsilon}$ is the innermost layer delimited by the gray dotted line }
    \label{fig:DGSdomain}
\end{figure}

We need to verify \eqref{equ.lip}. We observe that
\begin{align*}
\vol(S^+_{t+\epsilon}-S^-_{t-\epsilon})&\leq \vol(S^+_{t+\epsilon}-S^-_{t+\epsilon})+\vol(S^-_{t+\epsilon}-S^+_{t-\epsilon})+\vol(S^+_{t-\epsilon}-S^-_{t-\epsilon})\\
 &\leq \vol(S^+_{t+\epsilon}-S^-_{t+\epsilon})+\vol(S_{t+\epsilon}-S_{t-\epsilon})+\vol(S^+_{t-\epsilon}-S^-_{t-\epsilon}).
\end{align*}
 Both the case will be verified through local Lipschitz property of logarithm of volume (second term) and estimates of volume near the boundary of the region (first and third term).
 
 By admitting Lemma \ref{lem_volrest_proof} and Lemma \ref{lem_vollip_proof}, we obtain
 \[\vol(S^+_{t+\epsilon}-S^-_{t-\epsilon})\leq C\epsilon\vol(S_t), \]
 which is exactly \eqref{equ.lip}.
 
 It remains to prove Lemma \ref{lem_volrest_proof} and Lemma \ref{lem_vollip_proof}.
 
\paragraph{Boundary estimate}
 
We recall the Harish-Chandra formula
\[\vol(K\exp(\frak D)K)=\int_{\frakD}\prod_{\alpha\in\Sigma^+}\sinh(\alpha(Y))^{m_\alpha}\dd Leb(Y), \]
where $m_\alpha\in\N$ is the multiplicity of the positive root $\alpha$ and $\frakD$ is measurable subset of $\frak a^+$. To simplify the notation, we write $\frakS_t\subset \frak a^+$ for $S_t=K\exp(\frakS_t)K$. Similarly for $S_t^{\pm}$ and $D_t$.

Due to $\sup_{Y\in\frakD_t}2\rho(Y)\leq \delta_0t $, by the Harish-Chandra formula and $\sinh(\alpha(Y))\leq e^{\alpha(Y)}$, we obtain
\begin{equation}
    \vol(S_t)\leq \vol(D_t)\leq \int_{\frakD_t}e^{2\rho(Y)}\dd Y \ll e^{\delta_0t}t^{r_G}.
\end{equation}

Then we do the rest cases.
\begin{lem}\label{lem_volrest_proof}
For ball and parallelotope domains, we have
\[e^{\delta_0t}\ll \vol(S_t)\leq \vol(D_t),\  \vol(S^+_t-S^-_t)\ll \epsilon \vol(S_t). \]
\end{lem}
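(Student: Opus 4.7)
The plan is to apply Harish-Chandra's formula $\vol(D_t)=\int_{\frakD_t}\prod_{\alpha\in\Sigma^+}\sinh(\alpha(Y))^{m_\alpha}\dd Y$ to both $\vol(S_t)$ and $\vol(S_t^+-S_t^-)$, and estimate the resulting integrals over $\frak{a}^+$ directly.

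For the lower bound I exhibit a unit ball inside $\frakS_t$ on which the Harish-Chandra density is of order $e^{\delta_0 t}$. The key elementary fact is that the half-sum of positive roots $\rho$ lies in the strict interior of the positive cone in $\frak{a}^*$: since the simple reflection $s_\beta$ satisfies $s_\beta \rho = \rho - m_\beta \beta$, one has $\langle \rho, \beta \rangle > 0$ for every $\beta\in\Pi$. Consequently the maximizer $Y_\rho$ of $\rho$ on $B_{\frak{a}}(0,1)$ lies in $\frak{a}^{++}$ at some positive distance $d_0>0$ from $\partial\frak{a}^+$ (ball case); in the parallelotope case the maximizer of $2\rho$ on $\calP$ is the vertex $\sum_{\beta\in\Pi} a_\beta Y_\beta \in \frak{a}^{++}$, where $(Y_\beta)_{\beta\in\Pi}$ is the basis dual to $\Pi$. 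For $t$ large enough the unit ball around $(t-1)Y_\rho$ (respectively $(t-1)Y_*$) lies inside $\frakS_t$, and on it $\prod_{\alpha}\sinh(\alpha(Y))^{m_\alpha}\gg e^{\delta_0 t}$; integrating yields $\vol(S_t)\gg e^{\delta_0 t}$. The bound $\vol(S_t)\leq \vol(D_t)$ is immediate from $\frakS_t\subset \frakD_t$.

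For the upper bound I decompose the slab using $d(Y,\partial\frak{a}^+)=\min_{\beta\in\Pi}\beta(Y)/\|\beta\|$ on $\overline{\frak{a}^+}$, giving $\frakS_t^+\setminus \frakS_t^- \subset \bigcup_{\beta\in\Pi}T^\beta_t$ with $T^\beta_t:=\{Y\in\frakD_t: \beta(Y)\in[(\delta-\epsilon)\|\beta\|,\,(\delta+\epsilon)\|\beta\|]\}$. Fix $\beta\in\Pi$ and Fubini-slice along the direction normal to $\ker\beta$: writing $Y=Y'+(s/\|\beta\|^2)v_\beta$ with $Y'\in\ker\beta$, $v_\beta$ the vector dual to $\beta$, and $s=\beta(Y)$, one gets
\[\int_{T^\beta_t}\prod_\alpha \sinh(\alpha(Y))^{m_\alpha}\dd Y = \tfrac{1}{\|\beta\|}\int_{(\delta-\epsilon)\|\beta\|}^{(\delta+\epsilon)\|\beta\|}\sinh(s)^{m_\beta}\,J_\beta(s,t)\,\dd s.\]
Here $\sinh(s)^{m_\beta}=O(1)$ on the range of integration, and since the shift $(s/\|\beta\|^2)v_\beta$ has bounded norm, $\sinh(\alpha(Y))^{m_\alpha}\asymp \sinh(\alpha(Y'))^{m_\alpha}$ for each $\alpha\neq \beta$. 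Thus $J_\beta(s,t)$ is uniformly dominated (up to constants depending on $\delta$) by the cross-section integral $I_\beta(t):=\int_{\ker\beta\cap\frakD_t}\prod_{\alpha\neq\beta}\sinh(\alpha(Y'))^{m_\alpha}\dd Y'$.

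Applying $\sinh(x)\leq e^{x}$ and $\sum_{\alpha\neq\beta}m_\alpha\alpha = 2\rho - m_\beta\beta$ yields the pointwise bound $\prod_{\alpha\neq\beta}\sinh(\alpha(Y'))^{m_\alpha}\leq e^{2\rho(Y')}$ on $\ker\beta$, whence $I_\beta(t)\ll t^{r_G-1}e^{\delta_0(\beta)\,t}$ with $\delta_0(\beta):=2\sup_{B_{\ker\beta}(0,1)\cap\overline{\frak{a}^+}}\rho$ (respectively $2\sup_{\calP\cap\ker\beta}\rho$ in the parallelotope case). The crucial point is the \emph{strict spectral gap} $\delta_0(\beta)<\delta_0$: since $\beta(Y_\rho)>0$, the orthogonal projection of $Y_\rho$ onto $\ker\beta$ has norm strictly less than $1$, and analogously the parallelotope vertex lies strictly outside every face $\calP\cap\ker\beta$. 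Summing over $\beta$ gives
\[\vol(S_t^+-S_t^-)\ll \epsilon\,t^{r_G-1}e^{(\delta_0-c_0)t}\ll \epsilon\,\vol(S_t),\qquad c_0:=\min_{\beta\in\Pi}(\delta_0-\delta_0(\beta))>0,\]
using the lower bound. The main obstacle is establishing the strict gap $\delta_0(\beta)<\delta_0$; once this geometric fact is in place everything else reduces to routine bookkeeping.
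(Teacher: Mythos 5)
Your proposal is correct in substance and rests on exactly the geometric fact that the paper's terse proof also uses, namely that the maximizer of $\rho$ (resp.\ of $2\rho$ on $\calP$) lies in the open chamber $\frak a^{++}$, but your execution differs in both halves. For the lower bound the paper simply cites the known asymptotics $\vol(D_t)\asymp t^{(r_G-1)/2}e^{\delta_0 t}$ (Knieper, Helgason, Gorodnik--Oh--Shah) for the ball and its Lemma 3.7 for the parallelotope, whereas you integrate the Harish--Chandra density over a unit ball centred at $(t-1)Y_\rho$; this is more elementary and self-contained, at the cost of losing the polynomial factor (which the lemma does not need). For the upper bound the paper writes $\vol(S_t^+-S_t^-)\le\int_{\frakS_t^+-\frakS_t^-}e^{2\rho(Y)}\,\dd Y\ll\epsilon t^{r_G-1}e^{c_0t}$ with $c_0<\delta_0$ ``due to the choice of the domains''; your decomposition into the per-root slabs $T_t^\beta$ and the Fubini slicing along $v_\beta$ is a legitimate way to make both the factor $\epsilon$ and the strict gap $\delta_0(\beta)<\delta_0$ explicit. (Your identity $d(Y,\partial\frak a^+)=\min_\beta\beta(Y)/\|\beta\|$ on $\overline{\frak a^+}$ does hold exactly, by projecting onto the minimizing wall and intersecting the segment with $\partial\frak a^+$ if the projection leaves the cone, so the covering by the $T_t^\beta$ is valid.)

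One intermediate claim is false as stated and should be removed: $\sinh(\alpha(Y))^{m_\alpha}\asymp\sinh(\alpha(Y'))^{m_\alpha}$ for $Y=Y'+(s/\|\beta\|^2)v_\beta$ is not uniform, since $\alpha(Y')$ may vanish while $\alpha(Y)$ does not (for a non-reduced restricted root system with $2\beta\in\Sigma^+$ one has $\sinh(2\beta(Y'))\equiv0$ on $\ker\beta$, so the comparison fails identically there). The repair is immediate and you essentially perform it one line later: apply $\sinh(x)\le e^{x}$ \emph{before} comparing $Y$ with $Y'$, giving
$\prod_{\alpha\neq\beta}\sinh(\alpha(Y))^{m_\alpha}\le e^{(2\rho-m_\beta\beta)(Y)}=e^{2\rho(Y')}e^{O(1)}$
on $T_t^\beta$ (using $\beta(Y')=0$ and $s=O(1)$), and then bound the cross-sectional integral of $e^{2\rho}$ by $t^{r_G-1}e^{\delta_0(\beta)t}$ exactly as you do. A second cosmetic point: $s_\beta\rho=\rho-m_\beta\beta$ is not the correct formula for non-reduced restricted root systems (the $2\beta$-string contributes), but the conclusion you extract from it, $\langle\rho,\beta\rangle>0$ for every simple $\beta$, is correct and standard, and is all that is needed.
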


\begin{proof}
For the upper bound, 
by the Harish-Chandra formula, we obtain
\[\vol(S_t^+-S_t^-)\leq \int_{\frakS_t^+-\frakS_t^-}e^{2\rho(Y)}\dd Y  \ll \epsilon t^{r_G-1}e^{c_0t}, \]
where $c_0=\sup_{Y\in \frakS_t^+-\frakS_t^-}2\rho(Y)/t<\delta_0$ due to the choice of the domains.

For the lower bound of ball domain, we use volume estimates from \cite{knieper_asymptotic_1997}, \cite[Thm 5.8]{helgasonGroupsGeometricAnalysis2000}, \cite[Thm 6.1]{gorodnikIntegralPointsSymmetric2009} to obtain that $\vol(S_t)\asymp t^{(r_G-1)/2}e^{\delta_0 t}$.

For the lower bound of parallelotope domain, due to Lemma \ref{lem-volume-paral}, we obtain
\[ \vol(S_t)\geq \vol(D_t)-\vol(D_t\backslash S_t)\gg e^{\delta_0 t}. \]
\end{proof}

\paragraph{Local Lipschitz property of logarithm of volume}
\begin{lem}\label{lem_vollip_proof}
There exists $C>0$ such that for $\epsilon<1/C$ and $t>1$
\[ \vol(S_{t+\epsilon})-\vol(S_t)\leq C\epsilon \vol(S_t),\ \vol(D_{t+\epsilon})-\vol(D_t)\leq C\epsilon \vol(D_t) .\]
\end{lem}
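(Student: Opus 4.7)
The plan is to express $V(t+\epsilon) - V(t)$ via the Harish-Chandra integral formula
\[
V(t+\epsilon) - V(t) \;=\; \int_{\frakD_{t+\epsilon} \setminus \frakD_t} \phi(Y)\,\dd Y,
\qquad \phi(Y) := \prod_{\alpha\in\Sigma^+} \sinh(\alpha(Y))^{m_\alpha},
\]
and then bound this thin-shell integral by $C\epsilon V(t)$ in each of the two geometries. Equivalently, one can establish the uniform differential inequality $V'(t) \leq C V(t)$ for $t>1$, from which Gronwall gives $V(t+\epsilon) \leq V(t) e^{C\epsilon}$, hence $V(t+\epsilon) - V(t) \leq 2C\epsilon V(t)$ for $\epsilon < 1/C$. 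Since $\frakS_{t+\epsilon}\setminus \frakS_t \subset \frakD_{t+\epsilon}\setminus\frakD_t$, the $S_t$-bound will follow from the $D_t$-bound once we know $\vol(D_t) \ll \vol(S_t)$, which is immediate from Lemma \ref{lem_volrest_proof}.

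For the parallelotope domain, Lemma \ref{lem-volume-paral} already writes $V(t)$ explicitly as a finite linear combination of terms of the form $\prod_{\beta\in\Pi}(e^{t n_\beta(\omega)a_\beta}-1)/n_\beta(\omega)$, which expands to a sum $\sum_\omega c_\omega e^{\lambda_\omega t}$ whose dominant exponent is $\delta_\calP = \sum_{\beta\in\Pi} n_\beta(2\rho) a_\beta$. Term-by-term differentiation yields
\[
V'(t) = \delta_\calP V(t) + O(e^{\delta^- t}), \qquad \delta^- < \delta_\calP,
\]
and since $V(t) \gg e^{\delta_\calP t}$ by Lemma \ref{lem-volume-paral}, we conclude $V'(t)/V(t)$ is uniformly bounded on $t>1$.

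For the ball domain, parameterize $\frak{a}^+$ by polar coordinates $Y=r\omega$ with $r=\|Y\|$ and $\omega \in S^{r_G-1}\cap\frak{a}^+$ to get
\[
V(t+\epsilon) - V(t) \;=\; \int_t^{t+\epsilon} r^{r_G - 1}\!\!\int_{S^{r_G-1}\cap\frak{a}^+} \phi(r\omega)\,\dd\omega\,\dd r.
\]
The inner angular integral is controlled by Laplace's method applied to $\omega \mapsto e^{2 r \rho(\omega)}$ (the dominant factor of $\phi(r\omega)$ since $\sinh \asymp \frac{1}{2}e^{\,\cdot\,}$ on $\frak{a}^{++}$): the maximum $\delta_0 = 2\max_{\|\omega\|=1}\rho(\omega)$ is attained at a unique interior point of $S^{r_G-1}\cap \frak{a}^+$ with nondegenerate Hessian, yielding
\[
\int_{S^{r_G-1}\cap\frak{a}^+} \phi(r\omega)\,\dd\omega \;\asymp\; r^{-(r_G-1)/2}\, e^{\delta_0 r}.
\]
Thus $V(t+\epsilon) - V(t) \leq C\epsilon (t+\epsilon)^{(r_G-1)/2} e^{\delta_0(t+\epsilon)}$, which together with $V(t) \asymp t^{(r_G-1)/2} e^{\delta_0 t}$ (Lemma \ref{lem_volrest_proof} and Section \ref{sec:volume-ball}) gives the Lipschitz bound $V(t+\epsilon) - V(t) \ll \epsilon V(t)$ uniformly for $\epsilon$ bounded and $t>1$. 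For $t$ in a compact subinterval of $[1,\infty)$, $V$ is $C^1$ and bounded away from $0$, so the ratio $V'/V$ is trivially bounded.

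The main technical point is the sharp two-sided Laplace asymptotic in the ball case: crude upper bounds $\phi \leq Ce^{2\rho}$ would give a spurious $t^{(r_G-1)/2}$ factor and destroy the Lipschitz inequality, so we need the Laplace concentration factor $t^{-(r_G-1)/2}$ to cancel against the $t^{r_G-1}$ from the surface measure. This is exactly the same Laplace computation (one dimension lower, on the sphere instead of the ball) that underlies the Harish-Chandra asymptotic $V(t) \asymp t^{(r_G-1)/2} e^{\delta_0 t}$ quoted from \cite{knieper_asymptotic_1997}, \cite{helgasonGroupsGeometricAnalysis2000}, \cite{gorodnikIntegralPointsSymmetric2009}. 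The $S_t$ inequality then follows from the $D_t$ one since removing a fixed $\delta$-neighborhood of $\partial\frak{a}^+$ does not affect the interior Laplace concentration, so $\vol(S_t) \asymp \vol(D_t)$ and $\frakS_{t+\epsilon}\setminus\frakS_t \subset \frakD_{t+\epsilon}\setminus \frakD_t$.
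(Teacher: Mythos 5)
Your proposal is correct, but it takes a genuinely different route from the paper's for the key (ball-domain) step. The paper also passes to polar coordinates $(r,\theta)$ on $\frak{a}^+$, but instead of integrating out the angular variable and invoking Laplace asymptotics, it bounds the shell \emph{ray by ray}: writing the Harish-Chandra density in polar form as $\xi(r,\theta)$, it uses the mean value theorem on each radial segment of length $\epsilon$ and then the inequality $\xi(r,\theta)\leq C\int_0^r \xi(s,\theta)\,\dd s$ (Lemma A.3 of \cite{eskin_unipotent_1996}, uniform in $\theta$ and $r>1$) to dominate the shell by $C\epsilon\vol(D_{t+\epsilon})$, after which a bootstrap ($C\epsilon<1/2$) replaces $\vol(D_{t+\epsilon})$ by $\vol(D_t)$. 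That argument treats the ball and parallelotope uniformly and needs no information about where on the angular region the linear form $\rho$ is maximized, so it would survive for cone-shaped regions where the maximizer of $\rho$ lies on the angular boundary and your Laplace exponent/polynomial power would change. Your route instead requires the sharp two-sided asymptotic $\vol(D_t)\asymp t^{(r_G-1)/2}e^{\delta_0 t}$ (correctly identified as the crux, and legitimately quotable from \cite{knieper_asymptotic_1997}, \cite{helgasonGroupsGeometricAnalysis2000}) plus a separate explicit computation for the parallelotope via Lemma \ref{lem-volume-paral}; both of those ingredients are sound, the maximizer of $\rho$ on the unit sphere is indeed interior to $\frak{a}^{++}$ with nondegenerate Hessian, and the upper bound $\sinh(x)\le e^{x}/2$ suffices for the shell estimate. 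One small imprecision: $\vol(D_t)\ll\vol(S_t)$ is not literally ``immediate'' from the statement of Lemma \ref{lem_volrest_proof} (which only gives $e^{\delta_0 t}\ll\vol(S_t)\le\vol(D_t)$, while the crude upper bound for the ball is $t^{r_G}e^{\delta_0 t}$); you need the matching asymptotics for $\vol(S_t)$ and $\vol(D_t)$ established in \S\ref{sec:volume-ball} and inside the proof of Lemma \ref{lem_volrest_proof} — the paper makes exactly the same comparison, so this is not a gap, just a citation to tighten.
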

\begin{proof}
We use a similar computation as in Proposition 7.1 in \cite{gorodnikErgodicTheoryLattice2010}. We use the polar coordinate $(r,\theta)\in \R^+\times \frak a_1^+$ with $\frak a_1^+=\{Y\in\frak a^+,\ \|Y\|=1 \}$. Then we can rewrite the Harish-Chandra formula. For ball and parallelotope domains, using the cone shape of domains, we obtain
\begin{equation}
    \vol(D_{t+\epsilon}-D_t)=\int_{(r,\theta)\in \frakD_{t+\epsilon}-\frakD_t}\xi(r,\theta)\dd r\dd \theta=\int \dd\theta\int_{t(\theta)}^{t(\theta)+\epsilon} \xi(r,\theta)\dd r.
\end{equation}
Since $\xi(r,\theta)$ is a continuous function, we have
\[\int_{t(\theta)}^{t(\theta)+\epsilon}\xi(r,\theta)\dd r=\epsilon\xi(r(\theta),\theta), \]
with some $r(\theta)\in[t(\theta),t(\theta)+\epsilon]$.
Lemma A.3 in \cite{eskin_unipotent_1996} implies that there exists $C>0$ such that for $r>1$
\begin{equation*}
    \xi(r,\theta)\leq C\int_0^r\xi(s,\theta)\dd s.
\end{equation*}
We have
\begin{equation*}
    \xi(r(\theta),\theta)\leq C\int_0^{r(\theta)}\xi(s,\theta)\dd s\leq C\int_0^{t(\theta)+\epsilon}\xi(s,\theta)\dd s
\end{equation*}
Therefore, we have
\begin{equation*}
    \vol(D_{t+\epsilon}-D_t)\leq C\int \dd\theta\left(\epsilon\int_0^{t(\theta)+\epsilon} \xi(r,\theta)\dd r\right)=C\epsilon \vol(D_{t+\epsilon}).
\end{equation*}
By taking $\epsilon$ small such that $C\epsilon<1/2$, we obtain
\[  \vol(D_{t+\epsilon}-D_t)\leq C'\epsilon \vol(D_t) \]
for some new constant $C'>0$.

For both ball domain and parallelotope domain, due to definition and boundary estimates (Lemma \ref{volume_reste}) and by setting $C':= \frac{C}{1- C \epsilon}$ we have
\begin{align*}
    \vol(S_{t+\epsilon})-\vol(S_t)&=\vol(D_{t+\epsilon})-\vol(D_t)-\vol((D_{t+\epsilon}-D_t)\cap A^\delta)\\
    &\leq \vol(D_{t+\epsilon})-\vol(D_t)\leq C\epsilon \vol(D_t)\leq C'\epsilon \vol(S_t).
\end{align*}

\end{proof}

\subsection{Proof of Corollary \ref{theo-gorodnik-nevo2}}
The domains we are interested in may have singular elements. We use estimates of singular elements to obtain Corollary \ref{theo-gorodnik-nevo2}. 
\begin{proof}[Proof of Corollary \ref{theo-gorodnik-nevo2}]

Let $S_t^\delta=D_t^{reg}-S_t$ for ball and parallelotope domains, then
\begin{align*}
    &\frac{1}{ \vol(S_t\cup S_t^\delta)} \sum_{\gamma \in (S_t\cup S_t^\delta)\cap \Gamma} \psi(\gamma_o^+,\gamma_o^-)-\frac{1}{ \vol(S_t)} \sum_{\gamma \in S_t\cap \Gamma} \psi(\gamma_o^+,\gamma_o^-) \\
    =&\frac{1}{ \vol(S_t\cup S_t^\delta)} \sum_{\gamma \in  S_t^\delta\cap \Gamma} \psi(\gamma_o^+,\gamma_o^-)+(\frac{1}{ \vol(S_t\cup S_t^\delta)}-\frac{1}{ \vol(S_t)}) \sum_{\gamma \in S_t\cap \Gamma} \psi(\gamma_o^+,\gamma_o^-)\\
    \leq&\left( \frac{|S_t^\delta\cap\Gamma|}{ \vol(S_t)}+\frac{|S_t\cap\Gamma|\vol(S_t^\delta)}{ \vol(S_t)^2}\right)|\psi|_\infty.
\end{align*}
By Lemma \ref{lem-cartan-diff} and Lemma \ref{volume_reste}

\begin{equation*}
    |S_t^\delta\cap\Gamma|\leq \frac{ \vol(S_t^\delta\calO_{\epsilon_{inj}})}{ \vol(\calO_{\epsilon_{inj}})}\leq \vol(S_{t+\epsilon_{inj}}^{\delta+\epsilon_{inj} })\epsilon_{inj}^{-\dim G}\ll \vol(S_t)^{1-\kappa}\epsilon_{inj}^{-\dim G}.
\end{equation*}

For the term $|S_t\cap\Gamma|$, by a similar estimate and Lemma \ref{lem_vollip_proof}, we obtain

\begin{equation*}
    |S_t\cap\Gamma|\ll \vol(S_t)\epsilon_{inj}^{-\dim  G}.
\end{equation*}

Therefore, we have

\begin{align*}
    &\frac{1}{ \vol(S_t\cup S_t^\delta)} \sum_{\gamma \in (S_t\cup S_t^\delta)\cap \Gamma} \psi(\gamma_o^+,\gamma_o^-)-\frac{1}{ \vol(S_t)} \sum_{\gamma \in S_t\cap \Gamma} \psi(\gamma_o^+,\gamma_o^-) \\
    & \ll \vol(S_t)^{-\kappa}\epsilon_{inj}^{-\dim G}|\psi|_\infty.
\end{align*}
Therefore, if $t\geq C|\log\epsilon_{inj}|$ for some constant $C>0$, then we obtain the result.
\end{proof}

\clearpage

\bibliographystyle{alpha}

\bigskip
 \noindent 
	\it{Institut f\"ur Mathematik, Universit\"at Z\"urich, 8057 Z\"urich; \textsc{current}: CNRS-Centre de Math\'ematiques Laurent Schwartz ,\'Ecole Polytechnique, Palaiseau, France}  \\
	email: {\tt jialun.li@polytechnique.edu} 
		
		\bigskip   
		
\noindent 
		\it{Fakult\"at f\"ur Mathematik und Informatik
Universit\"at Heidelberg,
69120 Heidelberg;
\textsc{current}: MCF-Laboratoire de Math\'ematiques d'Orsay, Facult\'es des Sciences, Universit\'e Paris-Saclay, Orsay, France
}  \\
			email: {\tt nguyen-thi.dang@universite-paris-saclay.fr}

\end{document}